\newtheorem{thm}{Theorem}[section]
\newtheorem{lem}[thm]{Lemma}
\newtheorem{cor}[thm]{Corollary}
\newtheorem{rmk}[thm]{Remark}
\newcommand{\trans}[1]{\widetilde{#1}}
\newcommand{\bica}{$\beta_1$}
\newcommand{\bicb}{$\beta_2$}
\DeclareMathOperator{\core}{core}
\DeclareMathOperator{\comp}{comp}
\DeclareMathOperator{\bicomp}{bicomp}
\DeclareMathOperator{\ext}{ext}
\title{On the generalized Helly property of hypergraphs,\\ cliques, and bicliques\footnote{Preliminary versions of some of the results in this work were stated (without proofs) in the extended abstract~\cite{DBLP:journals/endm/DouradoGS15}.}}
\author{Mitre C.\ Dourado\footnote{Instituto de Computa\c{c}\~{a}o, Universidade Federal do Rio de Janeiro, Rio de Janeiro, Brazil. E-mail address: \texttt{mitre@ic.ufrj.br}}\qquad Luciano N.\ Grippo\footnote{Instituto de Ciencias, Universidad Nacional de General Sarmiento, Los Polvorines, Buenos Aires, Argentina and Consejo Nacional de Investigaciones Cient\'ificas y T\'ecnicas, Argentina. E-mail address: \texttt{lgrippo@campus.ungs.edu.ar}}\qquad Mart\'\i n D.\ Safe\footnote{Departamento de Matem\'atica, Universidad Nacional del Sur (UNS), Bah\'ia Blanca, Argentina and INMABB, Universidad Nacional del Sur (UNS)-CONICET, Bah\'ia Blanca, Argentina. E-mail address: \texttt{msafe@uns.edu.ar}}}
\date{}
\begin{document}

\maketitle

\begin{abstract}
A family of sets is \emph{$(p,q)$-intersecting} if every nonempty subfamily of $p$ or fewer sets has at least $q$ elements in its total intersection. A family of sets has the \emph{$(p,q)$-Helly property} if every nonempty $(p,q)$-intersecting subfamily has total intersection of cardinality at least $q$. The $(2,1)$-Helly property is the usual \emph{Helly property}. A hypergraph is \emph{$(p,q)$-Helly} if its edge family has the $(p,q)$-Helly property and \emph{hereditary $(p,q)$-Helly} if each of its subhypergraphs has the $(p,q)$-Helly property. A graph is \emph{$(p,q)$-clique-Helly} if the family of its maximal cliques has the $(p,q)$-the Helly property and \emph{hereditary $(p,q)$-clique-Helly} if each of its induced subgraphs is $(p,q)$-clique-Helly. The classes of \emph{$(p,q)$-biclique-Helly} and \emph{hereditary $(p,q)$-biclique-Helly graphs} are defined analogously. In this work, we prove several characterizations of hereditary $(p,q)$-Helly hypergraphs, including one by minimal forbidden partial subhypergraphs. On the algorithmic side, we give an improved time bound for the recognition of $(p,q)$-Helly hypergraphs for each fixed $q$ and show that the recognition of hereditary $(p,q)$-Helly hypergraphs can be solved in polynomial time if $p$ and $q$ are fixed but co-NP-complete if $p$ is part of the input. In addition, we generalize to $(p,q)$-clique-Helly graphs the characterization of $p$-clique-Helly graphs in terms of expansions and give different characterizations of hereditary $(p,q)$-clique-Helly graphs, including one by forbidden induced subgraphs. We give an improvement on the time bound for the recognition of $(p,q)$-clique-Helly graphs and prove that the recognition problem of hereditary $(p,q)$-clique-Helly graphs is polynomial-time solvable for $p$ and $q$ fixed but NP-hard if $p$ or $q$ is part of the input. Finally, we provide different characterizations, give recognition algorithms, and prove hardness results for $(p,q)$-biclique-Helly graphs and hereditary $(p,q)$-biclique-Helly graphs which are analogous to those for $(p,q)$-clique-Helly and hereditary $(p,q)$-clique-Helly graphs.
\end{abstract}

\section{Introduction}

A family $\mathcal F$ of sets is \emph{$p$-wise intersecting} if every nonempty subfamily of $p$ or fewer sets has nonempty total intersection. A family $\mathcal F$ of sets has the \emph{$p$-Helly property} if every nonempty $p$-wise intersecting subfamily of $\mathcal F$ has nonempty total intersection. For example, the celebrated \emph{Helly's theorem}~\cite{Helly} states that any finite family of convex sets in $\mathbb R^{p-1}$ has the $p$-Helly property. The $2$-Helly property is the usual \emph{Helly property}~\cite{MR0357172}.

In this work, we study the more general $(p,q)$-Helly property that originated in the works~\cite{MR1755431} and~\cite{MR1327319}. Let the \emph{core} of a family $\mathcal F$ of sets be the total intersection of the sets of the family. A family of sets is \emph{$p$-wise $q$-intersecting}, or simply \emph{$(p,q)$-intersecting}, if every nonempty subfamily of $\mathcal F$ consisting of $p$ or fewer sets has core of cardinality at least $q$. A family $\mathcal F$ of sets has the \emph{$(p,q)$-Helly property} if every nonempty $(p,q)$-intersecting subfamily of $\mathcal F$ has core of cardinality at least $q$. Clearly, the $(p,1)$-Helly property coincides with the $p$-Helly property.

Let $\mathcal H$ be a hypergraph~\cite{MR0357172}; i.e., $\mathcal H$ is an ordered pair $(X,\mathcal E)$ where $X$ is a finite set and $\mathcal E$ is a finite family of nonempty subsets of $X$ whose union is $X$. The members of $\mathcal E$ are called the \emph{edges} of $\mathcal H$.  Since a hypergraph is uniquely determined by its edge family, we will usually identify a hypergraph with its edge family and apply the concepts defined for families of sets to hypergraphs via this identification. For example, a hypergraph $\mathcal H=(X,\mathcal E)$ has the \emph{$(p,q)$-Helly property} if the family $\mathcal E$ has the $(p,q)$-Helly property. Berge and Duchet~\cite{MR0406801} gave a characterization of $p$-Helly hypergraphs from which a polynomial-time algorithm for the associated recognition problem for each fixed $p$ follows. Their result was extended in~\cite{D-P-S-varsize}, providing a characterization of $(p,q)$-Helly hypergraphs as well as a polynomial-time recognition algorithm for each fixed $p$ and $q$. A faster algorithm for the case where $q=1$ was given in~\cite{MR2457935}. In contrast, the recognition problem of $(p,q)$-Helly hypergraphs is NP-hard if $p$ is part of the input (even if $q$ is fixed)~\cite{MR2223573}, and the problem of determining the computational complexity of recognizing $(2,q)$-Helly hypergraphs when $q$ is part of the input is open~\cite{MR1755431}. For a survey on the computational aspects of the Helly property and its generalizations, the reader is referred to~\cite{D-P-S-ds}.

Let $\mathcal H=(X,\mathcal E)$ be a hypergraph and let $X'\subseteq X$. The \emph{subhypergraph of $\mathcal H$ induced by $X'$} is the hypergraph with vertex set $X'$ and whose edges are those sets $E\cap X'$ that are nonempty as $E$ varies over $\mathcal E$. A hypergraph $\mathcal H$ is \emph{hereditary $(p,q)$-Helly} if each of its subhypergraphs has the $(p,q)$-Helly property. Hereditary $(p,1)$-Helly hypergraphs are simply called \emph{hereditary $p$-Helly}. Characterizations and polynomial-time recognition algorithms for hereditary $p$-Helly hypergraphs for each fixed $p$ were given in~\cite{MR2404220}, where it was also proved that the recognition of hereditary $p$-Helly hypergraphs is co-NP-complete if $p$ is part of the input. An improvement on the time bound for the recognition problem of $p$-Helly graphs for each fixed $p$ was achieved in~\cite{MR2457935}.

A graph is \emph{$(p,q)$-clique-Helly} if the family of its maximal cliques has the $(p,q)$-Helly property. (In this work, the word \emph{maximal} always means inclusion-wise maximal.) The $(p,1)$-clique-Helly graphs are simply called \emph{$p$-clique-Helly}. Dragan~\cite{Dra-thesis} and Szwarcfiter~\cite{MR1447758}, independently, gave a characterization for the class of $2$-clique-Helly graphs (generally known as \emph{clique-Helly graphs}), leading to a polynomial-time recognition algorithm for the class (see also~\cite{MR2321859}). Their characterization was extended in~\cite{MR2365055} to a characterization of $p$-clique-Helly graphs for each $p$ in terms of the so called \emph{expansions}. Moreover, a different characterization for the classes of $(p,q)$-clique-Helly graphs, as well as a polynomial-time recognition algorithm for each fixed $p$ and $q$ were also given in~\cite{MR2365055}. Interestingly, the recognition problem of $(p,q)$-clique-Helly graphs is NP-hard when $p$ or $q$ is part of the input~\cite{MR2365055}.

A graph is \emph{hereditary $(p,q)$-clique-Helly} if each of its induced subgraphs is $(p,q)$-clique-Helly. Hereditary $(p,1)$-clique-Helly graph are called \emph{hereditary $p$-clique-Helly}. Prisner~\cite{MR1238872} gave several characterizations of hereditary $2$-clique-Helly graphs (known as \emph{hereditary clique-Helly graphs}) as well as a polynomial-time recognition algorithm for the class. An alternative recognition algorithm for the same class with better time complexity was given in~\cite{MR2321859}. Characterizations of hereditary $p$-clique-Helly graphs leading to a polynomial-time recognition algorithm for each fixed $p$ were given in~\cite{MR2404220}. In contrast, the recognition of hereditary $p$-clique-Helly graphs is NP-hard if $p$ is part of the input~\cite{MR2404220}.

A \emph{biclique} of a graph is a set of vertices inducing a complete bipartite graph, where we regard edgeless graphs as complete bipartite graphs. A graph is \emph{$(p,q)$-biclique-Helly} if the family of its maximal bicliques has the $(p,q)$-Helly property and \emph{hereditary $(p,q)$-biclique-Helly} if each of its induced subgraphs is $(p,q)$-biclique-Helly. In this work, we show that several results that hold for $(p,q)$-clique-Helly graphs and hereditary $(p,q)$-clique-Helly graphs are mirrored in $(p,q)$-biclique-Helly graphs and hereditary $(p,q)$-biclique-Helly graphs. Two graph classes related to $(2,1)$-biclique-Helly and hereditary $(2,1)$-biclique-Helly graphs were introduced in \cite{MR2365416} and~\cite{MR2383735} under the names of `biclique-Helly' and `hereditary biclique-Helly' graphs, respectively, but their graph classes differ from the ones studied here because in \cite{MR2365416,MR2383735} edgeless graphs are not regarded as complete bipartite graphs.

This work is organized as follows. In the next subsection, we give the basic definitions and notation. In Section~\ref{sec:HpqHelly}, we give improved time bounds (upon those of \cite{D-P-S-varsize}) for the recognition problem of $(p,q)$-Helly hypergraphs for each fixed $q$, prove different characterizations of hereditary $(p,q)$-Helly hypergraphs (including one by minimal forbidden partial subhypergraphs), and also derive a polynomial-time recognition algorithm for hereditary $(p,q)$-Helly hypergraphs for both $p$ and $q$ fixed. In contrast, we prove that the recognition problem of hereditary $(p,q)$-Helly hypergraphs is NP-hard if $p$ is part of the input. In Section~\ref{sec:HpqCHelly}, we give an improvement upon the time bound given in \cite{MR2365055} for the recognition problem of $(p,q)$-clique-Helly graphs, characterize the class of hereditary $(p,q)$-clique-Helly in different ways (including a characterization by forbidden induced subgraphs), and give a polynomial-time recognition for hereditary $(p,q)$-clique-Helly graphs for fixed $p$ and $q$ and prove that the recognition problem of hereditary $(p,q)$-clique-Helly graphs is NP-hard if $p$ or $q$ is part of the input.  In Section~\ref{sec:HpqBH}, we prove analogous characterizations and time bounds for the recognition problem for the classes of $(p,q)$-biclique-Helly graphs and hereditary $(p,q)$-biclique-Helly graphs. Besides, we prove that the recognition problem for both classes is co-NP-complete if $p$ or $q$ is part of the input.

\subsection{Basic definitions and notation}\label{sec:defs}

All graphs in this work are finite, undirected, and without loops or multiple edges. Let $G$ be a graph. We denote by $V(G)$ and $V(E)$ its vertex and edge set, respectively, and by $\overline G$ the complement of $G$. The \emph{neighborhood} of a vertex $v$ of $G$ is denoted by $N_G(v)$ and its \emph{closed neighborhood} $N_G(v)\cup\{v\}$ by $N_G[v]$. A vertex $v$ of $G$ is \emph{universal} if it is adjacent to every other vertex of $G$. Two vertices $u$ and $v$ of $G$ are \emph{false twins} if $N_G(u)=N_G(v)$ (which implies that $u$ and $v$ are nonadjacent). If $W\subseteq V(G)$, we denote by $G[W]$ the subgraph of $G$ induced by $W$ and by $G-W$ the subgraph of $G$ induced by $V(G)-W$. Given a graph $H$, we say $G$ \emph{contains no induced $H$}, or simply \emph{$G$ is $H$-free}, if $G$ has no induced subgraph isomorphic to $H$. A \emph{clique} (resp.\ \emph{stable set}) of a graph is a set of pairwise adjacent (resp.\ nonadjacent) vertices. A \emph{complete graph} is a graph whose vertex set is a clique.  The complete graph on $n$ vertices is denoted $K_n$. A vertex $v$ is \emph{complete} (resp.\ \emph{anticomplete}) to a vertex set $W$ if $v$ is adjacent (resp.\ nonadjacent) to every vertex of $W-\{v\}$. If $W_1$ and $W_2$ are two vertex sets, then $W_1$ is \emph{complete} (resp.\ \emph{anticomplete}) to $W_2$ if each vertex of $W_1$ is complete (resp.\ anticomplete) to $W_2$. A \emph{bipartition} of a graph is a partition of its vertex set into two (possibly empty) stable sets. A graph is \emph{bipartite} if it admits a bipartition and \emph{complete bipartite} if it has a bipartition $\{X,Y\}$ such that $X$ is complete to $Y$. A \emph{component} of a graph is a maximal connected subgraph. We denote by $P_n$ the chordless path on $n$ vertices, respectively. If $t$ is a positive integer, we denote by $tG$ the disjoint union of $t$ graphs each of which isomorphic to $G$.

Let $\mathcal H=(X,\mathcal E)$ be a hypergraph. We denote its \emph{vertex set} $X$ by $V(\mathcal H)$ and its \emph{edge family} $\mathcal E$ by $E(\mathcal H)$. The hypergraph $\mathcal H$ is \emph{empty} when $X$ is empty. A \emph{partial hypergraph} of $\mathcal H$ is any hypergraph $\mathcal H'$ such that $E(\mathcal H')$ is a subfamily of $E(\mathcal H)$. A \emph{partial subhypergraph} of $\mathcal H$ is a partial hypergraph of some subhypergraph of $\mathcal H$ or, equivalently, a subhypergraph of a partial hypergraph of $\mathcal H$.
If $E$ is an edge of $\mathcal H$, we denote by $\mathcal H-E$ the hypergraph whose edge family is $E(\mathcal H)$ with the set $E$ removed once. We identify each hypergraph with its edge family. For instance, the \emph{core} of $\mathcal H$, denoted $\core(\mathcal H)$, is the total intersection of $E(\mathcal H)$, and $\mathcal H$ is \emph{$(p,q)$-intersecting} if $E(\mathcal H)$ is $(p,q)$-intersecting.

Let $S$ be a set. We denote the cardinality of $S$ by $\vert S\vert$. We say that a set $S$ is a \emph{$k$-set} if $\vert S\vert=k$, a \emph{$k^-$-set} if $\vert S\vert\leq k$, and a \emph{$k^+$-set} if $\vert S\vert\geq k$. This notation will be applied to any term standing for a set; e.g., cores, subsets, cliques, bicliques, etc. By a \emph{$k$-hypergraph}, a \emph{$k^-$-hypergraph}, and a \emph{$k^+$-hypergraph}, we mean a hypergraph whose edge family consists of exactly $k$, at most $k$, and at least $k$ edges, respectively.

Along this work, $p$ and $q$ denote positive integers and $s$ a nonnegative integer. Graphs are assumed to be stored as adjacency lists. In time complexity analyses, $n$ and $m$ always refer to the number of vertices and edges of the input graph or hypergraph. By $\omega$ we denote the \emph{clique number} (the maximum cardinality of a clique) and by $\psi$ the \emph{biclique number} (the largest cardinality of a biclique), whereas $r$ denotes the \emph{rank} (the largest cardinality of an edge), $\Delta$ the \emph{maximum degree} (maximum number of edges sharing one vertex in common) of the input graph, and $M$ the \emph{total size} (sum of the cardinalities of all the edges) of the input hypergraph.

More specific definitions will be introduced as needed. For any undefined graph or hypergraph notions, the reader is referred to the books by West~\cite{MR1367739} and Berge~\cite{MR0357172}, respectively.

\section{The $(p,q)$-Helly property of hypergraphs}\label{sec:HpqHelly}

In this section, we study the problems of characterizing and recognizing $(p,q)$-Helly hypergraphs and hereditary $(p,q)$-Helly hypergraphs. In first place, we give an improvement upon the time bound given in \cite{D-P-S-varsize} for the recognition of $(p,q)$-Helly hypergraphs for each fixed $q$ by generalizing the algorithm for recognizing $p$-Helly hypergraphs given in \cite{MR2457935}. Afterwards, we give different characterizations of hereditary $(p,q)$-Helly hypergraphs, including one by minimal forbidden partial subhypergraphs, and derive a polynomial-time algorithm for the associated recognition problem for each fixed $p$ and $q$. In contrast, we show that, even for fixed $q$, the recognition of hereditary $(p,q)$-Helly hypergraphs is co-NP-complete if $p$ is part of the input. These results for hereditary $(p,q)$-Helly hypergraphs generalize analogous results for hereditary $p$-Helly hypergraphs proved in~\cite{MR2457935} and \cite{MR2404220} (see Theorems~\ref{thm:HpHelly} and \ref{thm:HpHelly-rec}).

\subsection{$(p,q)$-Helly hypergraphs}\label{ss:pqHH}

We first recall the characterization of $p$-Helly hypergraphs due to Berge and Duchet~\cite{MR0406801}. If $\mathcal H$ is a hypergraph and $P$ is a subset of $V(\mathcal H)$, we denote by $\mathcal H_P$ the hypergraph formed by the edges of $\mathcal H$ containing $P$ and by $\mathcal H_P^\cup$ the hypergraph formed by the edges of $\mathcal H$ containing all but at most one vertex in $P$. Hence, if $P=\{p_1,\ldots,p_\ell\}$, then $\mathcal H_P^\cup$ is the partial hypergraph of $\mathcal H$ consisting precisely of those edges of $\mathcal H$ that are also edges of least one of $\mathcal H_{P_1},\ldots,\mathcal H_{P_\ell}$, where $P_i=P-\{p_i\}$ for each $i\in\{1,\ldots,\ell\}$.

\begin{thm}[\cite{MR0406801}]\label{thm:pHelly} If $p$ is a positive integer, then a hypergraph $\mathcal H$ is $p$-Helly if and only if, for every $(p+1)$-subset $P$ of $V(\mathcal H)$, either $\mathcal H_P^\cup$ is empty or has nonempty core.\end{thm}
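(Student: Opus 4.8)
The plan is to prove the two implications separately, using in both the observation that $\mathcal H_P^\cup$ is a partial hypergraph of $\mathcal H$ that is automatically $p$-wise intersecting.

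For the forward implication, assume $\mathcal H$ is $p$-Helly and fix a $(p+1)$-subset $P$ with $\mathcal H_P^\cup$ nonempty. First I would check that $\mathcal H_P^\cup$ is $p$-wise intersecting: given any $t \le p$ of its edges $E_1, \ldots, E_t$, each $E_i$ fails to contain at most one vertex of $P$, so the set of vertices of $P$ missed by some $E_i$ has at most $t \le p < |P|$ elements; by pigeonhole some $v \in P$ lies in all of $E_1, \ldots, E_t$, and hence in their core. Since $\mathcal H_P^\cup$ is then a nonempty $p$-wise intersecting subfamily of the $p$-Helly hypergraph $\mathcal H$, its core is nonempty, as required.

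For the converse I would argue by contraposition, assuming $\mathcal H$ is not $p$-Helly and producing a $(p+1)$-subset $P$ for which $\mathcal H_P^\cup$ is nonempty with empty core. Take a subfamily $\mathcal F = \{E_1, \ldots, E_k\}$ of $\mathcal H$ that is $p$-wise intersecting, has empty core, and has the fewest edges among all such subfamilies. Since $\mathcal F$ is $p$-wise intersecting it must satisfy $k \ge p+1$, for otherwise $\mathcal F$ would itself be a subfamily of at most $p$ edges and hence have nonempty core. By minimality every proper subfamily of $\mathcal F$ has nonempty core (each is still $p$-wise intersecting but smaller), so for each $i$ I can pick $x_i \in \core(\mathcal F - E_i) = \bigcap_{j \ne i} E_j$. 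The emptiness of $\core(\mathcal F)$ gives $x_i \notin E_i$, and this same property makes $x_1, \ldots, x_k$ pairwise distinct. Setting $P = \{x_1, \ldots, x_{p+1}\}$, I would then verify that \emph{every} edge of $\mathcal F$ belongs to $\mathcal H_P^\cup$: for $i \le p+1$ the edge $E_i$ contains all of $P$ except $x_i$, while for $j > p+1$ the edge $E_j$ contains all of $P$. Hence $\mathcal F \subseteq \mathcal H_P^\cup$, so by monotonicity of cores under inclusion $\core(\mathcal H_P^\cup) \subseteq \core(\mathcal F) = \emptyset$, while $\mathcal H_P^\cup$ is nonempty, completing the contrapositive.

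The routine parts are the pigeonhole count and the distinctness of the $x_i$. The step that needs the most care is the claim that in the converse every edge of $\mathcal F$ — and not merely the $p+1$ edges singled out by $P$ — lands in $\mathcal H_P^\cup$; this is exactly what forces $\core(\mathcal H_P^\cup)$ to be empty, and it hinges on the fact that each remaining edge $E_j$ with $j > p+1$ actually contains the whole of $P$. Pinning down this point, together with justifying that a minimum counterexample really does have every proper subfamily with nonempty core, is where the argument must be made precise.
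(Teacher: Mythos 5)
Your proof is correct. Note that the paper offers no proof of this statement at all---it is quoted as a known result of Berge and Duchet \cite{MR0406801}---so there is nothing internal to compare against; your argument (pigeonhole on the at most $t\leq p<\vert P\vert$ vertices of $P$ missed by $t$ edges of $\mathcal H_P^\cup$ for sufficiency, and for necessity a minimum $p$-wise intersecting subfamily $\mathcal F$ with empty core, representatives $x_i\in\core(\mathcal F-E_i)\setminus E_i$, their pairwise distinctness, and the inclusion $\mathcal F\subseteq\mathcal H_P^\cup$ forcing $\core(\mathcal H_P^\cup)\subseteq\core(\mathcal F)=\emptyset$) is precisely the classical one, and every step you flag as needing care does go through, including the point that the edges $E_j$ with $j>p+1$ contain all of $P$ and hence also land in $\mathcal H_P^\cup$.
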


For notational convenience, if $\mathcal H$ is a hypergraph, we call any family $\mathcal S$ of $p+1$ pairwise different $q$-subsets of $V(\mathcal H)$, a \emph{$(p+1,q)$-basis} of $\mathcal H$. Each union of all but exactly one member of $\mathcal S$ will be called a \emph{support set} of $\mathcal S$. We denote by $\mathcal H_{\mathcal S}^\cup$ the partial hypergraph of $\mathcal H$ formed by those edges which contain some support set of $\mathcal S$ each. The above characterization of $p$-Helly hypergraphs was extended to $(p,q)$-Helly hypergraphs in~\cite{D-P-S-varsize} as follows.

\begin{thm}[\cite{D-P-S-varsize}]\label{thm:pqHelly}
If $p$ and $q$ are positive integers, then a hypergraph $\mathcal H$ is $(p,q)$-Helly if and only if, for every $(p+1,q)$-basis $\mathcal S$ of $\mathcal H$, either $\mathcal H_\mathcal S^\cup$ is empty or has a $q^+$-core.\end{thm}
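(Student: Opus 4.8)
The plan is to prove both implications by contraposition, treating the two directions asymmetrically: one is a short pigeonhole argument, the other is the real work.

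For the \emph{necessity} of the condition (that a $(p,q)$-Helly hypergraph satisfies it), I would argue the contrapositive: given a $(p+1,q)$-basis $\mathcal S=\{S_1,\dots,S_{p+1}\}$ for which $\mathcal H_{\mathcal S}^\cup$ is nonempty but has core of size at most $q-1$, I would show that the partial hypergraph $\mathcal H_{\mathcal S}^\cup$ itself witnesses the failure of the $(p,q)$-Helly property. It is a subfamily of the edges of $\mathcal H$, it is nonempty, and its core has size smaller than $q$, so it suffices to check that it is $(p,q)$-intersecting. Picking any $t\le p$ of its edges $F_1,\dots,F_t$, each $F_\ell$ contains some support set $T_{i_\ell}=\bigcup_{j\neq i_\ell}S_j$; since $t\le p<p+1$, some index $i^\ast\in\{1,\dots,p+1\}$ differs from all the $i_\ell$, whence $S_{i^\ast}\subseteq T_{i_\ell}\subseteq F_\ell$ for every $\ell$ and therefore $S_{i^\ast}$ lies in the core of $\{F_1,\dots,F_t\}$, which thus has size at least $q$. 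This is the pigeonhole already implicit in the Berge--Duchet argument, now applied to $q$-sets instead of single vertices.

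For the \emph{sufficiency} I would again use the contrapositive: assuming $\mathcal H$ is not $(p,q)$-Helly, I must produce a $(p+1,q)$-basis $\mathcal S$ with $\mathcal H_{\mathcal S}^\cup$ nonempty and with core of size at most $q-1$. Starting from a $(p,q)$-intersecting subfamily with core of size $<q$, I would pass to a subfamily $\mathcal F'=\{E_1,\dots,E_{k'}\}$ that is \emph{minimal} under inclusion among subfamilies with core of size at most $q-1$; it remains $(p,q)$-intersecting, and a counting check forces $k'\ge p+1$ (a subfamily of at most $p$ edges would have core of size at least $q$). Writing $C_i=\bigcap_{j\neq i}E_j=\core(\mathcal F'-E_i)$, minimality gives $|C_i|\ge q$ for every $i$, while $|\core(\mathcal F')|\le q-1$; since $\core(\mathcal F')=C_i\cap E_i$, this also yields $C_i\not\subseteq E_i$. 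For each $i\in\{1,\dots,p+1\}$ I would then choose a $q$-subset $S_i\subseteq C_i$ that meets $C_i\setminus E_i$ (possible because that set is nonempty and $|C_i|\ge q$), and set $\mathcal S=\{S_1,\dots,S_{p+1}\}$.

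The step I expect to be the crux is verifying that $\mathcal S$ is a \emph{legitimate} basis, i.e.\ that the $S_i$ are pairwise distinct: an arbitrary $q$-subset of $C_i$ could coincide across indices, so the requirement $S_i\cap(C_i\setminus E_i)\neq\emptyset$ is exactly what is needed. For $i\neq j$ one has $S_j\subseteq C_j\subseteq E_i$, so $S_j$ cannot equal $S_i$ because $S_i\not\subseteq E_i$; this is the direct generalization of the ``distinct witness vertices'' trick from the $q=1$ case. It then remains to confirm that $\mathcal H_{\mathcal S}^\cup$ captures all of $\mathcal F'$ and inherits its small core. The same containment $S_j\subseteq C_j\subseteq E_i$ for $j\neq i$ shows that each $E_i$ contains every $S_j$ with $j\neq i$: for $i\le p+1$ this means $E_i$ contains the support set $T_i$, and for $i>p+1$ it contains the union of all the $S_j$ and hence every support set, so in either case $E_i\in\mathcal H_{\mathcal S}^\cup$. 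Therefore $\mathcal F'\subseteq\mathcal H_{\mathcal S}^\cup$, so $\mathcal H_{\mathcal S}^\cup$ is nonempty and $\core(\mathcal H_{\mathcal S}^\cup)\subseteq\core(\mathcal F')$ has size at most $q-1<q$, completing the construction. The only genuinely delicate points are the distinctness of the $S_i$ and the observation that minimality forces even the possibly-many edges $E_{p+2},\dots,E_{k'}$ into $\mathcal H_{\mathcal S}^\cup$, so that the bound on the core of $\mathcal F'$ transfers.
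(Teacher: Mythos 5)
Your proof is correct. Note that the paper itself gives no proof of this statement: it is quoted as Theorem~\ref{thm:pqHelly} from \cite{D-P-S-varsize}, so there is no in-paper argument to compare against; your argument is the natural generalization of the Berge--Duchet proof of Theorem~\ref{thm:pHelly}, replacing single vertices by $q$-sets, and all the delicate points check out. In particular: the pigeonhole in the necessity direction is sound (any $t\le p$ edges of $\mathcal H_{\mathcal S}^\cup$ use at most $p$ of the $p+1$ support sets, so some $S_{i^\ast}$ lies in all of them); in the sufficiency direction, minimality of $\mathcal F'$ correctly yields $k'\ge p+1$ and $\vert C_i\vert\ge q$ with $C_i\not\subseteq E_i$, your requirement $S_i\cap(C_i\setminus E_i)\neq\emptyset$ is exactly what forces the $S_i$ to be pairwise distinct (since $S_j\subseteq C_j\subseteq E_i$ for $j\neq i$ while $S_i\not\subseteq E_i$), and your observation that even the edges $E_{p+2},\ldots,E_{k'}$ contain all of $S_1,\ldots,S_{p+1}$, hence every support set, is precisely what makes $\core(\mathcal H_{\mathcal S}^\cup)\subseteq\core(\mathcal F')$ and completes the contrapositive.
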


We now derive two variants of the above characterization for future reference. We say that a $(p+1,q)$-basis $\mathcal S$ of a hypergraph $\mathcal H$ is \emph{nontrivial} if each support set of $\mathcal S$ is contained in some edge of $\mathcal H$, and \emph{trivial} otherwise. We observe the following straightforward fact about trivial bases.

\begin{rmk}\label{rmk:nontrivial} If $\mathcal S$ is trivial basis of a hypergraph $\mathcal H$, then $\mathcal H_\mathcal S^\cup$ is empty or its core contains some member of $\mathcal S$.\end{rmk}

The above observation leads to the following variant of Theorem~\ref{thm:pqHelly}.

\begin{cor}\label{cor:pqHelly} If $p$ and $q$ are positive integers, then a hypergraph $\mathcal H$ is $(p,q)$-Helly if and only if, for every nontrivial $(p+1,q)$-basis $\mathcal S$ of $\mathcal H$, $\mathcal H_\mathcal S^\cup$ has a $q^+$-core.\end{cor}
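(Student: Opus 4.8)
The plan is to derive the corollary directly from Theorem~\ref{thm:pqHelly} by showing that, in the characterization given there, one may safely discard all the trivial bases. More precisely, Theorem~\ref{thm:pqHelly} says that $\mathcal H$ is $(p,q)$-Helly if and only if for \emph{every} $(p+1,q)$-basis $\mathcal S$, the partial hypergraph $\mathcal H_\mathcal S^\cup$ is empty or has a $q^+$-core. The corollary asserts the same equivalence but quantifying only over \emph{nontrivial} bases and (since $\mathcal H_\mathcal S^\cup$ will automatically be nonempty in the relevant cases) demanding only that $\mathcal H_\mathcal S^\cup$ have a $q^+$-core. So the whole task reduces to reconciling the two quantifications, and the key tool is Remark~\ref{rmk:nontrivial}.

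\textbf{The forward direction (Corollary condition $\Rightarrow$ Theorem condition).} First I would assume the condition in Corollary~\ref{cor:pqHelly} holds and verify the condition in Theorem~\ref{thm:pqHelly} for an arbitrary $(p+1,q)$-basis $\mathcal S$. If $\mathcal S$ is nontrivial, then by hypothesis $\mathcal H_\mathcal S^\cup$ has a $q^+$-core, which in particular already implies the Theorem's disjunction (``empty or $q^+$-core''). If instead $\mathcal S$ is trivial, I invoke Remark~\ref{rmk:nontrivial}: in that case $\mathcal H_\mathcal S^\cup$ is either empty, or its core contains some member of $\mathcal S$. Since each member of $\mathcal S$ is a $q$-set, in the latter case the core contains a $q$-set and hence is a $q^+$-core. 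Either way $\mathcal H_\mathcal S^\cup$ is empty or has a $q^+$-core, so the Theorem's condition holds and, by Theorem~\ref{thm:pqHelly}, $\mathcal H$ is $(p,q)$-Helly.

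\textbf{The reverse direction (Theorem condition $\Rightarrow$ Corollary condition).} Conversely, suppose $\mathcal H$ is $(p,q)$-Helly, so by Theorem~\ref{thm:pqHelly} every $(p+1,q)$-basis $\mathcal S$ satisfies: $\mathcal H_\mathcal S^\cup$ is empty or has a $q^+$-core. I need only strengthen this to ``$\mathcal H_\mathcal S^\cup$ has a $q^+$-core'' for nontrivial $\mathcal S$, i.e.\ to rule out the possibility that $\mathcal H_\mathcal S^\cup$ is empty when $\mathcal S$ is nontrivial. This is where the nontriviality hypothesis does its work: if $\mathcal S$ is nontrivial, then by definition each support set of $\mathcal S$ is contained in some edge of $\mathcal H$, and any such edge contains a support set and is therefore, by the definition of $\mathcal H_\mathcal S^\cup$, an edge of $\mathcal H_\mathcal S^\cup$. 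Hence $\mathcal H_\mathcal S^\cup$ is nonempty, and the disjunction from Theorem~\ref{thm:pqHelly} collapses to its second alternative, giving the $q^+$-core as required. I do not expect any genuine obstacle here; the only point demanding a little care is keeping the bookkeeping of the two logically distinct simplifications straight (dropping trivial bases versus dropping the ``empty'' alternative), and confirming that nontriviality is exactly the condition that guarantees $\mathcal H_\mathcal S^\cup\neq\varnothing$.
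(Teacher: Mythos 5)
Your proposal is correct and takes essentially the same approach as the paper: both derive the corollary from Theorem~\ref{thm:pqHelly} by combining the observation that nontriviality forces $\mathcal H_\mathcal S^\cup\neq\emptyset$ with Remark~\ref{rmk:nontrivial}, which shows that trivial bases automatically satisfy the disjunction (empty or $q^+$-core, since any member of $\mathcal S$ contained in the core is a $q$-set). The paper compresses into one sentence exactly the two bookkeeping steps you spell out in your two directions.
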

\begin{proof} It follows immediately from Theorem~\ref{thm:pqHelly} because $\mathcal H_\mathcal S^\cup$ is nonempty for any nontrivial $(p+1,q)$-basis $\mathcal S$ of $\mathcal H$ and Remark~\ref{rmk:nontrivial} implies that for each trivial $(p+1,q)$-basis $\mathcal S$ of $\mathcal H$ either $\mathcal H^\cup_\mathcal S$ is empty or has a $q^+$-core.\end{proof}

We will now derive a different variant of Theorem~\ref{thm:pqHelly}. Let $q$ be a positive integer. For any set $S$, we denote by $\varphi_q(S)$ the set of $q$-subsets of $S$. In particular, $\varphi_q(S)\neq\emptyset$ if and only if $S$ has cardinality at least $q$. For every hypergraph $\mathcal H$, we define $\varPhi_q(\mathcal H)$ as the hypergraph whose vertices are the $q$-subsets of $V(\mathcal H)$ that are contained in some edge of $\mathcal H$ and its edge family consists of the sets $\varphi_q(E)$ as $E$ varies over the $q^+$-edges of $\mathcal H$. This operator $\varPhi_q$ mirrors the homonymous operator for graphs defined in \cite{MR2365055} that we will discuss in Section~\ref{sec:HpqCHelly} (see~Theorems~\ref{thm:Phi}, \ref{thm:CPP}, and~\ref{thm:HpqCHelly}). For the time being, we observe the following immediate fact.

\begin{rmk}\label{rmk:phiq} If $\mathcal H$ is a hypergraph and $q$ is a positive integer, then $\varphi_q(\core(\mathcal H))\subseteq\core(\varPhi_q(\mathcal H))$. If, in addition, all the edges of $\mathcal H$ are $q^+$-sets, then $\varphi_q(\core(\mathcal H))=\core(\varPhi_q(\mathcal H))$ and, in particular, $\vert\core(\varPhi_q(\mathcal H))\vert=\binom{\vert\core(\mathcal H)\vert}q$.\end{rmk}

Notice that the set $\varphi_q(\core(\mathcal H))$ can be a proper subset of $\core(\varPhi_q(\mathcal H))$ in case not all edges of $\mathcal H$ are $q^+$-edges. For example, if $E(\mathcal H)=\{\{1\},\{1,2\}\}$ and $q=2$, then $\varphi_q(\core(\mathcal H))=\varphi_2(\{1\})=\emptyset$, whereas $\core(\varPhi_q(\mathcal H))=\{1,2\}$ because $\{1,2\}$ is the only edge of $\varPhi_q(\mathcal H)$.

The second variant of Theorem~\ref{thm:pqHelly} is the following.

\begin{cor}\label{cor:var2} If $p$ and $q$ are positive integers, then a hypergraph $\mathcal H$ is $(p,q)$-Helly if and only if $\varPhi_q(\mathcal H)$ is $p$-Helly.\end{cor}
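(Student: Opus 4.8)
The plan is to exploit the edge-level correspondence $E\mapsto\varphi_q(E)$ between the $q^+$-edges of $\mathcal H$ and the edges of $\varPhi_q(\mathcal H)$, and to translate the two defining conditions of the $(p,q)$-Helly property of $\mathcal H$ into the corresponding conditions of the $p$-Helly property of $\varPhi_q(\mathcal H)$, subfamily by subfamily. The whole argument reduces to one set-theoretic identity plus a bookkeeping observation about which subfamilies can possibly witness a failure of the Helly property.

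First I would record two preliminaries. The key observation is that every nonempty $(p,q)$-intersecting subfamily $\mathcal F'$ of $\mathcal H$ consists only of $q^+$-edges: if $E\in\mathcal F'$, then $\{E\}$ is a nonempty subfamily of at most $p$ edges, so $\vert E\vert=\vert\core(\{E\})\vert\geq q$. Hence the $(p,q)$-Helly property of $\mathcal H$ is witnessed entirely by subfamilies of $q^+$-edges, which are exactly the edges $E$ for which $\varphi_q(E)$ is a (nonempty) edge of $\varPhi_q(\mathcal H)$; the edges that $\varPhi_q$ discards can never participate in such a witness. The second preliminary is that $E\mapsto\varphi_q(E)$ is injective on $q^+$-edges, since one recovers $E$ as the union of its $q$-subsets (here $q\geq 1$ is used); thus $\mathcal F'\mapsto\{\varphi_q(E):E\in\mathcal F'\}$ is a cardinality-preserving bijection between subfamilies of $q^+$-edges of $\mathcal H$ and subfamilies of $\varPhi_q(\mathcal H)$, and it respects the constraint of having at most $p$ members.

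The engine of the proof is the identity $\core(\{\varphi_q(E):E\in\mathcal F'\})=\varphi_q(\core(\mathcal F'))$ for any subfamily $\mathcal F'$ of $q^+$-edges, which is precisely Remark~\ref{rmk:phiq} applied to the partial hypergraph $\mathcal F'$ (all of whose edges are $q^+$-sets), using that the core depends only on the edge family. Since $\varphi_q(S)\neq\emptyset$ if and only if $\vert S\vert\geq q$, this identity gives two translations simultaneously: applied to each sub-subfamily $\mathcal G\subseteq\mathcal F'$ of at most $p$ edges it shows that $\mathcal F'$ is $(p,q)$-intersecting if and only if its image is $p$-wise intersecting, and applied to $\mathcal F'$ itself it shows that $\vert\core(\mathcal F')\vert\geq q$ if and only if the image has nonempty core.

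Combining these, I would argue by contraposition: $\mathcal H$ fails to be $(p,q)$-Helly exactly when it has a nonempty $(p,q)$-intersecting subfamily whose core has fewer than $q$ elements; by the first preliminary such a subfamily uses only $q^+$-edges, and by the translations its image is a nonempty $p$-wise intersecting subfamily of $\varPhi_q(\mathcal H)$ with empty core, i.e.\ a witness that $\varPhi_q(\mathcal H)$ is not $p$-Helly. The bijection runs in both directions, since every edge of $\varPhi_q(\mathcal H)$ is of the form $\varphi_q(E)$, so witnesses correspond exactly and the equivalence follows. The degenerate case in which $\mathcal H$ has no $q^+$-edges is subsumed: then $\varPhi_q(\mathcal H)$ is the empty hypergraph and both properties hold vacuously. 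I expect the only step needing genuine care to be the first preliminary, namely confirming that no subfamily containing an edge of size less than $q$ can ever be a counterexample to the $(p,q)$-Helly property, since this is exactly what licenses discarding the edges deleted by $\varPhi_q$; the identity itself is immediate from Remark~\ref{rmk:phiq}.
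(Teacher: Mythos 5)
Your proof is correct, but it takes a genuinely different route from the paper's. The paper derives the corollary from its basis machinery: the forward direction applies Theorem~\ref{thm:pqHelly} to each $(p+1)$-subset $\mathcal S$ of $V(\varPhi_q(\mathcal H))$ (viewed as a $(p+1,q)$-basis of $\mathcal H$) and the backward direction applies the Berge--Duchet characterization (Theorem~\ref{thm:pHelly}) to each nontrivial basis via Corollary~\ref{cor:pqHelly}, the bridge in both directions being the commutation identity $\varPhi_q(\mathcal H_\mathcal S^\cup)=(\varPhi_q(\mathcal H))_\mathcal S^\cup$ together with Remark~\ref{rmk:phiq}. You instead argue directly from the definitions by a witness-by-witness correspondence: the singleton-subfamily observation (that any nonempty $(p,q)$-intersecting subfamily consists only of $q^+$-edges, so the edges discarded by $\varPhi_q$ are irrelevant) is exactly the point that makes this work, and the rest follows from the family-level correspondence $E\mapsto\varphi_q(E)$ and the identity $\varphi_q(\core(\mathcal F'))=\core(\{\varphi_q(E):E\in\mathcal F'\})$, i.e.\ the second half of Remark~\ref{rmk:phiq}, applied once to each $p^-$-subfamily and once to $\mathcal F'$ itself. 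All the steps check out, including the injectivity remark (harmless, and at the family level not even needed, since subfamilies of $\varPhi_q(\mathcal H)$ are indexed by the $q^+$-edges of $\mathcal H$) and the vacuous case with no $q^+$-edges. What each approach buys: yours is self-contained and more elementary --- it needs neither Theorems~\ref{thm:pHelly} and~\ref{thm:pqHelly} nor Corollary~\ref{cor:pqHelly} nor Remark~\ref{rmk:nontrivial}, and it exhibits an exact bijection between counterexamples on the two sides, which is conceptually cleaner; the paper's derivation is a two-line consequence of machinery it must develop anyway (the basis characterizations and the identity $\varPhi_q(\mathcal H_\mathcal S^\cup)=(\varPhi_q(\mathcal H))_\mathcal S^\cup$ reappear in the recognition algorithms and in the clique-Helly translations of Section~\ref{sec:HpqCHelly}), so it integrates better with the rest of the paper even though it is less direct.
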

\begin{proof} Suppose that $\mathcal H$ is $(p,q)$-Helly. Let $\mathcal S$ be any $(p+1)$-subset of $V(\varPhi_q(\mathcal H))$. Thus, $\mathcal S$ is a $(p+1,q)$-basis of $\mathcal H$. Since $\mathcal H$ is $(p,q)$-Helly, Theorem~\ref{thm:pqHelly} implies that $\mathcal H_\mathcal S^\cup$ is empty or has a $q^+$-core. Notice that $\varPhi_q(\mathcal H_\mathcal S^\cup)=(\varPhi_q(\mathcal H))_\mathcal S^\cup$. Thus, if $\mathcal H_\mathcal S^\cup$ is empty, then $(\varPhi_q(\mathcal H))_\mathcal S^\cup$ is empty. Otherwise, $\mathcal H_\mathcal S^\cup$ has a $q^+$-core and thus $(\varPhi_q(\mathcal H))_\mathcal S^\cup$ has nonempty core because $\emptyset\neq\varphi_q(\core(\mathcal H_\mathcal S^\cup))\subseteq\core(\varPhi_q(\mathcal H_\mathcal S^\cup))=\core((\varPhi_q(\mathcal H))_\mathcal S^\cup)$ (by relying on Remark~\ref{rmk:phiq}). Therefore, $\varPhi_q(\mathcal H)$ is $p$-Helly by virtue of Theorem~\ref{thm:pHelly}.

Conversely, suppose that $\varPhi_q(\mathcal H)$ is $p$-Helly. Let $\mathcal S$ be any nontrivial $(p+1,q)$-basis of $\mathcal H$. In particular, each member of $\mathcal S$ is contained in some edge of $\mathcal H$; i.e., $\mathcal S$ is a $(p+1)$-subset of $V(\varPhi_q(\mathcal H))$. Since $\varPhi_q(\mathcal H)$ is $p$-Helly, Theorem~\ref{thm:pHelly} implies that $(\varPhi_q(\mathcal H))_\mathcal S^\cup$ is empty or has nonempty core. Thus, since $\varPhi_q(\mathcal H_\mathcal S^\cup)=(\varPhi_q(\mathcal H))_\mathcal S^\cup$ and $\varPhi_q(\mathcal H_\mathcal S^\cup)$ is nonempty (because $\mathcal S$ is nontrivial), necessarily $\varPhi_q(\mathcal H_\mathcal S^\cup)$ has nonempty core. Hence, as every edge of $\mathcal H_\mathcal S^\cup$ is a $q^+$-set, Remark~\ref{rmk:phiq} implies $\varphi_q(\core(\mathcal H_\mathcal S^\cup))=\core(\varPhi_q(\mathcal H_\mathcal S^\cup))\neq\emptyset$. Thus, $\mathcal H_\mathcal S^\cup$ has a $q^+$-core. Therefore, $\mathcal H$ is $(p,q)$-Helly by virtue of Corollary~\ref{cor:pqHelly}.\end{proof}

As a corollary of Theorem~\ref{thm:pqHelly}, a recognition algorithm for $(p,q)$-Helly hypergraphs that runs in polynomial-time for fixed $p$ and $q$ was given in~\cite{D-P-S-varsize}. A faster algorithm for the more restricted problem of recognizing $p$-Helly hypergraphs was proposed in~\cite{MR2457935}. Recall that the recognition of $(p,q)$-Helly hypergraphs is NP-hard if $p$ is part of the input~\cite{MR2223573}. These results are summarized as follows.

\begin{thm}[\cite{MR2457935,MR2223573,D-P-S-varsize}]\label{thm:pH-rec} The recognition problem for $(p,q)$-Helly hypergraphs:
\begin{enumerate}[(i)]
 \item\label{it:pH-b1} can be solved in $O(m(n+pq)n^{(p+1)q})$ time, where $p$ and $q$ are part of the input;
 \item\label{it:pH-b2} can be solved in $O(prn^{p+1}+Mn^p)$ time if $q=1$.
 \item\label{it:pH-b3} is NP-hard if $p$ is part of the input, even if $q$ is fixed.
\end{enumerate}\end{thm}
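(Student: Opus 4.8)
The plan is to handle the three items separately, since each rests on a different by-now standard idea: parts~(i) and~(ii) are algorithmic upper bounds obtained by turning a structural characterization into a bounded search over small vertex sets, while part~(iii) is a reduction from an \textsf{NP}-complete problem in which $p$ is allowed to grow with the instance. Because the statement merely collects results from \cite{D-P-S-varsize}, \cite{MR2457935}, and \cite{MR2223573}, my aim is to re-derive each bound from the machinery already available to us.

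For part~(i) I would apply Theorem~\ref{thm:pqHelly} verbatim as a decision procedure. There are $O(n^{(p+1)q})$ candidate $(p+1,q)$-bases $\mathcal S$, since each is a choice of $p+1$ distinct $q$-subsets out of at most $\binom{n}{q}\le n^q$ of them. For a fixed $\mathcal S$ one first builds $\mathcal H_\mathcal S^\cup$ by scanning the $m$ edges and testing, for each edge, whether it contains one of the $p+1$ support sets (each of cardinality at most $pq$); using a characteristic-vector marking of the current edge this costs $O(n+pq)$ per edge. One then intersects the edges of $\mathcal H_\mathcal S^\cup$ to obtain $\core(\mathcal H_\mathcal S^\cup)$ and tests whether it is empty or a $q^+$-set. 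Each of the two passes costs $O(m(n+pq))$, so the total is $O(m(n+pq)\,n^{(p+1)q})$, and correctness is immediate from Theorem~\ref{thm:pqHelly}.

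For part~(ii), with $q=1$, I would instead invoke the Berge--Duchet characterization (Theorem~\ref{thm:pHelly}), whose test ranges only over $(p+1)$-subsets $P$ of $V(\mathcal H)$. The improvement over specializing part~(i) comes from avoiding recomputation: writing $P_i=P\setminus\{p_i\}$, one has $\core(\mathcal H_P^\cup)=\bigcap_{i}\core(\mathcal H_{P_i})$ over the indices with $\mathcal H_{P_i}$ nonempty, so it pays to precompute once the core $c(Q):=\core(\mathcal H_Q)$ together with the emptiness flag of $\mathcal H_Q$ for every $p$-subset $Q$ of $V(\mathcal H)$. Sweeping the edges yields all of these in $O(Mn^p)$ time; and since each relevant $c(Q)$ is an $r^-$-set (being contained in an edge), deciding a given $P$ then costs only $O(pr)$ to intersect the $p+1$ precomputed cores, which over the $O(n^{p+1})$ choices of $P$ contributes the $O(prn^{p+1})$ term. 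The delicate point is the bookkeeping that keeps the combination step within the stated bound and correctly handles bases some of whose $\mathcal H_{P_i}$ are empty; this is exactly the analysis of~\cite{MR2457935}, which I would follow.

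For part~(iii) I would give a polynomial reduction from a suitable \textsf{NP}-complete problem, exploiting that $p$ is part of the input: the target hypergraph is engineered so that a single basis witnessing a violation of the Helly property corresponds to a solution of the source instance, while $q$ is kept fixed. I expect the reduction of~\cite{MR2223573} to be the genuine obstacle here, since the whole content of~(iii) lies in verifying the correctness of such a gadget; parts~(i) and~(ii) are, by contrast, routine consequences of the characterizations in Theorems~\ref{thm:pqHelly} and~\ref{thm:pHelly}.
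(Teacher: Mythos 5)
The paper offers no proof of Theorem~\ref{thm:pH-rec}: it is a summary of results quoted from \cite{D-P-S-varsize}, \cite{MR2457935}, and \cite{MR2223573}, so your deferral of part~(iii) to the reduction of \cite{MR2223573} leaves you exactly as complete as the paper itself. Your derivations of (i) and (ii) are correct and follow the same route as the cited sources---for (ii) this is the same precompute-cores-then-intersect strategy that the paper later generalizes and sharpens in Lemma~\ref{lem:technical} and Theorem~\ref{thm:rec-pqHelly}; the only detail to patch in (i) is that testing an edge against all $p+1$ support sets directly costs $O(p^2q)$ rather than $O(pq)$, but it suffices to record which of the $p+1$ members of $\mathcal S$ lie in the edge (the edge contains the support set $P_\ell$ precisely when it contains every member except possibly $S_\ell$), which restores your claimed $O(n+pq)$ per-edge cost.
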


Below, we give an improvement upon bound (\ref{it:pH-b1}) of Theorem~\ref{thm:pH-rec} for each fixed $q$ by a generalization of the strategy used in \cite{MR2457935} for obtaining bound (\ref{it:pH-b2}) above. The recognition of $(1,q)$-Helly hypergraphs is based on the observation below, which follows easily from the definition.

\begin{rmk}\label{rmk:1qHelly} If $q$ is a positive integer, then the following statements are equivalent for each hypergraph $\mathcal H$:
\begin{enumerate}[(i)]
 \item\label{it:1qH1} $\mathcal H$ is $(1,q)$-Helly;
 \item\label{it:1qH2} the family of all the $q^+$-edges of $\mathcal H$ is empty or has a $q^+$-core;
 \item\label{it:1qH3} the family $\{\core(\mathcal H_P):\,P\text{ is a $q$-subset of some edge of }\mathcal H\}$ is empty or has $q^+$-core.
\end{enumerate}\end{rmk}

The next lemma will be useful for the recognition problem of the $(p,q)$-Helly property all along this work. Recall that, in time complexity bounds, $r$ stands for the rank of the input hypergraph.

\begin{lem}\label{lem:technical} Let $q$ be any fixed positive integer. Given a positive integer $p$ together with a list of $q$-sets $S_1,\ldots,S_N$ of vertices of some hypergraph $\mathcal H$ such that $S_1,\ldots,S_N$ includes (but is not necessarily limited to) all possible $q$-subsets of the edges of $\mathcal H$, it can be decided whether $\mathcal H$ is $(p,q)$-Helly in $O\bigl((rN+f+n)\binom Np\bigr)$ time assuming that, given any subset $P$ of $V(\mathcal H)$, it can be decided whether $\mathcal H_P$ is empty and, if it is not, also compute $\core(\mathcal H_P)$ in $O(f)$ time (where $f$ stands for some function of $\mathcal H$).\end{lem}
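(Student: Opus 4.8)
The plan is to reduce, via Corollary~\ref{cor:pqHelly}, to checking a single simple condition over a controlled family of bases. By that corollary, $\mathcal H$ is $(p,q)$-Helly if and only if every \emph{nontrivial} $(p+1,q)$-basis $\mathcal S$ has a $q^+$-core $\core(\mathcal H_\mathcal S^\cup)$. The first observation is that the members of any nontrivial basis already occur in the given list: each member $T_j$ of $\mathcal S$ lies in all the support sets $U_i=\bigcup_{i'\ne i}T_{i'}$ with $i\ne j$ (of which there are $p\ge 1$), and each such support set is, by nontriviality, contained in some edge $E$; hence $T_j$ is a $q$-subset of $E$ and so appears among $S_1,\dots,S_N$. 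Consequently it suffices to let $\mathcal S$ range over the $(p+1)$-subsets of $\{S_1,\dots,S_N\}$. I will in fact test \emph{all} such $(p+1)$-subsets uniformly: for any trivial one encountered, Remark~\ref{rmk:nontrivial} guarantees that $\mathcal H_\mathcal S^\cup$ is either empty or has a core containing one of the $q$-sets of $\mathcal S$, so it automatically passes the ``empty or $q^+$-core'' test and causes no false rejection.

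The efficiency of the algorithm rests on the identity $\core(\mathcal A\cup\mathcal B)=\core(\mathcal A)\cap\core(\mathcal B)$. Writing $\mathcal S=\{T_1,\dots,T_{p+1}\}$ with support sets $U_i=\bigcup_{j\ne i}T_j$, we have $\mathcal H_\mathcal S^\cup=\bigcup_i\mathcal H_{U_i}$, whence $\mathcal H_\mathcal S^\cup$ is empty exactly when every $\mathcal H_{U_i}$ is empty, and otherwise $\core(\mathcal H_\mathcal S^\cup)=\bigcap_{i:\,\mathcal H_{U_i}\ne\emptyset}\core(\mathcal H_{U_i})$. Since each support set $U_i$ is the union of a $p$-subset of the list, these per-support-set cores depend only on $p$-subsets and should be computed once rather than once per basis. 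In Step~1 I therefore range over all $p$-subsets $A\subseteq\{S_1,\dots,S_N\}$, form $U_A=\bigcup_{S\in A}S\subseteq V(\mathcal H)$, and call the oracle once on $U_A$ to decide whether $\mathcal H_{U_A}$ is empty and, if not, to record $C_A=\core(\mathcal H_{U_A})$. This uses $\binom Np$ oracle calls at $O(f)$ each, plus $O(pq)=O(p)$ per subset to build $U_A$, for a total of $O\bigl((f+p)\binom Np\bigr)$. Crucially, each $C_A$ is contained in an edge and is thus an $r^-$-set.

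In Step~2 I range over the $(p+1)$-subsets $\mathcal S$ of the list; for each, I retrieve the precomputed cores of its (at most $p+1$) $p$-subsets, intersect the nonempty ones, and test whether the result is empty or a $q^+$-set, declaring $\mathcal H$ not $(p,q)$-Helly the moment some $\mathcal S$ produces a nonempty $\mathcal H_\mathcal S^\cup$ with a $(q-1)^-$-core. Because every $C_A$ is an $r^-$-set, intersecting up to $p+1$ of them costs $O(pr)$ per basis using an auxiliary marking array on $V(\mathcal H)$, initialized once in $O(n)$ time and reused. With $\binom N{p+1}$ bases and the elementary bound $p\binom N{p+1}=\frac{p(N-p)}{p+1}\binom Np\le N\binom Np$, Step~2 runs in $O\bigl(rN\binom Np+n\bigr)$ time. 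Adding the two steps, and absorbing $O\bigl((f+p)\binom Np\bigr)$ into the total (note $p\le N\le rN$ whenever there is any $p$-subset), yields the claimed $O\bigl((rN+f+n)\binom Np\bigr)$ bound, while correctness follows from Corollary~\ref{cor:pqHelly}, the union-core identity, and Remark~\ref{rmk:nontrivial}.

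The conceptual content is carried entirely by Corollary~\ref{cor:pqHelly} and the identity $\core(\mathcal A\cup\mathcal B)=\core(\mathcal A)\cap\core(\mathcal B)$, which is precisely what allows the expensive oracle to be invoked only $\binom Np$ times instead of once per basis. The step I expect to need the most care is the bookkeeping in Step~2: organizing the values $C_A$ so that, for each of the $\binom N{p+1}$ bases, its $p+1$ support-set cores can be located and intersected within the $O(rN\binom Np)$ budget. I would handle this by indexing $p$-subsets through the combinatorial number system for constant-overhead lookup and by exploiting that every $C_A$ is an $r^-$-set, so that no single intersection ever touches more than $O(pr)$ elements.
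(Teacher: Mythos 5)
Your architecture is the same as the paper's: by Corollary~\ref{cor:pqHelly}, plus the observation that each member of a nontrivial basis lies in $p\geq 1$ support sets and is therefore a $q$-subset of an edge (hence on the list), it suffices to test the $(p+1)$-subsets of $\{S_1,\ldots,S_N\}$; you precompute, with one oracle call per $p$-subset $A$, the emptiness status and $\core(\mathcal H_{U_A})$, and then evaluate each candidate basis as an intersection of at most $p+1$ precomputed $r^-$-cores in $O(pr)$ time via a marking array. This is exactly the paper's two-stage scheme, and your correctness argument is sound; your uniform treatment of trivial bases is even a mild simplification of the paper's proof, which instead detects nontriviality explicitly (as ``none of $\mathcal H_{P_1},\ldots,\mathcal H_{P_{p+1}}$ is empty'') and only then tests the core -- by Remark~\ref{rmk:nontrivial} a trivial basis with $\mathcal H^\cup_{\mathcal S}$ nonempty has a core containing a member of $\mathcal S$, hence a $q^+$-core, so it can never trigger a false rejection.

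The one genuine soft spot is precisely the bookkeeping you flagged: the claim of ``constant-overhead lookup'' through the combinatorial number system is not justified as stated. Ranking a $p$-subset costs $\Theta(p)$ additions of tabulated binomials, so the $p+1$ delete-one lookups cost $\Theta(p^2)$ per basis, for a total of $\Theta\bigl(p^2\binom N{p+1}\bigr)=\Theta\bigl(p(N-p)\binom Np\bigr)$, which overshoots the claimed $O\bigl(rN\binom Np\bigr)$ whenever $p\gg r$ -- and nothing ties $p$ to $r$ (e.g.\ with $r=q=1$ one can have $p$ close to $N$). The repair is cheap and requires no new idea: for a basis $\{j_1<\cdots<j_{p+1}\}$, the colexicographic rank of the delete-$\ell$ subset splits as $\sum_{i<\ell}\binom{j_i-1}{i}+\sum_{i>\ell}\binom{j_i-1}{i-1}$, so after an $O(pN)$ Pascal-table precomputation, one pass of prefix sums and one of suffix sums yields all $p+1$ ranks in $O(p)$ total per basis; alternatively, maintain the $p+1$ ranks incrementally along the lexicographic enumeration. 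Either way Step~2's retrieval drops to $O\bigl(p\binom N{p+1}\bigr)=O\bigl(N\binom Np\bigr)$, within budget. Note that this incremental maintenance is exactly what the paper's $p+2$ synchronized iterators between the tries $T_1$ and $T_2$ accomplish at $O(p)$ per move of the depth-first search -- i.e., the detail you deferred is the very point where the paper invests most of its proof.
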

\begin{proof} By virtue of Corollary~\ref{cor:pqHelly}, $\mathcal H$ is $(p,q)$-Helly if and only if $\mathcal H^\cup_\mathcal S$ has a $q^+$-core for each nontrivial $(p+1,q)$-basis $\mathcal S=\{S_{j_1},\ldots,S_{j_{p+1}}\}$ such that $1\leq j_1<\cdots<j_{p+1}\leq N$. In particular, if $N\leq p$, then $\mathcal H$ is $(p,q)$-Helly. Thus, we assume, without loss of generality, that $N>p$. Notice that if $q>r$, then $\mathcal H$ is trivially $(p,q)$-Helly because the only $(p,q)$-intersecting subfamily of $E(\mathcal H)$ is the empty one. Hence, we also assume, without loss of generality, that $q\leq r$. 

We proceed in two stages. In the first stage, we perform a backtracking over all the tuples $(i_1,\ldots,i_k)$ such that $1\leq i_1<\cdots<i_k\leq N-p+k$ and $0\leq k\leq p$. We conduct and also store the result of the whole backtracking over a trie $T_1$, where each tuple $(i_1,\ldots,i_k)$ is represented by a node of $T_1$ corresponding to the ``word'' formed by the sequence of $k$ ``letters'' $i_1,i_2,\ldots,i_k$. Notice that the leaf nodes of $T_1$ represent precisely the tuples for which $k=p$. We identify each tuple $(i_1,\ldots,i_k)$ with the node of $T_1$ representing it. The children of a node $(i_1,\ldots,i_k)$ of $T_1$ (where $k<p$) are the nodes $(i_1,\ldots,i_k,i_{k+1})$ where $i_{k+1}$ ranges from $i_k+1$ to $N-p+k+1$; we assume that these children are stored in a linked list sorted in increasing order of $i_{k+1}$. Backtracking on the set of tuples corresponds to performing a depth-first search on $T_1$. Each time we visit a node $(i_1,\ldots,i_k)$, we update the union $P=S_{i_1}\cup\cdots\cup S_{i_k}$, which can be easily accomplished in $O(q)$ time (by representing $P$ as a vector of length $n$ where we keep track of the number of occurrences of each vertex of $\mathcal H$ among $S_{i_1}\cup\cdots\cup S_{i_k}$). Moreover, whenever we reach a leaf node $(i_1,\ldots,i_p)$, we compute and store at the node the following information: whether $\mathcal H_P$ is empty and, if it is not, also $\core(\mathcal H_P)$. Computing and storing this information takes $O(n+f)$ time per leaf node (taking into account the $O(n)$ time needed to retrieve the elements of $P$ from its vector representation). Since the number of nodes of $T_1$ is $\sum_{k=0}^p\binom{N-p+k}k=\frac{N+1}{N-p+1}\binom Np=O\bigl(N\binom Np\bigr)$ and the number of leaf nodes is $\binom Np$, the total time required for the first stage is $O\bigl((rN+f+n)\binom Np\bigr)$ because $q\leq r$.

For the second stage, we perform a second backtracking over all the tuples $(j_1,\ldots,j_k)$ such that $1\leq j_1<\cdots<j_k\leq N-(p+1)+k$ and $1\leq k\leq p+1$. Similarly to what we did in the first stage, we interpret this backtracking as performing a depth-first search over a second trie $T_2$ whose nodes are identified with this second set of tuples $(j_1,\ldots,j_k)$. However, we can just traverse $T_2$ implicitly (i.e., there is no need to store it explicitly in memory). At all times during the depth-first search over $T_2$, we keep $p+2$ iterators $I_0,I_1,\ldots,I_{p+1}$ pointing to nodes of the trie $T_1$. These iterators are updated each time we visit a node $(j_1,\ldots,j_k)$ of $T_2$ so as to ensure that iterator $I_0$ points to node $(j_1,\ldots,j_{\min\{k,p\}})$ of $T_1$, while iterator $I_\ell$ points to node $(j_1,\ldots,j_{\ell-1},j_{\ell+1},\ldots,j_k)$ of $T_1$ for each $\ell\in\{1,\ldots,k\}$. We now show that updating the iterators can be accomplished in $O(p)$ time per node. We consider three cases, according the each kind of movement during the depth-first search over $T_2$:
\begin{itemize}
 \item \emph{when moving from a node $(j_1,\ldots,j_k)$ to its first children $(j_1,\ldots,j_k,j_k+1)$:} each of $I_1,\ldots,I_{k-1}$ is pointed to the first children of the node it was pointing to, $I_k$ is pointed to the next sibling of the node pointed by $I_0$, $I_{k+1}$ is pointed to the node pointed by $I_0$, and then $I_0$ is either pointed to the first children of the node it was pointing to if $k<p$ or left unchanged if $k=p$; 
 \item \emph{when moving from a node $(j_1,\ldots,j_k)$ to its next sibling $(j_1,\ldots,j_k+1)$:} $I_0$ is either pointed to the next sibling of the node it was pointing to if $k\leq p$ or left unchanged if $k=p+1$, each of $I_1,\ldots,I_{k-1}$ is pointed to the next sibling of the node it was pointing to, and $I_k$ is left unchanged;
 \item \emph{when moving from a node $(j_1,\ldots,j_{k+1})$ having no next sibling to its parent $(j_1,\ldots,j_k)$:} $I_0$ is either pointed to the parent of the node it was pointing to if $k<p$ or left unchanged if $k=p$, and each of $I_1,\ldots,I_k$ is pointed to the parent of the node it was pointing to.
\end{itemize}
Notice that when we visit a leaf node $(j_1,\ldots,j_{p+1})$ of $T_2$, if $\mathcal S=\{S_{j_1},\ldots,S_{j_{p+1}}\}$ and $P_1,\ldots,P_{p+1}$ are its support sets, where $P_\ell=S_{j_1}\cup\cdots S_{j_{\ell-1}}\cup S_{j_{\ell+1}}\cup\cdots\cup S_{j_{p+1}}$ for each $\ell\in\{1,\ldots,p+1\}$, then the iterator $I_\ell$ allows us to know whether $\mathcal H_{P_\ell}$ is empty and, if it is not, also to have access to $\core(\mathcal H_{P_\ell})$ in $O(r)$ time; hence, in $O(pr)$ time, we can decide whether $\mathcal S$ is nontrivial (precisely when none of $\mathcal H_{P_1},\ldots,\mathcal H_{P_{p+1}}$ is empty) and, if so, verify whether $\core(\mathcal H^\cup_\mathcal S)$ has a $q^+$-core (because $\core(\mathcal H^\cup_\mathcal S)$ can be determined as the intersection of the cores of $\mathcal H_{P_1},\ldots,\mathcal H_{P_{p+1}}$). As discussed in the first paragraph of this proof, Corollary~\ref{cor:pqHelly} implies that $\mathcal H$ is $(p,q)$-Helly if and only if all of these verifications succeed. Moreover, since the total number of nodes of $T_2$ is $\sum_{k=0}^{p+1}\binom{N-(p+1)-k}{k}=\frac{N+1}{p+1}\binom Np=O\bigl(\frac Np\binom Np\bigr)$, the whole second stage takes $O\bigl((p+pr)\frac Np\binom Np\bigr)=O\bigl(rN\binom Np\bigr)$ time. Therefore, the whole recognition algorithm takes $O\bigl((rN+f+n)\binom Np\bigr)$ time, as claimed.\end{proof}

The above statements lead to the following time bounds for the recognition of $(p,q)$-Helly hypergraphs.

\begin{thm}\label{thm:rec-pqHelly} If $q$ is any fixed positive integer, then the recognition problem for $(p,q)$-Helly hypergraphs, where $p$ is part of the input, can be solved in:
\begin{enumerate}[(i)]
 \item\label{it:1} $O(M)$ time if $p=1$;
 \item\label{it:2} $O\bigl((rN+M)\binom Np\bigr)$ time if $p\geq 2$, where $N=\binom nq$.
\end{enumerate}
In particular, if $p$ and $q$ are both fixed, then the bound (\ref{it:2}) above becomes $O(rn^{(p+1)q}+Mn^{pq})$.
\end{thm}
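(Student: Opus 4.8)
The plan is to derive Theorem~\ref{thm:rec-pqHelly} as a direct application of Lemma~\ref{lem:technical}, after selecting an appropriate list of $q$-sets $S_1,\ldots,S_N$ and an efficient subroutine for the core computations. The key observation is that Lemma~\ref{lem:technical} requires us only to supply a list that \emph{includes} all $q$-subsets of edges of $\mathcal H$; the natural and wasteful-but-valid choice is to take $S_1,\ldots,S_N$ to be \emph{all} $q$-subsets of $V(\mathcal H)$, so that $N=\binom nq$. With $q$ fixed this is polynomial in $n$, and it sidesteps the need to enumerate which $q$-sets actually occur inside edges.

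**The case $p=1$.**
First I would dispatch part~(\ref{it:1}) separately, since Lemma~\ref{lem:technical} is geared toward $p\geq 2$ (the binomial $\binom Np$ and the two-trie machinery are natural only there). Here I would invoke Remark~\ref{rmk:1qHelly}: $\mathcal H$ is $(1,q)$-Helly if and only if the family of its $q^+$-edges is empty or has a $q^+$-core. Deciding this amounts to intersecting all the $q^+$-edges and checking whether the result has at least $q$ elements, which a single pass over the incidence structure accomplishes in $O(M)$ time, where $M$ is the total size; the $q^+$-edges are exactly those of cardinality at least $q$, trivially filtered while reading the input.

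**The case $p\geq 2$.**
For part~(\ref{it:2}) I would instantiate Lemma~\ref{lem:technical} with the list of all $N=\binom nq$ many $q$-subsets of $V(\mathcal H)$. It remains to pin down the function $f$ in the lemma's hypothesis: given $P\subseteq V(\mathcal H)$, one must decide whether $\mathcal H_P$ is empty and, if not, compute $\core(\mathcal H_P)$. The plan is to preprocess $\mathcal H$ so that each such query runs in $O(r)$ time, giving $f=O(r)$, and then the lemma's bound $O\bigl((rN+f+n)\binom Np\bigr)$ collapses to $O\bigl((rN+r+n)\binom Np\bigr)$. Since every vertex lies in some edge, $n\leq M$, and similarly $r\leq M$, so the bound simplifies to the stated $O\bigl((rN+M)\binom Np\bigr)$. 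The final ``in particular'' clause then follows by substituting $N=\binom nq=O(n^q)$ and $\binom Np=O(N^p)=O(n^{pq})$, which turns $rN\binom Np$ into $O(rn^{(p+1)q})$ and $M\binom Np$ into $O(Mn^{pq})$.

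**Main obstacle.**
The substantive point I expect to require care is justifying the claim that $\core(\mathcal H_P)$ (and the emptiness test for $\mathcal H_P$) can be answered in $O(r)$ time per query, i.e.\ that $f=O(r)$ after suitable preprocessing. The queries posed by Lemma~\ref{lem:technical} arise only at leaf nodes of the trie $T_1$, where $P$ is a union of $p$ many $q$-sets and hence has at most $pq$ vertices; but we want a bound of $O(r)$ independent of $p$. The clean way is to precompute, once and for all, for every $q$-subset $P$ in our list whether $\mathcal H_P$ is nonempty together with $\core(\mathcal H_P)$ — observe that the lemma actually evaluates cores only through the iterators $I_\ell$, whose associated sets are the support sets, and that these reduce to intersections of cores of the $q$-sets comprising them. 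Thus the real content is showing that one preprocessing sweep over the $M$-sized incidence data lets every individual $q$-set query be resolved in $O(r)$ time, after which the $O(r)$-per-query cost is absorbed into the leaf-node accounting already present in the lemma. I would make explicit that this preprocessing cost, $O(M)$ or $O(rN)$, is dominated by the two terms in the final bound, so it does not affect the stated complexity.
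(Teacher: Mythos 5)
Your overall skeleton is the same as the paper's: Remark~\ref{rmk:1qHelly} disposes of $p=1$ in $O(M)$ time (your treatment of that case is correct), and Lemma~\ref{lem:technical} instantiated with the full list of all $N=\binom nq$ many $q$-subsets of $V(\mathcal H)$ handles $p\geq 2$. However, the step you single out as the main obstacle contains a genuine error. In Lemma~\ref{lem:technical}, the $O(f)$-time oracle is invoked at the leaf nodes of the trie $T_1$, where $P=S_{i_1}\cup\cdots\cup S_{i_p}$ is a \emph{union of $p$ $q$-sets}, and your claim that such queries ``reduce to intersections of cores of the $q$-sets comprising them'' is false: since $\mathcal H_{S_{i_1}\cup\cdots\cup S_{i_p}}=\mathcal H_{S_{i_1}}\cap\cdots\cap\mathcal H_{S_{i_p}}$ as edge families (a \emph{smaller} family), its core is a \emph{superset} of each $\core(\mathcal H_{S_{i_j}})$, typically strictly. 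Concretely, take $q=1$ and edges $\{1,2\},\{1,3\},\{2,3\}$: then $\core(\mathcal H_{\{1\}})\cap\core(\mathcal H_{\{2\}})=\{1\}\cap\{2\}=\emptyset$, whereas $\core(\mathcal H_{\{1,2\}})=\{1,2\}$. Intersecting cores is valid for $\mathcal H^\cup_{\mathcal S}$, which is a union of the families $\mathcal H_{P_1},\ldots,\mathcal H_{P_{p+1}}$ --- and that is exactly what stage two of Lemma~\ref{lem:technical} already exploits --- but not for $\mathcal H_P$ with $P$ a union of sets. Consequently your scheme for achieving $f=O(r)$ via precomputation does not work as described, and the asserted preprocessing cost of ``$O(M)$ or $O(rN)$'' for all $N$ cores is unsubstantiated in any case.

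The repair is much simpler and is precisely what the paper does: no preprocessing is needed because the target bound already carries the term $M\binom Np$. Take $f=O(M)$: given any $P\subseteq V(\mathcal H)$, mark $P$ in an $n$-vector and make one pass over the edge lists, testing containment of $P$ and maintaining the running intersection of the containing edges, all in $O(M)$ time. Lemma~\ref{lem:technical} then gives $O\bigl((rN+M+n)\binom Np\bigr)=O\bigl((rN+M)\binom Np\bigr)$, using $n\leq M$ as you already observed; enumerating the $N$ $q$-subsets costs $O(qN)$ and is dominated. (One may also first check in $O(M)$ time whether $q>r$, in which case $\mathcal H$ is trivially $(p,q)$-Helly.) Your concluding substitution $N=O(n^q)$, $\binom Np=O(n^{pq})$ for the ``in particular'' clause is correct.
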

\begin{proof} Let $\mathcal H$ be the input hypergraph. As observed in the preceding proof, if $q>r$, then $\mathcal H$ is trivially $(p,q)$-Helly. As whether $q>r$ holds can be decided in $O(M)$ time, we assume, without loss of generality, that $q\leq r$. Bound (\ref{it:1}) is a consequence of the equivalence (\ref{it:1qH1})${}\Leftrightarrow{}$(\ref{it:1qH2}) of Remark~\ref{rmk:1qHelly}, whereas bound (\ref{it:2}) follows directly from Lemma~\ref{lem:technical} because we can enumerate all the $q$-subsets $S_1,\ldots,S_N$ of $V(\mathcal H)$ in $O(qN)$ time, where $N=\binom nq$, and for any given subset $P$ of $V(\mathcal H)$ we can decide whether $\mathcal H_P$ is empty and, if it is not, compute its core in $O(M)$ time. \end{proof}

To see that the above theorem leads to an improvement upon the bound (\ref{it:pH-b1}) of Theorem~\ref{thm:pH-rec} for each fixed $q$, notice that $(rN+M)\binom Np=O(mn^{(p+1)q+1})$ because $r\leq n$, $N\leq n^q$, $M\leq mn$, and $\binom Np=O(n^{pq})$. Since when $q=1$ our algorithm essentially coincides with that of \cite{MR2457935}, the fact that our result also improves the bound (\ref{it:pH-b2}) of Theorem~\ref{thm:pH-rec} is due to the complexity analysis in~\cite{MR2457935} not being as tight as ours. The purpose of making our analysis tighter is that it will then allow us to derive tighter time complexities for other recognition algorithms in what follows.

\subsection{Hereditary $(p,q)$-Helly hypergraphs}

We now turn to the problem of characterizing and recognizing hereditary $(p,q)$-Helly hypergraphs. We begin by introducing some definitions in order to present the results on hereditary $p$-Helly hypergraphs proved in~\cite{MR2457935} and~\cite{MR2404220}. Let $\mathcal H$ be a hypergraph. We say $\mathcal H$ is \emph{strong $p$-Helly}~\cite{MR782622} if, for every nonempty partial hypergraph $\mathcal H'$ of $\mathcal H$, there exist $p$ or fewer edges of $\mathcal H'$ whose core equals that of $\mathcal H'$. An \emph{incidence matrix} $M(\mathcal H)$ of $\mathcal H$ is a $(0,1)$-matrix having one row for each edge and one column for each vertex of $\mathcal H$ and such that there is a $1$ in the intersection of a row and a column if and only if the corresponding edge contains the corresponding vertex. Clearly, the incidence matrix of a hypergraph is unique up to permutation of its rows and/or columns. A \emph{complement of a permutation matrix} is a $(0,1)$-matrix having exactly one $0$ per row and per column. The \emph{complete $r$-uniform hypergraph on $n$ vertices}, denoted $\mathcal K_n^{r}$, is the hypergraph whose edges are all the $r$-subsets of an $n$-set. The following characterizations of hereditary $p$-Helly hypergraphs were proved in~\cite{MR2457935} and~\cite{MR2404220}.

\begin{thm}[\cite{MR2457935,MR2404220}]\label{thm:HpHelly}
If $p$ is a positive integer, then the following statements are equivalent for each hypergraph $\mathcal H$:
\begin{enumerate}[(i)]
\item\label{it:HpH1} $\mathcal H$ is hereditary $p$-Helly;
\item $\mathcal H$ is $(p,q)$-Helly for every $q$;
\item\label{it:HpH3} $\mathcal H$ is strong $p$-Helly;
\item\label{it:HpH4} every partial $(p+1)$-hypergraph of $\mathcal H$ is strong $p$-Helly;
\item\label{it:HpH5} $M(\mathcal H)$ contains no $(p+1)\times(p+1)$ complement of a permutation matrix as a submatrix;
\item no partial subhypergraph of $\mathcal H$ is isomorphic to $\mathcal K_{p+1}^{p}$;
\item\label{it:HpH7} for every $(p+1)$-subset $P$ of $V(\mathcal H)$, either $\mathcal H_P^\cup$ is empty or $\core(\mathcal H_P^\cup)\cap P\neq\emptyset$.
\end{enumerate}
\end{thm}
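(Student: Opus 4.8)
The plan is to prove all seven statements equivalent by establishing the single cycle
\[
\mathrm{(i)}\Rightarrow\mathrm{(ii)}\Rightarrow\mathrm{(iii)}\Rightarrow\mathrm{(iv)}\Rightarrow\mathrm{(v)}\Leftrightarrow\mathrm{(vi)}\Rightarrow\mathrm{(vii)}\Rightarrow\mathrm{(i)}.
\]
The equivalence of (v) and (vi) is a direct translation: after permuting rows and columns, a $(p+1)\times(p+1)$ complement of a permutation matrix is exactly $J-I$, which is the incidence matrix of $\mathcal K_{p+1}^{p}$ (its $p+1$ edges are the $p$-subsets of a $(p+1)$-set, so edge $i$ omits exactly vertex $i$); hence such a submatrix of $M(\mathcal H)$ corresponds precisely to a choice of $p+1$ edges and $p+1$ vertices whose restriction is isomorphic to $\mathcal K_{p+1}^{p}$, i.e.\ to a partial subhypergraph isomorphic to $\mathcal K_{p+1}^{p}$. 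Throughout, the recurring obstruction will be this configuration: vertices $v_1,\dots,v_{p+1}$ and edges $E_1,\dots,E_{p+1}$ with $v_i\notin E_i$ and $v_i\in E_j$ for $j\neq i$.

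The heart of the argument is the pair $\mathrm{(i)}\Rightarrow\mathrm{(ii)}$ and $\mathrm{(ii)}\Rightarrow\mathrm{(iii)}$. For $\mathrm{(i)}\Rightarrow\mathrm{(ii)}$ I argue contrapositively: if $\mathcal H$ fails to be $(p,q)$-Helly for some $q$, pick a nonempty $(p,q)$-intersecting subfamily $\mathcal F\subseteq E(\mathcal H)$ with $|\core(\mathcal F)|\le q-1$, set $C=\core(\mathcal F)$, and subtract the common part by passing to $\{E\setminus C:E\in\mathcal F\}$ inside the subhypergraph $\mathcal H[V(\mathcal H)\setminus C]$. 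Since every nonempty subfamily of at most $p$ edges of $\mathcal F$ has core of size at least $q>|C|$, the subtracted family is $p$-wise intersecting, yet its total core is $C\setminus C=\emptyset$; as the $p$-Helly property is inherited by partial hypergraphs, this contradicts $p$-Hellyness of $\mathcal H[V(\mathcal H)\setminus C]$, hence hereditary $p$-Hellyness. For $\mathrm{(ii)}\Rightarrow\mathrm{(iii)}$ the key is to instantiate the quantifier ``for every $q$'' at the right value. Given a nonempty partial hypergraph $\mathcal H'$ with $C=\core(\mathcal H')$ and $c=|C|$, take an inclusion-minimal subfamily $\mathcal F\subseteq\mathcal H'$ with $\core(\mathcal F)=C$; if $|\mathcal F|=k>p$, minimality produces for each $E\in\mathcal F$ a vertex $v_E\in\core(\mathcal F\setminus\{E\})\setminus C$ (necessarily with $v_E\notin E$), and these $v_E$ are distinct and outside $C$. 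A counting argument then shows every nonempty subfamily of at most $p$ edges of $\mathcal F$ has core of size at least $c+(k-p)\ge c+1$, so $\mathcal F$ is $(p,c+1)$-intersecting; by $(p,c+1)$-Hellyness its core has size at least $c+1$, contradicting $|\core(\mathcal F)|=c$. Hence $|\mathcal F|\le p$, and $\mathcal H$ is strong $p$-Helly.

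The remaining links are more routine. Both $\mathrm{(iii)}\Rightarrow\mathrm{(iv)}$ and the inheritance used above rest on the observation that, like $p$-Hellyness, the strong $p$-Helly property is closed under taking partial hypergraphs (both being conditions quantified over all subfamilies); in particular every partial $(p+1)$-hypergraph of a strong $p$-Helly hypergraph is strong $p$-Helly. For $\mathrm{(iv)}\Rightarrow\mathrm{(v)}$ I argue contrapositively: a $(p+1)\times(p+1)$ complement-of-permutation submatrix of $M(\mathcal H)$ singles out a partial $(p+1)$-hypergraph $\{E_1,\dots,E_{p+1}\}$ in which omitting any edge $E_i$ strictly enlarges the core by $v_i$, so no subfamily of at most $p$ edges attains the common core; thus this partial $(p+1)$-hypergraph is not strong $p$-Helly. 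For $\mathrm{(vi)}\Rightarrow\mathrm{(vii)}$, again contrapositively, if some $(p+1)$-subset $P$ has $\mathcal H_P^\cup$ nonempty with $\core(\mathcal H_P^\cup)\cap P=\emptyset$, then for each $v\in P$ some edge of $\mathcal H_P^\cup$ misses $v$ and hence (being in $\mathcal H_P^\cup$) contains all of $P\setminus\{v\}$; these $p+1$ edges restricted to $P$ reproduce $\mathcal K_{p+1}^{p}$. Finally, for $\mathrm{(vii)}\Rightarrow\mathrm{(i)}$ I would verify that condition (vii) is inherited by induced subhypergraphs: for $P\subseteq V(\mathcal H')$ one checks $(\mathcal H')_P^\cup=\{E\cap V(\mathcal H'):E\in\mathcal H_P^\cup\}$ and $\core((\mathcal H')_P^\cup)=V(\mathcal H')\cap\core(\mathcal H_P^\cup)$, so a core vertex lying in $P\subseteq V(\mathcal H')$ survives the restriction; combined with Theorem~\ref{thm:pHelly} (a core meeting $P$ is in particular nonempty), this forces every subhypergraph to be $p$-Helly, i.e.\ $\mathcal H$ is hereditary $p$-Helly.

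I expect the main obstacle to be $\mathrm{(ii)}\Rightarrow\mathrm{(iii)}$, specifically the realization that ``for every $q$'' must be applied at $q=|\core(\mathcal H')|+1$ to a minimal subfamily realizing the core; once the correct $q$ and that subfamily are fixed, the counting of the excluded vertices $v_E$ is straightforward. The core-subtraction trick in $\mathrm{(i)}\Rightarrow\mathrm{(ii)}$ needs the same kind of care but is less delicate, and the forbidden-configuration links are essentially bookkeeping around $\mathcal K_{p+1}^{p}$.
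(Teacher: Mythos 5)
Your proof is correct --- I checked each link of the cycle $\mathrm{(i)}\Rightarrow\mathrm{(ii)}\Rightarrow\mathrm{(iii)}\Rightarrow\mathrm{(iv)}\Rightarrow\mathrm{(v)}\Leftrightarrow\mathrm{(vi)}\Rightarrow\mathrm{(vii)}\Rightarrow\mathrm{(i)}$, including the distinctness of the witnesses $v_E$ and the bound $\vert\core(\mathcal G)\vert\geq c+(k-p)$ in $\mathrm{(ii)}\Rightarrow\mathrm{(iii)}$, and the restriction identities $(\mathcal H')_P^\cup=\{E\cap V(\mathcal H'):E\in E(\mathcal H_P^\cup)\}$ and $\core((\mathcal H')_P^\cup)=V(\mathcal H')\cap\core(\mathcal H_P^\cup)$ in $\mathrm{(vii)}\Rightarrow\mathrm{(i)}$ --- but there is nothing to match it against verbatim: the paper states Theorem~\ref{thm:HpHelly} as a known result of \cite{MR2457935,MR2404220} and gives no proof; the in-paper analogue is the proof of the generalization, Theorem~\ref{thm:HpqHelly}. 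Your argument is essentially that proof specialized to $q=1$, with two genuine departures. First, the closing link: the paper closes its cycle with (\ref{it:HpqH10})${}\Rightarrow{}$(\ref{it:HpqH1}) of Theorem~\ref{thm:HpqHelly} by taking a \emph{minimal} $(p,q)$-intersecting partial subhypergraph with small core and constructing from it a starlike basis violating the core condition, whereas you close with $\mathrm{(vii)}\Rightarrow\mathrm{(i)}$ by showing condition (vii) is inherited by induced subhypergraphs and then quoting Berge--Duchet (Theorem~\ref{thm:pHelly}); at $q=1$ your route is shorter and avoids the minimal-counterexample construction entirely. Second, the paper reaches the matrix/forbidden-configuration conditions through the operator $\varPhi_q$, quoting Theorem~\ref{thm:HpHelly} itself in the steps (\ref{it:HpqH4})${}\Rightarrow{}$(\ref{it:HpqH5})${}\Rightarrow{}$(\ref{it:HpqH6}) --- legitimate there, since the theorem is cited from the literature, but obviously unavailable to you --- while you prove $\mathrm{(iv)}\Rightarrow\mathrm{(v)}$ directly from the complement-of-permutation pattern; for $q=1$ the $\varPhi_q$ detour is vacuous anyway, so your direct argument is the natural one. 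One minor remark: in $\mathrm{(ii)}\Rightarrow\mathrm{(iii)}$ your minimal-subfamily-plus-counting argument is more elaborate than needed; the paper's version of this step simply observes that the negation of the strong Helly property already forces every nonempty $p^-$-subfamily's core to \emph{properly} contain $\core(\mathcal H')$, so $\mathcal H'$ itself is $(p,c+1)$-intersecting with a $c$-core, and no minimal subfamily or counting is required. This costs you nothing in correctness, only a little economy.
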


From the above theorem, polynomial-time recognition algorithms for fixed $p$ follow, whereas the recognition problem was shown to be co-NP-complete if $p$ is part of the input.

\begin{thm}[\cite{MR2457935,MR2404220}]\label{thm:HpHelly-rec} The recognition problem for hereditary $p$-Helly hypergraphs, where $p$ is part of the input:
\begin{enumerate}[(i)]
 \item can be solved in $O(p^2rm^{p+1})$ time;
 \item can be solved in $O(Mn^p+prn^{p+1})$ time;
 \item\label{it:HpH-rec3} is co-NP-complete.
\end{enumerate}\end{thm}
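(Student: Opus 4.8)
The plan is to obtain the two algorithmic bounds directly from the structural characterizations collected in Theorem~\ref{thm:HpHelly}, and to settle the complexity status in (iii) via the forbidden-configuration characterizations. For bound (i) I would use the characterization by excluded partial subhypergraphs isomorphic to $\mathcal K_{p+1}^p$ (equivalently, the incidence-matrix condition~(\ref{it:HpH5})). The key observation to make explicit is that $\mathcal H$ contains $\mathcal K_{p+1}^p$ as a partial subhypergraph if and only if there exist $p+1$ edges $E_1,\ldots,E_{p+1}$ with $\bigl(\bigcap_{j\neq i}E_j\bigr)\not\subseteq E_i$ for every $i$: any witnessing vertices $v_i\in\bigcap_{j\neq i}E_j\setminus E_i$ are automatically pairwise distinct (if $v_i=v_{i'}$ with $i\neq i'$ then $v_i\in E_{i'}$ and $v_i\notin E_{i'}$, a contradiction), so restricting $E_1,\ldots,E_{p+1}$ to $\{v_1,\ldots,v_{p+1}\}$ yields exactly all $p$-subsets. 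I would then enumerate the $\binom{m}{p+1}=O(m^{p+1})$ subsets of $p+1$ edges and, for each and for each $i$, test whether $\bigcap_{j\neq i}E_j\setminus E_i=\emptyset$ in $O(pr)$ time, for $O(p^2r)$ time per subset and the stated bound $O(p^2rm^{p+1})$.

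For bound (ii) I would instead use condition~(\ref{it:HpH7}): $\mathcal H$ is hereditary $p$-Helly if and only if, for every $(p+1)$-subset $P$ of $V(\mathcal H)$, either $\mathcal H_P^\cup$ is empty or $\core(\mathcal H_P^\cup)\cap P\neq\emptyset$. This differs from the Berge--Duchet test for ordinary $p$-Helly (Theorem~\ref{thm:pHelly}) only in the extra demand that a nonempty core meet $P$, so I would reuse verbatim the enumeration of $(p+1)$-subsets $P$ together with the incremental computation of $\mathcal H_P^\cup$ and $\core(\mathcal H_P^\cup)$ underlying bound~(\ref{it:pH-b2}) of Theorem~\ref{thm:pH-rec}, appending only the $O(p)$ test $\core(\mathcal H_P^\cup)\cap P\neq\emptyset$ per subset; this keeps the total running time at $O(Mn^p+prn^{p+1})$.

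For part (iii), membership in co-NP is immediate from the forbidden-subhypergraph characterization: a ``no'' instance is certified by $p+1$ edges and $p+1$ vertices exhibiting a copy of $\mathcal K_{p+1}^p$ (equivalently, a $(p+1)\times(p+1)$ complement of a permutation matrix inside $M(\mathcal H)$ by condition~(\ref{it:HpH5})), a certificate of size polynomial in the input and checkable in polynomial time, so the complementary problem lies in NP. For co-NP-hardness I would reduce from the detection of exactly this configuration. The crucial equivalence is that, forming the bipartite \emph{non-incidence} graph $B(\mathcal H)$ whose parts are the edges and the vertices of $\mathcal H$ with $Ev$ an edge precisely when $v\notin E$, the biadjacency matrix of $B(\mathcal H)$ is the complement of $M(\mathcal H)$, so a $(p+1)\times(p+1)$ complement of a permutation matrix in $M(\mathcal H)$ corresponds exactly to an induced matching of size $p+1$ in $B(\mathcal H)$. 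Since deciding whether a bipartite graph has an induced matching of size $k$ is NP-complete, I would, given such an instance $(G,k)$, set $M(\mathcal H)$ equal to the bipartite complement of the biadjacency matrix of $G$ and take $p=k-1$, so that $\mathcal H$ fails to be hereditary $p$-Helly precisely when $G$ has an induced matching of size $k$.

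The main obstacle is making this reduction produce a genuine hypergraph while preserving the configuration count exactly. The plain complement of a biadjacency matrix may contain all-zero rows or columns, i.e.\ empty edges or uncovered vertices, so I would pad $G$ with one universal vertex (an all-ones column of $M(\mathcal H)$) and one universal edge (an all-ones row), which makes every edge nonempty and every vertex covered. These padding elements are isolated in $B(\mathcal H)$, and, having no $0$ in their row or column, they can never participate in a complement of a permutation matrix; hence they neither create nor destroy a size-$(p+1)$ configuration, and an induced matching of size at least $p+1$ always contains one of size exactly $p+1$, matching the size demanded by $\mathcal K_{p+1}^p$. Verifying precisely this ``no spurious configurations'' claim is the delicate point; once it is in place, combining membership in co-NP with the reduction yields co-NP-completeness.
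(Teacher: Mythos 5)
Your proposal is correct, but note that the paper itself gives no proof of this statement: Theorem~\ref{thm:HpHelly-rec} is imported from \cite{MR2457935,MR2404220}, and the paper's own contribution is its generalization to arbitrary $q$ (Theorem~\ref{thm:HpqHelly-rec} and the subsequent co-NP-completeness theorem). Measured against those generalized proofs specialized to $q=1$, your parts (i) and (ii) follow essentially the same routes. For (i), your reformulation of ``$\mathcal K_{p+1}^p$ occurs as a partial subhypergraph'' as the existence of edges $E_1,\ldots,E_{p+1}$ with $\bigcap_{j\neq i}E_j\not\subseteq E_i$ for every $i$ is exactly the failure of the strong $p$-Helly property for a partial $(p+1)$-hypergraph (no $p$-subfamily attains the total core), i.e.\ the test behind the paper's bound via the equivalence (\ref{it:HpqH1})${}\Leftrightarrow{}$(\ref{it:HpqH4}) of Theorem~\ref{thm:HpqHelly}, and your distinctness argument for the witnesses $v_i$ is sound. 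For (ii), your plan matches the paper's Lemma~\ref{lem:technical2} (and \cite{MR2457935}); the one point you should make explicit is that the reused machinery examines only \emph{nontrivial} bases, which remains sound for the hereditary test because, for a trivial basis with $q=1$, Remark~\ref{rmk:nontrivial} yields that $\mathcal H_P^\cup$ is empty or its core contains a vertex of $P$, so condition (\ref{it:HpH7}) of Theorem~\ref{thm:HpHelly} holds there automatically. Where you genuinely diverge is (iii): the paper merely cites \cite{MR2404220} for the base hardness (its own reduction only lifts the $q=1$ case to general $q$ by appending all-ones columns), whereas you supply a self-contained reduction, and it works. The dictionary is right: a $(p+1)\times(p+1)$ complement of a permutation matrix in $M(\mathcal H)$ is precisely an induced matching of size $p+1$ in the bipartite non-incidence graph; induced matchings are closed under taking subsets, so ``size at least $k$'' and ``size exactly $k$'' coincide; maximum induced matching is indeed NP-complete on bipartite graphs; and your padding by one all-ones row and one all-ones column legitimizes the hypergraph (nonempty edges, covered vertices) without creating or destroying configurations, since a row or column containing no $0$ cannot enter a complement of a permutation submatrix. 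The only loose end is the degenerate case $k\leq 1$, where $p=k-1$ fails to be a positive integer; such instances are trivially decidable, so they can be excluded without harm. In short: (i) and (ii) coincide with the established arguments, while your (iii) buys independence from the cited source at the cost of having to verify the (correctly identified) ``no spurious configurations'' claim, which your padding argument does settle.
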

Recognition algorithms for hereditary $2$-Helly hypergraphs having $O(rm^3)$ and $O(r^2\Delta m)$ time complexities were devised in~\cite{MR1082681} and \cite{MR1866814}, respectively.

We will now extend Theorems~\ref{thm:HpHelly} and \ref{thm:HpHelly-rec} to the class of hereditary $(p,q)$-Helly hypergraphs. We introduce the following generalization of the strong $p$-Helly property. We say that a hypergraph $\mathcal H$ is \emph{strong $(p,q)$-Helly} if, for every nonempty $(p,q)$-intersecting partial hypergraph $\mathcal H'$ of $\mathcal H$, there is some nonempty subfamily of $p$ or fewer edges of $\mathcal H'$ whose core equals the core of $\mathcal H'$. Observe that the strong $(p,1)$-Helly property coincides with the strong $p$-Helly property. (In fact, a partial hypergraph $\mathcal H'$ that is not $(p,1)$-intersecting has a nonempty subfamily of $p$ or fewer edges whose core is empty and necessarily $\core(\mathcal H')$ is also empty.)

We also introduce a generalization of $\mathcal K_{p+1}^p$. Let $p$ and $q$ be positive integers and let $s\in\{0,\ldots,q-1\}$. We define $\mathcal J_{p+1,q,s}$ as the unique $(p+1)$-hypergraph $\mathcal H$ (up to isomorphism) having $(p+1)(q-s)+s$ vertices and such that there are $p+1$ pairwise disjoint $(q-s)$-subsets $T_1,\ldots,T_{p+1}$ of $V(\mathcal H)$ such that $E(\mathcal H)=\{V(\mathcal H)-T_i:\,1\leq i\leq p+1\}$. Clearly, $\mathcal J_{p+1,q,s}$ is $(p,q)$-intersecting and has $s$-core. Notice that if $q=1$, then $s=0$ and $\mathcal J_{p+1,1,0}$ coincides with $\mathcal K_{p+1}^p$.

Let $\mathcal S$ be a $(p+1,q)$-basis of a hypergraph $\mathcal H$. We say that $\mathcal S$ is \emph{starlike} if every vertex of $\mathcal H$ which belongs to at least two members of $\mathcal S$ also belongs to $\core(\mathcal S)$. We define the \emph{exterior} of $\mathcal S$, denoted $\ext(\mathcal S)$, as the set of vertices of $\mathcal H$ that belong to some set of $\mathcal S$ but not to $\core(\mathcal S)$. Equivalently, $\mathcal S$ is starlike if and only if there is some $s\in\{0,\ldots,q-1\}$, some $s$-subset $Z$ of $V(\mathcal H)$, and $p+1$ pairwise disjoint $(q-s)$-subsets $T_1,\ldots,T_{p+1}$ of $V(\mathcal H)-Z$ such that $\mathcal S=\{T_1\cup Z,\ldots,T_{p+1}\cup Z\}$; moreover, if so, then $\ext(\mathcal S)=T_1\cup\cdots\cup T_{p+1}$.

Recall that a hypergraph is \emph{hereditary $(p,q)$-Helly} if each of its subhypergraphs is $(p,q)$-Helly. This means that a hypergraph is hereditary $(p,q)$-Helly if and only if each of its $(p,q)$-intersecting partial subhypergraphs has $q^+$-core. Below, we give the aforementioned extension of Theorem~\ref{thm:HpHelly} to hereditary $(p,q)$-Helly hypergraphs. Notice that the equivalence (\ref{it:HpqH1})${}\Leftrightarrow{}$(\ref{it:HpqH7}) below gives a characterization of hereditary $(p,q)$-Helly hypergraphs by minimal forbidden partial subhypergraphs.

\begin{thm}\label{thm:HpqHelly}
If $p$ and $q$ are positive integers, then the following statements are equivalent for each hypergraph $\mathcal H$:
\begin{enumerate}[(i)]
\item\label{it:HpqH1} $\mathcal H$ is hereditary $(p,q)$-Helly;
\item\label{it:HpqH2} $\mathcal H$ is $(p,q')$-Helly for every $q'\geq q$;
\item\label{it:HpqH3} $\mathcal H$ is strong $(p,q)$-Helly;
\item\label{it:HpqH4} every partial $(p+1)$-hypergraph of $\mathcal H$ is strong $(p,q)$-Helly;
\item\label{it:HpqH5} $\varPhi_q(\mathcal H)$ is hereditary $p$-Helly;
\item\label{it:HpqH6} $M(\mathcal H)$ contains no incidence matrix of $\mathcal J_{p+1,q,s}$ as a submatrix for any $s\in\{0,\ldots,q-1\}$;
\item\label{it:HpqH7} no partial subhypergraph of $\mathcal H$ is isomorphic to $\mathcal J_{p+1,q,s}$ for any $s\in\{0,\ldots,q-1\}$;
\item\label{it:HpqH8} for each $s\in\{0,\ldots,q-1\}$, each $((p+1)(q-s)+s)$-subset $U$ of $V(\mathcal H)$, and each $p+1$ pairwise disjoint $(q-s)$-subsets $T_1,\ldots,T_{p+1}$ of $U$ such that each of the sets $U-T_1,\ldots,U-T_{p+1}$ is contained in some edge of $\mathcal H$, the basis $\mathcal S=\{T_1\cup Z,\ldots,T_{p+1}\cup Z\}$, where $Z=U-(T_1\cup\cdots\cup T_{p+1})$, satisfies $\core(\mathcal H_\mathcal S^\cup)\cap(T_1\cup\cdots\cup T_{p+1})\neq\emptyset$.
\item\label{it:HpqH9} for each nontrivial starlike $(p+1,q)$-basis $\mathcal S$ of $\mathcal H$, $\core(\mathcal H^\cup_\mathcal S)\cap\ext(\mathcal S)\neq\emptyset$.
\item\label{it:HpqH10} for each starlike $(p+1,q)$-basis $\mathcal S$ of $\mathcal H$, either $\mathcal H_\mathcal S^\cup$ is empty or $\core(\mathcal H_\mathcal S^\cup)\cap\ext(\mathcal S)\neq\emptyset$.
\end{enumerate}
\end{thm}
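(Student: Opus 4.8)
The plan is to split the ten conditions into a \emph{Helly-theoretic} block (\ref{it:HpqH1})--(\ref{it:HpqH4}) and a \emph{structural} block (\ref{it:HpqH5})--(\ref{it:HpqH10}), to prove the equivalences inside each block, and then to glue the blocks together with two bridging implications, letting the operator $\varPhi_q$ and Theorem~\ref{thm:HpHelly} applied to $\varPhi_q(\mathcal H)$ carry most of the load. The recurring device is the identity $(\varPhi_q(\mathcal H))_\mathcal S^\cup=\varPhi_q(\mathcal H_\mathcal S^\cup)$ already used in the proof of Corollary~\ref{cor:var2}, together with Remark~\ref{rmk:phiq}, which let me translate statements about $\mathcal H$ with parameter $q$ into statements about $\varPhi_q(\mathcal H)$ with parameter $p$. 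The two bridges will be the trivial (\ref{it:HpqH1})$\Rightarrow$(\ref{it:HpqH7}) and the transfer (\ref{it:HpqH5})$\Rightarrow$(\ref{it:HpqH3}).

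Within the Helly-theoretic block I would rely on two standard strong-Helly devices. For (\ref{it:HpqH3})$\Rightarrow$(\ref{it:HpqH1}), given a $(p,q)$-intersecting partial subhypergraph of $\mathcal H$ on a vertex subset $X'$ with edges $E_i\cap X'$, the family $\{E_i\}$ is $(p,q)$-intersecting in $\mathcal H$, so the strong $(p,q)$-Helly property yields $p$ or fewer edges with core equal to $\bigcap_i E_i$; intersecting with $X'$ shows the restricted core to be an intersection of at most $p$ restricted edges, hence a $q^+$-set. The equivalences among (\ref{it:HpqH1}), (\ref{it:HpqH3}), (\ref{it:HpqH4}) and the link to (\ref{it:HpqH2}) hinge on taking a \emph{minimal} subfamily $\mathcal F'$ with $\core(\mathcal F')=\core(\mathcal F)$ of a given $(p,q)$-intersecting partial hypergraph $\mathcal F$, together with distinct witness vertices $v_E\in\core(\mathcal F'-E)-\core(\mathcal F')$, one per edge $E\in\mathcal F'$. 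If $|\mathcal F'|=t\geq p+1$, these witnesses, placed next to a $(q-1)$-subset $Z$ of $\core(\mathcal F')$ (which has at least $q$ elements by (\ref{it:HpqH1})), realize on $Z\cup\{v_E:E\in\mathcal F'\}$ a copy of $\mathcal J_{t,q,q-1}$, which is $(p,q)$-intersecting with $(q-1)$-core: this contradicts (\ref{it:HpqH1}) and gives (\ref{it:HpqH1})$\Rightarrow$(\ref{it:HpqH3}), while the same witnesses make any $p+1$ edges of $\mathcal F'$ an irredundant partial $(p+1)$-hypergraph and so contradict (\ref{it:HpqH4}), giving (\ref{it:HpqH4})$\Rightarrow$(\ref{it:HpqH3}). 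The easy directions (\ref{it:HpqH3})$\Rightarrow$(\ref{it:HpqH4}) and (\ref{it:HpqH3})$\Rightarrow$(\ref{it:HpqH2}) follow since the strong property passes to partial hypergraphs and since $p$ or fewer edges capturing the core of a $(p,q')$-intersecting family force that core to have at least $q'$ elements; for (\ref{it:HpqH2})$\Rightarrow$(\ref{it:HpqH3}) the point is that a minimal $\mathcal F'$ as above is in fact $(p,|\core(\mathcal F')|+t-p)$-intersecting, so $(p,q')$-Hellyness for the appropriate $q'\geq q$ forces $t\leq p$.

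Within the structural block, (\ref{it:HpqH8})$\Leftrightarrow$(\ref{it:HpqH9}) is a restatement, since (\ref{it:HpqH8}) merely writes the starlike and nontrivial hypotheses of (\ref{it:HpqH9}) in terms of $T_1,\dots,T_{p+1}$ and $Z$, and (\ref{it:HpqH9})$\Leftrightarrow$(\ref{it:HpqH10}) follows from Remark~\ref{rmk:nontrivial}: for a trivial starlike basis a nonempty $\mathcal H_\mathcal S^\cup$ has some member $T_i\cup Z$ in its core, and $T_i\subseteq\ext(\mathcal S)$ is nonempty. The equivalence (\ref{it:HpqH6})$\Leftrightarrow$(\ref{it:HpqH7}) is the usual translation between a submatrix of $M(\mathcal H)$ and a partial subhypergraph. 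Applying Theorem~\ref{thm:HpHelly} to $\varPhi_q(\mathcal H)$ and rewriting its condition (\ref{it:HpH7}) via $(\varPhi_q(\mathcal H))_\mathcal S^\cup=\varPhi_q(\mathcal H_\mathcal S^\cup)$ and Remark~\ref{rmk:phiq}, (\ref{it:HpqH5}) becomes the assertion that every $(p+1,q)$-basis $\mathcal S$ whose members all lie in edges either has $\mathcal H_\mathcal S^\cup$ empty or has some member inside $\core(\mathcal H_\mathcal S^\cup)$; specializing to starlike bases gives (\ref{it:HpqH5})$\Rightarrow$(\ref{it:HpqH10}), and conversely (\ref{it:HpqH10})$\Rightarrow$(\ref{it:HpqH7}) is immediate because a copy of $\mathcal J_{p+1,q,s}$ yields a nontrivial starlike basis with $\core(\mathcal H_\mathcal S^\cup)\cap\ext(\mathcal S)=\emptyset$. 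The bridge (\ref{it:HpqH5})$\Rightarrow$(\ref{it:HpqH3}) again uses Remark~\ref{rmk:phiq}: since every edge of a $(p,q)$-intersecting partial hypergraph is a $q^+$-set, strong $p$-Hellyness of $\varPhi_q(\mathcal H)$ transfers to strong $(p,q)$-Hellyness of $\mathcal H$; and the bridge (\ref{it:HpqH1})$\Rightarrow$(\ref{it:HpqH7}) is trivial, as $\mathcal J_{p+1,q,s}$ is $(p,q)$-intersecting with $s$-core and $s<q$.

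The step I expect to be the main obstacle is the remaining direction (\ref{it:HpqH7})$\Rightarrow$(\ref{it:HpqH5}): extracting an honest $\mathcal J_{p+1,q,s}$ from a failure of hereditary $p$-Hellyness of $\varPhi_q(\mathcal H)$. Through the forbidden-$\mathcal K_{p+1}^p$ form of Theorem~\ref{thm:HpHelly}, such a failure hands me $q$-sets $A_1,\dots,A_{p+1}$ and edges $E_1,\dots,E_{p+1}$ of $\mathcal H$ with $A_j\subseteq E_i$ if and only if $j\neq i$; the difficulty is that the $A_i$ may overlap arbitrarily, while $\mathcal J_{p+1,q,s}$ requires a \emph{starlike} configuration. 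I would repair this by choosing $x_i\in A_i-E_i$ for each $i$ (these are distinct, with $x_i\in E_j$ exactly when $j\neq i$), setting $B=\bigcap_i E_i$ and $B_i=\bigcap_{j\neq i}E_j$, taking $s=\min\{|B|,q-1\}$ and a fixed $s$-subset $Z\subseteq B$, and selecting pairwise disjoint $(q-s)$-sets $T_i\subseteq B_i-B$. Disjointness is automatic because the sets $B_i-B$ are pairwise disjoint, and the inequality $|B_i-B|\geq q-|A_i\cap B|\geq q-s$ (since $x_i\in A_i-B$ forces $|A_i\cap B|\leq\min\{|B|,q-1\}=s$) guarantees that the $T_i$ exist. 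Restricting $E_1,\dots,E_{p+1}$ to $U=Z\cup T_1\cup\dots\cup T_{p+1}$ then yields $E_i\cap U=U-T_i$, so this partial subhypergraph is a copy of $\mathcal J_{p+1,q,s}$, contradicting (\ref{it:HpqH7}). The technical heart is verifying the identities $B\cap U=Z$, $B_i\cap U=Z\cup T_i$, and $E_i\cap U=U-T_i$, a routine but delicate computation; the analogue for $q=1$ in Theorem~\ref{thm:HpHelly} is essentially trivial, and pinning down the parameter $s$ and the restriction set $U$ exactly is what makes the general case more involved.
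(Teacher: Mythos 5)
Your proposal is correct, and it takes a genuinely different route from the paper's proof. The paper establishes one directed cycle (\ref{it:HpqH1})${}\Rightarrow{}$(\ref{it:HpqH2})${}\Rightarrow\cdots\Rightarrow{}$(\ref{it:HpqH10})${}\Rightarrow{}$(\ref{it:HpqH1}), so every converse is available only around the full loop; you instead close the two blocks (\ref{it:HpqH1})--(\ref{it:HpqH4}) and (\ref{it:HpqH5})--(\ref{it:HpqH10}) separately and join them by the bridges (\ref{it:HpqH1})${}\Rightarrow{}$(\ref{it:HpqH7}) and (\ref{it:HpqH5})${}\Rightarrow{}$(\ref{it:HpqH3}). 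Inside the first block, your minimal-subfamily device with distinct witnesses $v_E\in\core(\mathcal F'-E)-\core(\mathcal F')$ does in one stroke what the paper distributes over (\ref{it:HpqH1})${}\Rightarrow{}$(\ref{it:HpqH2}), (\ref{it:HpqH2})${}\Rightarrow{}$(\ref{it:HpqH3}), and the closing step (\ref{it:HpqH10})${}\Rightarrow{}$(\ref{it:HpqH1}) (which uses essentially the same witnesses); your observation that a minimal family with $t\geq p+1$ edges is $(p,\vert\core(\mathcal F')\vert+t-p)$-intersecting is a clean direct proof of (\ref{it:HpqH2})${}\Rightarrow{}$(\ref{it:HpqH3}) not present in the paper, and your direct (\ref{it:HpqH4})${}\Rightarrow{}$(\ref{it:HpqH3}) likewise short-circuits the loop. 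The more substantial divergence is the hard combinatorial step: the paper's core is (\ref{it:HpqH7})${}\Rightarrow{}$(\ref{it:HpqH8}), handled by the chain argument on the families $\{E_v\cap T_i:\,v\in T_i\}$ (incomparability yields $\mathcal J_{p+1,q,q-1}$, otherwise minimal chain elements yield $\mathcal J_{p+1,q,s}$); you avoid chains entirely by going (\ref{it:HpqH7})${}\Rightarrow{}$(\ref{it:HpqH5}) through $\varPhi_q$, where the $\mathcal K_{p+1}^p$ configuration hands you one edge $E_i$ per index, so the sets $B_i-B$ are automatically pairwise disjoint and the choice $s=\min\{\vert B\vert,q-1\}$, $T_i\subseteq B_i-B$ yields $E_i\cap U=U-T_i$ directly --- I checked the counting ($x_i\in A_i-B$ forces $\vert A_i\cap B\vert\leq s$, hence $\vert B_i-B\vert\geq q-s$, and $B_i\cap E_i=B$ kills $T_i$) and it is sound. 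The trade-off is that you use Theorem~\ref{thm:HpHelly} as a black box in three forms (its statements (\ref{it:HpH3}), (\ref{it:HpH5})/(\ref{it:HpH7}), and the forbidden-$\mathcal K_{p+1}^p$ form), where the paper needs fewer of its implications. Two small points to make explicit in a write-up: first, in (\ref{it:HpqH5})${}\Rightarrow{}$(\ref{it:HpqH10}) your rewritten form of (\ref{it:HpqH5}) quantifies only over bases all of whose members lie in edges, so you should prove (\ref{it:HpqH5})${}\Rightarrow{}$(\ref{it:HpqH9}) --- noting that a nontrivial starlike basis automatically has all members inside edges, since each member is contained in every support set other than its own --- and let your Remark~\ref{rmk:nontrivial} argument for (\ref{it:HpqH9})${}\Rightarrow{}$(\ref{it:HpqH10}) absorb the remaining starlike bases, which are necessarily trivial; second, the object you call $\mathcal J_{t,q,q-1}$ for $t>p+1$ is not defined in the paper, so in the block-A contradictions you should restrict to $p+1$ edges of $\mathcal F'$ together with their $p+1$ witnesses and a $(q-1)$-subset of the core, which lands on a bona fide $\mathcal J_{p+1,q,q-1}$. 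Neither point affects correctness.
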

\begin{proof} \mbox{(\ref{it:HpqH1})${}\Rightarrow{}$(\ref{it:HpqH2})} Suppose (\ref{it:HpqH2}) does not hold. Thus, there is some $(p,q')$-intersecting partial hypergraph $\mathcal H'$ of $\mathcal H$ having a $q''$-core where $q''<q'$. If $q''<q$, then $\mathcal H$ is not $(p,q)$-Helly by definition and, in particular, not hereditary $(p,q)$-Helly. Hence, we assume, without loss of generality, that $q''\geq q$. If $W$ is a subset of $\core(\mathcal H')$ of cardinality $q''-(q-1)$, then the subhypergraph $\mathcal H''$ of $\mathcal H'$ induced by $V(\mathcal H')-W$ is $(p,q)$-intersecting but has a $(q-1)$-core. As $\mathcal H''$ is a partial subhypergraph of $\mathcal H$, (\ref{it:HpqH1}) does not hold.

\mbox{(\ref{it:HpqH2})${}\Rightarrow{}$(\ref{it:HpqH3})} Suppose (\ref{it:HpqH3}) does not hold. Let $\mathcal H'$ be a nonempty $(p,q)$-intersecting partial hypergraph of $\mathcal H$ such that each nonempty subfamily of $p$ or fewer edges of $\mathcal H'$ has a core properly containing the core of $\mathcal H'$. Let $q'=\vert\core(\mathcal H')\vert$. On the one hand, if $q'<q$, then $\mathcal H'$ is not $(p,q)$-Helly. On the other hand, if $q'\geq q$, then $\mathcal H'$ is $(p,q'+1)$-intersecting but has $q'$-core. In either case, (\ref{it:HpqH2}) does not hold.

\mbox{(\ref{it:HpqH3})${}\Rightarrow{}$(\ref{it:HpqH4})} It follows by definition.

\mbox{(\ref{it:HpqH4})${}\Rightarrow{}$(\ref{it:HpqH5})} Suppose (\ref{it:HpqH4}) holds. Let $\mathcal J$ be any partial $(p+1)$-hypergraph of $\varPhi_q(\mathcal H)$. By the definition of operator $\varPhi_q$, $\mathcal J=\varPhi_q(\mathcal H')$ for some partial $(p+1)$-hypergraph $\mathcal H'$ of $\mathcal H$ such that each edge of $\mathcal H'$ has cardinality at least $q$. We claim that $\mathcal J$ is strong $p$-Helly. As $\mathcal J$ has $p+1$ edges, to prove the claim it suffices to show that there is some partial $p$-hypergraph of $\mathcal J$ whose core equals that of $\mathcal J$. If $\mathcal J$ is not $p$-wise intersecting, then some partial $p$-hypergraph of $\mathcal J$ has empty core, which consequently coincides with the core of $\mathcal J$. Thus, we assume without loss of generality, that $\mathcal J$ is $p$-wise intersecting. Hence, $\mathcal H'$ is $(p,q)$-intersecting because of Remark~\ref{rmk:phiq}. Since (\ref{it:HpqH4}) holds, there is some partial $p$-hypergraph 
$\mathcal H''$ of $\mathcal H'$ such that $\core(\mathcal H'')=\core(\mathcal H')$. Hence, $\varPhi_q(\mathcal H'')$ is a partial $p$-hypergraph of $\mathcal J$ and, by Remark~\ref{rmk:phiq}, $\core(\varPhi_q(\mathcal H''))=\varphi_q(\core(\mathcal H''))=\varphi_q(\core(\mathcal H'))=\core(\varPhi_q(\mathcal H'))=\core(\mathcal J)$. This proves the claim. Therefore, (\ref{it:HpqH5}) holds because of the implication (\ref{it:HpH4})${}\Rightarrow{}$(\ref{it:HpH1}) of Theorem~\ref{thm:HpHelly}.

\mbox{(\ref{it:HpqH5})${}\Rightarrow{}$(\ref{it:HpqH6})} Suppose (\ref{it:HpqH6}) does not hold; i.e., $M(\mathcal H)$ has an incidence matrix of $\mathcal J_{p+1,q,s}$ as a submatrix for some $s\in\{0,\ldots,q-1\}$. Thus, there is some $((p+1)(q-s)+s)$-subset $U$ of $V(\mathcal H)$ and $p+1$ pairwise disjoint $(q-s)$-subsets $T_1,\ldots,T_{p+1}$ of $U$, and edges $E_1,\ldots,E_{p+1}$ of $\mathcal H$ such that $E_i\cap U=U-T_i$ for each $i\in\{1,\ldots,p+1\}$. If $Z=U-(T_1\cup\cdots\cup T_{p+1})$, then it is clear that the incidence matrix of the subhypergraph induced by $\{T_1\cup Z,\ldots,T_{p+1}\cup Z\}$ of the partial hypergraph of $\varPhi_q(\mathcal H)$ formed by the hyperedges $\varphi_q(E_1),\ldots,\varphi_q(E_{p+1})$ is a $(p+1)\times(p+1)$ complement of a permutation matrix. Hence, the implication (\ref{it:HpH5})${}\Rightarrow{}$(\ref{it:HpH1}) of Theorem~\ref{thm:HpHelly} shows that $\varPhi_q(\mathcal H)$ is not hereditary $p$-Helly; i.e., (\ref{it:HpqH5}) does not hold.

\mbox{(\ref{it:HpqH6})${}\Rightarrow{}$(\ref{it:HpqH7})} It follows by definition of incidence matrix.

\mbox{(\ref{it:HpqH7})${}\Rightarrow{}$(\ref{it:HpqH8})} Suppose \eqref{it:HpqH8} does not hold; i.e., there is some $s\in\{0,\ldots,q-1\}$, some $((p+1)(q-s)+s)$-subset $U$ of $V(\mathcal H)$, and $p+1$ pairwise disjoint $(q-s)$-subsets $T_1,\ldots,T_{p+1}$ of $U$ such that each of the sets $U-T_1,\ldots,U-T_{p+1}$ is contained in some edge of $\mathcal H$ and the $(p+1,q)$-basis $\mathcal S=\{T_1\cup Z,\ldots,T_{p+1}\cup Z\}$, where $Z=U-(T_1\cup\cdots\cup T_{p+1})$, satisfies $\core(\mathcal H_\mathcal S^\cup)\cap(T_1\cup\cdots\cup T_{p+1})=\emptyset$. Observe that $\mathcal H_\mathcal S^\cup\neq\emptyset$ because each of the support sets $U-T_1,\ldots,U-T_{p+1}$ of $\mathcal S$ is contained in some edge of $\mathcal H$. Hence, $\core(\mathcal H_\mathcal S^\cup)\cap(T_1\cup\cdots\cup T_{p+1})=\emptyset$ implies that, for each $v\in T_1\cup\cdots\cup T_{p+1}$, we can choose some edge $E_v$ of $\mathcal H_\mathcal S^\cup$ such that $v\notin E_v$. Notice that $E_v$ and $E_{v'}$ may coincide for two different vertices $v$ and $v'$. Moreover, since every edge of $\mathcal H_\mathcal S^\cup$ contains at least $p$ members of $\mathcal S$, necessarily $Z\subseteq E_v$ and $T_j\subseteq E_v$ for each $j\in\{1,\ldots,p+1\}$ such that $v\notin T_j$.

Suppose that there are two vertices $v_1,v_1'\in T_1$ such that $E_{v_1}\cap T_1$ and $E_{v_1'}\cap T_1$ are inclusion-wise incomparable. Thus, there are two vertices $t_1,t_1'\in T_1$ such that $t_1\in E_{v_1'}-E_{v_1}$ and $t_1'\in E_{v_1}-E_{v_1'}$. Therefore, for any choice of a vertex $v_j\in T_j$ for each $j\in\{1,\ldots,p+1\}$, the subhypergraph induced by $\{t_1,t_1',v_2,\ldots,v_p\}\cup(T_{p+1}-\{v_{p+1}\})\cup Z$ of the hypergraph formed by the edges $E_{v_1},E_{v_1'},E_{v_2},\ldots,E_{v_p}$ is a partial subhypergraph of $\mathcal H$ isomorphic to $\mathcal J_{p+1,q,q-1}$ and (\ref{it:HpqH7}) does not hold. Hence, we assume, without loss of generality, that for each $i\in\{1,\ldots,p\}$, the family $\mathcal E_i=\{E_v\cap T_i:\,v\in T_i\}$ is a chain with respect to inclusion and let $E_i$ be the minimum element of $\mathcal E_i$. Since $v\notin E_v$ for each $v\in T_1\cup\cdots\cup T_{p+1}$, $E_i\cap T_i=\emptyset$ for each $i\in\{1,\ldots,p+1\}$. Therefore, for any choice of a vertex $v_i\in T_i$ for each $i\in\{1,\ldots,p+1\}$, the subhypergraph induced by $T_1\cup\cdots\cup T_{p+1}\cup Z$ of the hypergraph formed by the edges $E_1,\ldots,E_{p+1}$ is a partial subhypergraph of $\mathcal H$ isomorphic to $\mathcal J_{p+1,q,s}$ and (\ref{it:HpqH7}) does not hold.

\mbox{(\ref{it:HpqH8})${}\Rightarrow{}$(\ref{it:HpqH9})} Suppose that (\ref{it:HpqH9}) does not hold. Thus, there is some $s\in\{0,\ldots,q-1\}$, some $s$-subset $Z$ of $V(\mathcal H)$, and some pairwise disjoint $(q-s)$-subsets $T_1,\ldots,T_{p+1}$ of $V(\mathcal H)-Z$ such that the basis $\mathcal S=\{T_1\cup Z,\ldots,T_{p+1}\cup Z\}$ satisfies that each of its support sets is contained in some edge of $\mathcal H$ and $\core(\mathcal H_\mathcal S^\cup)\cap(T_1\cup\cdots\cup T_{p+1})=\emptyset$. Let $U=T_1\cup\cdots\cup T_{p+1}\cup Z$. Hence, $U$ is a $((p+1)(q-s)+s)$-subset of $V(\mathcal H)$ and each of the sets $U-T_1,\ldots,U-T_{p+1}$ is a support set of $\mathcal S$ which, by assumption, is contained in some edge of $\mathcal H$. Moreover, since $T_1,\ldots,T_{p+1}$ are $p+1$ pairwise disjoint $(q-s)$-subsets of $U$ and $\core(\mathcal H_\mathcal S^\cup)\cap(T_1\cup\cdots\cup T_{p+1})=\core(\mathcal H^\cup_\mathcal S)\cap\ext(\mathcal S)=\emptyset$, (\ref{it:HpqH8}) does not hold.

\mbox{(\ref{it:HpqH9})${}\Rightarrow{}$(\ref{it:HpqH10})} Suppose~\eqref{it:HpqH9} holds. Let $\mathcal S$ be a trivial starlike $(p+1,q)$-basis of $\mathcal H$. In order to prove that \eqref{it:HpqH10} holds, it suffices to prove that either $\mathcal H^\cup_\mathcal S$ is empty or $\core(\mathcal H^\cup_\mathcal S)\cap\ext(\mathcal S)\neq\emptyset$. By Remark~\ref{rmk:nontrivial}, either $\mathcal H^\cup_\mathcal S$ is empty or $\core(\mathcal H^\cup_\mathcal S)$ contains some member of $\mathcal S$. Notice that if the latter holds, then $\core(\mathcal H^\cup_\mathcal S)\cap\ext(\mathcal S)\neq\emptyset$ because the fact that $\mathcal S$ is starlike implies that each member of $\mathcal S$ contains at least one vertex from $\ext(\mathcal S)$. Therefore, \eqref{it:HpqH10} holds.

\mbox{(\ref{it:HpqH10})${}\Rightarrow{}$(\ref{it:HpqH1})} Suppose~\eqref{it:HpqH1} does not hold. Let $\mathcal H'$ be a $(p,q)$-intersecting partial subhypergraph of $\mathcal H$ having an $s$-core where $s<q$. Let $Z=\core(\mathcal H')$ and $E(\mathcal H')=\{E_1',\ldots,E_{m'}'\}$ where, necessarily, $m'\geq p+1$. We assume, without loss of generality, that $\mathcal H'$ is minimal; i.e., $\mathcal H'-E_i'$ has $q^+$-core for each $i\in\{1,\ldots,m'\}$. Hence, $\core(\mathcal H'-E_i')-Z$ is a $(q-s)^+$-set and let $T_i$ be any $(q-s)$-subset of $\core(\mathcal H'-E_i')-Z$, for each $i\in\{1,\ldots,m'\}$. Since $\mathcal H'$ is a partial subhypergraph of $\mathcal H$, for each $i\in\{1,\ldots,p+1\}$, there is an edge $E_i$ of $\mathcal H$ such that $E_i\cap V(\mathcal H')=E_i'$. By construction, $T_i\cap E_i=\emptyset$ but $T_i\cup Z\subseteq E_j$ for each two different $i,j\in\{1,\ldots,p+1\}$. Thus, $T_1,\ldots,T_{p+1}$ are pairwise disjoint and $\mathcal S=\{T_1\cup Z,\ldots,T_{p+1}\cup Z\}$ is a starlike $(p+1,q)$-basis of $\mathcal H$ such that $E_1,\ldots,E_{p+1}\in E(\mathcal H_\mathcal S^\cup)$. Hence, (\ref{it:HpqH10}) does not hold because $\mathcal H_\mathcal S^\cup$ is nonempty and $\core(\mathcal H_\mathcal S^\cup)\cap\ext(\mathcal S)=\core(\mathcal H_\mathcal S^\cup)\cap(T_1\cup\cdots\cup T_{p+1})\subseteq E_1\cap\cdots\cap E_{p+1}\cap(T_1\cup\cdots\cup T_{p+1})=\emptyset$.\end{proof}

The remaining of this section is devoted to addressing the problem of recognizing hereditary $(p,q)$-Helly hypergraphs. Firstly, we derive from the equivalence (\ref{it:HpqH1})${}\Leftrightarrow{}$(\ref{it:HpqH9}) of the theorem above that Lemma~\ref{lem:technical} is still valid for hereditary $(p,q)$-Helly hypergraphs.

\begin{lem}\label{lem:technical2} Lemma~\ref{lem:technical} is still valid if `$(p,q)$-Helly' is replaced by `hereditary $(p,q)$-Helly'.\end{lem}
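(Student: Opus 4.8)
The plan is to mirror the proof of Lemma~\ref{lem:technical} almost verbatim, replacing its use of Corollary~\ref{cor:pqHelly} by the equivalence (\ref{it:HpqH1})${}\Leftrightarrow{}$(\ref{it:HpqH9}) of Theorem~\ref{thm:HpqHelly}. Recall that Corollary~\ref{cor:pqHelly} reduces testing the $(p,q)$-Helly property to checking, for each nontrivial $(p+1,q)$-basis $\mathcal S$, whether $\core(\mathcal H^\cup_\mathcal S)$ is a $q^+$-set, whereas (\ref{it:HpqH9}) reduces testing the hereditary $(p,q)$-Helly property to checking, for each nontrivial \emph{starlike} $(p+1,q)$-basis $\mathcal S$, whether $\core(\mathcal H^\cup_\mathcal S)\cap\ext(\mathcal S)\neq\emptyset$. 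Since the starlike bases form a subfamily of all $(p+1,q)$-bases, I would keep the two-stage backtracking over the two tries of Lemma~\ref{lem:technical} completely unchanged (in particular the first stage, which precomputes for each relevant union $P$ whether $\mathcal H_P$ is empty and, if not, its core in $O(f)$ time), and only alter the test performed upon reaching a leaf node of the second trie.

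First I would check that the list $S_1,\ldots,S_N$ still captures every basis that matters. Each member of a nontrivial starlike $(p+1,q)$-basis $\mathcal S=\{T_1\cup Z,\ldots,T_{p+1}\cup Z\}$ is in fact a $q$-subset of some edge of $\mathcal H$: because $p+1\geq 2$, for each $i$ one may pick $\ell\neq i$, and the support set $U-T_\ell$, which contains $T_i\cup Z$, is by nontriviality contained in an edge of $\mathcal H$. Hence every such member lies in $\{S_1,\ldots,S_N\}$, so enumerating all $(p+1)$-subsets of the list and discarding the non-starlike ones loses none of the bases constrained by (\ref{it:HpqH9}).

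Next I would describe the modified leaf test. At a leaf $(j_1,\ldots,j_{p+1})$ with $\mathcal S=\{S_{j_1},\ldots,S_{j_{p+1}}\}$ and support sets $P_1,\ldots,P_{p+1}$, the iterators $I_1,\ldots,I_{p+1}$ still give, in $O(r)$ time each, whether $\mathcal H_{P_\ell}$ is empty and, if not, access to $\core(\mathcal H_{P_\ell})$, so nontriviality is decided exactly as before. To decide whether $\mathcal S$ is starlike I would tally, in $O(pq)=O(p)$ time since $q$ is fixed, how many members of $\mathcal S$ contain each vertex: $\mathcal S$ is starlike precisely when no vertex lies in at least $2$ and at most $p$ members, and in that case $\ext(\mathcal S)$ is exactly the set of vertices lying in a single member, again read off in $O(p)$ time. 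Finally, for a nontrivial starlike $\mathcal S$, I would form $\core(\mathcal H^\cup_\mathcal S)$ as the intersection $\core(\mathcal H_{P_1})\cap\cdots\cap\core(\mathcal H_{P_{p+1}})$ and test it for a common element with $\ext(\mathcal S)$, all in $O(pr)$ time. Thus the per-leaf cost remains $O(pr)$, exactly as in Lemma~\ref{lem:technical}, the two stages still run within $O\bigl((rN+f+n)\binom Np\bigr)$ time, and by (\ref{it:HpqH1})${}\Leftrightarrow{}$(\ref{it:HpqH9}) the algorithm correctly recognizes hereditary $(p,q)$-Helly hypergraphs.

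The point requiring care, rather than a genuine obstacle, is the verification in the second paragraph that nontriviality forces every member of a starlike basis into the list $S_1,\ldots,S_N$; once that is settled, the data structures and the $O(pr)$-per-leaf accounting of Lemma~\ref{lem:technical} transfer without change, since computing $\ext(\mathcal S)$ and intersecting it with $\core(\mathcal H^\cup_\mathcal S)$ costs no more asymptotically than the $q^+$-core test it replaces.
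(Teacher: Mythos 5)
Your proposal is correct and takes essentially the same approach as the paper: both rely on the equivalence (\ref{it:HpqH1})${}\Leftrightarrow{}$(\ref{it:HpqH9}) of Theorem~\ref{thm:HpqHelly} and rerun the two-stage trie backtracking of Lemma~\ref{lem:technical} unchanged except at the leaves of the second trie, where the $q^+$-core test is replaced by deciding starlikeness via vertex-occurrence counts and testing $\core(\mathcal H^\cup_\mathcal S)\cap\ext(\mathcal S)\neq\emptyset$, within the same $O(pr)$ per-leaf budget. The only cosmetic difference is that the paper maintains the occurrence vector and a counter incrementally along the depth-first search while you recompute the tally at each leaf (same asymptotics), and your explicit verification that every member of a nontrivial basis is a $q$-subset of an edge, hence appears in the list $S_1,\ldots,S_N$, makes precise a point the paper leaves implicit.
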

\begin{proof} By virtue of the equivalence (\ref{it:HpqH1})${}\Leftrightarrow{}$(\ref{it:HpqH9}) of Theorem~\ref{thm:HpqHelly}, $\mathcal H$ is hereditary $(p,q)$-Helly if and only if $\core(\mathcal H^\cup_\mathcal S)\cap\ext(\mathcal S)\neq\emptyset$ for each nontrivial starlike $(p+1,q)$-basis $\{S_{j_1},\ldots,S_{j_{p+1}}\}$, where $1\leq j_1<\cdots<j_{p+1}\leq N$.

We proceed as in the proof of Lemma~\ref{lem:technical}, except that, each time we visit a node $(j_1,\ldots,j_{k})$ in the second stage, we update $S_{j_1}\cup\cdots\cup S_{j_k}$ (using an $n$-vector to keep track of the number of occurrences of each vertex of $\mathcal H$ among $S_{j_1},\ldots,S_{j_k}$) and update a counter that indicates the number of vertices of $V(\mathcal H)$ that belong to more than one but less than $p+1$ of the sets $S_{j_1},\ldots,S_{j_k}$ in $O(q)$ time, which allows us, at each leaf node $(j_1,\ldots,j_{p+1})$ of the second stage, to decide whether $\mathcal S=\{S_{j_1},\ldots,S_{j_{p+1}}\}$ is starlike in $O(pq)$ time by comparing the value of the counter with $\vert\core(\mathcal S)\vert$. Moreover, since we can compute $\ext(\mathcal S)$ in $O(pq)$ time and the intersection between $\core(\mathcal H^\cup_\mathcal S)$ and $\ext(\mathcal S)$ in $O(pq+r)$ time, all the additional operations take at most $O\bigl((q+pq+r)\frac Np\binom Np\bigr)=O\bigl(rN\binom Np\bigr)$ time, which completes the proof of the lemma.\end{proof}

Using the results above, we derive two recognition algorithms for hereditary $(p,q)$-Helly hypergraphs analogous to those of Theorem~\ref{thm:HpHelly-rec}, both of which are polynomial-time when $p$ and $q$ are fixed.

\begin{thm}\label{thm:HpqHelly-rec} If $q$ is any fixed positive integer, then the recognition problem for hereditary $(p,q)$-Helly hypergraphs, where $p$ is part of the input:
\begin{enumerate}[(i)]
 \item\label{it:HpqHrec-1} can be solved in $O\bigl(p^2r\binom m{p+1}\bigr)$ time;
 \item\label{it:HpqHrec-2} can be solved in $O\bigl((rN+M)\binom Np\bigr)$ time, where $N=\binom nq$.
\end{enumerate}
In particular, if $p$ and $q$ are both fixed, the above bounds become $O(rm^{p+1})$ and $O(rn^{(p+1)q}+Mn^{pq})$, respectively.
\end{thm}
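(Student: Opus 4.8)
The plan is to establish each of the two bounds by exhibiting a list of $q$-sets to which either Lemma~\ref{lem:technical2} or a direct forbidden-structure search can be applied, and then to carry out the time analysis. As in the proof of Theorem~\ref{thm:rec-pqHelly}, I would first dispose of the trivial case $q>r$ (decidable in $O(M)$ time), after which we may assume $q\leq r$.

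For bound~\eqref{it:HpqHrec-2}, I would invoke Lemma~\ref{lem:technical2} in exactly the same manner as Theorem~\ref{thm:rec-pqHelly}\eqref{it:2} invoked Lemma~\ref{lem:technical}: take $S_1,\ldots,S_N$ to be all $N=\binom nq$ many $q$-subsets of $V(\mathcal H)$, enumerable in $O(qN)$ time, and observe that for any $P\subseteq V(\mathcal H)$ one can decide whether $\mathcal H_P$ is empty and, if not, compute $\core(\mathcal H_P)$ in $O(M)$ time, so the function $f$ of the lemma is $O(M)$. Substituting $f=O(M)$ and $N=\binom nq$ into the $O\bigl((rN+f+n)\binom Np\bigr)$ bound of Lemma~\ref{lem:technical2} yields $O\bigl((rN+M)\binom Np\bigr)$ since $n\leq M$; specializing to fixed $p,q$ gives $O(rn^{(p+1)q}+Mn^{pq})$ via $N=O(n^q)$, $\binom Np=O(n^{pq})$.

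For the sharper bound~\eqref{it:HpqHrec-1}, the key is to avoid ranging over all $q$-subsets of the whole vertex set and instead search only over edges. Here I would lean on the equivalence (\ref{it:HpqH1})${}\Leftrightarrow{}$(\ref{it:HpqH7}) (or~(\ref{it:HpqH6})): $\mathcal H$ fails to be hereditary $(p,q)$-Helly precisely when some partial subhypergraph is isomorphic to $\mathcal J_{p+1,q,s}$ for some $s\in\{0,\ldots,q-1\}$, which by the definition of $\mathcal J_{p+1,q,s}$ is witnessed by $p+1$ edges $E_1,\ldots,E_{p+1}$ of $\mathcal H$ together with pairwise disjoint $(q-s)$-sets $T_1,\ldots,T_{p+1}$ with $T_i\cap E_i=\emptyset$ but $T_i\subseteq E_j$ for $j\neq i$, and a common $s$-set $Z\subseteq\bigcap_i E_i$. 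The algorithm therefore enumerates the $\binom m{p+1}$ choices of $(p+1)$-tuples of edges and, for each, checks in $O(p^2 r)$ time (by comparing the chosen edges pairwise, tracking which vertices lie outside exactly one edge and inside the rest) whether such witnessing sets $T_1,\ldots,T_{p+1}$ and $Z$ can be found. This gives the total $O\bigl(p^2 r\binom m{p+1}\bigr)$ and, for fixed $p,q$, the bound $O(rm^{p+1})$.

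The main obstacle I anticipate is the per-tuple test for bound~\eqref{it:HpqHrec-1}: one must argue that the existence of a $\mathcal J_{p+1,q,s}$-witness on a \emph{fixed} set of $p+1$ edges reduces to a purely local, cardinality-based condition checkable in $O(p^2 r)$ time, rather than requiring a further search over subsets of the vertices. The correct formulation is that for each ordered pair $(i,j)$ with $i\neq j$ one records the set $E_j\setminus E_i$ (the candidate region from which $T_i$ must draw vertices relative to $E_i$), and a witness exists iff one can pick disjoint $(q-s)$-sets $T_i\subseteq\bigcap_{j\neq i}E_j\setminus E_i$ while reserving an $s$-set $Z$ inside $\bigcap_i E_i$ with the total vertex budget matching $(p+1)(q-s)+s$; establishing that this selection is feasible exactly when the relevant intersections are large enough, and that feasibility can be decided greedily in the stated time for each $s$, is the delicate point. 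Once this local characterization is in place, summing over the $q$ values of $s$ (a constant, since $q$ is fixed) and over all edge-tuples closes the argument.
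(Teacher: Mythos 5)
Your proposal is correct, and for bound~(\ref{it:HpqHrec-2}) it coincides with the paper's proof: the paper likewise feeds the $N=\binom nq$ $q$-subsets of $V(\mathcal H)$ into Lemma~\ref{lem:technical2}, with $f=O(M)$, exactly as bound~(\ref{it:2}) of Theorem~\ref{thm:rec-pqHelly} was derived from Lemma~\ref{lem:technical}. For bound~(\ref{it:HpqHrec-1}) you take a genuinely different (though equivalent) route: the paper uses the equivalence (\ref{it:HpqH1})${}\Leftrightarrow{}$(\ref{it:HpqH4}) of Theorem~\ref{thm:HpqHelly} and, for each of the $O\bigl(\binom m{p+1}\bigr)$ partial $(p+1)$-hypergraphs $\mathcal H'$, simply computes $\core(\mathcal H')$ and the cores of all $p+1$ partial $p$-hypergraphs of $\mathcal H'$ in $O(p^2r)$ time and tests the strong $(p,q)$-Helly condition directly, whereas you use the forbidden-subhypergraph characterization (\ref{it:HpqH1})${}\Leftrightarrow{}$(\ref{it:HpqH7}) and search each edge tuple for a $\mathcal J_{p+1,q,s}$-witness. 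The ``delicate point'' you flag --- that witness feasibility on a fixed tuple $E_1,\ldots,E_{p+1}$ reduces to a local cardinality check --- in fact resolves in one line: the candidate regions $R_i=\bigl(\bigcap_{j\neq i}E_j\bigr)\setminus E_i$ are \emph{automatically} pairwise disjoint (a vertex in $R_i$ and $R_{i'}$ with $i\neq i'$ would both lie in and avoid $E_{i'}$) and disjoint from $\bigcap_iE_i$, so no selection problem arises and a witness exists for some $s\in\{0,\ldots,q-1\}$ if and only if $\min_i\vert R_i\vert\geq 1$ and $\bigl\vert\bigcap_iE_i\bigr\vert\geq q-\min_i\vert R_i\vert$; this is computable in $O(pr)$ time per tuple by one pass with a membership counter, well within your $O(p^2r)$ budget. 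Note that $R_i=\core(\mathcal H'-E_i)-\core(\mathcal H')$, so your cardinality test is literally the negation of the paper's strong-$(p,q)$-Helly test in disguise, which is why both yield $O\bigl(p^2r\binom m{p+1}\bigr)$; the paper's formulation buys a test that needs no disjointness argument at all, while yours buys an explicit forbidden-structure certificate when the input is rejected.
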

\begin{proof} Let $\mathcal H$ be the input hypergraph. Bound (\ref{it:HpqHrec-1}) follows from the equivalence (\ref{it:HpqH1})${}\Leftrightarrow{}$(\ref{it:HpqH4}) of Theorem~\ref{thm:HpqHelly}. In fact, for each partial $(p+1)$-hypergraph $\mathcal H'$ of $\mathcal H$, we can compute the core of $\mathcal H'$ and every partial $p$-hypergraph of $\mathcal H'$ in $O(p^2r)$ time. Since there are $O\bigl(\binom m{p+1}\bigr)$ such partial hypergraphs $\mathcal H'$, deciding whether statement (\ref{it:HpqH4}) of Theorem~\ref{thm:HpqHelly} holds takes at most $O\bigl(p^2r\binom m{p+1}\bigr)$ time. The derivation of bound (\ref{it:HpqHrec-2}) from Lemma~\ref{lem:technical2} is analogous to that of bound (\ref{it:2}) of Theorem~\ref{thm:rec-pqHelly} from Lemma~\ref{lem:technical}.\end{proof}

We can also extend the hardness result contained in (\ref{it:HpH-rec3}) of Theorem~\ref{thm:HpHelly-rec} to hereditary $(p,q)$-Helly hypergraphs as follows.

\begin{thm} The recognition problem for hereditary $(p,q)$-Helly hypergraphs, for positive integers $p$ and $q$, is co-NP-complete if $p$ is part of the input (even if $q$ is fixed).\end{thm}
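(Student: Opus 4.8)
The plan is to prove membership in co-NP and co-NP-hardness separately, the combination giving co-NP-completeness.

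For membership, I would invoke the characterization by minimal forbidden partial subhypergraphs, that is, the equivalence (\ref{it:HpqH1})${}\Leftrightarrow{}$(\ref{it:HpqH7}) of Theorem~\ref{thm:HpqHelly}: a hypergraph $\mathcal H$ fails to be hereditary $(p,q)$-Helly precisely when it has a partial subhypergraph isomorphic to $\mathcal J_{p+1,q,s}$ for some $s\in\{0,\ldots,q-1\}$. Such an obstruction is specified by the value of $s$, a choice of $p+1$ edges of $\mathcal H$, and the vertex set on which to induce; this witness has size polynomial in the input and can be verified in polynomial time. Hence deciding non-membership lies in NP, so the recognition problem lies in co-NP.

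For hardness, I would reduce from the recognition of hereditary $p$-Helly hypergraphs with $p$ part of the input, which is co-NP-complete by (\ref{it:HpH-rec3}) of Theorem~\ref{thm:HpHelly-rec}. Fixing $q$, given an instance $\mathcal H=(X,\mathcal E)$ I construct $\mathcal H'$ by adjoining $q-1$ fresh vertices $z_1,\ldots,z_{q-1}$ to every edge; that is, $\mathcal H'$ has vertex set $X\cup\{z_1,\ldots,z_{q-1}\}$ and edge family $\{E\cup\{z_1,\ldots,z_{q-1}\}:E\in\mathcal E\}$. This is computable in polynomial time, since for fixed $q$ it only enlarges each edge by a constant. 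The claim to be proved is that $\mathcal H$ is hereditary $p$-Helly if and only if $\mathcal H'$ is hereditary $(p,q)$-Helly, which together with the membership above yields co-NP-completeness. Both sides are handled through (\ref{it:HpqH1})${}\Leftrightarrow{}$(\ref{it:HpqH7}) of Theorem~\ref{thm:HpqHelly}, applied with parameter $q$ to $\mathcal H'$ and with parameter $1$ to $\mathcal H$, where the only obstruction is $\mathcal J_{p+1,1,0}=\mathcal K_{p+1}^p$.

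The easy direction is that a $\mathcal K_{p+1}^p$ partial subhypergraph of $\mathcal H$ on vertices $t_1,\ldots,t_{p+1}$ lifts, after adjoining $z_1,\ldots,z_{q-1}$ to the inducing vertex set, to a copy of $\mathcal J_{p+1,q,q-1}$ in $\mathcal H'$ (with core $\{z_1,\ldots,z_{q-1}\}$ and singletons $T_i=\{t_i\}$), so non-membership is preserved. I expect the main obstacle to be the converse. Suppose $\mathcal H'$ contains some $\mathcal J_{p+1,q,s}$ as a partial subhypergraph, witnessed by edges $E_1',\ldots,E_{p+1}'$, pairwise disjoint $(q-s)$-sets $T_1,\ldots,T_{p+1}$, and core $Z$. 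The key observation is that each $z_i$ lies in every edge of $\mathcal H'$ and hence cannot lie in any $T_j$, which is excluded from $E_j'$; thus $T_1\cup\cdots\cup T_{p+1}\subseteq X$. Writing $E_i'=E_i\cup\{z_1,\ldots,z_{q-1}\}$ with $E_i\in\mathcal E$ and choosing any $t_i\in T_i$ (nonempty since $q-s\geq 1$), one checks that $t_i\notin E_i$ while $t_i\in E_j$ for every $j\neq i$, so $E_1,\ldots,E_{p+1}$ induce a $\mathcal K_{p+1}^p$ on $\{t_1,\ldots,t_{p+1}\}$ in $\mathcal H$. The delicate point is exactly that the inducing subset of $\mathcal H'$ may discard some of the $z_i$; what must be argued is that whichever $z_i$ survive remain common to all surviving edges and so can contribute only to the core $Z$, never to the excluded sets $T_j$. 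Consequently $\mathcal H'$ admits a $\mathcal J_{p+1,q,s}$ obstruction if and only if $\mathcal H$ admits a $\mathcal K_{p+1}^p$ obstruction, which establishes the equivalence and completes the reduction.
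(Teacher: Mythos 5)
Your proof is correct and follows essentially the same route as the paper: the identical reduction (adjoining $q-1$ fresh universal vertices to every edge of $\mathcal H$) from the co-NP-complete recognition of hereditary $p$-Helly hypergraphs, with co-NP membership drawn from Theorem~\ref{thm:HpqHelly}. The only cosmetic difference is that the paper certifies non-membership via a partial $(p+1)$-hypergraph that is not strong $(p,q)$-Helly (item~(\ref{it:HpqH4})) and verifies the reduction in one line through the incidence-matrix characterization~(\ref{it:HpqH6}), whereas you use the equivalent forbidden-partial-subhypergraph characterization~(\ref{it:HpqH7}) for both and spell out the lifting of $\mathcal K_{p+1}^p$ to $\mathcal J_{p+1,q,q-1}$ and the projection of any $\mathcal J_{p+1,q,s}$ back to $\mathcal K_{p+1}^p$ in more detail than the paper does.
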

\begin{proof} Let $\mathcal H$ be the input hypergraph. The recognition problem is in co-NP because, by the equivalence (\ref{it:HpqH1})${}\Leftrightarrow{}$(\ref{it:HpqH4}) of Theorem~\ref{thm:HpqHelly}, if $\mathcal H$ is not hereditary $(p,q)$-Helly, then there is a certificate in the form of a partial $(p+1)$-hypergraph of $\mathcal H$ which is not $(p,q)$-strong Helly. Assume $q\geq 1$ fixed and let $\mathcal H'$ be the hypergraph that arises from $\mathcal H$ by adding $q-1$ new vertices $v_1,\ldots,v_{q-1}$ to its vertex set as well as to each of its edges. Clearly, $M(\mathcal H')$ arises from $M(\mathcal H)$ by adding $q-1$ columns filled with $1$'s. Hence, (\ref{it:HpqH1})${}\Leftrightarrow{}$(\ref{it:HpqH6}) of Theorem~\ref{thm:HpqHelly} implies that $\mathcal H$ is hereditary $p$-Helly if and only if the hypergraph $\mathcal H'$ is hereditary $(p,q)$-Helly. The result now follows from statement (\ref{it:HpH-rec3}) of Theorem~\ref{thm:HpHelly-rec}.\end{proof}

We prove the following result for future reference.

\begin{lem}\label{lem:H1qHelly} If $q$ is a positive integer, then the following statements are equivalent for each simple hypergraph $\mathcal H$:
\begin{enumerate}[(i)]
 \item\label{it:H1qH1} $\mathcal H$ is hereditary $(1,q)$-Helly
 \item\label{it:H1qH2} $\mathcal H$ has at most one $q^+$-edge.
 \item\label{it:H1qH3} The union of all the $q$-subsets of the edges of $\mathcal H$ is empty or is contained in some edge of $\mathcal H$.
 \item\label{it:H1qH4} The union of all the $q$-subsets of the edges of $\mathcal H$ is empty or an edge of $\mathcal H$.
\end{enumerate}\end{lem}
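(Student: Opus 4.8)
The plan is to prove Lemma~\ref{lem:H1qHelly} by establishing the cycle of implications \eqref{it:H1qH1}${}\Rightarrow{}$\eqref{it:H1qH2}${}\Rightarrow{}$\eqref{it:H1qH3}${}\Rightarrow{}$\eqref{it:H1qH4}${}\Rightarrow{}$\eqref{it:H1qH1}, since each consecutive pair is close enough that the individual steps should be short. The key observation driving everything is Remark~\ref{rmk:1qHelly}: the $(1,q)$-Helly property is governed entirely by the $q^+$-edges and their cores. For the hereditary version I would first unwind the definition, noting that $\mathcal H$ is hereditary $(1,q)$-Helly if and only if every subhypergraph of $\mathcal H$ is $(1,q)$-Helly, and then use the equivalence \eqref{it:HpqH1}${}\Leftrightarrow{}$\eqref{it:HpqH7} of Theorem~\ref{thm:HpqHelly} specialized to $p=1$: the forbidden configurations $\mathcal J_{2,q,s}$ for $s\in\{0,\ldots,q-1\}$ are exactly the obstructions to appearing as partial subhypergraphs.

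For \eqref{it:H1qH1}${}\Rightarrow{}$\eqref{it:H1qH2} I would argue by contrapositive: if $\mathcal H$ has two distinct $q^+$-edges $E_1$ and $E_2$, then because the hypergraph is simple they differ, so one may pick disjoint $q$-subsets witnessing a partial subhypergraph isomorphic to some $\mathcal J_{2,q,s}$ (where $s=\vert E_1\cap E_2\vert$ truncated appropriately, restricting to a $2q-s$ vertex set), contradicting hereditary $(1,q)$-Helly via \eqref{it:HpqH7}. The implication \eqref{it:H1qH2}${}\Rightarrow{}$\eqref{it:H1qH3} is nearly immediate: if there is at most one $q^+$-edge, then every $q$-subset of every edge lies inside that single $q^+$-edge (a $q$-subset of an edge forces that edge to be a $q^+$-edge), so the union of all such $q$-subsets is contained in it. For \eqref{it:H1qH3}${}\Rightarrow{}$\eqref{it:H1qH4}, I would note that the union in question either is empty or, being contained in an edge $E$ and also containing every vertex of $E$ that lies in a $q$-subset of $E$, must in fact equal $E$ (here one uses that if $E$ is a $q^+$-edge then $E$ itself is the union of its own $q$-subsets); the only subtlety is handling $q^-$-edges, which contribute no $q$-subsets and hence do not affect the union.

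The step I expect to be the main obstacle is \eqref{it:H1qH4}${}\Rightarrow{}$\eqref{it:H1qH1}, the full hereditary conclusion. Here I would take an arbitrary $(1,q)$-intersecting partial subhypergraph $\mathcal H''$ of $\mathcal H$ and show it has $q^+$-core. Each edge of $\mathcal H''$ arises as $E\cap W$ for some edge $E$ of $\mathcal H$ and some vertex subset $W$; being $(1,q)$-intersecting forces every such edge to be a $q^+$-set, hence each underlying $E$ is a $q^+$-edge of $\mathcal H$. By \eqref{it:H1qH4}, letting $D$ denote the common union/edge equal to the union of all $q$-subsets, every $q^+$-edge of $\mathcal H$ is contained in $D$ and $D$ is itself an edge; the delicate part is translating this ``all $q^+$-edges live inside a single edge'' structure into a lower bound of $q$ on the core of $\mathcal H''$, which I would do by showing that the restriction of $D$ witnesses enough common intersection. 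Concretely, I would verify that a partial hypergraph with a single $q^+$-edge is automatically $(1,q)$-Helly by Remark~\ref{rmk:1qHelly}\eqref{it:1qH2}, and then lift this to \emph{hereditary} $(1,q)$-Helly by checking closure under taking subhypergraphs—since restricting to any $W\subseteq V(\mathcal H)$ preserves the ``at most one $q^+$-edge'' property (an induced restriction cannot create new $q^+$-edges that fail to be nested inside the image of $D$). This closure-under-restriction check is the real content and the place where one must be careful that intersecting $D$ with $W$ does not spawn multiple incomparable $q^+$-edges.
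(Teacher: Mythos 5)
Your proposal is correct, but it is organized quite differently from the paper's proof. The paper dispatches the equivalence (\ref{it:H1qH1})${}\Leftrightarrow{}$(\ref{it:H1qH2}) in one stroke via the strong $(1,q)$-Helly characterization, i.e., the equivalence (\ref{it:HpqH1})${}\Leftrightarrow{}$(\ref{it:HpqH3}) of Theorem~\ref{thm:HpqHelly}: hereditary $(1,q)$-Helly means that every two $q^+$-edges $E_1,E_2$ satisfy $E_1\cap E_2\in\{E_1,E_2\}$, which simplicity rules out for distinct edges; it then closes with the easy chain (\ref{it:H1qH2})${}\Rightarrow{}$(\ref{it:H1qH4})${}\Rightarrow{}$(\ref{it:H1qH3})${}\Rightarrow{}$(\ref{it:H1qH2}). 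You instead run the cycle (\ref{it:H1qH1})${}\Rightarrow{}$(\ref{it:H1qH2})${}\Rightarrow{}$(\ref{it:H1qH3})${}\Rightarrow{}$(\ref{it:H1qH4})${}\Rightarrow{}$(\ref{it:H1qH1}), obtaining (\ref{it:H1qH1})${}\Rightarrow{}$(\ref{it:H1qH2}) from the forbidden-subhypergraph item (\ref{it:HpqH7}) of the same theorem --- your construction of a partial subhypergraph isomorphic to $\mathcal J_{2,q,s}$ with $s=\min\{\vert E_1\cap E_2\vert,q-1\}$ is sound, though note that you need \emph{incomparability} of $E_1$ and $E_2$ (which simplicity provides), not mere distinctness, to find nonempty disjoint $T_1\subseteq E_1-E_2$ and $T_2\subseteq E_2-E_1$ --- and proving (\ref{it:H1qH4})${}\Rightarrow{}$(\ref{it:H1qH1}) by hand via Remark~\ref{rmk:1qHelly} and closure of the one-$q^+$-edge property under restriction, a step the paper never needs because its cycle routes back through (\ref{it:H1qH2}). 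Your last step buys a self-contained argument for the hardest implication without a second appeal to Theorem~\ref{thm:HpqHelly}, at the cost of the closure check, and there you must make the use of simplicity explicit: from (\ref{it:H1qH4}) you only get that every $q^+$-edge is \emph{contained} in the edge $D$, and it is simplicity that upgrades containment to equality, making $D$ the unique $q^+$-edge; then any $q^+$-trace $E\cap W$ forces $\vert E\vert\geq q$, hence $E=D$, so all $q^+$-edges of any subhypergraph coincide with the single set $D\cap W$ and the $q^+$-core bound follows from Remark~\ref{rmk:1qHelly}(\ref{it:1qH2}). This is not pedantry: the non-simple hypergraph with edges $\{1,2\}$, $\{1,3\}$, $\{1,2,3\}$ satisfies (\ref{it:H1qH4}) for $q=2$ yet fails to be $(1,2)$-Helly, so the implication genuinely breaks at exactly the point you flagged if simplicity is not invoked.
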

\begin{proof} By the equivalence (\ref{it:HpqH1})${}\Leftrightarrow{}$(\ref{it:HpqH3}) of Theorem~\ref{thm:HpqHelly}, $\mathcal H$ is hereditary $(1,q)$-Helly if and only if, for every two $q^+$-edges $E_1$ and $E_2$ of $\mathcal H$, either $E_1\cap E_2=E_1$ or $E_1\cap E_2=E_2$, which in turn holds precisely when $E_1=E_2$ (because $\mathcal H$ is simple). This proves (\ref{it:H1qH1})${}\Leftrightarrow{}$(\ref{it:H1qH2}). Let $U$ denote the union of all the $q$-subsets of the edges of $\mathcal H$. The implication (\ref{it:H1qH2})${}\Rightarrow{}$(\ref{it:H1qH4}) is clear because, if $\mathcal H$ has at most one $q^+$-edge, then either $\mathcal H$ has no $q^+$-edges and $U=\emptyset$, or $\mathcal H$ has exactly one $q^+$-edge and $U=E$. Since (\ref{it:H1qH4})${}\Rightarrow{}$(\ref{it:H1qH3}) holds trivially, it only remains to show that (\ref{it:H1qH3})${}\Rightarrow{}$(\ref{it:H1qH2}). In order to do so, suppose that (\ref{it:H1qH2}) does not hold and let $E_1$ and $E_2$ be two different $q^+$-edges of $\mathcal H$. Thus, $U$ contains $E_1\cup E_2$ and, since $\mathcal H$ is simple, $E_1$ and $E_2$ are inclusion-wise incomparable. In particular, $U$ properly contains $E_1$. Hence, (\ref{it:H1qH3}) does not hold, since otherwise $U$ would be contained in some edge $E_3$ of $\mathcal H$ which would properly contain the edge $E_1$. This completes the proof of (\ref{it:H1qH3})${}\Rightarrow{}$(\ref{it:H1qH2}) and thus of the lemma.\end{proof}

\section{The $(p,q)$-clique-Helly property of graphs}\label{sec:HpqCHelly}

In this section, we study the problems of characterizing and recognizing $(p,q)$-clique-Helly graphs and hereditary $(p,q)$-clique-Helly graphs. We generalize the characterization of $p$-clique-Helly graphs proved in \cite{MR2365055} in terms of expansions to $(p,q)$-clique-Helly graphs and make an improvement in the time complexity given in the same work for the recognition problem of $(p,q)$-clique-Helly graphs. We also characterize hereditary $(p,q)$-clique graphs in several ways, including a characterization by forbidden induced subgraphs, and derive a polynomial-time recognition algorithm for each fixed $p$ and $q$. In contrast, we show that the recognition problem is NP-hard if $p$ or $q$ is part of the input. Our results for hereditary $(p,q)$-clique-Helly graphs generalize results for hereditary $p$-clique-Helly graphs proved in \cite{MR2404220}.

\subsection{$(p,q)$-clique-Helly graphs}

For each positive integer $q$, let $\varPhi_q(G)$ be the graph defined as follows: the vertices of $\varPhi_q(G)$ are the $q$-cliques of $G$ and two different vertices of $\varPhi_q(G)$ are adjacent if and only if they are contained in a common clique of $G$. The operator $\varPhi_q$ coincides with the operator $\varPhi_{q,2q}$ defined in \cite[p.\ 136]{Marcia-doc} and $\varPhi_2$ is the \emph{edge clique graph operator} introduced in~\cite{MR753607}. The operator $\varPhi_q$ was used in~\cite{MR2365055} in order to characterize $(p,q)$-clique-Helly graphs as follows.

\begin{thm}[{\cite{MR2365055}}]\label{thm:Phi}
A graph $G$ is $(p,q)$-clique-Helly if and only if $\varPhi_q(G)$ is $p$-clique-Helly.
\end{thm}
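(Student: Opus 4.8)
The plan is to reduce the statement to the hypergraph equivalence already established in Corollary~\ref{cor:var2}, by passing from $G$ to its maximal-clique hypergraph. Let $\mathcal H=\mathcal H(G)$ be the hypergraph on vertex set $V(G)$ whose edges are exactly the maximal cliques of $G$. By the very definition of the $(p,q)$-clique-Helly property, $G$ is $(p,q)$-clique-Helly if and only if $\mathcal H$ is $(p,q)$-Helly; likewise, $\varPhi_q(G)$ is $p$-clique-Helly (that is, $(p,1)$-clique-Helly) if and only if the maximal-clique hypergraph $\mathcal H(\varPhi_q(G))$ is $p$-Helly. Since Corollary~\ref{cor:var2} already tells us that $\mathcal H$ is $(p,q)$-Helly if and only if the hypergraph operator $\varPhi_q(\mathcal H)$ is $p$-Helly, it suffices to prove the purely combinatorial identity $\mathcal H(\varPhi_q(G))=\varPhi_q(\mathcal H)$, i.e., that the family of maximal cliques of the \emph{graph} $\varPhi_q(G)$ coincides with the edge family of the \emph{hypergraph} $\varPhi_q(\mathcal H)$.

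The key step is to describe the cliques of $\varPhi_q(G)$. I would first prove the following: a family $\mathcal Q$ of $q$-cliques of $G$ is a clique of $\varPhi_q(G)$ if and only if $\bigcup\mathcal Q$ is a clique of $G$. The nontrivial direction is the forward one, and it hinges on the precise adjacency rule in $\varPhi_q(G)$: if $\mathcal Q$ is pairwise adjacent, then for any two distinct vertices $u,v\in\bigcup\mathcal Q$, say $u\in Q_i$ and $v\in Q_j$, the $q$-cliques $Q_i$ and $Q_j$ lie in a common clique of $G$, so $Q_i\cup Q_j$ is a clique and hence $u$ and $v$ are adjacent (the case $Q_i=Q_j$ being immediate since each $Q_i$ is itself a clique); thus $\bigcup\mathcal Q$ is a clique. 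The converse is trivial, as any two $q$-subcliques of a common clique are contained in a common clique. This pairwise-to-joint passage is what I expect to be the main point of the argument, and it is exactly the feature that rules out ``spurious'' cliques of $\varPhi_q(G)$ that do not arise from cliques of $G$.

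From this characterization the maximal cliques of $\varPhi_q(G)$ are identified at once. On the one hand, for every maximal clique $C$ of $G$ with $\vert C\vert\geq q$, the family $\varphi_q(C)$ of its $q$-subcliques is a clique of $\varPhi_q(G)$ whose union is $C$; it is maximal, because adjoining any further $q$-clique $Q'$ would force $C\cup Q'$ to be a clique, whence $Q'\subseteq C$ by maximality of $C$ and $Q'\in\varphi_q(C)$ already. On the other hand, if $\mathcal Q$ is any maximal clique of $\varPhi_q(G)$, then $U=\bigcup\mathcal Q$ is a clique of $G$, so $U$ lies in some maximal clique $C$ of $G$ with $\vert C\vert\geq q$; then $\mathcal Q\subseteq\varphi_q(C)$ and maximality gives $\mathcal Q=\varphi_q(C)$. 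Since $C=\bigcup\varphi_q(C)$, distinct maximal cliques of $G$ yield distinct maximal cliques of $\varPhi_q(G)$, so the maximal cliques of $\varPhi_q(G)$ are precisely the sets $\varphi_q(C)$ as $C$ ranges over the maximal cliques of $G$ of cardinality at least $q$. These are exactly the edges $\varphi_q(E)$ of $\varPhi_q(\mathcal H)$ arising from the $q^+$-edges $E$ of $\mathcal H$, and the vertex sets agree as well, since the $q$-cliques of $G$ are exactly the $q$-subsets of maximal cliques, i.e.\ of edges of $\mathcal H$. Hence $\mathcal H(\varPhi_q(G))=\varPhi_q(\mathcal H)$, and chaining the three equivalences above completes the proof. (A few degenerate cases, such as maximal cliques of $G$ of size less than $q$, contribute nothing to either side and are harmless.)
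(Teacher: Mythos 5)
Your proposal is correct and follows essentially the same route the paper takes: it observes (right after Theorem~\ref{thm:CPP}) that Theorem~\ref{thm:Phi} is the specialization of Corollary~\ref{cor:var2} to $\mathcal H=\mathcal C(G)$, using the identity $\mathcal C(\varPhi_q(G))=\varPhi_q(\mathcal C(G))$ to chain the definitional equivalences exactly as you do. The only difference is that you additionally supply a (correct) proof of the Clique Preservation Property --- via the characterization that a family of $q$-cliques is a clique of $\varPhi_q(G)$ if and only if its union is a clique of $G$ --- whereas the paper simply cites it from \cite{MR2365055}.
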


Given a graph $G$, the \emph{clique hypergraph} $\mathcal C(G)$ of $G$ is the hypergraph whose vertex set coincides with the vertex set of $G$ and whose edge family is the set of maximal cliques of $G$. By definition, a graph is $p$-clique-Helly if and only if $\mathcal C(G)$ is $p$-Helly. The connection between the operator $\varPhi_q$ for graphs and the operator $\varPhi_q$ for hypergraphs defined in the preceding section is made explicit by the result below. The restriction to $q=2$ of the theorem below also appeared in \cite{MR753607} and \cite{Marcia-doc}.

\begin{thm}[Clique Preservation Property~\cite{MR2365055}]\label{thm:CPP} For each positive integer $q$ and each graph $G$, $\mathcal C(\varPhi_q(G))=\varPhi_q(\mathcal C(G))$; i.e., the maximal cliques of $\varPhi_q(G)$ are precisely the results of applying $\varphi_q$ to each of the maximal cliques of $G$ of cardinality at least $q$.\end{thm}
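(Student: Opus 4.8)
For each positive integer $q$ and each graph $G$, the maximal cliques of $\varPhi_q(G)$ are precisely the images $\varphi_q(K)$ of the maximal cliques $K$ of $G$ with $|K|\geq q$; equivalently, $\mathcal C(\varPhi_q(G))=\varPhi_q(\mathcal C(G))$.

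The plan is to unwind both sides of the claimed identity and reduce everything to a single structural lemma about cliques of $\varPhi_q(G)$. The vertices of $\varPhi_q(G)$ are exactly the $q$-cliques of $G$, which are precisely the $q$-subsets of $V(G)$ contained in some maximal clique; hence the vertex sets of $\mathcal C(\varPhi_q(G))$ and of $\varPhi_q(\mathcal C(G))$ already coincide, and it suffices to prove the equality of edge families, i.e.\ that the maximal cliques of $\varPhi_q(G)$ are exactly the sets $\varphi_q(K)$ as $K$ ranges over the maximal cliques of $G$ with $\vert K\vert\geq q$. The key lemma I would establish first is: a nonempty family $\mathcal Q$ of $q$-cliques of $G$ is a clique of $\varPhi_q(G)$ if and only if $\bigcup\mathcal Q$ is a clique of $G$. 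The ``if'' direction is immediate, since then any two members of $\mathcal Q$ lie in the common clique $\bigcup\mathcal Q$ and are therefore adjacent in $\varPhi_q(G)$. For the ``only if'' direction, given $u,v\in\bigcup\mathcal Q$ lying in members $Q_1,Q_2\in\mathcal Q$, either $Q_1=Q_2$ (so $u,v$ lie in one $q$-clique) or $Q_1,Q_2$ are adjacent in $\varPhi_q(G)$ and hence lie in a common clique of $G$; in either case $u$ and $v$ are adjacent, so $\bigcup\mathcal Q$ is a clique.

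Alongside this I would record the elementary fact that for any clique $C$ of $G$ with $\vert C\vert\geq q$ one has $\bigcup\varphi_q(C)=C$; in particular $\varphi_q(C)$ is itself a clique of $\varPhi_q(G)$ by the lemma, and $\varphi_q$ is injective on cliques of cardinality at least $q$. The first inclusion is then straightforward: if $K$ is a maximal clique of $G$ with $\vert K\vert\geq q$, then $\varphi_q(K)$ is a clique of $\varPhi_q(G)$, and if some $q$-clique $Q$ could be added keeping it a clique, the lemma would make $K\cup Q=\bigcup\bigl(\varphi_q(K)\cup\{Q\}\bigr)$ a clique of $G$, whence $Q\subseteq K$ by maximality of $K$, i.e.\ $Q\in\varphi_q(K)$; thus $\varphi_q(K)$ is a maximal clique of $\varPhi_q(G)$.

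The reverse inclusion is where the main obstacle lies. Given a maximal clique $\mathcal Q$ of $\varPhi_q(G)$, set $C=\bigcup\mathcal Q$; the lemma makes $C$ a clique with $\vert C\vert\geq q$ and $\mathcal Q\subseteq\varphi_q(C)$, so maximality of $\mathcal Q$ forces $\mathcal Q=\varphi_q(C)$. The delicate point is that this only shows $C$ to be \emph{some} clique of $G$, not necessarily a maximal one, so I must still rule out that $\mathcal Q$ arises from a strictly larger clique. To do so I would extend $C$ to a maximal clique $K\supseteq C$; then $\varphi_q(C)\subseteq\varphi_q(K)$ with $\varphi_q(K)$ again a clique of $\varPhi_q(G)$, so the maximality of $\mathcal Q=\varphi_q(C)$ yields $\varphi_q(C)=\varphi_q(K)$, and taking unions together with $\bigcup\varphi_q(C)=C$ and $\bigcup\varphi_q(K)=K$ gives $C=K$. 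Hence $\mathcal Q=\varphi_q(K)$ for the maximal clique $K$, completing the equality; the degenerate case in which $G$ has no clique of size at least $q$ (both edge families empty) is noted separately.
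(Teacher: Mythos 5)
Your proof is correct. Note, however, that the paper itself gives no proof of Theorem~\ref{thm:CPP}: it is stated with a citation to~\cite{MR2365055}, so there is no internal argument to compare yours against. Your route through the union lemma --- a nonempty family $\mathcal Q$ of $q$-cliques of $G$ is a clique of $\varPhi_q(G)$ if and only if $\bigcup\mathcal Q$ is a clique of $G$ --- is the natural one, and it is the same structural phenomenon the paper invokes elsewhere (e.g., the observation, just before Theorem~\ref{thm:pqcliqueHelly}, that for $p\geq 2$ all members of a nontrivial $(p+1,q)$-basis of $\mathcal C(G)$ lie in a common clique). The two delicate points are handled correctly: in the forward inclusion, any $q$-clique $Q$ extending $\varphi_q(K)$ forces $K\cup Q$ to be a clique, so maximality of $K$ absorbs $Q$; and in the reverse inclusion you correctly recognize that $C=\bigcup\mathcal Q$ is a priori only \emph{some} clique, and close the gap by extending to a maximal clique $K\supseteq C$, using that $\varphi_q(K)$ is again a clique of $\varPhi_q(G)$ and that $\bigcup\varphi_q(C)=C$ and $\bigcup\varphi_q(K)=K$ recover $C=K$ from $\varphi_q(C)=\varphi_q(K)$. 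Together with the identification of the vertex sets ($q$-cliques of $G$ on both sides, every $q$-clique lying in a maximal clique of cardinality at least $q$), the injectivity of $\varphi_q$ on $q^+$-cliques, and the degenerate case where $G$ has no $q$-clique, this yields the equality of the two hypergraphs in full.
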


The above result shows that Theorem~\ref{thm:Phi} is a specialization of Corollary~\ref{cor:var2} to the case $\mathcal H=\mathcal C(G)$.  Interestingly, the specialization of Corollary~\ref{cor:pqHelly} to the case $\mathcal H=\mathcal C(G)$ directly leads to a generalization to $(p,q)$-clique-Helly graphs of the characterization of $p$-clique-Helly graphs given in~\cite{MR2365055} in terms of expansions. If $Q$ is a $(p+1)$-clique of a graph $G$, the \emph{$(p+1)$-expansion} of $Q$ in $G$ is the subgraph of $G$ induced by those vertices of $G$ which are adjacent to at least $p$ vertices of $Q$. This notation was introduced in~\cite{MR2365055}. The $3$-expansions were originally used in~\cite{Dra-thesis} and \cite{MR1447758} to characterize clique-Helly graphs. The characterization of $p$-clique-Helly given in terms of expansions is as follows.

\begin{thm}[{\cite{MR2365055}}]\label{thm:(p+1)-exp} For each integer $p\geq 2$, a graph $G$ is $p$-clique-Helly if and only if every $(p+1)$-expansion in $G$ contains a universal vertex.\end{thm}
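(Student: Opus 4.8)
The plan is to specialize Corollary~\ref{cor:pqHelly} to the case $q=1$ with $\mathcal H=\mathcal C(G)$ and then translate the resulting hypergraph condition into the language of expansions. Since $G$ is $p$-clique-Helly exactly when $\mathcal C(G)$ is $(p,1)$-Helly, Corollary~\ref{cor:pqHelly} asserts that $G$ is $p$-clique-Helly if and only if $\mathcal C(G)_{\mathcal S}^\cup$ has nonempty core for every nontrivial $(p+1,1)$-basis $\mathcal S$ of $\mathcal C(G)$. The task is then to show that, for $p\geq 2$, the nontrivial $(p+1,1)$-bases of $\mathcal C(G)$ are precisely the $(p+1)$-cliques of $G$, and that nonemptiness of $\core(\mathcal C(G)_{\mathcal S}^\cup)$ corresponds exactly to the presence of a universal vertex in the associated expansion.

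First I would unpack the combinatorics of a $(p+1,1)$-basis $\mathcal S$. Such a basis is simply a set $Q=\{v_0,\ldots,v_p\}$ of $p+1$ distinct vertices; its support sets are the $p$-subsets $Q-\{v_i\}$, and $\mathcal S$ is nontrivial exactly when each such $p$-subset lies in a maximal clique, i.e.\ is itself a clique. This is where the hypothesis $p\geq 2$ enters: when $p\geq 2$ every pair of vertices of $Q$ is contained in a common $p$-subset, so all $p$-subsets being cliques forces $Q$ itself to be a $(p+1)$-clique, while conversely every $(p+1)$-clique yields a nontrivial basis. For such a basis, $\mathcal C(G)_{\mathcal S}^\cup$ is the family of maximal cliques containing some support set, that is, the maximal cliques meeting $Q$ in at least $p$ vertices.

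The heart of the argument, which I expect to be the main obstacle, is the identification
\[
\core(\mathcal C(G)_{\mathcal S}^\cup)=\{\,\text{universal vertices of the }(p+1)\text{-expansion }G_Q\text{ of }Q\,\}.
\]
Writing $W$ for the vertex set of $G_Q$ (the vertices adjacent to at least $p$ vertices of $Q$, a set containing $Q$), I would argue both inclusions directly. If $u$ lies in every maximal clique meeting $Q$ in $\geq p$ vertices, then extending each clique $Q-\{v_i\}$ to a maximal clique shows $u\in W$; moreover, for any $w\in W$ with $w\neq u$, completing $\{w\}$ together with the $\geq p$ vertices of $Q$ adjacent to $w$ into a maximal clique $C$ forces $u\in C$, whence $u$ is adjacent to $w$, so $u$ is universal in $G_Q$. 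Conversely, if $u$ is universal in $G_Q$ and $C$ is any maximal clique with $\vert C\cap Q\vert\geq p$, then every vertex of $C$ is adjacent to the $\geq p$ vertices of $Q\cap C$ and hence lies in $W$; since $u$ is adjacent to all of $W$, maximality of $C$ forces $u\in C$, so $u$ lies in the core.

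Combining the three steps yields the claim: for $p\geq 2$, $G$ is $p$-clique-Helly if and only if for every $(p+1)$-clique $Q$ the expansion $G_Q$ contains a universal vertex. The only delicate point beyond routine set-theoretic bookkeeping is carefully tracking whether the relevant vertices lie inside or outside $Q$ when verifying membership in $W$ and in the maximal cliques of $\mathcal C(G)_{\mathcal S}^\cup$; everything else follows immediately from Corollary~\ref{cor:pqHelly} and the definitions.
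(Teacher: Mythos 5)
Your proposal is correct and follows essentially the same route as the paper, which obtains this statement as the $q=1$ case of Theorem~\ref{thm:pqcliqueHelly}, i.e., Corollary~\ref{cor:pqHelly} applied to $\mathcal H=\mathcal C(G)$ together with the observation (for $p\geq 2$) that nontrivial $(p+1,1)$-bases are exactly the $(p+1)$-cliques and that $\core(\mathcal C_{\mathcal S}^\cup)$ is the set of universal vertices of the expansion. Your direct two-inclusion verification of that core identification is just the $q=1$ instance of Lemma~\ref{lem:CP} (that $\core(\mathcal C_P)$ consists of the vertices complete to everything complete to $P$) intersected over the $p+1$ support sets, so there is no substantive difference in approach.
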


Before generalizing the above result, we prove the lemma below. Recall that if $\mathcal H$ is any hypergraph and $P$ is a subset of $V(\mathcal H)$, then $\mathcal H_P$ denotes the partial hypergraph of $\mathcal H$ whose edge family consists of those edges of $\mathcal H$ containing $P$.

\begin{lem}\label{lem:CP} Let $G$ be a graph and let $P$ be a clique of $G$. If $\mathcal C$ is the clique hypergraph of $G$, then $\core(\mathcal C_P)$ is the set of universal vertices of $G[V(\mathcal C_P)]$ or, equivalently, $\core(\mathcal C_P)$ is the set of vertices of $G$ that are complete to the set of vertices of $G$ that are complete to $P$. In particular, $\core(\mathcal C_P)$ is a clique of $G$ containing $P$ and can be computed in $O(m+n)$ time.\end{lem}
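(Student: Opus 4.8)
The plan is to reduce everything to a single set, the common neighborhood of $P$. Write $A$ for the set of vertices of $G$ that are complete to $P$ (equivalently, $A=\bigcap_{u\in P}N_G[u]$), and note $P\subseteq A$ since $P$ is a clique. First I would identify the vertex set $V(\mathcal C_P)$: because $P$ is a clique it extends to at least one maximal clique, so $\mathcal C_P$ is nonempty, and a vertex $w$ lies in some maximal clique containing $P$ if and only if $P\cup\{w\}$ is a clique, i.e.\ if and only if $w$ is complete to $P$. Hence $V(\mathcal C_P)=A$, and this already matches the two descriptions in the statement: a vertex is universal in $G[A]$ exactly when it is complete to $A$, and any vertex complete to $A$ is complete to $P\subseteq A$ and therefore already lies in $A$; so the ``universal vertices of $G[V(\mathcal C_P)]$'' and the ``vertices complete to the vertices complete to $P$'' denote the same set.

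The heart of the argument is the equality $\core(\mathcal C_P)=\{v:\ v\text{ is complete to }A\}$, which I would prove by two inclusions. For $\subseteq$, take $v\in\core(\mathcal C_P)$ and any $w\in A$ with $w\neq v$; since $w$ is complete to $P$, the clique $P\cup\{w\}$ extends to a maximal clique $Q\supseteq P$, and because $v$ lies in every maximal clique containing $P$ we get $v,w\in Q$, whence $v$ is adjacent to $w$; thus $v$ is complete to $A$. For $\supseteq$, take $v$ complete to $A$ and any maximal clique $Q\supseteq P$; every vertex of $Q$ is complete to $P$, so $Q\subseteq A$, and since $v$ is complete to $A\supseteq Q$, maximality of $Q$ forces $v\in Q$ (otherwise $Q\cup\{v\}$ would be a strictly larger clique). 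As $Q$ was arbitrary, $v\in\core(\mathcal C_P)$. Combined with the first paragraph, this yields both stated descriptions of $\core(\mathcal C_P)$.

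The ``in particular'' clause is then immediate: each vertex of $P$ belongs to every maximal clique containing $P$, so $P\subseteq\core(\mathcal C_P)$; and $\core(\mathcal C_P)$ is contained in any fixed maximal clique containing $P$, so it is a subset of a clique and hence a clique. For the $O(m+n)$ bound I would compute $A$ and then its universal vertices by two sweeps. To obtain $A$, mark the vertices of $P$ and, scanning the adjacency lists of the vertices of $P$, record for each $v\notin P$ its number of neighbors in $P$; then $A$ is $P$ together with those $v\notin P$ adjacent to all of $P$, computed in $\sum_{u\in P}\deg_G(u)+O(n)=O(m+n)$ time. To extract $\core(\mathcal C_P)$ as the universal vertices of $G[A]$, scan the adjacency lists of the vertices of $A$ to compute $\deg_{G[A]}(v)$ for each $v\in A$ and keep those with $\deg_{G[A]}(v)=|A|-1$; this is again $O(m+n)$.

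I expect the only genuine subtlety to be the two directions of the core identity: in the $\supseteq$ inclusion one must use maximality of $Q$ (not merely that $Q$ is a clique containing $P$), and in the $\subseteq$ inclusion one must invoke the ``extend $P\cup\{w\}$ to a maximal clique'' step to produce a witness clique through both $v$ and $w$. Everything else, including the two descriptions agreeing and the complexity analysis, is routine bookkeeping.
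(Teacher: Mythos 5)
Your proposal is correct and takes essentially the same approach as the paper: both identify $V(\mathcal C_P)$ with $\comp_G(P)$ and prove $\core(\mathcal C_P)=\comp_G(\comp_G(P))$ using exactly the two ingredients you flag as the subtleties (extending $P\cup\{w\}$ to a maximal clique to witness one inclusion, and invoking maximality of $Q$ for the other), the paper merely stating this in contrapositive form ($v\notin\core(\mathcal C_P)$ iff $v$ is nonadjacent to some vertex complete to $P$). Your two-sweep $O(m+n)$ computation also matches the paper's, which computes $\comp_G(W)$ twice.
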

\begin{proof} For each subset $W$ of $V(G)$, let $\comp_G(W)$ denote the set of vertices of $G$ that are complete to $W$. Let $v\in V(G)$. By definition, $v\notin\core(\mathcal C_P)$ if and only if there is some maximal clique $Q$ of $G$ containing $P$ such that $v\notin Q$. Equivalently, $v\notin\core(\mathcal C_P)$ if and only if $v$ is nonadjacent to some vertex $w$ which is complete to $P$. This proves that $\core(\mathcal C_P)=\comp_G(\comp_G(P))$. Since $\mathcal C_P$ consists of the maximal cliques of $G$ containing $P$, $V(\mathcal C_P)=\comp_G(P)$ and, since $P$ is a clique, $\comp_G(\comp_G(P))\subseteq\comp_G(P)$. Therefore, $\core(\mathcal C_P)$ is also equivalent to the set of universal vertices of $G[V(\mathcal C_P)]$. In particular, $\core(\mathcal C_P)$ is a clique of $G$ containing $P$. Moreover, since clearly $\comp_G(W)$ can be computed in $O(m+n)$ time, $\core(\mathcal C_P)=\comp_G(\comp_G(P))$ can also be computed in $O(m+n)$ time.\end{proof}

We introduce $(p+1,q)$-expansions as follows. Let $G$ be a graph and let $\mathcal Q$ be a family of $p+1$ pairwise different $q$-cliques of $G$ such that each union of $p$ members of $\mathcal Q$ is a clique of $G$. Observe that this is equivalent to $\mathcal Q$ being a nontrivial $(p+1,q)$-basis of $\mathcal C(G)$ and that if $p\geq 2$ then this means that all the members of $\mathcal Q$ are contained in a common clique of $G$. We define the \emph{$(p+1,q)$-expansion} of $\mathcal Q$ in $G$ as the subgraph of $G$ induced by those vertices of $G$ which are complete to at least $p$ members of $\mathcal Q$. The characterization of $(p,q)$-clique-Helly graphs in terms of expansions is the following.

\begin{thm}\label{thm:pqcliqueHelly} For any positive integers $p$ and $q$, a graph $G$ is $(p,q)$-clique-Helly if and only if each $(p+1,q)$-expansion in $G$ has at least $q$ universal vertices.\end{thm}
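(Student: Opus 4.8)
The plan is to reduce Theorem~\ref{thm:pqcliqueHelly} to Corollary~\ref{cor:pqHelly} by translating everything to the clique hypergraph $\mathcal C=\mathcal C(G)$. By definition, $G$ is $(p,q)$-clique-Helly if and only if $\mathcal C$ is $(p,q)$-Helly, and Corollary~\ref{cor:pqHelly} says this holds if and only if $\mathcal H_\mathcal S^\cup$ has a $q^+$-core for every nontrivial $(p+1,q)$-basis $\mathcal S$ of $\mathcal C$. As already noted in the paragraph introducing $(p+1,q)$-expansions, a family $\mathcal Q$ of $p+1$ pairwise different $q$-cliques of $G$ is a nontrivial $(p+1,q)$-basis of $\mathcal C$ precisely when each union of $p$ of its members is a clique of $G$ (i.e., is contained in some maximal clique, hence in some edge of $\mathcal C$). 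So the nontrivial $(p+1,q)$-bases of $\mathcal C$ correspond exactly to the families $\mathcal Q$ for which the $(p+1,q)$-expansion is defined. The whole theorem will then follow once I show, for every such $\mathcal Q$, the equivalence:
\begin{equation*}
\core(\mathcal C_\mathcal Q^\cup)\text{ has cardinality }\geq q \iff \text{the }(p+1,q)\text{-expansion of }\mathcal Q\text{ has }\geq q\text{ universal vertices.}
\end{equation*}

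The key computation is to identify $\core(\mathcal C_\mathcal Q^\cup)$ with the set of universal vertices of the $(p+1,q)$-expansion. First I would unwind the definitions: writing $P_1,\ldots,P_{p+1}$ for the support sets of $\mathcal Q$ (each $P_i$ being the union of all members of $\mathcal Q$ except the $i$-th), the hypergraph $\mathcal C_\mathcal Q^\cup$ consists of the maximal cliques of $G$ that contain some $P_i$. Lemma~\ref{lem:CP} tells me that $\core(\mathcal C_{P_i})=\comp_G(\comp_G(P_i))$, the set of vertices complete to every vertex complete to $P_i$, and that $V(\mathcal C_{P_i})=\comp_G(P_i)$. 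Since $\core(\mathcal C_\mathcal Q^\cup)$ is the intersection of the cores of the nonempty $\mathcal C_{P_i}$ (and here all are nonempty because $\mathcal Q$ is nontrivial), I get $\core(\mathcal C_\mathcal Q^\cup)=\bigcap_{i=1}^{p+1}\comp_G(\comp_G(P_i))$. The plan is then to argue that a vertex $v$ lies in this intersection if and only if $v$ is a universal vertex of the $(p+1,q)$-expansion $H$ of $\mathcal Q$, where $V(H)$ is the set of vertices complete to at least $p$ members of $\mathcal Q$.

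The main obstacle, and the step deserving the most care, is verifying this last equivalence precisely, because the expansion is phrased in terms of being complete to $p$ of the $q$-cliques in $\mathcal Q$, whereas the core is phrased via the support sets $P_i$ and the double-complement operator. I would proceed as follows. A vertex $w$ is complete to at least $p$ members of $\mathcal Q$ if and only if $w$ is complete to some support set $P_i$, i.e., $w\in\bigcup_i\comp_G(P_i)$; this identifies $V(H)$ with $\bigcup_i V(\mathcal C_{P_i})$. For the forward direction, if $v\in\core(\mathcal C_\mathcal Q^\cup)$ then $v\in\comp_G(\comp_G(P_i))$ for every $i$, so $v$ is complete to every vertex of every $\comp_G(P_i)$, hence complete to all of $V(H)$; and since $q$-cliques are nonempty one checks $v$ itself lies in $V(H)$, so $v$ is a universal vertex of $H$. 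Conversely, a universal vertex $v$ of $H$ is complete to $\bigcup_i\comp_G(P_i)\supseteq\comp_G(P_i)$ for each $i$, so $v\in\comp_G(\comp_G(P_i))$ for every $i$, giving $v\in\core(\mathcal C_\mathcal Q^\cup)$. The slightly delicate point to handle carefully is the boundary behavior when $p=1$ (where ``complete to $p$ members'' and the support-set description degenerate) and the fact that each member of $\mathcal Q$ being a $q$-clique guarantees the universal vertices actually sit inside $V(H)$; once these are checked, counting universal vertices of $H$ equals $|\core(\mathcal C_\mathcal Q^\cup)|$, and the theorem follows by comparing both with $q$.
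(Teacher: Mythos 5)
Your proposal is correct and takes essentially the same route as the paper's proof: reduce via Corollary~\ref{cor:pqHelly} applied to $\mathcal H=\mathcal C(G)$, identify the nontrivial $(p+1,q)$-bases of $\mathcal C(G)$ with the families $\mathcal Q$ defining $(p+1,q)$-expansions, and use Lemma~\ref{lem:CP} to show that $\core(\mathcal C_\mathcal Q^\cup)=\bigcap_{i=1}^{p+1}\core(\mathcal C_{P_i})$ is exactly the set of universal vertices of $G[V(\mathcal C_\mathcal Q^\cup)]$. Your explicit unwinding through $\comp_G(\comp_G(P_i))$, including the membership check $v\in V(H)$ (which holds because each support set $P_i$ is a clique, so $\core(\mathcal C_{P_i})\subseteq\comp_G(P_i)\subseteq V(H)$ by Lemma~\ref{lem:CP}), is just the computational form of the same identification the paper invokes directly.
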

\begin{proof} Let $\mathcal C$ denote the clique hypergraph of $G$. By definition, the $(p+1,q)$-expansions in $G$ are the induced subgraphs $G[V(\mathcal C_\mathcal Q^\cup)]$ as $\mathcal Q$ varies over the nontrivial $(p+1,q)$-bases of $\mathcal C$. Since, also by definition, $G$ is $(p,q)$-clique-Helly if and only if $\mathcal C$ is $(p,q)$-Helly, Corollary~\ref{cor:pqHelly} implies that in order to prove the theorem it is enough to show that $\core(\mathcal C_\mathcal Q^\cup)$ coincides with the set of universal vertices of $G[V(\mathcal C_\mathcal Q^\cup)]$.

Let $\mathcal Q$ be a nontrivial $(p+1,q)$-basis of $\mathcal C$. Thus, the support sets $Q_1,\ldots,Q_{p+1}$ of $\mathcal Q$ are cliques of $G$ and, by Lemma~\ref{lem:CP}, $\core(\mathcal C_{Q_i})$ is the set of universal vertices of $G[V(\mathcal C_{Q_i})]$. Therefore, $\core(\mathcal C_\mathcal Q^\cup)=\core(\mathcal C_{Q_1})\cap\cdots\cap\core(\mathcal C_{Q_{p+1}})$ is the set of universal vertices of $G[V(\mathcal C_{Q_1})\cup\cdots\cup V(\mathcal C_{Q_{p+1}})]=G[V(\mathcal C_{\mathcal Q}^\cup)]$, as needed.\end{proof}

We now turn to the problem of recognizing $(p,q)$-clique-Helly graphs for any positive integers $p$ and $q$. If $p$ or $q$ is part of the input, the problem is known to be NP-hard~\cite{MR2365055}. Nevertheless, as a consequence of Theorem~\ref{thm:Phi}, a polynomial-time recognition algorithm for $(p,q)$-clique-Helly graphs for each fixed $p$ and $q$ was proposed in~\cite{MR2365055} (see also \cite{D-P-S-ds} for more details).

\begin{thm}[\cite{MR2365055}] The recognition problem for $(p,q)$-clique-Helly graphs:
\begin{enumerate}[(i)]
 \item\label{it:time} can be solved in $O(n^{(p+3)q})$ time for each fixed $q$, where $p$ is part of the input;
 \item is NP-hard if $p$ or $q$ is part of the input.
\end{enumerate}\end{thm}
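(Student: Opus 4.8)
The plan is to handle the two parts separately, establishing the running-time bound of part~(\ref{it:time}) first and the hardness claims afterwards. Throughout, the key reduction tool is the operator $\varPhi_q$ together with the characterizations already available in the excerpt.

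For the upper bound, I would reduce the recognition of $(p,q)$-clique-Helly graphs to the recognition of ordinary $p$-clique-Helly graphs. By Theorem~\ref{thm:Phi}, a graph $G$ is $(p,q)$-clique-Helly if and only if $\varPhi_q(G)$ is $p$-clique-Helly, so the first step is to build $\varPhi_q(G)$ explicitly: its vertices are the $q$-cliques of $G$, of which there are at most $\binom nq=O(n^q)$, and two of them are adjacent exactly when their union is a clique of $G$, which is tested in $O(q^2)$ time per pair. Writing $n'=O(n^q)$ for the number of vertices of $\varPhi_q(G)$, the graph is assembled in $O((n')^2q^2)=O(n^{2q})$ time; note this step is polynomial only because $q$ is fixed, which is why the statement restricts to fixed $q$. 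The second step is to decide whether $\varPhi_q(G)$ is $p$-clique-Helly. For $p\geq 2$ I would invoke the expansion characterization of Theorem~\ref{thm:(p+1)-exp}: $\varPhi_q(G)$ is $p$-clique-Helly if and only if every $(p+1)$-expansion in it contains a universal vertex. There is one expansion per $(p+1)$-clique of $\varPhi_q(G)$, hence at most $\binom{n'}{p+1}=O(n^{q(p+1)})$ of them; for each such clique $Q$ one forms the subgraph induced by the vertices adjacent to at least $p$ vertices of $Q$ and scans it for a universal vertex, all within $O((n')^2)=O(n^{2q})$ time. The total cost is therefore $O(n^{q(p+1)})\cdot O(n^{2q})=O(n^{(p+3)q})$, which is the claimed bound. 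The case $p=1$ is disposed of directly, since a graph is $1$-clique-Helly precisely when it has a universal vertex (its core of maximal cliques being, by Lemma~\ref{lem:CP} with $P=\emptyset$, exactly its set of universal vertices), and this is checked on $\varPhi_q(G)$ in $O(n^{2q})$ time.

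For the hardness claims of part~(ii), the natural route is polynomial reductions from problems already known to be intractable. When $p$ is part of the input (with $q=1$ fixed), I would reduce from the recognition of $p$-Helly hypergraphs, which is NP-hard with $p$ in the input by Theorem~\ref{thm:pH-rec}(\ref{it:pH-b3}): the aim is to associate with a hypergraph $\mathcal H$ a graph $G$ whose family of maximal cliques coincides with the edge family of $\mathcal H$, so that $G$ is $p$-clique-Helly exactly when $\mathcal H$ is $p$-Helly. When $q$ is part of the input (with $p$ fixed), the hypergraph analogue gives no leverage, since recognizing $(2,q)$-Helly hypergraphs with $q$ in the input is open; here one must instead design a gadget in which deciding the $(2,q)$-intersection pattern of the maximal cliques already encodes an NP-hard decision, typically by padding edges with controlled numbers of shared vertices so that the required pairwise $q$-fold intersections either do or do not force a global $q$-fold intersection.

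The main obstacle, in both hardness reductions, is the passage from an arbitrary set system to a graph whose \emph{maximal} cliques are exactly the intended sets: the two-section of a hypergraph generally carries extra maximal cliques (a failure of conformality), so the construction must either restrict to conformal instances or attach private vertices and separating gadgets that suppress every unwanted maximal clique while leaving the intended intersection structure, and hence the Helly behaviour, untouched. Proving that no spurious maximal clique survives, and that the correspondence between cores of cliques and cores of hyperedges is exact, is the delicate part of the argument; by contrast, the running-time analysis in part~(\ref{it:time}) is routine once the $\varPhi_q$ reduction and the expansion test are in place.
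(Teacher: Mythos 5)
Your argument for part~(\ref{it:time}) is sound and follows the intended route: the paper itself states this theorem without proof, attributing it to \cite{MR2365055}, and the algorithm there is exactly the reduction you describe---build $\varPhi_q(G)$ and invoke Theorem~\ref{thm:Phi}, then test $p$-clique-Helly-ness of $\varPhi_q(G)$ via the $(p+1)$-expansions of Theorem~\ref{thm:(p+1)-exp}. Your accounting is correct: $n'=O(n^q)$ vertices, $O(n^{2q})$ to assemble $\varPhi_q(G)$ for fixed $q$, at most $O(n^{(p+1)q})$ candidate $(p+1)$-cliques (each testable in $O(p^2)$ time, absorbed by the final bound), and $O((n')^2)=O(n^{2q})$ per expansion-plus-universal-vertex check, for a total of $O(n^{(p+3)q})$. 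Only a trivial caveat: in the case $p=1$, ``has a universal vertex'' must be read as ``is empty or has a universal vertex,'' since a graph $G$ with no $q$-clique yields an empty $\varPhi_q(G)$ yet is vacuously $(1,q)$-clique-Helly.

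Part~(ii), however, is not a proof: you name the reductions you would like to perform and the obstacle that blocks them, but you construct no gadget and prove no correctness, and the obstacle you identify is genuine. A hypergraph is a clique hypergraph only if it is conformal and its edges are precisely the maximal cliques of its two-section, so the plan ``associate with $\mathcal H$ a graph $G$ whose maximal cliques coincide with $E(\mathcal H)$'' is impossible for general instances; moreover, Theorem~\ref{thm:pH-rec}(\ref{it:pH-b3}) asserts NP-hardness of $p$-Helly recognition for \emph{arbitrary} hypergraphs, and nothing quoted in the paper says the problem stays hard on the conformal subclass, so hardness does not transfer without a new reduction and a new proof. Beware also that for the non-hereditary class an induced subgraph failing the property is not a certificate of failure (the class is not closed under induced subgraphs), so even the paper's own devices for the hereditary analogue---the join with a $(p+1)$-ocular, Lemma~\ref{lem:pqs-ocular}, and Theorem~\ref{thm:Kpq-free}---do not carry over verbatim to handle $q$ as part of the input. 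One device that \emph{does} transfer cleanly, and would have sharpened your plan, is universal-vertex padding exactly as in the paper's hardness corollary: if $G'$ arises from $G$ by adding $q-1$ universal vertices, then every maximal clique of $G'$ is a maximal clique of $G$ together with those $q-1$ vertices, whence a family of maximal cliques of $G'$ is $(p,q)$-intersecting if and only if the corresponding family in $G$ is $p$-wise intersecting, and $G$ is $p$-clique-Helly if and only if $G'$ is $(p,q)$-clique-Helly; this reduces the case of arbitrary $q$ (with $p$ in the input) to the base case $q=1$. But the NP-hardness of recognizing $p$-clique-Helly graphs with $p$ part of the input is itself the content of the cited reference \cite{MR2365055} and is established nowhere in your write-up, so the hardness half of the theorem remains unproved.
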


We will derive an improvement upon the time complexity bound (\ref{it:time}) above. Our algorithm uses a refinement of the precomputing approach of~\cite{MR2457935}. Moreover, our algorithm matches the $O(m^2+n)$ time bound proved in~\cite{MR2321859} for recognizing clique-Helly graphs, whereas we obtain an $O(\omega m^2+n)$ time bound for the recognition of $(2,2)$-clique-Helly graphs, which represents an improvement upon the $O(m^5+n)$ time bound for the same problem proved in \cite{Marcia-doc}. We rely on the following algorithmic result about the enumeration of all the $q$-cliques of a graph.

\begin{thm}[\cite{MR774940}]\label{thm:qcliques} Given a connected graph $G$ and an integer $q\geq 2$, the $q$-cliques of $G$ can be enumerated in $O(qm^{q/2})$ time. In particular, the number of $q$-cliques of $G$ is $O(m^{q/2})$. \end{thm}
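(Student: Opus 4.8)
The plan is to prove both assertions at once through a single structural device: an acyclic orientation of $G$ in which every out-degree is small. First I would establish that the degeneracy $d^*$ of $G$ (the largest minimum degree over all subgraphs) satisfies $d^*=O(\sqrt m)$. This follows from a two-sided estimate: any subgraph $H$ witnessing the degeneracy, say on $k$ vertices, has minimum degree at most $\min(k-1,\,2\vert E(H)\vert/k)\leq\min(k-1,\,2m/k)$, and the maximum of $\min(k-1,2m/k)$ over $k$ is attained near $k=\sqrt{2m}$, giving $d^*\leq\sqrt{2m}$. Fixing a degeneracy ordering $v_1,\ldots,v_n$ (each $v_i$ has at most $d^*$ neighbours among $v_{i+1},\ldots,v_n$) and orienting every edge from its earlier to its later endpoint yields an acyclic orientation with out-degree $d^+(v)\leq d^*$ for all $v$.

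For the counting bound, I would observe that every $q$-clique $Q$ has a unique lowest vertex $u$ in the ordering, and the remaining $q-1$ vertices of $Q$ all lie in the out-neighbourhood $N^+(u)$ and induce a $(q-1)$-clique there. Hence the number of $q$-cliques is at most $\sum_u\binom{d^+(u)}{q-1}$. Bounding $\binom{d^+(u)}{q-1}\leq (d^*)^{q-2}\,d^+(u)/(q-1)!$ (using $d^+(u)\leq d^*$ for $q-2$ of the factors) and using $\sum_u d^+(u)=m$, this sum is $O\bigl((d^*)^{q-2}m\bigr)=O\bigl(m^{(q-2)/2}\cdot m\bigr)=O(m^{q/2})$, which proves the ``in particular'' clause.

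To obtain the enumeration bound, I would turn the same idea into a bounded-depth backtracking. The degeneracy ordering and orientation are computable in $O(m+n)$ time, and since $G$ is connected we have $n\leq m+1$, so this is $O(m)$; connectivity is used precisely to absorb the vertex count into $m$. The algorithm grows cliques one vertex at a time, always extending the current clique by an out-neighbour of its lowest vertex and maintaining the set of candidate extensions as the common out-neighbourhood of the chosen vertices; outputting each completed $q$-clique costs $O(q)$. Since candidate sets never exceed $d^*$ in size and the number of partial cliques explored at each of the $q-1$ extension levels is controlled by the same estimate as in the counting step, the total running time is $O(qm^{q/2})$, the factor $q$ coming from the per-clique output and the recursion depth.

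The hard part will be the fine-grained time analysis of the backtracking rather than the counting bound, which is essentially immediate from the orientation. In particular I would need to argue that the cost of repeatedly intersecting out-neighbourhoods to form the candidate sets is itself dominated by $O(qm^{q/2})$ --- i.e.\ that the work spent at internal nodes of the search tree does not exceed the work spent producing output --- and to confirm that each $q$-clique is generated exactly once, thanks to the acyclicity of the orientation together with the ``lowest vertex first'' convention. The degeneracy estimate $d^*=O(\sqrt m)$ and the identity $\sum_u d^+(u)=m$ are the two quantitative facts that make all the exponents come out to $q/2$.
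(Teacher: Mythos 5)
The paper does not prove this statement at all: Theorem~\ref{thm:qcliques} is imported verbatim from Chiba and Nishizeki~\cite{MR774940}, so the only meaningful comparison is against that cited source, and your proposal is a correct, self-contained proof that takes a genuinely different (and by now more standard) route. Chiba and Nishizeki bound the \emph{arboricity} by $a(G)=O(\sqrt m)$ and run a recursive procedure that, for each vertex $v$ taken in decreasing degree order, lists the $(q-1)$-cliques inside the subgraph induced by $N(v)$ and then deletes $v$; their charging argument rests on the lemma $\sum_{uv\in E(G)}\min\{d(u),d(v)\}\leq 2a(G)\,m$. You instead work with the degeneracy $d^*=O(\sqrt m)$ and a single fixed acyclic orientation of out-degree at most $d^*$, which buys two things: the counting bound becomes a one-line computation from $\sum_u d^+(u)=m$ and $d^+(u)\leq d^*$, and uniqueness of generation is automatic rather than needing a convention --- in an acyclic orientation a clique can only ever be assembled in increasing order of the degeneracy ordering, since each newly added vertex must be an out-neighbour of \emph{all} vertices already chosen, so your ``lowest vertex first'' rule is forced rather than imposed. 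The one step you leave schematic, the cost of the candidate-set intersections, closes routinely: with a marker array, intersecting the current candidate set $C$ with $N^+(v)$ costs $O(\vert C\vert+d^+(v))=O(d^*)$ per node, the number of internal nodes at level $k$ is $O\bigl(m(d^*)^{k-2}\bigr)$ by the same counting estimate, so level-$k$ work is $O\bigl(m(d^*)^{k-1}\bigr)$ and the total over $k\leq q$ is $O\bigl(qm(d^*)^{q-2}\bigr)=O(qm^{q/2})$. Your observation that connectivity serves only to absorb the $O(n)$ preprocessing into $O(m)$ is also exactly right. Since degeneracy and arboricity agree up to a factor of $2$, both proofs run on the same quantitative engine; yours is cleaner for this single theorem (one static orientation, no repeated graph surgery), while the Chiba--Nishizeki formulation generalizes more directly to their other subgraph-listing results.
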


Our time bounds for the recognition of $(p,q)$-clique-Helly graphs are as follows.

\begin{thm}\label{thm:recog-pq-CH} If $q$ is any fixed positive integer, then recognition problem for $(p,q)$-clique-Helly graphs, where $p$ is part of the input, can be solved in:
\begin{enumerate}[(i)]
 \item $O(m+n)$ time if $p=1$ and $q=1$.
 \item $O(m^{q/2+1}+n)$ time if $p=1$ and $q\geq 2$;
 \item\label{it:pqCH-rec3} $O(m^{p/2+1}+p\omega m^{(p+1)/2}+n)$ time if $p\geq 2$ and $q=1$;
 \item\label{it:pqCH-rec4} $O\bigl(qm^{q/2}+\omega m^{q/2}\binom{N_q}p+n\bigr)$ time if $p\geq 2$ and $q\geq 2$, where $N_q$ is the number of $q$-cliques of the input graph.
\end{enumerate}
In particular, if $p$ and $q$ are both fixed, bound (\ref{it:pqCH-rec4}) above becomes $O(\omega m^{(p+1)q/2}+n)$.\end{thm}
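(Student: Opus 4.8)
The plan is to run everything on the clique hypergraph $\mathcal C=\mathcal C(G)$, combining three tools: the core oracle of Lemma~\ref{lem:CP} (deciding whether $\mathcal C_P$ is empty and, if not, returning $\core(\mathcal C_P)=\comp_G(\comp_G(P))$ in $O(m+n)$ time; if $P$ is not a clique then $\mathcal C_P$ is empty, detected within the same bound), the $q$-clique enumeration of Theorem~\ref{thm:qcliques} (which needs a connected graph and yields $N_q=O(m^{q/2})$ cliques in $O(qm^{q/2})$ time), and, for the $p\ge2$, $q\ge2$ case, the oracle-driven counting engine of Lemma~\ref{lem:technical}. The first reduction I would make concerns $p\ge2$: since every nontrivial $(p+1,q)$-basis of $\mathcal C$ lies inside a common clique and hence inside a single component, Corollary~\ref{cor:pqHelly} shows that the nontrivial bases of $\mathcal C(G)$ split as the disjoint union of the nontrivial bases of the $\mathcal C(H)$ over the components $H$, with per-component core conditions; thus for $p\ge2$ the graph $G$ is $(p,q)$-clique-Helly if and only if each component is, so I may decompose $G$, solve each component, and pay only the additive $O(n)$. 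For $p=1$ the property is inherently global and is handled directly on $G$. Inside a component I use repeatedly that $n=O(m)$, that $r=\omega$ for $\mathcal C$, and that $\omega=O(\sqrt m)$.

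For $p=1$ I would invoke Remark~\ref{rmk:1qHelly}. When $q=1$, its statement (ii) says $\mathcal C$ is $(1,1)$-Helly iff all maximal cliques share a vertex, i.e.\ iff $G$ has a universal vertex, an $O(m+n)$ test, giving bound (i). When $q\ge2$, its statement (iii) reduces the problem to computing $\bigcap\{\core(\mathcal C_P):P\text{ a }q\text{-clique}\}$ and testing whether it is a $q^+$-set. I enumerate the $q$-cliques inside each component; if two different components each contain a $q$-clique then the intersection is empty (cores lie in disjoint components), so the answer is ``no'', while if at most one component is relevant I intersect its $O(m^{q/2})$ cores, each computed in $O(m)$ time, maintaining a running intersection that is always a clique and hence updated in $O(\omega)$ time; the total is $O(m^{q/2+1}+n)$, which is bound (ii).

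For $p\ge2$ I distinguish on $q$. If $q=1$, the key observation is that a nontrivial $(p+1,1)$-basis is a set of $p+1$ vertices each $p$ of which spans a clique, and for $p\ge2$ this forces the whole set to be a $(p+1)$-clique (every pair lies in a common $p$-subset, hence is adjacent). So I enumerate the $O(m^{(p+1)/2})$ $(p+1)$-cliques, precompute $\core(\mathcal C_R)$ for every $p$-clique $R$ (there are $O(m^{p/2})$ of them, each costing $O(m)$ in a component, for $O(m^{p/2+1})$ total), and for each $(p+1)$-clique test, via Corollary~\ref{cor:pqHelly} and the identity $\core(\mathcal C_{\mathcal Q}^\cup)=\core(\mathcal C_{Q_1})\cap\cdots\cap\core(\mathcal C_{Q_{p+1}})$, whether the intersection of the $p+1$ already-computed cores of its $p$-subsets is nonempty, in $O(p\omega)$ time; this yields bound (iii). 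If instead $q\ge2$, I apply Lemma~\ref{lem:technical} to $\mathcal H=\mathcal C$ with the list $S_1,\dots,S_{N_q}$ of all $q$-cliques (exactly the $q$-subsets of the edges of $\mathcal C$), with $r=\omega$ and the core oracle of Lemma~\ref{lem:CP}, so $f=O(m+n)$; crucially the oracle form means no maximal clique is ever enumerated. Inside a component $n=O(m)$, and since $\omega\ge q\ge2$ and $q/2\ge1$ one has $m\le\omega m^{q/2}$, so $rN_q+f+n=O(\omega m^{q/2})$ and Lemma~\ref{lem:technical} delivers $O\bigl((rN_q+f+n)\binom{N_q}p\bigr)=O\bigl(\omega m^{q/2}\binom{N_q}p\bigr)$, which together with the enumeration and the $O(n)$ decomposition is bound (iv).

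The hard part will be the bookkeeping that makes the per-basis work fit the budget in case (iv): the core oracle spends $O(m+n)$ at each of the $\binom{N_q}p$ leaves and Lemma~\ref{lem:technical} adds a further $O(n)$ per leaf, and a priori this is too large. The point to verify carefully is that both costs collapse once one restricts to a single component, because there $n=O(m)$ and $\omega m^{q/2}\ge m$ for $q\ge2$; this collapse fails exactly at $q=1$, which is why that case must be handled by direct $(p+1)$-clique enumeration rather than by feeding the vertex list into Lemma~\ref{lem:technical} (that would incur $\binom n{p+1}$ instead of $O(m^{(p+1)/2})$). Summing the per-component bounds then uses the superadditivity of $t\mapsto m^{t}$ for $t\ge1$ together with $N_q=\sum_c N_{q,c}$, $\omega=\max_c\omega_c$, and $\sum_c\binom{N_{q,c}}p\le\binom{N_q}p$. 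Finally, for fixed $p$ and $q$ one has $\binom{N_q}p=O(N_q^{\,p})=O(m^{pq/2})$, whence $\omega m^{q/2}\binom{N_q}p=O(\omega m^{(p+1)q/2})$ and bound (iv) simplifies to $O(\omega m^{(p+1)q/2}+n)$, as claimed.
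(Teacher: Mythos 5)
Your proposal is correct and takes essentially the same route as the paper's proof: Remark~\ref{rmk:1qHelly} for $p=1$, component decomposition for $p\geq 2$, the core oracle of Lemma~\ref{lem:CP} together with Theorem~\ref{thm:qcliques} throughout, Lemma~\ref{lem:technical} for $p\geq 2$ and $q\geq 2$, and direct enumeration of $p$- and $(p+1)$-cliques when $q=1$ (avoiding Lemma~\ref{lem:technical} there for exactly the right reason). The one detail you leave implicit in case (iii) --- how to retrieve, for each $(p+1)$-clique, the precomputed cores of its $p$-subsets within the $O(p\omega)$ budget --- is handled in the paper by rebuilding the trie-plus-iterators machinery of Lemma~\ref{lem:technical} over the $p$-cliques and $(p+1)$-cliques, since naive dictionary lookups do not obviously meet the deterministic bound.
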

\begin{proof} Let $G$ be the input graph and let $\mathcal C$ be the clique hypergraph of $G$.

Suppose first that $p=1$. If $q=1$ then, by the equivalence (\ref{it:1qH1})${}\Leftrightarrow{}$(\ref{it:1qH2}) of Remark~\ref{rmk:1qHelly}, $G$ is $(p,q)$-Helly if and only if $G$ has a universal vertex, which can be decided in $O(m+n)$ time. Suppose now that $q\geq 2$. We assume that $2m\geq q$, since if the opposite is true it can be detected in $O(m)$ time and, if so, $G$ has no $q$-clique and thus is $(p,q)$-clique-Helly. The equivalence (\ref{it:1qH1})${}\Leftrightarrow{}$(\ref{it:1qH3}) of Remark~\ref{rmk:1qHelly} implies that $G$ is $(p,q)$-Helly if and only if there are at least $q$ vertices in the intersection of the cores of $\mathcal C_Q$ for all the $q$-cliques $Q$ of $G$. Since we can compute the components of $G$ in $O(m+n)$ time, the cores of $\mathcal C_Q$ for all the $q$-cliques $Q$ of every component of $G$ in $O(m^{q/2+1})$ time (by Lemma~\ref{lem:CP} and Theorem~\ref{thm:qcliques}), and the intersection of all such cores in $O(\omega m^{q/2}+n)$ time, we can decide whether $G$ is $(p,q)$-clique-Helly in $O(m^{q/2+1}+n)$ time.

We assume, from now on, that $p\geq 2$. Since $G$ is $(p,q)$-clique-Helly if and only if each component of $G$ is $(p,q)$-clique-Helly and the components of $G$ can be determined in $O(m+n)$ time, we assume, without loss of generality, that $G$ is connected and thus $O(m+n)=O(m)$.

We consider first the case $q=1$. We start by computing all the $p$-cliques and all the $(p+1)$-cliques of $G$ in $O(pm^{(p+1)/2})$ time (by Theorem~\ref{thm:qcliques}). By Corollary~\ref{cor:pqHelly} applied to $\mathcal H=\mathcal C$, it follows that $G$ is $(p,1)$-clique-Helly if and only if $\mathcal C^\cup_P$ has nonempty core for each $(p+1)$-clique $P$ of $G$. Thus, if $G$ has no $(p+1)$-clique, then $G$ is trivially $(p,1)$-clique-Helly. Hence, we assume, without loss of generality, that $p+1\leq\omega$. As $p\geq 2$, necessarily $m\geq 3$. We assume some fixed total order on $V(G)$. In additional $O(p^2m^{p/2})$ time, we can assume that each $p$-clique $P=\{v_1,\ldots,v_p\}$ of $G$, where $v_1<\cdots<v_p$, is stored as a sorted sequence $v_1,\ldots,v_p$. Then, in additional $O(p^2m^{p/2})$ time, we can build a trie $T_1$ containing all the $p$-cliques of $G$, where the children of each node of $T_1$ are stored in a doubly linked list sorted by their largest vertex (in the fixed total order), in such a way that, if $P$ is any clique $\{v_1,\ldots,v_k\}$ contained in some $p$-clique $\{v_1,\ldots,v_p\}$ of $G$, where $v_1<\cdots<v_p$, then $P$ is represented by the node of $T_1$ corresponding to the ``word'' formed by the sequence of $k$ ``letters'' $v_1,\ldots,v_k$. In particular, the leaf nodes of $T_1$ are precisely those representing the $p$-cliques of $G$. For each leaf node of $T_1$, representing some $p$-clique $P$, we use Lemma~\ref{lem:CP} to compute $\core(\mathcal C_P)$ in $O(m)$ time and attach this core to the leaf node. Since $T_1$ has $O(m^{p/2})$ leaf nodes, we conclude that building $T_1$, including the cores attached to its leaf nodes, takes $O((p^2+m)m^{p/2})$ time. Afterwards, we analogously build a second trie $T_2$ containing all the $(p+1)$-cliques $P$ of $G$, except that this time we do not compute the cores attached to its leaf nodes, in $O(p^2m^{(p+1)/2})$ time. We perform a depth-first search on $T_2$ while keeping at all times $p+2$ iterators $I_0,I_1,\ldots,I_{p+1}$. Each time we visit a node of $T_2$ representing some clique $\{v_1,\ldots,v_k\}$ where $v_1<\cdots<v_k$ and $0\leq k\leq p+1$, we update the iterators in such a way that iterator $I_0$ points to the node of $T_1$ representing $P=\{v_1,\ldots,v_{\min\{k,p\}}\}$, while iterator $I_\ell$ points to the node of $T_1$ representing the clique $\{v_1,\ldots,v_{\ell-1},v_{\ell+1},\ldots,v_k\}$ for each $\ell\in\{1,\ldots,k\}$. Updating the iterators takes $O(p)$ time per node (reasoning as in the proof of Lemma~\ref{lem:technical}). Hence, each time we visit a leaf node of $T_2$, representing some $p$-clique $P=\{v_1,\ldots,v_{p+1}\}$, we can compute the core of $\mathcal C^\cup_P$ as the intersection of the cores of the leaf nodes of $T_1$ pointed by $I_1,\ldots,I_{p+1}$ in $O(p\omega)$ time. As discussed at the beginning of the paragraph, $G$ is $(p,q)$-clique-Helly if and only if all such cores of $\mathcal C_P^\cup$ are nonempty.  Hence, since $T_2$ has $O(m^{(p+1)/2})$ leaf nodes, we can decide whether $G$ is $(p,q)$-clique Helly in $O((p^2+m)m^{p/2}+p^2m^{(p+1)/2}+p\omega m^{(p+1)/2})=O(m^{p/2+1}+p\omega m^{(p+1)/2})$ time. Adding the $O(m+n)$ time required to compute the components of $G$, we obtain bound~\eqref{it:pqCH-rec3}.

It only remains to consider the case $p\geq 2$ and $q\geq 2$. Again, we assume, without loss of generality, that $2m\geq q$. We can compute all the $q$-cliques $S_1,\ldots,S_{N_q}$ of $G$ in $O(qm^{q/2})$ time (by Theorem~\ref{thm:qcliques}). Moreover, by Lemma~\ref{lem:CP}, given any subset $P$ of $V(G)$, we can compute $\core(\mathcal C_P)$ in $O(m)$ time. Hence, as $S_1,\ldots,S_{N_q}$ includes all possible $q$-subsets of the edges of $\mathcal C$, Lemma~\ref{lem:technical} implies that it can be decided whether $G$ is $(p,q)$-clique-Helly in $O\bigl(qm^{q/2}+\omega m^{q/2}\binom{N_q}p\bigr)$ time. Bound~\eqref{it:pqCH-rec4} arises by taking into account the $O(m+n)$ time required to find the components of $G$.\end{proof}

\subsection{Hereditary $(p,q)$-clique-Helly graphs}

The remaining of this section is devoted to the problems of characterizing and recognizing hereditary $(p,q)$-clique-Helly graphs. We begin by revisiting the existing results on hereditary $p$-clique-Helly graphs. By definition, $G$ is hereditary $p$-clique-Helly if and only if $\mathcal C(G')$ is $p$-Helly for each induced subgraph $G'$ of $G$. It is well known that if $G'$ is an induced subgraph of $G$, then $\mathcal C(G')$ is the hypergraph formed by the inclusion-wise maximal edges of the subhypergraph of $\mathcal C(G)$ induced by $V(G')$. Hence, $\mathcal C(G)$ may have partial subhypergraphs that are not clique hypergraphs of any induced subgraph of $G$. A graph $G$ is \emph{strong $p$-clique-Helly} if $\mathcal C(G)$ is strong $p$-Helly.

Prisner~\cite{MR1238872} characterized hereditary $2$-clique-Helly graphs in several ways, including a characterization by forbidden induced subgraphs. A characterization of $p$-clique-Helly graphs by forbidden induced subgraphs for every $p\geq 2$ was given in~\cite{MR2404220} in terms of $(p+1)$-oculars. For every integer $p\geq 2$, a graph is a \emph{$(p+1)$-ocular}~\cite{MR2404220} if its vertex set is the union of two disjoint $(p+1)$-sets $U=\{u_1,u_2,\ldots,u_{p+1}\}$ and $W=\{w_1,w_2,\ldots,w_{p+1}\}$, where $U$ is a clique and the only nonneighbor of $w_i$ in $U$ is $u_i$ for each $i\in\{1,\ldots,p+1\}$. Notice that there are no restrictions on the subgraph of $G$ induced by $W$. For each $p\geq 2$, a $p$-clique $Q'$ contained in a $(p+1)$-clique $Q$ is \emph{good} if each vertex complete to $Q'$ is complete to $Q$.

The following characterizations of hereditary $p$-clique-Helly graph were given in \cite{MR2404220}.

\begin{thm}[\cite{MR2404220}]\label{thm:HpCHelly}
For each integer $p\geq 2$, the following statements are equivalent for each graph $G$:
\begin{enumerate}[(i)]
\item\label{it:HpCH1} $G$ is hereditary $p$-clique-Helly;
\item\label{it:HpCH2} $G$ is strong $p$-clique-Helly;
\item\label{it:HpCH3} $G$ contains no induced $(p+1)$-ocular.
\item\label{it:HpCH4} Every $(p+1)$-clique of $G$ contains a good $p$-clique.
\end{enumerate}
\end{thm}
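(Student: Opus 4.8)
The plan is to anchor the whole equivalence on the clique hypergraph $\mathcal C=\mathcal C(G)$ together with the already-established hypergraph characterization of Theorem~\ref{thm:HpHelly} (the $q=1$ case of Theorem~\ref{thm:HpqHelly}), and then translate the hypergraph obstructions into the graph language of oculars and good cliques. By definition, statement~(\ref{it:HpCH2}) asserts exactly that $\mathcal C$ is strong $p$-Helly, so Theorem~\ref{thm:HpHelly} tells us at once that (\ref{it:HpCH2}) is equivalent to $\mathcal C$ being hereditary $p$-Helly and to $M(\mathcal C)$ containing no $(p+1)\times(p+1)$ complement of a permutation matrix. I would use this matrix condition as the pivot through which (\ref{it:HpCH3}) and (\ref{it:HpCH4}) are reached, closing the cycle back to (\ref{it:HpCH1}) separately.

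For the easy direction (\ref{it:HpCH2})${}\Rightarrow{}$(\ref{it:HpCH1}), I would argue that $\mathcal C$ strong $p$-Helly forces, via Theorem~\ref{thm:HpHelly}, every subhypergraph of $\mathcal C$ to be $p$-Helly. Since for any induced subgraph $G'$ the clique hypergraph $\mathcal C(G')$ is a partial hypergraph of the subhypergraph of $\mathcal C$ induced by $V(G')$, and since every subfamily of a $p$-Helly edge family is again $p$-Helly (a $p$-wise intersecting subfamily of the smaller family is one of the larger), $\mathcal C(G')$ is $p$-Helly for all $G'$, which is precisely (\ref{it:HpCH1}).

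The combinatorial core is a dictionary identifying the matrix obstruction, the induced $(p+1)$-oculars, and the $(p+1)$-cliques with no good $p$-subclique as one and the same configuration. A $(p+1)\times(p+1)$ complement of a permutation submatrix of $M(\mathcal C)$ yields maximal cliques $C_1,\dots,C_{p+1}$ and vertices $v_1,\dots,v_{p+1}$ with $v_i\in C_j$ precisely when $i\neq j$; here $U=\{v_1,\dots,v_{p+1}\}$ is a $(p+1)$-clique (since $p\geq 2$, any two of its vertices lie in a common $C_k$), and maximality of each $C_i$ lets me pick $w_i\in C_i$ nonadjacent to $v_i$, necessarily complete to $U-\{v_i\}$ and missing only $v_i$ in $U$. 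The condition that $w_i$ misses exactly $v_i$ in $U$ forces the $w_i$ to be pairwise distinct and disjoint from $U$ automatically, so $G[U\cup\{w_1,\dots,w_{p+1}\}]$ is an induced $(p+1)$-ocular and at the same time exhibits each $U-\{v_i\}$ as a non-good $p$-subclique of $U$. Reading the definitions directly in the other direction shows that an induced $(p+1)$-ocular has no good $p$-subclique (each $w_i$ witnesses that $U-\{u_i\}$ is not good) and that a $(p+1)$-clique without a good $p$-subclique produces such witnesses, hence an ocular; extending each $(U-\{u_i\})\cup\{w_i\}$ to a maximal clique recovers the submatrix. Thus (\ref{it:HpCH3}) fails exactly when the submatrix is present, which happens exactly when (\ref{it:HpCH4}) fails, so (\ref{it:HpCH2}), (\ref{it:HpCH3}), and (\ref{it:HpCH4}) are equivalent.

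To close the loop I would prove (\ref{it:HpCH1})${}\Rightarrow{}$(\ref{it:HpCH3}) directly by showing that an induced $(p+1)$-ocular $H$ already fails to be $p$-clique-Helly: extending each $(U-\{u_i\})\cup\{w_i\}$ to a maximal clique $Q_i$ of $H$, every $p$ of the $Q_i$ contain the single omitted vertex of $U$ and so are $p$-wise intersecting, yet no vertex lies in all $p+1$ (such a vertex would be complete to all of $U$, which no vertex of $W$ is and which each $Q_j$ forbids for the corresponding $u_j$), so $\mathcal C(H)$ is not $p$-Helly and $G$ is not hereditary $p$-clique-Helly. Together with the dictionary and with (\ref{it:HpCH2})${}\Rightarrow{}$(\ref{it:HpCH1}), all four statements coincide. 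The main obstacle is this dictionary step: one must check carefully that the ocular's maximal cliques genuinely witness the failure of the $p$-Helly property (empty total intersection against nonempty $p$-wise intersections) and that the three descriptions of the obstruction truly coincide, the only delicate point being the automatic distinctness of the $w_i$, which holds because each $w_i$ is nonadjacent within $U$ to $v_i$ alone. Everything at the hypergraph level is handed to us by Theorem~\ref{thm:HpHelly}, while Lemma~\ref{lem:CP} supplies the translation of cores $\core(\mathcal C_P)$ into completeness relations should any step be phrased through cores rather than directly through cliques.
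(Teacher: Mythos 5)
Your proof is correct. Note that the paper itself states Theorem~\ref{thm:HpCHelly} without proof, citing~\cite{MR2404220}, so there is no in-paper argument to compare against; what you wrote is a valid self-contained derivation from results the paper does establish or quote. Moreover, your route coincides with the strategy the paper deploys for the $(p,q)$ generalization (Theorem~\ref{thm:HpqCHelly}): anchor the equivalences on the clique hypergraph via Theorem~\ref{thm:HpHelly} (there, Theorem~\ref{thm:HpqHelly}), handle (\ref{it:HpCH1})${}\Rightarrow{}$(\ref{it:HpCH3}) by verifying directly that oculars fail the clique-Helly property --- your argument is exactly the $q=1$ case of Lemma~\ref{lem:pqs-ocular} --- and close the loop by translating the forbidden configuration back into the hypergraph characterization. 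The delicate points are all handled correctly: the witnesses $w_i$ are pairwise distinct and lie outside $U$ (each $w_i$ misses only $u_i$ in $U$, while every vertex of $U$ and every $w_j$ with $j\neq i$ is adjacent to or misses the wrong vertices), $U$ is a clique only because $p\geq 2$ lets any two of its vertices share a third clique $C_k$, and the family $\{Q_1,\ldots,Q_{p+1}\}$ is genuinely $p$-wise intersecting with empty core. It is also worth observing that for $q=1$ the chain/incomparability argument the paper needs in the general case ((\ref{it:HpqCH8})${}\Rightarrow{}$(\ref{it:HpqCH7}) of Theorem~\ref{thm:HpqCHelly}) degenerates, since each $T_i$ is a singleton; this is precisely why your simpler three-way dictionary between the matrix obstruction, induced oculars, and $(p+1)$-cliques without good $p$-subcliques suffices here.
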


Moreover, the following algorithmic consequences of the above theorem were also proved in \cite{MR2404220}.

\begin{thm}[\cite{MR2404220}]\label{thm:HpCHelly-rec} The recognition problem for hereditary $p$-clique-Helly graphs, where $p$ is part of the input:
\begin{enumerate}[(i)]
 \item\label{it:HpCH-rec1} can be solved in $O(pn^{p+2})$ time;
 \item\label{it:HpCH-rec2} is NP-hard.
\end{enumerate}\end{thm}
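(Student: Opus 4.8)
The plan is to obtain part~(\ref{it:HpCH-rec1}) algorithmically from the characterization by good $p$-cliques (statement~(\ref{it:HpCH4}) of Theorem~\ref{thm:HpCHelly}) and part~(\ref{it:HpCH-rec2}) by a reduction from the recognition of hereditary $p$-Helly hypergraphs, whose hardness is recorded in Theorem~\ref{thm:HpHelly-rec}(\ref{it:HpH-rec3}). For part~(\ref{it:HpCH-rec1}), note first that for $p=1$ every graph is hereditary $1$-clique-Helly, and if $p+1>n$ then $G$ has no $(p+1)$-clique and is trivially hereditary $p$-clique-Helly; so I may assume $p\geq 2$ and $p<n$. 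By Theorem~\ref{thm:HpCHelly} it then suffices to test, for each $(p+1)$-clique $Q=\{v_1,\ldots,v_{p+1}\}$, whether some sub-$p$-clique $Q-\{v_i\}$ is good. By the definition of good, $Q-\{v_i\}$ fails to be good precisely when some vertex $w$ is complete to $Q-\{v_i\}$ but nonadjacent to $v_i$, that is, when $v_i$ is the \emph{unique} nonneighbor of $w$ inside $Q$. Fixing $Q$, I would scan every $w\notin Q$, count its nonneighbors in $Q$ in $O(p)$ time, and whenever this count equals one mark the corresponding index $i$ as witnessed bad; after the scan, $Q$ contains a good $p$-clique if and only if some index is left unmarked.

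This scan costs $O(np)$ per $(p+1)$-clique. Enumerating all $(p+1)$-subsets and testing cliqueness costs $O(p^2n^{p+1})$, and the per-clique work totals $O(np\cdot n^{p+1})=O(pn^{p+2})$; since $p<n$, the term $p^2n^{p+1}$ is absorbed and the whole procedure runs in $O(pn^{p+2})$ time, giving the bound of part~(\ref{it:HpCH-rec1}).

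For part~(\ref{it:HpCH-rec2}) I would exploit the parallel between the matrix obstruction for hypergraphs (statement~(\ref{it:HpH5}) of Theorem~\ref{thm:HpHelly}) and the ocular obstruction for graphs (statement~(\ref{it:HpCH3}) of Theorem~\ref{thm:HpCHelly}): a $(p+1)\times(p+1)$ complement of a permutation matrix in $M(\mathcal H)$ records $p+1$ edges and $p+1$ vertices with a co-permutation incidence pattern, while a $(p+1)$-ocular records a $(p+1)$-clique $U$ together with vertices whose unique nonneighbor in $U$ realizes the very same pattern. The recognition problem is the complement of ``does $G$ contain an induced $(p+1)$-ocular?'', and the problem ``does $M(\mathcal H)$ contain a $(p+1)\times(p+1)$ complement of a permutation matrix?'' is NP-complete, being the complement of the co-NP-complete problem in Theorem~\ref{thm:HpHelly-rec}(\ref{it:HpH-rec3}) via the equivalence in Theorem~\ref{thm:HpHelly}(\ref{it:HpH5}). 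Starting from $\mathcal H$, I would build an incidence graph $G$ with a \emph{column} vertex $c_j$ for each hypergraph vertex $x_j$ and a \emph{row} vertex $r_i$ for each edge $E_i$, make the columns a clique, make the rows a stable set, and join $r_i$ to $c_j$ exactly when $x_j\in E_i$; the goal is that a complement-of-permutation submatrix of $M(\mathcal H)$ corresponds to an induced $(p+1)$-ocular with apex clique $U$ among the columns and co-matching side $W$ among the rows.

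The main obstacle is to rule out \emph{spurious} induced $(p+1)$-oculars, and the crux is forcing the clique side $U$ of any induced ocular to consist entirely of column vertices. Once that is guaranteed, the co-matching side $W$ automatically lands in the rows, since a column vertex is complete to the whole column clique and hence cannot miss exactly one apex of $U$; and then the co-permutation incidence between $U$ and $W$ reads off precisely a complement-of-permutation submatrix of $M(\mathcal H)$, yielding the desired equivalence ``$\mathcal H$ hereditary $p$-Helly iff $G$ hereditary $p$-clique-Helly''. To force $U$ into the columns I would attach simple padding gadgets to the row vertices (for instance a private pendant structure per row) that prevent a row vertex from serving as an apex; verifying that these gadgets accomplish this while themselves creating no new oculars is the delicate part of the construction. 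This reduction shows that detecting an induced $(p+1)$-ocular is NP-hard; since, by the ocular characterization~(\ref{it:HpCH3}), such an ocular is a polynomial-size certificate of non-membership, recognition lies in co-NP and is therefore co-NP-complete, which is the hardness asserted in part~(\ref{it:HpCH-rec2}).
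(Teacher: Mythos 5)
The paper does not actually prove this theorem: it is imported verbatim from \cite{MR2404220}, so your attempt must be judged on its own merits. For part~(\ref{it:HpCH-rec1}) your algorithm is sound and is the natural one derived from the good-clique characterization (statement~(\ref{it:HpCH4}) of Theorem~\ref{thm:HpCHelly}): the observation that $Q-\{v_i\}$ is bad exactly when some $w$ has $v_i$ as its unique nonneighbor in $Q$ is correct, and the accounting $O(p^2n^{p+1}+pn^{p+2})=O(pn^{p+2})$ works (you should say explicitly that you first build an adjacency matrix in $O(n^2)$ time, since the paper stores graphs as adjacency lists and your $O(p)$ nonneighbor counts need constant-time adjacency queries). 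However, your dispatch of the case $p=1$ is wrong: it is false that every graph is hereditary $1$-clique-Helly; by Corollary~\ref{cor:H1qCHelly} with $q=1$ (equivalently, since the paper's $2$-ocular is $2K_1$), the hereditary $1$-clique-Helly graphs are precisely the complete graphs. The patch is trivial --- test completeness in $O(n^2)\subseteq O(pn^{p+2})$ time --- but as written the claim is an error, not just an omitted edge case.

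Part~(\ref{it:HpCH-rec2}) contains the genuine gap: you correctly identify that the entire reduction hinges on forcing the apex clique $U$ of any induced ocular into the column side, and then you delegate exactly this to unspecified ``private pendant structures'' whose two required properties --- blocking row vertices as apices, and creating no new oculars --- you never establish. That deferred verification is the whole content of the hardness proof, and pendant gadgets are in fact a hazardous choice here, since adding vertices changes both the family of $(p+1)$-cliques and the set of vertices with a unique nonneighbor in a given clique. What you missed is that \emph{no gadget is needed}: in the plain incidence graph (columns a clique $C$, rows a stable set $R$, $c_j\sim r_i$ iff $x_j\in E_i$) the forcing is automatic. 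Since $R$ is stable, a clique $U$ contains at most one row $r$; if $r\in U$, pick any column apex $u_t\in U$ (one exists because $p+1\geq 3$ for $p\geq 2$): its witness $w_t$ must be adjacent to $r$, hence $w_t\in C$ (rows are pairwise nonadjacent), but then $w_t$ is adjacent to $u_t$ because $C$ is a clique --- contradiction. So $U\subseteq C$; each $w_i$ must then lie in $R$ (a column cannot miss the column $u_i$), and the $U$--$W$ adjacencies read off exactly a $(p+1)\times(p+1)$ complement of a permutation matrix in $M(\mathcal H)$; the converse is immediate because the ocular definition imposes nothing inside $W$ and $R$ is stable. With this short argument your reduction closes, and via statements~(\ref{it:HpH5}) of Theorem~\ref{thm:HpHelly}, (\ref{it:HpCH3}) of Theorem~\ref{thm:HpCHelly}, and (\ref{it:HpH-rec3}) of Theorem~\ref{thm:HpHelly-rec} it yields co-NP-completeness (membership in co-NP via the ocular certificate, as you note); this gives the stated NP-hardness in the Turing-reduction sense, which is evidently the convention in force in this literature, since the paper itself later labels reductions to complements as proofs of ``NP-hardness''.
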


For the class of hereditary $2$-clique-Helly graphs, an $O(n^2m)$-time recognition algorithm was proposed in~\cite{MR1238872}. Later, a faster $O(m^2+n)$-time algorithm was devised in \cite{MR2321859}.

Our aim is to extend the above results to hereditary $(p,q)$-clique-Helly graphs for any positive integers $p$ and $q$. For that purpose, we generalize $(p+1)$-oculars as follows. If $p$ and $q$ are positive integers and $s\in\{0,\ldots,q-1\}$, a \emph{$(p+1,q,s)$-ocular} is a graph whose vertex set is the union of two disjoint sets $U$ and $W$ where $U$ is a $((p+1)(q-s)+s)$-set, $T_1,\ldots,T_{p+1}$ are $p+1$ pairwise disjoint $(q-s)$-subsets of $U$, and one of the following statements holds:
\begin{enumerate}[({$\alpha$}$_1$)]
 \item\label{it:oc-a} $p=1$, $W=\emptyset$, and $U-T_i$ is a clique but $(U-T_i)\cup\{v_i\}$ is not a clique for each $v_i\in T_i$ and each $i\in\{1,2\}$;
 \item\label{it:oc-b} $p\geq 2$, $W=\{w_1,\ldots,w_{p+1}\}$, $U$ is a clique, and $w_i$ is complete to $U-T_i$ and anticomplete to $T_i$ for each $i\in\{1,\ldots,p+1\}$.
\end{enumerate}
Observe that if $p\geq 2$ then the vertices of $W$ may induce an arbitrary graph. Clearly, the notion of $(p+1)$-oculars coincide with that of $(p+1,1,0)$-oculars for each $p\geq 2$. We define the \emph{$2$-ocular} as the graph $2K_1$ (which is the only $(2,1,0)$-ocular). It is easy to see that, with this definition, statements (\ref{it:HpCH1}), (\ref{it:HpCH2}), and (\ref{it:HpCH3}) of Theorem~\ref{thm:HpCHelly} are still equivalent for $p=1$.

The lemma below shows that the $(p+1,q,s)$-oculars are not even $(p,q)$-clique-Helly graphs.

\begin{lem}\label{lem:pqs-ocular} If $p$ and $q$ are positive integers and $s\in\{0,\ldots,q-1\}$, then no $(p+1,q,s)$-ocular is $(p,q)$-clique-Helly.\end{lem}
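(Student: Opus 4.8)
The plan is to exhibit explicitly, inside any $(p+1,q,s)$-ocular $G$, a subfamily of $p+1$ maximal cliques that is $(p,q)$-intersecting yet whose total intersection has fewer than $q$ vertices; by the definition of the $(p,q)$-clique-Helly property (equivalently, by Corollary~\ref{cor:pqHelly} applied to the clique hypergraph $\mathcal C(G)$) this shows $G$ is not $(p,q)$-clique-Helly. Throughout, I would set $Z=U-(T_1\cup\cdots\cup T_{p+1})$; since the $T_i$ are $p+1$ pairwise disjoint $(q-s)$-sets and $\vert U\vert=(p+1)(q-s)+s$, the set $Z$ has exactly $s$ vertices, and $U$ is the disjoint union $Z\cup T_1\cup\cdots\cup T_{p+1}$.

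First I would produce, for each $i\in\{1,\ldots,p+1\}$, a maximal clique $C_i$ of $G$ that contains $U-T_i$ but is disjoint from $T_i$. In case $(\alpha_2)$ the natural candidate is $C_i=(U-T_i)\cup\{w_i\}$: it is a clique because $U$ is a clique and $w_i$ is complete to $U-T_i$; it is maximal because no other vertex extends it (the vertices of $T_i$ are nonadjacent to $w_i$, while each $w_j$ with $j\neq i$ has the nonempty set $T_j\subseteq U-T_i$ among its nonneighbors); and $C_i\cap T_i=\emptyset$ because $w_i$ is anticomplete to $T_i$. In case $(\alpha_1)$ (so $p=1$) I would take $C_i$ to be any maximal clique containing the clique $U-T_i$; the defining property that $(U-T_i)\cup\{v_i\}$ is not a clique for any $v_i\in T_i$ forces $C_i\cap T_i=\emptyset$.

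Next I would verify the two required properties of $\{C_1,\ldots,C_{p+1}\}$. For the $(p,q)$-intersecting condition, deleting a single clique $C_i$ leaves an intersection $\bigcap_{j\neq i}C_j\supseteq\bigcap_{j\neq i}(U-T_j)=U-\bigcup_{j\neq i}T_j=Z\cup T_i$, a set of size $s+(q-s)=q$; smaller subfamilies have even larger intersections, so every subfamily of $p$ or fewer of the $C_i$ has core of size at least $q$. For the total intersection, the relation $C_i\cap T_i=\emptyset$ for every $i$ shows $\bigcap_i C_i$ meets none of $T_1,\ldots,T_{p+1}$; combined with the fact that the only vertices common to all of $C_1,\ldots,C_{p+1}$ lie in $Z$ (in case $(\alpha_2)$ one checks $w_j\in C_i$ only when $i=j$, so no $w$ survives; in case $(\alpha_1)$ the vertex set is just $U$), this gives $\bigcap_i C_i\subseteq Z$ and hence $\vert\bigcap_i C_i\vert\leq s<q$. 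The cliques are pairwise distinct since $C_i\supseteq T_j$ while $C_j\cap T_j=\emptyset$ for $j\neq i$, and $T_j\neq\emptyset$ as $q-s\geq 1$.

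The main obstacle I anticipate is the bookkeeping in case $(\alpha_2)$: one must confirm that the cliques $C_i=(U-T_i)\cup\{w_i\}$ are genuinely maximal and, more delicately, that no vertex of $W$ slips into the total intersection despite the subgraph induced on $W$ being arbitrary. Both points hinge solely on the adjacency pattern between $W$ and $U$ — specifically that each $w_j$ is anticomplete to the nonempty set $T_j$ — which confines $w_j$ to the single clique $C_j$ irrespective of the edges among the $w$'s. Once this is settled, the size computations $\vert Z\vert=s$ and $\vert Z\cup T_i\vert=q$ are immediate, and the failure of the $(p,q)$-clique-Helly property follows at once.
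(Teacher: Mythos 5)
Your proposal is correct and follows essentially the same route as the paper's proof: in case ($\alpha_2$) the paper uses exactly the family $Q_i=(U-T_i)\cup\{w_i\}$, observes that each $Q_i$ is a maximal clique, that the core of the full family is the $s$-set $Z$ while each $p$-subfamily has core $Z\cup T_i$ of size $q$; in case ($\alpha_1$) it notes directly that $U-T_1$ and $U-T_2$ are maximal $q$-cliques intersecting in only $s<q$ vertices. Your additional explicit verifications (maximality of the $C_i$, exclusion of the $w_j$'s from the total intersection, pairwise distinctness) are all sound and merely spell out what the paper leaves implicit.
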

\begin{proof} Let $G$ be a $(p+1,q,s)$-ocular. If $p=1$, then $G$ is not $(p,q)$-Helly because $U-T_1$ and $U-T_2$ are maximal cliques of cardinality $q$ each, whose intersection has cardinality $s$ where $s<q$. Hence, we assume, without loss of generality, that $p\geq 2$. By definition, $Q_i=(U-T_i)\cup\{w_i\}$ is a maximal clique of $G$ for each $i\in\{1,\ldots,p+1\}$ and let $\mathcal Q=\{Q_1,\ldots,Q_{p+1}\}$. Clearly, the core of $\mathcal Q$ is the $s$-set $Z=U-(T_1\cup\cdots\cup T_{p+1})$ while the core of $\mathcal Q-\{Q_i\}$ is the $q$-set $Z\cup T_i$ for each $i\in\{1,\ldots,p+1\}$. Since $s<q$, this proves that $G$ is not $(p,q)$-clique-Helly.\end{proof}

A \emph{clique-matrix} $C(G)$ of $G$ is an incidence matrix of $\mathcal C(G)$; i.e., $C(G)$ is a $(0,1)$-matrix having one row for each maximal clique of $G$, one column for each vertex of $G$, and having a $1$ in the intersection of a row and a column if the corresponding maximal clique contains the corresponding vertex. Clearly, the clique-matrix $C(G)$ of $G$ is unique up to permutation of its rows and/or columns. We say a graph $G$ is \emph{strong $(p,q)$-clique-Helly} if $\mathcal C(G)$ is strong $(p,q)$-clique-Helly.

The theorem below is the main result of this section. It characterizes hereditary $(p,q)$-clique-Helly graphs in several ways, including a characterization by forbidden induced subgraphs (equivalence (\ref{it:HpqCH1})${}\Leftrightarrow{}$(\ref{it:HpqCH8}) below).

\begin{thm}\label{thm:HpqCHelly}
If $p$ and $q$ are positive integers, then the following statements are equivalent for each graph $G$:
\begin{enumerate}[(i)]
\item\label{it:HpqCH1} $G$ is hereditary $(p,q)$-clique-Helly;
\item\label{it:HpqCH2} $G$ is $(p,q')$-clique-Helly, for every $q'\geq q$;
\item\label{it:HpqCH3} $G$ is strong $(p,q)$-clique-Helly;
\item\label{it:HpqCH4} Every family of $p+1$ maximal cliques of $G$ is strong $(p,q)$-Helly;
\item\label{it:HpqCH5} $\varPhi_q(G)$ is hereditary $p$-clique-Helly;
\item\label{it:HpqCH6} $C(G)$ contains no incidence matrix of $\mathcal J_{p+1,q,s}$ as a submatrix for any $s\in\{0,\ldots,q-1\}$;
\item\label{it:HpqCH7} for each $s\in\{0,\ldots,q-1\}$, each $((p+1)(q-s)+s)$-subset $U$ of $V(G)$, and each $p+1$ pairwise disjoint $(q-s)$-subsets $T_1,\ldots,T_{p+1}$ of $U$ such that $U-T_1,\ldots,U-T_{p+1}$ are cliques of $G$, there is some $i\in\{1,\ldots,p+1\}$ and some $v\in T_i$ such that $v$ is adjacent to every vertex of $G$ that is complete to $U-T_i$.
\item\label{it:HpqCH8} $G$ contains no induced $(p+1,q,s)$-ocular for any $s\in\{0,\ldots,q-1\}$.
\end{enumerate}
\end{thm}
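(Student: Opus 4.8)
The plan is to prove the equivalence of all eight statements in Theorem~\ref{thm:HpqCHelly} by exploiting the close parallel with the hypergraph-level result Theorem~\ref{thm:HpqHelly} and the translation operator $\varPhi_q$, together with the Clique Preservation Property (Theorem~\ref{thm:CPP}). The key observation is that most of these statements are the specializations to $\mathcal H=\mathcal C(G)$ of the corresponding statements in Theorem~\ref{thm:HpqHelly}, so a substantial part of the work amounts to making this dictionary precise and handling the one genuine subtlety: the clique hypergraph $\mathcal C(G)$ consists only of the \emph{maximal} cliques, so its partial subhypergraphs need not themselves be clique hypergraphs of induced subgraphs of $G$.

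First I would establish the two ``anchor'' equivalences that tie the graph statements to the hypergraph machinery. For \eqref{it:HpqCH5}, I would invoke the Clique Preservation Property $\mathcal C(\varPhi_q(G))=\varPhi_q(\mathcal C(G))$ so that $\varPhi_q(G)$ is hereditary $p$-clique-Helly if and only if $\varPhi_q(\mathcal C(G))$ is hereditary $p$-Helly, which by the equivalence \mbox{(\ref{it:HpqH1})${}\Leftrightarrow{}$(\ref{it:HpqH5})} of Theorem~\ref{thm:HpqHelly} (applied to $\mathcal H=\mathcal C(G)$) is equivalent to $\mathcal C(G)$ being hereditary $(p,q)$-Helly. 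The remaining task for this branch is to show that $\mathcal C(G)$ is hereditary $(p,q)$-Helly if and only if $G$ is hereditary $(p,q)$-clique-Helly, i.e.\ statement~\eqref{it:HpqCH1}; here the care needed is precisely the maximality issue mentioned above, which I would resolve using the standard fact (recalled in the text preceding the theorem) that $\mathcal C(G')$ is the family of inclusion-wise maximal edges of the induced subhypergraph of $\mathcal C(G)$ on $V(G')$, and that passing to maximal edges does not change which $(p,q)$-intersecting families have small core.

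Next I would route the ``combinatorial'' statements \eqref{it:HpqCH3}, \eqref{it:HpqCH4}, \eqref{it:HpqCH6}, and \eqref{it:HpqCH7} through their hypergraph counterparts: each of these is, by the definition of strong $(p,q)$-clique-Helly and of $C(G)$ as an incidence matrix of $\mathcal C(G)$, literally the statement for $\mathcal H=\mathcal C(G)$ of the matching item of Theorem~\ref{thm:HpqHelly}. Thus \eqref{it:HpqCH3} is \mbox{(\ref{it:HpqH3})}, \eqref{it:HpqCH4} is \mbox{(\ref{it:HpqH4})}, \eqref{it:HpqCH6} is \mbox{(\ref{it:HpqH6})}, and \eqref{it:HpqCH7} is the graph-theoretic reformulation of \mbox{(\ref{it:HpqH8})} once one uses Lemma~\ref{lem:CP} to rewrite $\core(\mathcal C_{U-T_i})$ as the set of vertices complete to everything complete to $U-T_i$. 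The equivalence \mbox{\eqref{it:HpqCH1}${}\Leftrightarrow{}$\eqref{it:HpqCH2}} likewise transfers directly from \mbox{(\ref{it:HpqH1})${}\Leftrightarrow{}$(\ref{it:HpqH2})} of Theorem~\ref{thm:HpqHelly}, since $G$ is $(p,q')$-clique-Helly exactly when $\mathcal C(G)$ is $(p,q')$-Helly.

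The one part that requires genuinely new argument, and which I expect to be the main obstacle, is the forbidden-induced-subgraph characterization \mbox{\eqref{it:HpqCH1}${}\Leftrightarrow{}$\eqref{it:HpqCH8}}. Lemma~\ref{lem:pqs-ocular} already gives the easy direction: every $(p+1,q,s)$-ocular fails to be $(p,q)$-clique-Helly, so a hereditary $(p,q)$-clique-Helly graph contains no induced $(p+1,q,s)$-ocular. For the converse I would argue contrapositively: assuming $G$ is not hereditary $(p,q)$-clique-Helly, I would use \mbox{\eqref{it:HpqCH7}} (already shown equivalent to \eqref{it:HpqCH1}) to extract a witnessing configuration---an $s$, a set $U$, and pairwise disjoint $(q-s)$-sets $T_1,\ldots,T_{p+1}$ with $U-T_i$ a clique, yet for every $i$ and every $v\in T_i$ some vertex complete to $U-T_i$ misses $v$---and then I would \emph{build} an induced $(p+1,q,s)$-ocular inside $G$. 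The delicate step is selecting, for each $i$, a single external vertex $w_i$ complete to $U-T_i$ but anticomplete to all of $T_i$ (for $p\geq 2$), respectively showing maximality of the cliques $U-T_i$ when $p=1$; this is exactly the kind of ``choice-and-merge'' argument carried out in the proof of \mbox{(\ref{it:HpqH7})${}\Rightarrow{}$(\ref{it:HpqH8})} of Theorem~\ref{thm:HpqHelly}, where one first reduces to the case that the chosen non-neighbors form inclusion-chains and then picks minimal representatives. I would adapt that chain argument to produce the $w_i$, taking care that the $w_i$ lie outside $U$ and that the induced subgraph on $U\cup\{w_1,\ldots,w_{p+1}\}$ has exactly the ocular adjacency pattern; the freedom that $W$ may induce an arbitrary graph makes this feasible, but verifying the anticompleteness to $T_i$ and completeness to $U-T_i$ simultaneously for all $i$ is where the bookkeeping is heaviest.
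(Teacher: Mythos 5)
Your overall skeleton matches the paper's proof quite closely: the paper likewise proves two ``anchor'' claims identifying (\ref{it:HpqCH5}) with statement~(\ref{it:HpqH5}) and (\ref{it:HpqCH7}) with statement~(\ref{it:HpqH8}) of Theorem~\ref{thm:HpqHelly} for $\mathcal H=\mathcal C(G)$ (via Theorems~\ref{thm:HpCHelly}, \ref{thm:CPP}, \ref{thm:HpHelly} and Lemma~\ref{lem:CP}), deduces that (\ref{it:HpqCH2})--(\ref{it:HpqCH7}) are equivalent, gets \mbox{(\ref{it:HpqCH1})${}\Rightarrow{}$(\ref{it:HpqCH8})} from Lemma~\ref{lem:pqs-ocular}, and closes the cycle with exactly the chain/minimal-representative construction you sketch for building oculars. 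However, there is a genuine gap in how you tie statement~(\ref{it:HpqCH1}) into the hypergraph machinery. Your proposed bridge --- that $\mathcal C(G)$ is hereditary $(p,q)$-Helly if and only if $G$ is hereditary $(p,q)$-clique-Helly, justified by the principle that ``passing to maximal edges does not change which $(p,q)$-intersecting families have small core'' --- is false as a general principle, and its hard direction is precisely what cannot be obtained this cheaply. Replacing each edge of a small-core $(p,q)$-intersecting family by a maximal edge containing it preserves the $(p,q)$-intersecting property but can enlarge the core past $q$, destroying the witness. Concretely, take $p=1$, $q=2$, and $G$ with vertex set $\{1,2,3,a,b\}$ where $\{1,2,3\}$ is a triangle, $a$ is adjacent to $1,2$ and $b$ to $1,3$: the subhypergraph of $\mathcal C(G)$ induced by $\{1,2,3\}$ has edges $\{1,2\},\{1,3\},\{1,2,3\}$, and the family $\{\{1,2\},\{1,3\}\}$ is $(1,2)$-intersecting with $1$-core, yet $\mathcal C(G[\{1,2,3\}])=\{\{1,2,3\}\}$ sees no violation. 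In general the cliques $U-T_i$ of a $\mathcal J_{p+1,q,s}$ configuration sit inside the clique $U$ (for $p\geq 2$) and are never maximal in $G[U]$; turning the hypergraph witness into a failing induced subgraph requires \emph{adding} the external pinning vertices $w_i$, which is exactly the content of the ocular construction --- so the equivalence of (\ref{it:HpqCH1}) with ``$\mathcal C(G)$ is hereditary $(p,q)$-Helly'' (Corollary~\ref{cor:HpqCHelly} in the paper) is a \emph{consequence} of the theorem, not an available ingredient.

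This gap also makes your plan for \mbox{(\ref{it:HpqCH8})${}\Rightarrow{}$(\ref{it:HpqCH1})} circular: you start from $\neg$(\ref{it:HpqCH1}) and invoke ``(\ref{it:HpqCH7}), already shown equivalent to (\ref{it:HpqCH1})'', but that equivalence rests on the unproved bridge. The repair is exactly the paper's routing, and all the needed pieces already appear in your proposal in disguised form: prove \mbox{(\ref{it:HpqCH8})${}\Rightarrow{}$(\ref{it:HpqCH7})} directly --- your ``witnessing configuration'' is literally a negation of (\ref{it:HpqCH7}), so no reference to (\ref{it:HpqCH1}) is needed to launch the chain argument --- and prove \mbox{(\ref{it:HpqCH3})${}\Rightarrow{}$(\ref{it:HpqCH1})} using only the easy half of your maximality observation, namely that $\mathcal C(G')$ is a partial subhypergraph of $\mathcal C(G)$ for every induced subgraph $G'$, combined with the equivalence \mbox{(\ref{it:HpqH3})${}\Leftrightarrow{}$(\ref{it:HpqH7})} of Theorem~\ref{thm:HpqHelly}. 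With the cycle \mbox{(\ref{it:HpqCH3})${}\Rightarrow{}$(\ref{it:HpqCH1})${}\Rightarrow{}$(\ref{it:HpqCH8})${}\Rightarrow{}$(\ref{it:HpqCH7})} the theorem follows, and your false generic principle is never needed.
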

\begin{proof} We claim that (\ref{it:HpqCH5}) is equivalent to statement (\ref{it:HpqH5}) of Theorem~\ref{thm:HpqHelly} for $\mathcal H=\mathcal C(G)$. Indeed, Theorem~\ref{thm:HpCHelly} implies that $\varPhi_q(G)$ is hereditary $p$-clique-Helly if and only if $\varPhi_q(G)$ is strong $p$-clique-Helly. By definition, the latter holds if and only if $\mathcal C(\varPhi_q(G))$ is strong $p$-Helly. Hence, by Theorem~\ref{thm:CPP}, $\varPhi_q(G)$ is hereditary $p$-clique-Helly if and only if $\varPhi_q(\mathcal C(G))$ is strong $p$-Helly or, equivalently, hereditary $p$-Helly (by Theorem~\ref{thm:HpHelly}). This proves the claim.

Our second claim is that (\ref{it:HpqCH7}) is equivalent to statement (\ref{it:HpqH8}) of Theorem~\ref{thm:HpqHelly} for $\mathcal H=\mathcal C(G)$. Let us denote $\mathcal C(G)$ simply by $\mathcal C$ and let $s\in\{0,\ldots,q-1\}$, let $U$ be a $((p+1)(q-s)+s)$-subset of $V(G)$, let $T_1,\ldots,T_{p+1}$ be pairwise disjoint $(q-s)$-subsets of $U$ such that each of $U-T_1,\ldots,U-T_{p+1}$ is a clique of $G$, and let $\mathcal S=\{T_1\cup Z,\ldots,T_{p+1}\cup Z\}$ where $Z=U-(T_1\cup\cdots\cup T_{p+1})$. Since the support sets of $\mathcal S$ are $U-T_1,\ldots,U-T_{p+1}$, we have that $\core(\mathcal C_\mathcal S^\cup)=\core(\mathcal C_{U-T_1})\cap\cdots\cap\core(\mathcal C_{U-T_{p+1}})$. Moreover, since $U-T_j$ is a clique of $G$, Lemma~\ref{lem:CP} implies that $U-T_j\subseteq\core(\mathcal C_{U-T_j})$ and, in particular, $T_i\subseteq\core(\mathcal C_{U-T_j})$, for every two different $i,j\in\{1,\ldots,p+1\}$. Hence, $\core(\mathcal C_\mathcal S^\cup)\cap(T_1\cup\cdots\cup T_{p+1})\neq\emptyset$ if and only if there is some $i\in\{1,\ldots,p+1\}$ and some $v\in T_i$ such that $v\in\mathcal\core(\mathcal C_{U-T_i})$ which, by Lemma~\ref{lem:CP}, is equivalent to the fact that $v$ is adjacent to every vertex that is complete to $U-T_i$. This proves our second claim.

From the above arguments, we conclude that Theorem~\ref{thm:HpqHelly} applied to $\mathcal H=\mathcal C(G)$ implies that statements (\ref{it:HpqCH2}) to (\ref{it:HpqCH7}) are equivalent. Hence, in order to prove that (\ref{it:HpqCH1}) and (\ref{it:HpqCH8}) are also equivalent to all of them, it is enough to prove (\ref{it:HpqCH3})${}\Rightarrow{}$(\ref{it:HpqCH1}), (\ref{it:HpqCH1})${}\Rightarrow{}$(\ref{it:HpqCH8}), and (\ref{it:HpqCH8})${}\Rightarrow{}$(\ref{it:HpqCH7}), as we do below.

\mbox{(\ref{it:HpqCH3})${}\Rightarrow{}$(\ref{it:HpqCH1})} It follows from the equivalence (\ref{it:HpqH3})${}\Leftrightarrow{}$(\ref{it:HpqH7}) of Theorem~\ref{thm:HpqHelly} as follows. Suppose that $G$ is not hereditary $(p,q)$-clique-Helly and let $G'$ be an induced subgraph of $G$ which is not $(p,q)$-clique-Helly. In particular, $\mathcal C(G')$ is not strong $(p,q)$-Helly and, by Theorem~\ref{thm:HpqHelly}, has some partial subhypergraph $\mathcal H$ isomorphic to $\mathcal J_{p+1,q,s}$ for some $s\in\{0,\ldots,q-1\}$.  Since $\mathcal C(G')$ is a partial subhypergraph of $\mathcal C(G)$, $\mathcal H$ is also a partial subhypergraph of $\mathcal C(G)$. This means that $\mathcal C(G)$ is not strong $(p,q)$-Helly; i.e., $G$ is not strong $(p,q)$-clique-Helly.

\mbox{(\ref{it:HpqCH1})${}\Rightarrow{}$(\ref{it:HpqCH8})} It follows from Lemma~\ref{lem:pqs-ocular}.

\mbox{(\ref{it:HpqCH8})${}\Rightarrow{}$(\ref{it:HpqCH7})} Suppose \eqref{it:HpqCH7} does not hold; i.e., there is some $s\in\{0,\ldots,q-1\}$, some $((p+1)(q-s)+s)$-subset $U$ of $V(G)$, and some $p+1$ pairwise disjoint $(q-s)$-subsets $T_1,\ldots,T_{p+1}$ of $U$ such that $U-T_1,\ldots,U-T_{p+1}$ are cliques of $G$ and for each $i\in\{1,\ldots,p+1\}$ and each $v\in T_i$ there is some vertex $\trans{v}$ which is complete to $U-T_i$ but nonadjacent to $v$.

Consider first the case $p=1$. If, for each $i\in\{1,2\}$, no vertex $v\in T_i$ is complete to $U-T_i$, then $U$ induces a $(p+1,q,s)$-ocular in $G$. If, on the contrary, there is some $i\in\{1,2\}$ and some $v\in T_i$ that is complete to $U-T_i$, then $(U-T_i)\cup\{v,\trans{v}\}$ induces a $(p+1,q,q-1)$-ocular in $G$. In either case, (\ref{it:HpqCH8}) does not hold.

From now on, we assume that $p\geq 2$. Observe that this means that $U$ is a clique of $G$ because every pair of vertices of $U$ is contained in at least one of the cliques $U-T_1,\ldots,U-T_{p+1}$ of $G$. For each $i\in\{1,\ldots,p+1\}$ and each $v\in T_i$, let $W(v)=\{w\in V(G):\,w\text{ is complete to }T-U_i\text{ and nonadjacent to }v\}$. Suppose first that there are two vertices $x,y\in T_j$ such that $W(x)$ and $W(v)$ are inclusion-wise incomparable. By symmetry, let $j=1$ and let $x'$ and $y'$ be two vertices complete to $U-T_1$ such that $x'$ is adjacent to $y$ but not to $x$, and $y'$ is adjacent to $x$ but not to $y$. Thus, for any choice of a vertex $v_i\in T_i$ for each $i\in\{2,\ldots,p+1\}$, the set $\{x,y,v_2,\ldots,v_p\}\cup(T_{p+1}-\{v_{p+1}\})\cup Z\cup\{x',y',\trans{v_2},\ldots,\trans{v_p}\}$ induces a $(p+1,q,q-1)$-ocular in $G$ and (\ref{it:HpqCH8}) does not hold. Hence, for each $i\in\{1,\ldots,p+1\}$, we assume, without loss of generality, that the family $\{W(v):\,v\in T_i\}$ is a chain under inclusion, let $v_i\in T_i$ such that $W(v_i)$ is the minimum element of the chain and let $w_i=\trans{v_i}$. By construction, for each $i\in\{1,\ldots,p+1\}$, $w_i$ is complete to $U-T_i$ and anticomplete to $T_i$. Thus, the set $U\cup\{\trans{v_1},\ldots,\trans{v_{p+1}}\}$ induces a $(p+1,q,s)$-ocular in $G$ and (\ref{it:HpqCH8}) does not hold either. This completes the proof of (\ref{it:HpqCH8})${}\Rightarrow{}$(\ref{it:HpqCH7}) and thus of the theorem.\end{proof}

By combining the equivalences (\ref{it:HpqCH1})${}\Leftrightarrow{}$(\ref{it:HpqCH3}) of Theorem~\ref{thm:HpqCHelly} and (\ref{it:HpqH1})${}\Leftrightarrow{}$(\ref{it:HpqH3}) of Theorem~\ref{thm:HpqHelly}, we obtain the following consequence which is crucial for our derivation of a polynomial-time recognition algorithm.

\begin{cor}\label{cor:HpqCHelly} For each pair of positive integers $p$ and $q$, a graph $G$ is hereditary $(p,q)$-clique-Helly if and only if $\mathcal C(G)$ is hereditary $(p,q)$-Helly.\end{cor}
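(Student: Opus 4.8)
The plan is to derive Corollary~\ref{cor:HpqCHelly} directly by chaining two equivalences that have already been established, so that essentially no new combinatorial work is required. The statement to prove is that a graph $G$ is hereditary $(p,q)$-clique-Helly if and only if the clique hypergraph $\mathcal C(G)$ is hereditary $(p,q)$-Helly. My strategy is to recognize that both sides of this biconditional admit a characterization through the \emph{strong} $(p,q)$-Helly property: the left side via Theorem~\ref{thm:HpqCHelly} and the right side via Theorem~\ref{thm:HpqHelly}, and that these two characterizations refer to exactly the same object, namely whether $\mathcal C(G)$ is strong $(p,q)$-Helly.

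First I would recall that, by the definition of strong $(p,q)$-clique-Helly graph given just before Theorem~\ref{thm:HpqCHelly}, the graph $G$ is strong $(p,q)$-clique-Helly \emph{precisely when} $\mathcal C(G)$ is strong $(p,q)$-Helly as a hypergraph. This definitional bridge is the crux that ties the two theorems together. Next I would invoke the equivalence \mbox{(\ref{it:HpqCH1})${}\Leftrightarrow{}$(\ref{it:HpqCH3})} of Theorem~\ref{thm:HpqCHelly}, which states that $G$ is hereditary $(p,q)$-clique-Helly if and only if $G$ is strong $(p,q)$-clique-Helly, i.e., if and only if $\mathcal C(G)$ is strong $(p,q)$-Helly. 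Then I would apply the equivalence \mbox{(\ref{it:HpqH1})${}\Leftrightarrow{}$(\ref{it:HpqH3})} of Theorem~\ref{thm:HpqHelly} to the hypergraph $\mathcal C(G)$, which states that a hypergraph is hereditary $(p,q)$-Helly if and only if it is strong $(p,q)$-Helly. Composing these two equivalences yields that $G$ is hereditary $(p,q)$-clique-Helly if and only if $\mathcal C(G)$ is hereditary $(p,q)$-Helly, which is exactly the claim.

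The proof therefore reduces to a short transitivity argument of the form
\[
G \text{ is hereditary } (p,q)\text{-clique-Helly}
\iff \mathcal C(G) \text{ is strong } (p,q)\text{-Helly}
\iff \mathcal C(G) \text{ is hereditary } (p,q)\text{-Helly},
\]
where the first equivalence comes from Theorem~\ref{thm:HpqCHelly} together with the definition of strong $(p,q)$-clique-Helly graph, and the second comes from Theorem~\ref{thm:HpqHelly} applied to $\mathcal H=\mathcal C(G)$. There is essentially no obstacle here beyond correctly matching the definitions: the only subtlety worth flagging is the distinction (noted earlier in the text) that $\mathcal C(G)$ may have partial subhypergraphs that are not clique hypergraphs of induced subgraphs of $G$, which is exactly why one cannot argue naively that ``hereditary $(p,q)$-clique-Helly'' equals ``hereditary $(p,q)$-Helly of $\mathcal C(G)$'' by simply comparing induced subgraphs with subhypergraphs. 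The strong $(p,q)$-Helly property sidesteps this discrepancy precisely because it quantifies over \emph{all} partial hypergraphs, so routing the equivalence through the strong property rather than directly through the hereditary definitions is what makes the corollary valid. I expect this definitional matching to be the only point requiring care; the rest is immediate.
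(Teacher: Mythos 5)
Your proposal is correct and follows exactly the paper's own route: the paper derives the corollary precisely by combining the equivalence (\ref{it:HpqCH1})${}\Leftrightarrow{}$(\ref{it:HpqCH3}) of Theorem~\ref{thm:HpqCHelly} with (\ref{it:HpqH1})${}\Leftrightarrow{}$(\ref{it:HpqH3}) of Theorem~\ref{thm:HpqHelly} applied to $\mathcal H=\mathcal C(G)$, using the definitional identification of ``strong $(p,q)$-clique-Helly'' for $G$ with ``strong $(p,q)$-Helly'' for $\mathcal C(G)$. Your additional remark about why routing through the strong property is necessary (since partial subhypergraphs of $\mathcal C(G)$ need not be clique hypergraphs of induced subgraphs) is accurate and matches the caveat the paper itself raises before Theorem~\ref{thm:HpCHelly}.
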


As a consequence of the above result, the fact that $\mathcal C(G)$ is a simple hypergraph, and Lemma~\ref{lem:H1qHelly}, we have the following characterizations of hereditary $(1,q)$-clique-Helly graphs.

\begin{cor}\label{cor:H1qCHelly} If $q$ is a positive integer, then the following statements are equivalent for each graph $G$:
\begin{enumerate}[(i)]
 \item\label{it:H1qCH1} $G$ is hereditary $(1,q)$-clique-Helly
 \item\label{it:H1qCH2} $G$ has at most one maximal clique of cardinality at least $q$.
 \item\label{it:H1qCH3} the union of all the $q$-cliques of $G$ is empty or a maximal clique of $G$.
 \item\label{it:H1qCH4} the union of all the $q$-cliques of $G$ is a (possibly empty) clique of $G$.
\end{enumerate}\end{cor}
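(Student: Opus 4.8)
The plan is to deduce the corollary directly from Lemma~\ref{lem:H1qHelly} applied to the clique hypergraph, translating each hypergraph condition into the corresponding statement about cliques. First I would set $\mathcal H=\mathcal C(G)$ and note that $\mathcal C(G)$ is a \emph{simple} hypergraph, since distinct maximal cliques of $G$ are distinct sets; hence Lemma~\ref{lem:H1qHelly} is applicable. By Corollary~\ref{cor:HpqCHelly} with $p=1$, the assertion that $G$ is hereditary $(1,q)$-clique-Helly is equivalent to $\mathcal C(G)$ being hereditary $(1,q)$-Helly, i.e.\ to statement~\eqref{it:H1qH1} of the lemma. Thus \eqref{it:H1qCH1} matches \eqref{it:H1qH1}, and it remains to read off the other three conditions.

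Next I would set up the dictionary between $\mathcal C(G)$ and $G$. The edges of $\mathcal C(G)$ are exactly the maximal cliques of $G$, so the $q^+$-edges of $\mathcal C(G)$ are precisely the maximal cliques of cardinality at least $q$; this turns statement~\eqref{it:H1qH2} into statement~\eqref{it:H1qCH2}. The key observation is that the $q$-subsets of the edges of $\mathcal C(G)$ are exactly the $q$-cliques of $G$: any $q$-subset of a maximal clique is itself a clique, and conversely every $q$-clique of $G$ extends to some maximal clique and so is a $q$-subset of some edge. Consequently the union $U$ appearing in \eqref{it:H1qH3}--\eqref{it:H1qH4} equals the union of all the $q$-cliques of $G$, which is the object referred to in \eqref{it:H1qCH3}--\eqref{it:H1qCH4}. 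With this identification, \eqref{it:H1qH4} (``$U$ is empty or an edge'') becomes \eqref{it:H1qCH3} (``$U$ is empty or a maximal clique'') verbatim.

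The only link requiring a small argument is \eqref{it:H1qCH4}${}\Leftrightarrow{}$\eqref{it:H1qH3}, i.e.\ the equivalence between ``$U$ is a (possibly empty) clique of $G$'' and ``$U$ is empty or contained in some edge of $\mathcal C(G)$''. Here I would argue that a vertex set is contained in a maximal clique if and only if it is a clique: one direction is immediate since a subset of a clique is a clique, and for the other direction every clique extends to a maximal one; the empty case is trivial on both sides. I expect this to be the only genuine step, and even it is routine. Finally, since Lemma~\ref{lem:H1qHelly} asserts that \eqref{it:H1qH1}--\eqref{it:H1qH4} are pairwise equivalent, the matched statements \eqref{it:H1qCH1}--\eqref{it:H1qCH4} are pairwise equivalent as well, completing the proof. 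The main ``obstacle'' is merely keeping the translation honest; there is no substantive difficulty, as all the real content lives in Corollary~\ref{cor:HpqCHelly} and Lemma~\ref{lem:H1qHelly}.
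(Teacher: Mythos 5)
Your proposal is correct and takes essentially the same approach as the paper, which states Corollary~\ref{cor:H1qCHelly} as an immediate consequence of Corollary~\ref{cor:HpqCHelly}, the simplicity of $\mathcal C(G)$, and Lemma~\ref{lem:H1qHelly}, leaving implicit exactly the dictionary you spell out. Your translation---including the correct matching of the lemma's last two conditions to the corollary's (in swapped order) via the observation that a vertex set is a clique if and only if it is contained in a maximal clique---is precisely the intended reading.
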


We now give time bounds for the recognition of hereditary $(p,q)$-clique-Helly graphs. Our bounds represent an improvement upon the bound (\ref{it:HpCH-rec1}) of Theorem~\ref{thm:HpCHelly-rec} proved in \cite{MR2404220} for hereditary $p$-clique-Helly graphs. Moreover, for the recognition of hereditary clique-Helly graphs (i.e., hereditary $(2,1)$-clique-Helly graphs), our bound matches the $O(m^2+n)$-time bound given in~\cite{MR2321859}, which is the best currently known. In fact, our algorithm for the case $q=1$ is a generalization of the approach for the case $(p,q)=(2,1)$ used in~\cite{MR2321859}.

\begin{thm} If $q$ is any fixed positive integer, then the recognition problem for hereditary $(p,q)$-clique-Helly graphs, where $p$ is part of the input, can be solved in:
\begin{enumerate}[(i)]
 \item\label{it:HpqCH-rec1} $O(m+n)$ time if $p=1$ and $q=1$;
 \item\label{it:HpqCH-rec2} $O(qm^{q/2}+n)$ time if $p=1$ and $q\geq 2$;
 \item\label{it:HpqCH-rec3} $O(m^{p/2+1}+p\omega m^{(p+1)/2}+n)$ time if $p\geq 2$ and $q=1$;
 \item\label{it:HpqCH-rec4} $O\bigl(qm^{q/2}+\omega m^{q/2}\binom{N_q}p+n\bigr)$ time if $p\geq 2$ and $q\geq 2$, where $N_q$ is the number of $q$-cliques of the input graph.
\end{enumerate}
In particular, if $p$ and $q$ are both fixed, bound (\ref{it:HpqCH-rec4}) above becomes $O(\omega m^{(p+1)q/2}+n)$.
\end{thm}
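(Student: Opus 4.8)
The plan is to reduce every case to a statement about the clique hypergraph $\mathcal C(G)$ via Corollary~\ref{cor:HpqCHelly}, which guarantees that $G$ is hereditary $(p,q)$-clique-Helly if and only if $\mathcal C(G)$ is hereditary $(p,q)$-Helly, and then to reuse as much of the machinery already developed in Theorem~\ref{thm:recog-pq-CH} as possible. As a preliminary step I would note that the property is preserved component-wise, so after computing the components of $G$ in $O(m+n)$ time we may assume $G$ is connected and hence $n=O(m)$; the residual $+n$ in each bound accounts for this bookkeeping and for isolated vertices. For the cases $p=1$ (bounds~\eqref{it:HpqCH-rec1} and~\eqref{it:HpqCH-rec2}) I would invoke Corollary~\ref{cor:H1qCHelly}, whose equivalence (\ref{it:H1qCH1})${}\Leftrightarrow{}$(\ref{it:H1qCH4}) reduces the problem to testing whether the union $U$ of all $q$-cliques of $G$ is a (possibly empty) clique. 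When $q=1$ this union is $V(G)$, so the test is simply whether $G$ is complete, decidable in $O(m+n)$ time, giving~\eqref{it:HpqCH-rec1}. When $q\geq 2$ I would enumerate all $q$-cliques in $O(qm^{q/2})$ time via Theorem~\ref{thm:qcliques} (applied per component), form their union $U$, and decide whether $G[U]$ is complete by marking $U$ and counting the edges inside it against $\binom{|U|}2$; since $q\geq 2$ gives $m=O(m^{q/2})$, this costs $O(m^{q/2}+n)$ and yields~\eqref{it:HpqCH-rec2}.

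For the case $p\geq 2$ and $q\geq 2$ (bound~\eqref{it:HpqCH-rec4}) the argument copies that for bound~\eqref{it:pqCH-rec4} of Theorem~\ref{thm:recog-pq-CH} verbatim, with Lemma~\ref{lem:technical2} in place of Lemma~\ref{lem:technical}. Concretely, I would enumerate the $q$-cliques $S_1,\ldots,S_{N_q}$ of $G$ in $O(qm^{q/2})$ time; this list contains every $q$-subset of every maximal clique, i.e.\ of every edge of $\mathcal C(G)$. By Lemma~\ref{lem:CP} the core $\core(\mathcal C(G)_P)$ of any $P\subseteq V(G)$ is computable in $O(m)$ time, so Lemma~\ref{lem:technical2} with $r=\omega$, $N=N_q=O(m^{q/2})$, $f=O(m)$, and $n=O(m)$ decides hereditary $(p,q)$-Helly-ness of $\mathcal C(G)$ in $O\bigl(\omega m^{q/2}\binom{N_q}p\bigr)$ time (using $\omega m^{q/2}\geq m^{q/2}\geq m$ for $q\geq 2$). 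Together with the enumeration and the component bookkeeping this gives~\eqref{it:HpqCH-rec4}.

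For the case $p\geq 2$ and $q=1$ (bound~\eqref{it:HpqCH-rec3}) I would mirror the trie-based algorithm proving bound~\eqref{it:pqCH-rec3} of Theorem~\ref{thm:recog-pq-CH}, altering only the test at each leaf. That algorithm enumerates all $p$- and $(p+1)$-cliques, stores the $p$-cliques in a trie $T_1$ with $\core(\mathcal C(G)_P)$ attached to each leaf $P$, and then, traversing a trie $T_2$ of $(p+1)$-cliques with $p+2$ iterators into $T_1$, computes at each $(p+1)$-clique $U=\{u_1,\ldots,u_{p+1}\}$ the core $\core(\mathcal C(G)_U^\cup)=\bigcap_{i}\core(\mathcal C(G)_{U-\{u_i\}})$ in $O(p\omega)$ time. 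By the equivalence (\ref{it:HpqCH1})${}\Leftrightarrow{}$(\ref{it:HpqCH7}) of Theorem~\ref{thm:HpqCHelly} specialized to $q=1$, $s=0$ (equivalently, by the good-$p$-clique condition of Theorem~\ref{thm:HpCHelly}(\ref{it:HpCH4})), $G$ is hereditary $(p,1)$-clique-Helly if and only if for every $(p+1)$-clique $U$ this core meets $U$. Since each $U-\{u_i\}$ is a clique, Lemma~\ref{lem:CP} gives $U-\{u_i\}\subseteq\core(\mathcal C(G)_{U-\{u_i\}})$, so $u_j\in\core(\mathcal C(G)_U^\cup)$ can fail only when $j=i$; hence the modified test reduces to checking, for some $j$, whether $u_j\in\core(\mathcal C(G)_{U-\{u_j\}})$. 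This is $O(p\omega)$ per leaf, no worse than the intersection already performed, so the overall bound is unchanged and equals~\eqref{it:HpqCH-rec3}.

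The main obstacle will be this last case $q=1$, $p\geq 2$: one must verify both that the forbidden-configuration test coming from Theorem~\ref{thm:HpqCHelly} genuinely coincides with the good-$p$-clique condition and that intersecting the computed core with $U$ folds into the existing leaf processing without increasing its asymptotic cost; the other three cases are direct specializations of Corollary~\ref{cor:H1qCHelly}, Lemma~\ref{lem:technical2}, and the algorithms of Theorem~\ref{thm:recog-pq-CH}. Finally, for the ``in particular'' claim, fixing $p$ and $q$ gives $N_q=O(m^{q/2})$ and hence $\binom{N_q}p=O(m^{pq/2})$, so $qm^{q/2}+\omega m^{q/2}\binom{N_q}p+n$ collapses to $O(\omega m^{(p+1)q/2}+n)$, as stated.
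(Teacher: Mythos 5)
Your proposal is correct and takes essentially the same approach as the paper: the same four-case split with component preprocessing, Corollary~\ref{cor:H1qCHelly} for $p=1$, the trie algorithm of Theorem~\ref{thm:recog-pq-CH} with a modified leaf test for $p\geq 2$, $q=1$, and Corollary~\ref{cor:HpqCHelly} combined with Lemma~\ref{lem:technical2} for $p\geq 2$, $q\geq 2$. Your leaf test in the $q=1$ case (checking whether $u_j\in\core(\mathcal C_{U-\{u_j\}})$ for some $j$) is an equivalent, equally cheap reformulation of the paper's test $\core(\mathcal C_P^\cup)\cap P\neq\emptyset$, justified by the same characterization the paper invokes.
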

\begin{proof} Let $G$ be the input graph and let $\mathcal C$ be the clique hypergraph of $G$.

Suppose that $p=1$. By virtue of Corollary~\ref{cor:H1qCHelly}, $G$ is hereditary $(p,q)$-clique-Helly if and only if the union of all the $q$-cliques of $G$ is a clique of $G$. In particular, if $q=1$, then $G$ is hereditary $(p,q)$-clique-Helly if and only if $G$ is a complete graph, which can be decided in $O(m+n)$ time. If, on the contrary, $q\geq 2$, then we can enumerate all the $q$-cliques of $G$ while computing its union $U$ in $O(qm^{q/2}+n)$ time (by Theorem~\ref{thm:qcliques}) and determine whether $U$ is a clique of $G$ in additional $O(m+n)$ time; hence, it can be decided whether $G$ is hereditary $(p,q)$-clique-Helly in $O(qm^{q/2}+n)$ time.

From now on, we suppose that $p\geq 2$. Since $G$ is hereditary $(p,q)$-clique-Helly if and only if each component of $G$ is and we can compute the components of $G$ in $O(m+n)$ time, we assume, without loss of generality, that $G$ is connected.

We consider first the case $q=1$. By the equivalences (\ref{it:HpCH1})${}\Leftrightarrow{}$(\ref{it:HpCH2}) of Theorem~\ref{thm:HpCHelly} and (\ref{it:HpH3})${}\Leftrightarrow{}$(\ref{it:HpH7}) of Theorem~\ref{thm:HpHelly} and Remark~\ref{rmk:nontrivial}, $G$ is hereditary $p$-clique-Helly if and only if $\core(\mathcal C_P^\cup)\cap P\neq\emptyset$ for every $(p+1)$-clique $P$ of $G$. We can verify the latter by proceeding as in the proof of Theorem~\ref{thm:recog-pq-CH} for the case where $p\geq 2$ and $q=1$. The only difference is that whenever we reach a leaf node of $T_2$ corresponding to a $(p+1)$-clique $P$, we not only compute $\core(\mathcal C^\cup_P)$ but also compute its intersection with $P$. These additional computations account for additional $O(p m^{(p+1)/2})$ time only and hence the bound $O(m^{p/2+1}+p\omega m^{(p+1)/2}+n)$ remains unaltered.

It only remains to consider the case where $p\geq 2$ and $q\geq 2$. By Corollary~\ref{cor:HpqCHelly}, $G$ is hereditary $(p,q)$-clique-Helly if and only if $\mathcal C$ is hereditary $(p,q)$-Helly, and the latter can be decided in $O\bigl(\omega m^{q/2}\binom{N_q}p\bigr)$ time where $N_q=O(m^{q/2})$ after enumerating all $q$-cliques of $G$ in $O(qm^{q/2})$ time, proceeding as in the last paragraph of the proof of Theorem~\ref{thm:recog-pq-CH} by making Lemma~\ref{lem:technical2} play the role of Lemma~\ref{lem:technical}.\end{proof}

In~\cite{MR2365055}, it was proved that $K_{p+q}$-free graphs are $(p,q)$-clique Helly. The following is an immediate consequence.

\begin{thm}[\cite{MR2365055}]\label{thm:Kpq-free} If $p$ and $q$ are positive integers, every $K_{p+q}$-free graph is hereditary $(p,q)$-clique-Helly.\end{thm}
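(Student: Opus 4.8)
The plan is to obtain the statement as a direct consequence of the quoted fact that every $K_{p+q}$-free graph is $(p,q)$-clique-Helly, together with the trivial observation that $K_{p+q}$-freeness is preserved under taking induced subgraphs. Recall that, by definition, $G$ is hereditary $(p,q)$-clique-Helly exactly when each of its induced subgraphs is $(p,q)$-clique-Helly; hence it suffices to verify that every induced subgraph of a $K_{p+q}$-free graph is again $(p,q)$-clique-Helly.

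First I would note that if $G$ is $K_{p+q}$-free and $G'$ is an induced subgraph of $G$, then $G'$ is $K_{p+q}$-free as well, since any $(p+q)$-clique of $G'$ would also be a $(p+q)$-clique of $G$, contradicting the hypothesis on $G$. Applying the quoted result to each such $G'$ shows that every induced subgraph $G'$ of $G$ is $(p,q)$-clique-Helly, which is precisely the assertion that $G$ is hereditary $(p,q)$-clique-Helly. Since this argument relies only on the quoted result and on the closure of $K_{p+q}$-freeness under induced subgraphs, there is essentially no obstacle to overcome; this is exactly why the authors label the statement an immediate consequence.

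Alternatively, for $p\geq 2$ one can give a self-contained proof through the characterization of hereditary $(p,q)$-clique-Helly graphs by forbidden induced subgraphs, namely the equivalence (\ref{it:HpqCH1})${}\Leftrightarrow{}$(\ref{it:HpqCH8}) of Theorem~\ref{thm:HpqCHelly}. By that equivalence it is enough to check that each $(p+1,q,s)$-ocular contains $K_{p+q}$, so that a $K_{p+q}$-free graph can contain no induced $(p+1,q,s)$-ocular for any $s\in\{0,\ldots,q-1\}$. The only point requiring verification is a size bound: in a $(p+1,q,s)$-ocular with $p\geq 2$ the set $U$ is a clique of cardinality $(p+1)(q-s)+s=(p+1)q-ps$, and this is at least $p+q$ because the inequality $(p+1)q-ps\geq p+q$ rewrites as $p(q-s)\geq p$, i.e.\ as $s\leq q-1$, which holds by the definition of a $(p+1,q,s)$-ocular. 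Thus $U$ alone induces a copy of $K_{p+q}$, and the claim follows; the remaining case $p=1$ is subsumed by the quoted result invoked in the main argument above.
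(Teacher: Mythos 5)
Your main argument is precisely the paper's: the theorem is presented there as an immediate consequence of the quoted result of \cite{MR2365055} that $K_{p+q}$-free graphs are $(p,q)$-clique-Helly, combined with exactly your observation that $K_{p+q}$-freeness is closed under taking induced subgraphs. Your alternative route reproduces the paper's follow-up remark that the result also follows from Theorem~\ref{thm:HpqCHelly} because the oculars contain $(p+q)$-cliques, and your restriction of that route to $p\geq 2$ (with the correct computation $(p+1)(q-s)+s\geq p+q$) is in fact more careful than the paper's remark, since for $p=1$ the set $U$ of a $(2,q,s)$-ocular need not be a clique, e.g., the $(2,1,0)$-ocular $2K_1$ contains no $K_2$.
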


Notice that the above result also follows from our Theorem~\ref{thm:HpqCHelly} because each $(p,q,s)$-ocular has some $(p+q)$-clique. We conclude this section by extending the hardness result contained in statement (\ref{it:HpCH-rec2}) of Theorem~\ref{thm:HpCHelly-rec} as follows.

\begin{cor} The recognition problem for hereditary $(p,q)$-clique-Helly graphs, for positive integers $p$ and $q$, is NP-hard if $p$ or $q$ is part of the input.\end{cor}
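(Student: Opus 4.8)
The plan is to handle the two parameterizations separately, as they are quite different in character. When $p$ is part of the input I would simply take $q=1$ fixed: since hereditary $(p,1)$-clique-Helly graphs are exactly the hereditary $p$-clique-Helly graphs, this subproblem coincides with the recognition problem for hereditary $p$-clique-Helly graphs, which is NP-hard by statement~(\ref{it:HpCH-rec2}) of Theorem~\ref{thm:HpCHelly-rec}. This disposes of that case immediately.

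When $q$ is part of the input I would fix \emph{any} $p\geq 2$ (the case $p=1$ being polynomial by Corollary~\ref{cor:H1qCHelly}) and reduce from CLIQUE. Given a graph $H$ and an integer $k$, I would form $G$ by taking a $(p+1)$-ocular $O$ on vertices $u_1,\ldots,u_{p+1}$ (forming a clique) and $w_1,\ldots,w_{p+1}$ (forming a stable set, with $w_i$ complete to $\{u_j:j\neq i\}$ and nonadjacent to $u_i$), and joining $O$ completely to a disjoint copy of $H$ (making every vertex of $O$ adjacent to every vertex of $H$); I would set $q=k+1$. By the equivalence (\ref{it:HpqCH1})${}\Leftrightarrow{}$(\ref{it:HpqCH8}) of Theorem~\ref{thm:HpqCHelly}, $G$ fails to be hereditary $(p,q)$-clique-Helly precisely when it contains an induced $(p+1,q,s)$-ocular for some $s\in\{0,\ldots,q-1\}$.

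To prove the reduction correct I would argue both implications. If $H$ has a clique $Z$ of size $k$, then $\{u_1,\ldots,u_{p+1}\}\cup Z$ is a clique of size $p+q$; taking the singletons $\{u_i\}$ as the sets $T_i$ and $Z$ as the core of size $q-1$, each $w_i$ is complete to this clique minus $\{u_i\}$ and anticomplete to $\{u_i\}$, so that $\{u_1,\ldots,u_{p+1},w_1,\ldots,w_{p+1}\}\cup Z$ induces a $(p+1,q,q-1)$-ocular and $G$ is not hereditary $(p,q)$-clique-Helly. For the converse I would use a clique-size count: since $W$ is a stable set we have $\omega(O)=p+1$, so $\omega(G)=\omega(H)+p+1$, whereas the vertex set of any $(p+1,q,s)$-ocular is a clique of size $(p+1)(q-s)+s\geq p+q=k+p+1$ (as noted after Theorem~\ref{thm:Kpq-free}). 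Hence, if $H$ has no $k$-clique then $\omega(G)\leq(k-1)+p+1<k+p+1$, so $G$ has no induced $(p+1,q,s)$-ocular for any $s$ and is hereditary $(p,q)$-clique-Helly. Thus $H$ has a $k$-clique if and only if $G$ is not hereditary $(p,q)$-clique-Helly, which yields the claimed hardness (and, since an induced ocular is a polynomial-size certificate of non-membership, co-NP-completeness of the recognition problem).

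The step I expect to demand the most care is the converse direction, namely ruling out \emph{all} spurious oculars for every admissible $s$ simultaneously and for every placement of the $p+1$ eyes among the vertices of $H$ and of the gadget. The purpose of choosing $q=k+1$ and making $W$ a stable set is exactly to collapse this into the single inequality $\omega(G)<p+q$, so that no ocular of any shape can fit once $H$ is $k$-clique-free; getting the offset between $\omega(G)=\omega(H)+p+1$ and the minimum ocular clique size $p+q$ to line up correctly is the crux of the construction.
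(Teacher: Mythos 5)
Your reduction for the case where $q$ is part of the input is essentially the paper's own: the paper likewise takes the join of a $(p+1)$-ocular with the \textsc{Clique} instance, sets $q=k+1$, exhibits the join of the ocular and $K_{q-1}$ as an induced $(p+1,q,q-1)$-ocular when a $k$-clique exists, and uses $K_{p+q}$-freeness (Theorem~\ref{thm:Kpq-free}) otherwise. For $p\geq 2$ your two directions are correct, modulo a slip of wording: the \emph{vertex set} of a $(p+1,q,s)$-ocular is not a clique (the vertices of $W$ are not complete to $U$); what is true, and what your count actually needs, is that $U$ alone is a clique of cardinality $(p+1)(q-s)+s\geq p+q$, which indeed yields $\omega(G)\leq(k-1)+(p+1)<p+q$ and hence no induced ocular via (\ref{it:HpqCH1})${}\Leftrightarrow{}$(\ref{it:HpqCH8}) of Theorem~\ref{thm:HpqCHelly}.

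There are, however, two genuine gaps in coverage. First, your dismissal of $p=1$ as ``polynomial by Corollary~\ref{cor:H1qCHelly}'' is false when $q$ is part of the input: that corollary is a characterization, not an algorithm, and testing whether the union of all $q$-cliques is a clique requires deciding which vertices lie in $q$-cliques, which is hard for unbounded $q$. (Concretely, the disjoint union of two copies of a \textsc{Clique} instance $(G,k)$ is hereditary $(1,k)$-clique-Helly if and only if $\omega(G)<k$, so the $p=1$ case is itself co-NP-hard.) The paper's proof deliberately fixes ``$p\geq 1$'', using the $2$-ocular $2K_1$ when $p=1$; your join construction in fact still works there ($U=\{a,b\}\cup Z$ with $T_1=\{a\}$, $T_2=\{b\}$ satisfies condition ($\alpha_1$), and $\omega$ of the join is $\omega(H)+1<q+1$ in the other direction), so the fix is cheap, but as written your argument leaves $p=1$ unproved with an incorrect justification. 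Second, for $p$ part of the input you handle only $q=1$; read in parallel with the hypergraph analogue (which asserts hardness ``even if $q$ is fixed''), the intended claim is hardness for \emph{every} fixed $q$, which the paper obtains by padding the hereditary-$p$-clique-Helly instance with $q-1$ universal vertices and invoking the clique-matrix equivalence (\ref{it:HpqCH1})${}\Leftrightarrow{}$(\ref{it:HpqCH6}) of Theorem~\ref{thm:HpqCHelly}; your identification of the $q=1$ case with hereditary $p$-clique-Helly recognition is correct but strictly weaker. Your closing parenthetical is fine in spirit: like the paper's, your reduction maps \textsc{Clique} to the \emph{complement} of the recognition problem, so the hardness obtained is co-NP-hardness (NP-hardness in the Turing sense), and with the induced ocular as certificate the $q$-input problem is indeed co-NP-complete.
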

\begin{proof} We first prove the NP-hardness if $p$ is part of the input by a reduction from the problem of recognizing hereditary $p$-clique-Helly graphs (which in turn is NP-hard by statement (\ref{it:HpCH-rec2}) of Theorem~\ref{thm:HpCHelly-rec}). Let $G$ be a graph and let $p\geq 1$. Let $G'$ be the graph that arises from $G$ by adding $q-1$ universal vertices. Clearly, $C(G')$ arises from $C(G)$ by adding $q-1$ columns filled with $1$'s. Hence, by virtue of the equivalence (\ref{it:HpqCH1})${}\Leftrightarrow{}$(\ref{it:HpqCH6}) of Theorem~\ref{thm:HpqCHelly}, $G$ is hereditary $p$-clique-Helly if and only if $G'$ is hereditary $(p,q)$-clique-Helly. This proves the correctness of the reduction and hence the NP-hardness.

Assume $p\geq 1$ fixed and let $H$ be a $(p+1)$-ocular. Observe that the clique number of $H$ is $p+1$.  We now prove the NP-hardness when $q$ is part of the input by reduction from the \textsc{Clique} problem~\cite{MR0378476}. Let a graph $G$ and a positive integer $k$ be given. Without loss of generality assume that $V(G)\cap V(H)=\emptyset$ and let $G'$ be the \emph{join} of $G$ and $H$ (i.e., the graph that arises from the disjoint union of $G$ and $H$ by adding all the possible edges joining a vertex of $G$ to a vertex of $H$). We let $q=k+1$ and claim that $G$ has a $k$-clique if and only if $G'$ is hereditary $(p,q)$-clique-Helly. On the one hand, if $G$ has a $(q-1)$-clique, then $G'$ contains the join of $H$ and $K_{q-1}$, which is a $(p,q,q-1)$-ocular, as an induced subgraph and, by Lemma~\ref{lem:pqs-ocular}, $G'$ is not hereditary $(p,q)$-clique-Helly. On the other hand, if $G$ has no $(q-1)$-clique, then $G'$ is $K_{p+q}$-free and, by Theorem~\ref{thm:Kpq-free}, $G'$ is hereditary $(p,q)$-clique-Helly. This proves the claim and hence the correctness of the reduction and the NP-hardness.\end{proof}

\section{The $(p,q)$-biclique-Helly property}\label{sec:HpqBH}

In this section, we give polynomial-time recognition algorithms and prove different characterizations for $(p,q)$-biclique-Helly graphs and hereditary $(p,q)$-biclique-Helly graphs which are analogous to those for $(p,q)$-clique-Helly and hereditary $(p,q)$-clique-Helly graphs proved in the preceding section. In addition, we prove that the recognition problems of $(p,q)$-biclique-Helly graphs and of hereditary $(p,q)$-biclique-Helly graphs are co-NP-complete when $p$ or $q$ are part of the input.

We begin with some preliminary properties of complete bipartite graphs and bicliques. Recall that we regard edgeless graphs as complete bipartite graphs. The following is a straightforward observation.

\begin{rmk}\label{rmk:coP3-C3} A graph $G$ is complete bipartite if and only if $G$ is $\{\overline{P_3},K_3\}$-free.\end{rmk}

The next result shows that $\overline{P_3}$ and $K_3$ cannot occur as induced subgraphs an odd number of times in total among four vertices.

\begin{lem}\label{lem:abcd} If $a,b,c,d$ are (not necessarily pairwise different) vertices of a graph $G$, then precisely an even number of the sets $\{a,b,c\}$, $\{a,b,d\}$, $\{a,c,d\}$, and $\{b,c,d\}$ are bicliques of $G$.\end{lem}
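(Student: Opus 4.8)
The plan is to reduce the claim to a parity count modulo $2$. The first step is to determine exactly which three-vertex sets are bicliques. By Remark~\ref{rmk:coP3-C3}, a three-vertex induced subgraph is complete bipartite if and only if it contains no induced $\overline{P_3}$ and no induced $K_3$. Among the four graphs on three vertices, $\overline{P_3}$ and $K_3$ are precisely those with an odd number of edges (namely one and three), whereas $3K_1$ and $P_3$ have an even number of edges (namely zero and two). Hence a set of three \emph{distinct} vertices is a biclique if and only if the number of edges among them is even.

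To cope with the possibility that $a,b,c,d$ are not pairwise different, I would set $e(u,v)=1$ when $uv$ is an edge of $G$ and $e(u,v)=0$ otherwise, together with the convention $e(u,u)=0$ (valid since $G$ has no loops). With this convention the quantity $e(x,y)+e(x,z)+e(y,z)\bmod 2$ equals $0$ precisely when $\{x,y,z\}$ is a biclique, \emph{even when two or three of $x,y,z$ coincide}: if, say, $x=y$, the sum reduces to $2e(x,z)\equiv 0$, matching the fact that a set of at most two vertices always induces a complete bipartite graph. Thus a single algebraic criterion detects bicliqueness uniformly, with no need to distinguish the degenerate cases.

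The decisive step is then a direct computation modulo $2$. Writing $B(x,y,z)=e(x,y)+e(x,z)+e(y,z)$, so that $B\equiv 1$ exactly when $\{x,y,z\}$ fails to be a biclique, the number of the four sets $\{a,b,c\}$, $\{a,b,d\}$, $\{a,c,d\}$, $\{b,c,d\}$ that fail to be bicliques is congruent modulo $2$ to $B(a,b,c)+B(a,b,d)+B(a,c,d)+B(b,c,d)$. Expanding this sum, each of the six pairs $ab,ac,ad,bc,bd,cd$ contributes its term $e(\cdot,\cdot)$ exactly twice, so the total is $\equiv 0$. Hence an even number of the four sets fail to be bicliques, and therefore, since there are four sets in total, an even number of them are bicliques, as claimed.

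I expect the only genuine obstacle to be the bookkeeping for coinciding vertices; the $e(u,u)=0$ convention introduced in the second step is designed precisely to absorb that case into the same $\mathbb{F}_2$-identity used for distinct vertices, so that the final computation is a single line rather than a case analysis.
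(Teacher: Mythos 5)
Your proof is correct and follows essentially the same route as the paper's: both reduce bicliqueness of a triple to evenness of its edge count via Remark~\ref{rmk:coP3-C3}, and then observe that each of the six pairs among $a,b,c,d$ is counted exactly twice across the four triples (the paper states this as $e(a,b,c)+e(a,b,d)+e(a,c,d)+e(b,c,d)=2e(a,b,c,d)$), so the four parities sum to zero modulo $2$. The only difference is cosmetic: the paper dismisses the repeated-vertex case as trivial, while your convention $e(u,u)=0$ absorbs it into the same $\mathbb{F}_2$ identity, which is a tidy but not substantively different treatment.
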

\begin{proof} If there are repeated vertices among $a,b,c,d$, the lemma holds trivially. Thus, we assume, without loss of generality, that $a,b,c,d$ are pairwise different. If $x_1,\ldots,x_k$ are vertices of $G$, we denote by $e(x_1,\ldots,x_k)$ the number of edges of $G[\{x_1,\ldots,x_k\}]$. Since $e(a,b,c)+e(a,b,d)+e(a,c,d)+e(b,c,d)=2e(a,b,c,d)$, there are an even number of even numbers among $e(a,b,c)$, $e(a,b,d)$, $e(a,c,d)$, and $e(b,c,d)$. As three different vertices $x,y,z$ induce $\overline{P_3}$ or $K_3$ precisely when $e(x,y,z)$ is odd, the result follows by Remark~\ref{rmk:coP3-C3}.\end{proof}

The following result about bicliques will be useful in what follows.

\begin{lem}\label{lem:B1B2} Let $B$ and $B'$ be two bicliques of a graph $G$ such that $B\cap B'\neq\emptyset$. If $v\in B$ is such that $B'\cup\{v\}$ is not a biclique, then there is some vertex $w\in B'-B$ such that $(B\cap B')\cup\{v,w\}$ is not a biclique. Moreover, $\{v,x,w\}$ is not a biclique for any $x\in B\cap B'$.\end{lem}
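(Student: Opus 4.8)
The plan is to work entirely through the local characterization of bicliques furnished by Remark~\ref{rmk:coP3-C3}: a vertex set $S$ is a biclique if and only if $G[S]$ is $\{\overline{P_3},K_3\}$-free, i.e.\ no three vertices of $S$ induce $\overline{P_3}$ or $K_3$. Thus ``$S$ is not a biclique'' is always witnessed by a single \emph{bad triple} inside $S$, and, since this is an induced-subgraph condition, any set containing a bad triple is again not a biclique. First I would observe that $v\notin B'$ (otherwise $B'\cup\{v\}=B'$ would be a biclique), so in fact $v\in B-B'$. Because $B'$ is a biclique while $B'\cup\{v\}$ is not, the certifying bad triple cannot lie inside $B'$ and must therefore use $v$; hence there exist $y,z\in B'$ with $\{v,y,z\}$ not a biclique.

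The heart of the argument is a double application of the parity lemma (Lemma~\ref{lem:abcd}). Fix any $x_0\in B\cap B'$, which exists since $B\cap B'\neq\emptyset$. Applying Lemma~\ref{lem:abcd} to $v,x_0,y,z$: the triple $\{x_0,y,z\}\subseteq B'$ is a biclique whereas $\{v,y,z\}$ is not, so to keep the number of biclique-triples even, exactly one of $\{v,x_0,y\}$ and $\{v,x_0,z\}$ is a biclique and the other is not. Relabelling $y,z$ if necessary, I set $w=z$ so that $\{v,x_0,w\}$ is not a biclique. That $w\in B'-B$ follows because $w\in B'$, while $w\in B$ would force $\{v,x_0,w\}\subseteq B$ to be a biclique, a contradiction.

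It then remains to establish the two conclusions for this $w$. The set $(B\cap B')\cup\{v,w\}$ contains the bad triple $\{v,x_0,w\}$, so it is immediately not a biclique. For the ``Moreover'' claim I would run Lemma~\ref{lem:abcd} a second time, on $v,w,x,x_0$ for an arbitrary $x\in B\cap B'$: here $\{v,x,x_0\}\subseteq B$ and $\{w,x,x_0\}\subseteq B'$ are both bicliques while $\{v,w,x_0\}$ is not; were $\{v,w,x\}$ a biclique, three of the four triples would be bicliques, an odd number, contradicting the lemma. Hence $\{v,w,x\}$ is not a biclique, as required, and the degenerate case $x=x_0$ is exactly the triple already shown to fail.

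I expect the only real subtlety—the step to watch most closely—to be the bookkeeping: verifying that $w$ genuinely lands outside $B$, and that in each invocation of Lemma~\ref{lem:abcd} the three ``known'' triples sit on the correct side ($B$ or $B'$) so that the parity count comes out as claimed. Once the bad triple through $v$ is located and $x_0$ is fixed, everything else is forced by parity, so there are no genuine calculations to grind through.
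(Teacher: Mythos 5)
Your proof is correct, and it reaches the conclusion by a genuinely different route than the paper, even though both arguments rest on the same two tools: Remark~\ref{rmk:coP3-C3} (so that a set fails to be a biclique exactly when it contains a bad triple, an induced-subgraph condition) and the parity Lemma~\ref{lem:abcd}. Your second parity application---deriving that $\{v,x,w\}$ fails for every $x\in B\cap B'$ from the good triples $\{v,x,x_0\}\subseteq B$ and $\{w,x,x_0\}\subseteq B'$ together with the bad triple $\{v,x_0,w\}$---is essentially identical to the paper's closing step. Where you diverge is in producing $w$: the paper argues by contradiction, assuming $\{v,y,w\}$ is a biclique for every $y\in B\cap B'$ and every $w\in B'-B$, then uses parity to upgrade this to $\{v,w,w'\}$ being a biclique for all $w,w'\in B'-B$, and concludes that every $3$-subset of $B'\cup\{v\}$ is a biclique, whence $B'\cup\{v\}$ itself would be a biclique by Remark~\ref{rmk:coP3-C3}, a contradiction. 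You instead extract a bad triple $\{v,y,z\}$ with $y,z\in B'$ directly from the failure of $B'\cup\{v\}$ (noting correctly that the witness must pass through $v$, since $B'$ is a biclique and $v\notin B'$) and use a single parity step on $v,x_0,y,z$ to transfer the badness to $\{v,x_0,w\}$ with $w\in\{y,z\}$; your check that $w\notin B$ (else $\{v,x_0,w\}\subseteq B$ would be a biclique) is exactly right, as is your handling of the degenerate cases $x_0\in\{y,z\}$ and $x=x_0$, which the paper's Lemma~\ref{lem:abcd} accommodates by allowing repeated vertices. Your version is constructive and slightly more economical---one locate-and-transfer parity step replaces the paper's quantifier-heavy contradiction---while the paper's argument yields as a by-product the cleaner structural dichotomy that if no triple $\{v,y,w\}$ of the required form were bad, then $B'\cup\{v\}$ would itself be a biclique outright.
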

\begin{proof} Let $v\in B$ such that $B'\cup\{v\}$ is not a biclique and suppose, for a contradiction, that $\{v,y,w\}$ is a biclique for each $y\in B\cap B'$ and each $w\in B'-B$. Thus, $\{v,w,w'\}$ is also a biclique for every $w,w'\in B'-B$ by virtue of Lemma~\ref{lem:abcd} because for any $y\in B\cap B'$, the sets $\{v,y,w\}$, $\{v,y,w'\}$, and $\{y,w,w'\}$ are bicliques. But, since $\{v,y,y'\}$ is a biclique for each $y,y'\in B\cap B'$ and $B'$ is a biclique, every $3$-subset of $B'\cup\{v\}$ is a biclique and Remark~\ref{rmk:coP3-C3} implies that $B'\cup\{v\}$ is a biclique, a contradiction. This contradiction arose from supposing that $\{v,y,w\}$ was a biclique for each $y\in B\cap B'$ and each $w\in B'-B$. Hence, there is some $y\in B\cap B'$ and some $w\in B'-B$ such that $\{v,y,w\}$ is not a biclique. Let $x\in B\cap B'$. Since $\{v,y,x\}$ and $\{w,y,x\}$ are bicliques (because they are contained in $B$ and $B'$, respectively) and $\{v,y,w\}$ is not a biclique, Lemma~\ref{lem:abcd} implies that $\{v,x,w\}$ is not a biclique. This proves that $\{v,x,w\}$ is not a biclique for any $x\in B\cap B'$.\end{proof}

\subsection{$(p,q)$-biclique-Helly graphs}

In this subsection, we address the problems of characterizing and recognizing $(p,q)$-biclique-Helly graphs. For that purpose, we introduce some definitions. We denote by $\mathcal B(G)$ the \emph{biclique hypergraph} of a graph $G$, which is the hypergraph whose edge family is the set of maximal bicliques of $G$. By definition, a graph $G$ is $(p,q)$-biclique-Helly if and only if $\mathcal B(G)$ is $(p,q)$-Helly. A \emph{$2$-labeled graph} is a graph whose vertices are labeled using at most two different labels in such a way that each vertex is given exactly one label. Let $H$ be a $2$-labeled graph. We say that two different vertices of $H$ are \emph{bicompatible} if either: (1) they are adjacent and have different labels; or (2) they are nonadjacent and have the same label. A \emph{biuniversal vertex} of $H$ is a vertex of $H$ that is bicompatible with every other vertex of $H$. Equivalently, $v$ is a biuniversal if it is adjacent precisely to those vertices whose label is different from that of $v$.

\begin{rmk} Let $H$ be a $2$-labeled graph. Any set of pairwise bicompatible vertices of $H$ is a biclique. If $a,b,c$ are three different vertices of $H$ such that $a$ is bicompatible with $b$ and $c$, then the following holds: $\{a,b,c\}$ is a biclique if and only if $b$ and $c$ are bicompatible.\end{rmk}

Let $G$ be a graph and let $P$ be a nonempty biclique of $G$. Let $\{X,Y\}$ be the only bipartition of $G[P]$ as a complete bipartite graph. The \emph{bicompletion} of $P$ in $G$, denoted $\bicomp_G(P)$ is the $2$-labeled graph that arises by labeling with $X$ and $Y$ the vertices of the subgraph of $G$ induced by those vertices $v$ such that $P\cup\{v\}$ is a biclique of $G$ as follows: if $v$ belongs to $X$ or $Y$, then $v$ is labeled with $X$ or $Y$, respectively, whereas if $v\notin P$ then $v$ is labeled so as to make $v$ biuniversal in $G[P\cup\{v\}]$ (i.e., bicompatible with every vertex of $P$).

The result below, which resembles Lemma~\ref{lem:CP}, will be useful for both characterizing and recognizing $(p,q)$-biclique-Helly graphs.

\begin{lem}\label{lem:BP} Let $G$ be a graph and let $P$ be a nonempty biclique of $G$. If $\mathcal B$ is the biclique hypergraph of $G$ and $v$ is a vertex of $G$, then the following statements are equivalent:
\begin{enumerate}[(i)]
 \item\label{it:BP1} $v\in\core(\mathcal B_P)$;
 \item\label{it:BP2} $P\cup\{v,w\}$ is a biclique of $G$ for each vertex $w$ of $G$ such that $P\cup\{w\}$ is a biclique of $G$;
 \item\label{it:BP3} $v$ is a biuniversal vertex of the bicompletion of $P$ in $G$.
\end{enumerate}
In particular, $\core(\mathcal B_P)$ is a biclique of $G$ containing $P$ and can be computed in $O(m+n)$ time.\end{lem}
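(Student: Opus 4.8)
<br>

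The plan is to prove the three equivalences in a cycle, say \eqref{it:BP1}$\Rightarrow$\eqref{it:BP2}$\Rightarrow$\eqref{it:BP3}$\Rightarrow$\eqref{it:BP1}, and then read off the final assertions (that $\core(\mathcal B_P)$ is a biclique containing $P$, computable in linear time) as consequences of the characterization in \eqref{it:BP3}. First I would recall that, by definition, $\mathcal B_P$ is the partial hypergraph consisting of those maximal bicliques of $G$ that contain $P$, so $v\in\core(\mathcal B_P)$ means exactly that $v$ lies in every maximal biclique containing $P$.

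For \eqref{it:BP1}$\Rightarrow$\eqref{it:BP2}, I would argue contrapositively: if there is a vertex $w$ with $P\cup\{w\}$ a biclique but $P\cup\{v,w\}$ not a biclique, then extend $P\cup\{w\}$ to a maximal biclique $B$; since $P\cup\{v,w\}$ is not a biclique, $v\notin B$ (adding $v$ to the biclique $B\supseteq P\cup\{w\}$ would in particular keep $P\cup\{v,w\}$ a biclique), so $v\notin\core(\mathcal B_P)$. For the converse direction \eqref{it:BP2}$\Rightarrow$\eqref{it:BP1}, suppose \eqref{it:BP2} holds and let $B$ be any maximal biclique containing $P$. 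I must show $v\in B$. Here I would invoke Lemma~\ref{lem:B1B2} with $B'=B$ and the biclique $P\cup\{v\}$ (which is a biclique by applying the hypothesis \eqref{it:BP2} to $w=v$, or directly since $P\cup\{v\}=P\cup\{v,v\}$): if $v\notin B$, then since adding $v$ to $B$ must fail maximality, Lemma~\ref{lem:B1B2} produces a vertex $w\in B-P\supseteq B$ witnessing that $P\cup\{v,w\}$ is not a biclique while $P\cup\{w\}$ is (as $w\in B$), contradicting \eqref{it:BP2}. This is the step I expect to be the main obstacle, because making the maximality argument clean requires matching the hypotheses of Lemma~\ref{lem:B1B2} carefully against the two bicliques at hand and correctly handling the intersection $P\subseteq B\cap(P\cup\{v\})$.

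The equivalence \eqref{it:BP2}$\Leftrightarrow$\eqref{it:BP3} is essentially a translation into the language of the bicompletion. By the definition of $\bicomp_G(P)$, its vertices are exactly the vertices $w$ such that $P\cup\{w\}$ is a biclique, each labeled so as to be bicompatible with all of $P$. Using the Remark preceding this lemma, $v$ is biuniversal in $\bicomp_G(P)$ iff $v$ is bicompatible with every such $w$, and since $\{v,x,w\}$ being a biclique for $x\in P$ is governed by bicompatibility of $v$ and $w$, this says precisely that $P\cup\{v,w\}$ is a biclique for every admissible $w$; I would spell out that $v$ itself must be a vertex of the bicompletion (take $w=v$) for biuniversality to even make sense, which is where condition \eqref{it:BP2} applied to $w=v$ guarantees $P\cup\{v\}$ is a biclique.

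Finally, the concluding claims follow immediately. That $P\subseteq\core(\mathcal B_P)$ holds because every vertex of $P$ is trivially in every maximal biclique containing $P$; that $\core(\mathcal B_P)$ is itself a biclique follows from \eqref{it:BP3}, since the set of biuniversal vertices of any $2$-labeled graph is a set of pairwise bicompatible vertices and hence a biclique by the Remark. For the $O(m+n)$ bound, I would observe that one can construct $\bicomp_G(P)$ by scanning each vertex $v$ once to test whether $P\cup\{v\}$ is a biclique and to assign its label, and then identify the biuniversal vertices, all in linear time; fixing the bipartition $\{X,Y\}$ of $G[P]$ and checking bicompatibility against $P$ reduces to comparing adjacencies, so the total work stays within $O(m+n)$.
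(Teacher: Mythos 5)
Your overall route is essentially the paper's own: the paper proves the cycle (\ref{it:BP1})${}\Rightarrow{}$(\ref{it:BP2})${}\Rightarrow{}$(\ref{it:BP3})${}\Rightarrow{}$(\ref{it:BP1}), your (\ref{it:BP1})${}\Rightarrow{}$(\ref{it:BP2}) is verbatim its first step, and your direct (\ref{it:BP2})${}\Rightarrow{}$(\ref{it:BP1}) is exactly the paper's (\ref{it:BP3})${}\Rightarrow{}$(\ref{it:BP1}) argument in disguise: the same application of Lemma~\ref{lem:B1B2} to the maximal biclique $B\supseteq P$ and the biclique $P\cup\{v\}$, with intersection exactly $P$ because $v\notin B$. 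Your translation (\ref{it:BP2})${}\Leftrightarrow{}$(\ref{it:BP3}) via the Remark on bicompatibility, and your closing claims (the core is the set of biuniversal vertices of $\bicomp_G(P)$, hence pairwise bicompatible, hence a biclique containing $P$; two counting passes over adjacency lists against the bipartition $\{X,Y\}$ give $O(m+n)$) likewise match the paper.

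The one step that fails as written is your justification that $P\cup\{v\}$ is a biclique, which you need both before invoking Lemma~\ref{lem:B1B2} and before biuniversality of $v$ even makes sense. You claim it follows ``by applying the hypothesis (\ref{it:BP2}) to $w=v$, or directly since $P\cup\{v\}=P\cup\{v,v\}$''. That is circular: statement (\ref{it:BP2}) asserts $P\cup\{v,w\}$ is a biclique only \emph{for those $w$ such that $P\cup\{w\}$ is a biclique}, so instantiating $w=v$ has as its hypothesis precisely the conclusion you want. The repair is immediate, and it is where the hypothesis that $P$ is \emph{nonempty} earns its keep: pick any $w\in P$; then $P\cup\{w\}=P$ is a biclique, and (\ref{it:BP2}) yields that $P\cup\{v,w\}=P\cup\{v\}$ is a biclique. (The paper sidesteps this issue by arguing the implications contrapositively and splitting off first the case that $P\cup\{v\}$ is not a biclique.) Also note the typo ``$w\in B-P\supseteq B$'': Lemma~\ref{lem:B1B2} produces $w\in B-(P\cup\{v\})$, and then $P\cup\{w\}\subseteq B$ is a biclique, contradicting (\ref{it:BP2}) as you intend. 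With these local repairs your proof is complete and coincides with the paper's.
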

\begin{proof}\mbox{(\ref{it:BP1})${}\Rightarrow{}$(\ref{it:BP2})} Suppose \eqref{it:BP2} does not hold; i.e., there is some vertex $w$ such that $P\cup\{v,w\}$ is not a biclique of $G$ but $P\cup\{w\}$ is a biclique. Thus, if we let $B$ be any maximal biclique of $G$ containing $P\cup\{w\}$, then $v\notin B$ and (\ref{it:BP1}) does not hold.

\mbox{(\ref{it:BP2})${}\Rightarrow{}$(\ref{it:BP3})} Suppose \eqref{it:BP3} does not hold. If $P\cup\{v\}$ is not a biclique, then (\ref{it:BP2}) does not hold. Thus, we assume, without loss of generality, that $P\cup\{v\}$ is a biclique. Since $v$ is not biuniversal in the bicompletion of $P$ in $G$, there is some vertex $w$ such that $P\cup\{w\}$ is also a biclique but $v$ and $w$ are not bicompatible. By construction, none of $v$ and $w$ belongs to $P$ and given any $p\in P$, $p$ is bicompatible with $v$ and $w$. Thus, $\{v,w,p\}$ is not a biclique of $G$ and (\ref{it:BP2}) does not hold.

\mbox{(\ref{it:BP3})${}\Rightarrow{}$(\ref{it:BP1})} Suppose \eqref{it:BP1} does not hold; i.e., $v\notin\core(\mathcal B_P)$. If $P\cup\{v\}$ is not a biclique, then $v$ is not even a vertex of the bicompletion of $P$ in $G$ and (\ref{it:BP3}) does nod hold. Thus, we assume, without loss of generality, that $P\cup\{v\}$ is a biclique. Since $v\notin\core(\mathcal B_P)$, there is some maximal biclique $B$ of $G$ containing $P$ such that $v\notin B$. Hence, Lemma~\ref{lem:B1B2} implies that there is some vertex $w\in B-P$ such that $\{v,w,p\}$ is not a biclique of $G$ for any $p\in P$. Since, by construction, $p$ is bicompatible with $v$ and $w$, necessarily $v$ is not bicompatible with $w$ and (\ref{it:BP3}) does not hold.

\smallskip

We conclude that indeed (\ref{it:BP1}), (\ref{it:BP2}), and (\ref{it:BP3}) are equivalent. Notice that because of (\ref{it:BP1})${}\Leftrightarrow{}$(\ref{it:BP3}), the vertices of $\core(\mathcal B_P)$ are pairwise bicompatible in the completion of $P$ in $G$ and, consequently, $\core(\mathcal B_P)$ is a biclique of $G$. That $\core(\mathcal B_P)$ can be computed in $O(m+n)$ time follows from (\ref{it:BP1})${}\Leftrightarrow{}$(\ref{it:BP3}). Clearly, we can find the only bipartition $\{X,Y\}$ of $G[P]$ as a complete bipartite graph in $O(m+n)$ time. Moreover, in additional $O(m+n)$ time, it is possible to count the number of neighbors in $X$ and in $Y$ for each vertex of $G$ by traversing the neighborhoods of the vertices in $X$ and in $Y$ once. Then, in additional $O(m+n)$ time, it is possible to determine $\bicomp_G(P)$ and also its biuniversal vertices (by counting the number of neighbors labeled with $X$ and with $Y$ in a similar way to what we have just done).\end{proof}

The above lemma and Corollary~\ref{cor:pqHelly} for $\mathcal H=\mathcal B(G)$ lead immediately to the following characterizations of $(p,q)$-biclique-Helly graphs.

\begin{cor}\label{cor:pqBHelly} For each pair of positive integers $p$ and $q$, a graph $G$ is $(p,q)$-biclique-Helly if and only if, for every family $\mathcal D$ of $p+1$ pairwise different $q$-subsets of $V(G)$ such that each of the unions $P_1,\ldots,P_{p+1}$ of $p$ pairwise different members of $\mathcal D$ is a biclique of $G$, there are at least $q$ vertices of $G$ that are biuniversal in each of the bicompletions of $P_1,\ldots,P_{p+1}$ in $G$.\end{cor}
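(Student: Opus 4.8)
The plan is to apply Corollary~\ref{cor:pqHelly} to the biclique hypergraph $\mathcal H=\mathcal B(G)$ and then translate each ingredient of that characterization into the language of bicliques and bicompletions by means of Lemma~\ref{lem:BP}. By definition $G$ is $(p,q)$-biclique-Helly if and only if $\mathcal B=\mathcal B(G)$ is $(p,q)$-Helly, so Corollary~\ref{cor:pqHelly} reduces the problem to showing that, for every nontrivial $(p+1,q)$-basis $\mathcal S$ of $\mathcal B$, the partial hypergraph $\mathcal B_\mathcal S^\cup$ has a $q^+$-core, and that this condition matches the one in the statement.

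First I would unpack the combinatorial objects. A $(p+1,q)$-basis of $\mathcal B$ is precisely a family $\mathcal D$ of $p+1$ pairwise different $q$-subsets of $V(\mathcal B)=V(G)$, and its support sets are the unions $P_1,\ldots,P_{p+1}$, where each $P_i$ is the union of the $p$ members of $\mathcal D$ other than the $i$-th one. Since a subset of $V(G)$ is contained in some maximal biclique (i.e., in some edge of $\mathcal B$) if and only if it is itself a biclique, the basis $\mathcal S$ is nontrivial exactly when each of $P_1,\ldots,P_{p+1}$ is a biclique of $G$; this is the hypothesis imposed on $\mathcal D$ in the statement. Each $P_i$ is moreover nonempty (as $p,q\geq 1$), so Lemma~\ref{lem:BP} applies to it.

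Next I would compute $\core(\mathcal B_\mathcal S^\cup)$. By definition the edges of $\mathcal B_\mathcal S^\cup$ are exactly the maximal bicliques containing at least one of $P_1,\ldots,P_{p+1}$, so the family of its edges is the union of the families of edges of $\mathcal B_{P_1},\ldots,\mathcal B_{P_{p+1}}$; since nontriviality guarantees that each $\mathcal B_{P_i}$ is nonempty, the intersection over a union of families yields
\[
\core(\mathcal B_\mathcal S^\cup)=\core(\mathcal B_{P_1})\cap\cdots\cap\core(\mathcal B_{P_{p+1}}).
\]
Applying the equivalence (\ref{it:BP1})${}\Leftrightarrow{}$(\ref{it:BP3}) of Lemma~\ref{lem:BP} to each $P_i$, the set $\core(\mathcal B_{P_i})$ is exactly the set of biuniversal vertices of the bicompletion of $P_i$ in $G$. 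Hence $\core(\mathcal B_\mathcal S^\cup)$ is the set of vertices that are biuniversal in each of the bicompletions of $P_1,\ldots,P_{p+1}$, and requiring this core to be a $q^+$-set is precisely the requirement that there be at least $q$ such vertices. Combining this with Corollary~\ref{cor:pqHelly} gives the stated equivalence.

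I do not anticipate a genuine obstacle, in keeping with the ``lead immediately'' remark preceding the statement; the only points demanding a little care are the two translations highlighted above---that nontriviality of $\mathcal S$ is the same as each $P_i$ being a biclique, and that the core of $\mathcal B_\mathcal S^\cup$ splits as the intersection of the cores $\core(\mathcal B_{P_i})$, which relies on each $\mathcal B_{P_i}$ being nonempty. Once these are in place, Lemma~\ref{lem:BP} converts cores of $\mathcal B_{P_i}$ into biuniversal vertices of bicompletions and the statement follows.
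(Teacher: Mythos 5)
Your proof is correct and follows essentially the same route as the paper's own argument: apply Corollary~\ref{cor:pqHelly} to $\mathcal B(G)$, observe that nontrivial $(p+1,q)$-bases are exactly the families $\mathcal D$ whose support sets $P_1,\ldots,P_{p+1}$ are bicliques, write $\core(\mathcal B_{\mathcal D}^\cup)=\core(\mathcal B_{P_1})\cap\cdots\cap\core(\mathcal B_{P_{p+1}})$, and convert each $\core(\mathcal B_{P_i})$ into the biuniversal vertices of the bicompletion of $P_i$ via Lemma~\ref{lem:BP}. The paper states the nontriviality translation and the core-splitting without comment, so your explicit justifications (a set lies in some maximal biclique iff it is a biclique, and each $\mathcal B_{P_i}$ is nonempty so the intersection formula is legitimate) simply make the same proof slightly more careful.
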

\begin{proof} By Corollary~\ref{cor:pqHelly}, $G$ is $(p,q)$-biclique-Helly if and only if $\mathcal B^\cup_\mathcal D$ has $q^+$-core for each nontrivial $(p+1,q)$-basis $\mathcal D$ of $\mathcal B$. Since $\core(\mathcal B^\cup_\mathcal D)=\core(\mathcal B_{P_1})\cap\cdots\cap\core(\mathcal B_{P_{p+1}})$ where $P_1,\ldots,P_{p+1}$ are the support sets of $\mathcal D$ and, by Lemma~\ref{lem:BP}, $\core(\mathcal B_{P_i})$ is the set of biuniversal vertices of the bicompletion of $P_i$ in $G$ for each $i\in\{1,\ldots,p+1\}$, the corollary follows.\end{proof}

We now consider the case $p\geq 2$ and we will derive a characterization of $(p,q)$-biclique-Helly graphs in terms of what we call \emph{biexpansions}, which resembles Theorem~\ref{thm:pqcliqueHelly}. Let $\mathcal D$ be a family of $p+1$ pairwise different $q$-bicliques of $G$ whose union $U$ is a biclique of $G$ and let $\{X,Y\}$ be the only bipartition of $G[U]$ as a complete bipartite graph. We define the $(p+1,q)$-\emph{biexpansion} of $\mathcal D$ in $G$ as the subgraph of $G$ induced by those vertices $v$ such that $P\cup\{v\}$ is a biclique for some union $P$ of $p$ pairwise different members of $\mathcal D$ and where each such vertex $v$ is labeled with $X$ or $Y$ as follows: if $v$ belongs to $X$ or $Y$, then $v$ is labeled with $X$ or $Y$, respectively, whereas if $v\notin U$, then $v$ is labeled so as to make $v$ biuniversal in $G[P\cup\{v\}]$ (i.e., bicompatible with every vertex of $P$.) It is easy to see that no vertex $v$ is labeled with $X$ for some $P$ and with $Y$ for a different $P$. In fact, if there were any such vertex $v$, then $v\notin U$ and $v$ would be simultaneously adjacent and nonadjacent to each vertex of $P_1\cap P_2$ where $P_1$ and $P_2$ are two unions of $p$ pairwise different members of $\mathcal D$ each and, consequently, $\vert P_1\cap P_2\vert\geq q$.

The following is the characterization of $(p,q)$-biclique-Helly graphs, where $p\geq 2$, in terms of biexpansions.

\begin{thm}\label{thm:biexp} For each integer $p\geq 2$ and each positive integer $q$, a graph $G$ is $(p,q)$-biclique-Helly if and only if each of the following statements holds:
\begin{enumerate}[(i)]
\item every $(p+1,q)$-biexpansion in $G$ has at least $q$ biuniversal vertices;
\item\label{it:p=2} if $p=2$ then there are no three $q$-bicliques $B_1,B_2,B_3$ of $G$ such that $B_1\cup B_2$, $B_2\cup B_3$, and $B_3\cup B_1$ are bicliques of $G$ but $B_1\cup B_2\cup B_3$ is not a biclique of $G$.
\end{enumerate}\end{thm}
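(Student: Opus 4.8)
The plan is to deduce the theorem from Corollary~\ref{cor:pqBHelly}, which already characterizes $(p,q)$-biclique-Helly graphs through the families $\mathcal D$ of $p+1$ pairwise different $q$-subsets all of whose support sets $P_1,\dots,P_{p+1}$ (the unions of $p$ members each) are bicliques. The bridge to the present statement is the identity $\core(\mathcal B^\cup_\mathcal D)=\core(\mathcal B_{P_1})\cap\cdots\cap\core(\mathcal B_{P_{p+1}})$ together with Lemma~\ref{lem:BP}, which identifies $\core(\mathcal B_{P_i})$ with the set of biuniversal vertices of $\bicomp_G(P_i)$. First I would record two elementary facts: every member of such a family is itself a $q$-biclique (being contained in a support set, as $p+1\geq 3$), and each $P_i\subseteq U:=B_1\cup\cdots\cup B_{p+1}$, so that whenever $U$ is a biclique the bipartition of $G[P_i]$ is the restriction of that of $G[U]$. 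This last point is exactly what makes the labels used in the various bicompletions and in the $(p+1,q)$-biexpansion mutually consistent.

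The core technical step is to show, for any family $\mathcal D$ whose union $U$ is a biclique (so the biexpansion is defined), that the biuniversal vertices of the biexpansion coincide with $\core(\mathcal B_{P_1})\cap\cdots\cap\core(\mathcal B_{P_{p+1}})$. The inclusion $\supseteq$ is immediate from Lemma~\ref{lem:BP}: a vertex lying in every $\core(\mathcal B_{P_i})$ is a vertex of every $\bicomp_G(P_i)$ and is bicompatible with all their vertices, hence biuniversal in the biexpansion. For $\subseteq$ I would argue that a biuniversal vertex $v$ of the biexpansion is bicompatible with all of $P_i\subseteq U$, so $P_i\cup\{v\}$ is a set of pairwise bicompatible vertices and thus a biclique (by the observation that pairwise bicompatible vertices form a biclique); this places $v$ in each $\bicomp_G(P_i)$ and therefore, again by Lemma~\ref{lem:BP}, in each $\core(\mathcal B_{P_i})$. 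With this equality, statement~(i) is precisely the condition of Corollary~\ref{cor:pqBHelly} restricted to families whose union is a biclique.

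It then remains to explain why insisting that the union of $\mathcal D$ be a biclique loses nothing for $p\geq 3$ but must be compensated by~(ii) for $p=2$. Here I would show that if every support set $P_i$ is a biclique, then $U$ is automatically a biclique once $p\geq 3$: a vertex $x$ of $U$ fails to lie in $P_i$ only when $x$ belongs to $B_i$ and to no other member, so each vertex has at most one forbidden index; given any three vertices of $U$ there are at most three forbidden indices, and since $p+1\geq 4$ some index $i$ is forbidden for none of them, placing all three in the biclique $P_i$. Thus every $3$-subset of $U$ is a biclique and $U$ is a biclique by Remark~\ref{rmk:coP3-C3}. For $p=2$ this pigeonhole fails, and the only families escaping the biexpansion framework are the triples $B_1,B_2,B_3$ with pairwise unions bicliques but total union not a biclique, which are exactly those forbidden by~(ii). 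Hence for the direction (i),(ii)$\Rightarrow(p,q)$-biclique-Helly I would take an arbitrary Corollary-family, conclude via the pigeonhole (for $p\geq 3$) or via~(ii) (for $p=2$) that its union is a biclique, and invoke~(i) together with the correspondence to verify the Corollary condition.

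For the converse, assuming $G$ is $(p,q)$-biclique-Helly, statement~(i) follows by applying Corollary~\ref{cor:pqBHelly} to each biexpansion-family and translating through the correspondence. The remaining point, which I expect to be the main obstacle, is deriving~(ii) for $p=2$. I would suppose $B_1,B_2,B_3$ are $q$-bicliques with $B_1\cup B_2$, $B_2\cup B_3$, $B_3\cup B_1$ bicliques and apply Corollary~\ref{cor:pqBHelly} to $\mathcal D=\{B_1,B_2,B_3\}$ to obtain at least $q\geq 1$ common biuniversal vertex $v$, so each $(B_i\cup B_j)\cup\{v\}$ is a biclique. Then for any three vertices $a,b,c\in B_1\cup B_2\cup B_3$, each pair lies in some $B_i\cup B_j$, whence $\{a,b,v\}$, $\{a,c,v\}$, $\{b,c,v\}$ are all bicliques, and Lemma~\ref{lem:abcd} applied to $a,b,c,v$ forces $\{a,b,c\}$ to be a biclique as well; thus $B_1\cup B_2\cup B_3$ is a biclique and~(ii) holds. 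The delicacy throughout is the behaviour of edgeless induced subgraphs as bicliques, which is exactly why Lemmas~\ref{lem:abcd} and~\ref{lem:B1B2}, rather than a naive consistent-$2$-colouring argument, are needed to control bicompatibility.
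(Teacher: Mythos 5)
Your proposal is correct and follows essentially the same route as the paper: reduce to Corollary~\ref{cor:pqBHelly} (equivalently Corollary~\ref{cor:pqHelly} plus Lemma~\ref{lem:BP}), prove that for families with biclique union the biuniversal vertices of the $(p+1,q)$-biexpansion coincide with $\core(\mathcal B_{P_1})\cap\cdots\cap\core(\mathcal B_{P_{p+1}})$, and dispose of non-biclique unions by the pigeonhole argument for $p\geq 3$ (a detail the paper only asserts) and, for $p=2$, by showing via Lemma~\ref{lem:abcd} that the exceptional triples are exactly those excluded by condition~(\ref{it:p=2}). One step is compressed: placing a biuniversal vertex $v$ of the biexpansion in each $\bicomp_G(P_i)$ does not by itself give $v\in\core(\mathcal B_{P_i})$ via Lemma~\ref{lem:BP} — you must add that, since the vertex set of $\bicomp_G(P_i)$ lies inside the biexpansion with consistent labels, $v$ is biuniversal (not merely present) in $\bicomp_G(P_i)$; this is exactly how the paper closes that step, and the label-consistency fact you recorded at the outset supplies it.
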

\begin{proof} Let $\mathcal B$ be the biclique hypergraph of $G$. We consider first the case where there is some nontrivial $(p+1,q)$-basis $\mathcal D$ of $\mathcal B$ whose union is not a biclique. Necessarily, (\ref{it:p=2}) does not hold; i.e., $p=2$ and there are three $q$-bicliques $B_1,B_2,B_3$ of $G$ such that $B_1\cup B_2$, $B_2\cup B_3$, $B_3\cup B_1$ are bicliques of $G$ but $B_1\cup B_2\cup B_3$ is not a biclique of $G$. Hence, there is a vertex $b_i\in B_i$ for each $i\in\{1,2,3\}$ such that $\{b_1,b_2,b_3\}$ is not a biclique of $G$. If $G$ were $(p,q)$-Helly, then, by Corollary~\ref{cor:pqBHelly}, there should be some vertex $u$ belonging to the bicompletion of each of $B_1\cup B_2$, $B_2\cup B_3$, and $B_3\cup B_1$ in $G$. In particular, $\{b_1,b_2,u\}$, $\{b_2,b_3,u\}$, $\{b_3,b_1,u\}$ would be bicliques of $G$ and, by Lemma~\ref{lem:abcd}, $\{b_1,b_2,b_3\}$ would also be a biclique of $G$. This contradiction proves that $G$ is not $(p,q)$-biclique-Helly. Hence, if $\mathcal B$ has some nontrivial $(p+1,q)$-basis whose union is not a biclique of $G$, then the theorem holds for $G$. Therefore, from now on, we assume that, for each nontrivial $(p+1,q)$-basis of $\mathcal B$, its union is a biclique of $G$ and thus statement (\ref{it:p=2}) does not hold.

By Corollary~\ref{cor:pqHelly}, $G$ is $(p,q)$-biclique-Helly if and only if $\mathcal B^\cup_\mathcal D$ has a $q^+$-core for each nontrivial $(p+1,q)$-basis $\mathcal D$ of $\mathcal B$. Since we are assuming that, for each nontrivial $(p+1,q)$-basis of $\mathcal B$, its union is a biclique of $G$, in order to prove the theorem it is enough to show that, for each family $\mathcal D$ of $p+1$ pairwise different $q$-bicliques of $G$ whose union is a biclique of $G$, the core of $\mathcal B^\cup_\mathcal D$ coincides with the set of the biuniversal vertices in the biexpansion of $\mathcal D$ in $G$.

Let $v\in\core(\mathcal B^\cup_\mathcal D)$ and let $v'$ be any vertex in the biexpansion of $\mathcal D$ in $G$. By definition, $v'$ is a vertex of $\bicomp_G(P)$ for some support set $P$ of $\mathcal D$. Since $v\in\core(\mathcal B^\cup_\mathcal D)$, $v\in\core(\mathcal B_P)$ which, by Lemma~\ref{lem:BP}, implies that $v$ is bicompatible with $v'$ in $\bicomp_G(P)$. By definition of biexpansions, $v$ and $v'$ are also bicompatible in the biexpansion of $\mathcal D$ in $G$. This proves that every vertex of $\core(\mathcal B^\cup_\mathcal D)$ is biuniversal in the biexpansion of $\mathcal D$ in $G$. Conversely, let $v$ be a biuniversal vertex in the biexpansion of $\mathcal D$ in $G$. Since, by construction, the vertices of $U$ are pairwise bicompatible and, by assumption, $v$ is bicompatible with every vertex of $U$, then $U\cup\{v\}$ is a biclique of $G$. In particular, $v$ is a vertex of $\bicomp_G(P)$ for each support set $P$ of $\mathcal D$. Moreover, since $v$ is bicompatible with every other vertex in the biexpansion of $\mathcal D$ in $G$, the definition of expansions implies that $v$ is also biuniversal in $\bicomp_G(P)$. Hence, by Lemma~\ref{lem:BP}, $v\in\core(\mathcal B_P)$ for each support set $P$ of $\mathcal D$. By definition, this means that $v\in\core(\mathcal B^\cup_\mathcal D)$. This proves that every biuniversal vertex in the biexpansion of $\mathcal D$ belongs to $\core(\mathcal B^\cup_\mathcal D)$ and completes the proof of the theorem.\end{proof}

We now turn to the problem of recognizing $(p,q)$-biclique-Helly graphs. First, we prove the following lemma.

\begin{lem}\label{lem:11,12,21} If $G$ is a graph and $(p,q)=(1,1)$, $(1,2)$, or $(2,1)$, then $G$ is $(p,q)$-biclique-Helly if and only if $G$ is a complete bipartite graph.\end{lem}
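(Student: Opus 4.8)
The plan is to prove the two implications separately, with the forward one immediate and the converse carrying the content. First I would dispose of the easy direction: if $G$ is complete bipartite, then $V(G)$ is itself a biclique and hence the unique maximal biclique of $G$, so $\mathcal B(G)$ consists of a single edge. A one-edge hypergraph is $(p,q)$-Helly for every $p$ and $q$, since its only nonempty subfamily is $(p,q)$-intersecting exactly when its core $V(G)$ is a $q^+$-set, in which case the Helly condition holds, and the property is vacuous otherwise. Thus $G$ is $(p,q)$-biclique-Helly for all $p,q$, in particular for the three pairs in question.

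For the converse I would argue the contrapositive, treating the three pairs uniformly. Assume $G$ is not complete bipartite. By Remark~\ref{rmk:coP3-C3}, $G$ has three vertices $a,b,c$ inducing $\overline{P_3}$ or $K_3$; in either case each of $\{a,b\}$, $\{b,c\}$, $\{a,c\}$ is a biclique while $\{a,b,c\}$ is not. I would then extend each pair to a maximal biclique, choosing maximal bicliques $M_1\supseteq\{a,b\}$, $M_2\supseteq\{b,c\}$, and $M_3\supseteq\{a,c\}$. Since every subset of a biclique is a biclique and $\{a,b,c\}$ is not one, it follows that $c\notin M_1$, $a\notin M_2$, and $b\notin M_3$; consequently $M_1,M_2,M_3$ are pairwise intersecting, with $b\in M_1\cap M_2$, $c\in M_2\cap M_3$, and $a\in M_1\cap M_3$.

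The crux, and the step I expect to be the main obstacle, is to show that $M_1\cap M_2\cap M_3=\emptyset$, even though further vertices could a priori lie in all three. Here I would invoke Lemma~\ref{lem:abcd}: if some $v$ lay in $M_1\cap M_2\cap M_3$, then $v\notin\{a,b,c\}$ by the exclusions above, and each of $\{a,b,v\}$, $\{b,c,v\}$, $\{a,c,v\}$ would be a biclique, being contained in $M_1$, $M_2$, $M_3$ respectively. Thus exactly three of the four $3$-subsets of $\{a,b,c,v\}$ (all but $\{a,b,c\}$) would be bicliques, contradicting Lemma~\ref{lem:abcd}, which forces an even number of them to be bicliques. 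Hence no such $v$ exists and $M_1\cap M_2\cap M_3=\emptyset$.

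Finally I would read off the failure of the Helly property in each case. For $(p,q)=(2,1)$, the family $\{M_1,M_2,M_3\}$ is $(2,1)$-intersecting (being pairwise intersecting) yet has empty core, so $\mathcal B(G)$ is not $(2,1)$-Helly. For $(p,q)=(1,1)$ and $(1,2)$, the three maximal bicliques $M_1,M_2,M_3$ are $2^+$-edges (in particular $1^+$-edges) whose total intersection is empty, so the family of all $1^+$-edges (respectively $2^+$-edges) of $\mathcal B(G)$ is nonempty with core contained in $M_1\cap M_2\cap M_3=\emptyset$; by the equivalence in Remark~\ref{rmk:1qHelly}, $\mathcal B(G)$ is neither $(1,1)$- nor $(1,2)$-Helly. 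In every case $G$ fails to be $(p,q)$-biclique-Helly, which completes the contrapositive and hence the lemma.
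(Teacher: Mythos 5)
Your proof is correct, and it takes a genuinely different route from the paper's. The paper splits the converse by parameter pair and leans on its characterization machinery: for $(p,q)=(2,1)$ it invokes condition (ii) of Theorem~\ref{thm:biexp} (with $q=1$ the pairwise unions of any three vertices are automatically bicliques, so that condition plus Remark~\ref{rmk:coP3-C3} forces $G$ to be $\{\overline{P_3},K_3\}$-free), while for $(p,q)=(1,1)$ and $(1,2)$ it uses Remark~\ref{rmk:1qHelly} to extract a single vertex $u$ lying in $\core(\mathcal B_P)$ for every $q$-biclique $P$, deduces that $\{u,v,w\}$ is a biclique for all $v,w$, and reads off the bipartition $\{N_G(u),V(G)-N_G(u)\}$ constructively. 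You instead argue the contrapositive uniformly for all three pairs: from an induced $\overline{P_3}$ or $K_3$ on $\{a,b,c\}$ you extend the three pairs to maximal bicliques $M_1,M_2,M_3$, note the exclusions $c\notin M_1$, $a\notin M_2$, $b\notin M_3$ (which also force $v\notin\{a,b,c\}$ for any common vertex $v$), and apply the parity Lemma~\ref{lem:abcd} to the quadruple $\{a,b,c,v\}$ to show $M_1\cap M_2\cap M_3=\emptyset$, since otherwise exactly three of its four triples would be bicliques. Your closing verifications are sound: the pairwise intersections $a,b,c$ certify $(2,1)$-intersecting, each $M_i$ is a $2^+$-set, and Remark~\ref{rmk:1qHelly} handles $p=1$ with $q=1,2$. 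What your approach buys is a unified, self-contained argument that avoids Theorem~\ref{thm:biexp} entirely (in effect inlining the one fragment of its proof relevant to $q=1$) and produces an explicit $(p,q)$-intersecting family of maximal bicliques with deficient core; what the paper's approach buys is brevity given the machinery already in place, plus, in the $p=1$ cases, a direct construction of the bipartition of $G$ rather than an argument by contradiction.
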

\begin{proof} The `if' direction is clear. Conversely, suppose that $G$ is $(p,q)$-biclique-Helly. If $(p,q)=(2,1)$, then statement (\ref{it:p=2}) of Theorem~\ref{thm:biexp} and Remark~\ref{rmk:coP3-C3} imply that $G$ is a complete bipartite graph. Thus, assume, without loss of generality, that $(p,q)=(1,1)$ or $(1,2)$. We assume also that $G$ has at least one edge (since otherwise $G$ is a complete bipartite graph). By Remark~\ref{rmk:1qHelly}, there is at least one vertex $u$ of $G$ that belongs to $\core(\mathcal B_P)$ for each $q$-biclique $P$ of $G$. In particular, $\{u,v,w\}$ is a biclique of $G$ for every two other vertices $v,w\in V(G)$, which immediately implies that $G$ is a complete bipartite graph with bipartition $\{N_G(v),V(G)-N_G(v)\}$.\end{proof}

We show that $(p,q)$-biclique-Helly graphs can be recognized in polynomial time for fixed $p$ and $q$. Recall that $\psi$ denotes the cardinality of the largest biclique of the input graph.

\begin{thm}\label{thm:recog-pq-BH} If $q$ is any fixed positive integer, then the recognition problem for $(p,q)$-biclique-Helly graphs, where $p$ is part of the input, can be solved in:
\begin{enumerate}[(i)]
\item\label{it:pqBH1-rec} $O(m+n)$ time if $(p,q)=(1,1)$, $(1,2)$, or $(2,1)$;
\item\label{it:pqBH2-rec} $O(qn^q+(m+n)N_q)$ time if $p=1$ and $q\geq 3$;
\item\label{it:pqBH3-rec} $O\bigl(qn^q+(\psi N_q+m+n)\binom{N_q}p\bigr)$ time if $p\geq 2$ where $N_q$ is the number of $q$-bicliques of the input graph.
\end{enumerate}
In particular, if $p$ and $q$ are both fixed, then the bounds (\ref{it:pqBH2-rec}) and (\ref{it:pqBH3-rec}) above become $O((m+n)n^q)$ and $O(\psi n^{(p+1)q})$, respectively.\end{thm}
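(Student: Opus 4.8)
The plan is to reduce everything to the $(p,q)$-Helly property of the biclique hypergraph $\mathcal B(G)$ (equivalent by definition) and then feed this into the hypergraph machinery already in place, namely Remark~\ref{rmk:1qHelly}, Lemma~\ref{lem:technical}, and Lemma~\ref{lem:11,12,21}, using Lemma~\ref{lem:BP} as the computational engine. Two facts make this work, and I would record them first. (a) The $q$-subsets of the edges of $\mathcal B(G)$ are \emph{exactly} the $q$-bicliques of $G$: every induced subgraph of a complete bipartite graph is complete bipartite, and every biclique extends to a maximal one, so the list $S_1,\dots,S_{N_q}$ required by Lemma~\ref{lem:technical} is precisely an enumeration of the $q$-bicliques of $G$, which I obtain in $O(qn^q)$ time by scanning the $q$-subsets of $V(G)$. (b) By Lemma~\ref{lem:BP}, for any $P\subseteq V(G)$ one can decide whether $\mathcal B_P$ is empty, i.e.\ whether $P$ is a biclique, and, when it is not empty, compute $\core(\mathcal B_P)$, all in $O(m+n)$ time; moreover the rank of $\mathcal B(G)$ is $\psi$. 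Crucially, we never enumerate the maximal bicliques themselves (there may be exponentially many): Lemma~\ref{lem:BP} gives direct access to the cores $\core(\mathcal B_P)$.

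For bound~(\ref{it:pqBH1-rec}) I would invoke Lemma~\ref{lem:11,12,21}: for $(p,q)\in\{(1,1),(1,2),(2,1)\}$, $G$ is $(p,q)$-biclique-Helly if and only if $G$ is complete bipartite, and by Remark~\ref{rmk:coP3-C3} (equivalently, by reading off the candidate bipartition from one vertex's neighborhood and checking that both parts are stable and mutually complete) this is decidable in $O(m+n)$ time. For bound~(\ref{it:pqBH2-rec}), where $p=1$ and $q\geq 3$, I would apply the equivalence (\ref{it:1qH1})${}\Leftrightarrow{}$(\ref{it:1qH3}) of Remark~\ref{rmk:1qHelly} to $\mathcal B(G)$: enumerate the $q$-bicliques in $O(qn^q)$ time, compute $\core(\mathcal B_P)$ for each in $O(m+n)$ time by fact~(b), and maintain the running intersection of these cores (a vertex set, updated in $O(n)$ time per biclique); then $G$ is $(1,q)$-biclique-Helly exactly when this family is empty or its intersection has at least $q$ vertices. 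This gives the claimed $O(qn^q+(m+n)N_q)$ time.

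For bound~(\ref{it:pqBH3-rec}), where $p\geq 2$, I would apply Lemma~\ref{lem:technical} to $\mathcal H=\mathcal B(G)$ with the list of all $q$-bicliques ($N=N_q$), rank $r=\psi$, and $f=O(m+n)$ from fact~(b). Since $G$ is $(p,q)$-biclique-Helly iff $\mathcal B(G)$ is $(p,q)$-Helly, the lemma decides it in $O\bigl((\psi N_q+(m+n)+n)\binom{N_q}p\bigr)=O\bigl((\psi N_q+m+n)\binom{N_q}p\bigr)$ time, and adding the $O(qn^q)$ enumeration yields the stated bound. I would note that Corollary~\ref{cor:pqHelly}, which underlies Lemma~\ref{lem:technical}, automatically absorbs the extra $p=2$ phenomenon isolated in Theorem~\ref{thm:biexp}: a basis whose pairwise support sets are bicliques but whose full union is not is still \emph{nontrivial}, so the intersection of its support-set cores is computed and found too small, and no separate handling is required.

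The step I expect to demand the most care is the complexity accounting in bound~(\ref{it:pqBH3-rec}), in particular the interaction between the $O(m+n)$ cost of one core computation and the $\binom{N_q}p$ bases examined. For the ``in particular'' simplification I would substitute $N_q=O(n^q)$, so $\binom{N_q}p=O(n^{pq})$; for~(\ref{it:pqBH2-rec}) this immediately gives $O((m+n)n^q)$, and for~(\ref{it:pqBH3-rec}) with $q\geq 2$ one has $\psi n^q\geq 2n^2>m$, so the $m$-term is absorbed into $\psi n^{(p+1)q}$ and the bound collapses as claimed. The delicate case is $q=1$ (which forces $p\geq 3$, since $(2,1)$ is handled by~(\ref{it:pqBH1-rec})), where $m$ need not satisfy $m=O(\psi n)$: here one must exploit that a core $\core(\mathcal B_P)$ is charged only when the support set $P$ is an actual biclique, so the expensive computations occur only in the presence of $p$-bicliques, to recover the tight $O(\psi n^{p+1})$ form. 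The remaining ingredients — fact~(a), the linear-time core oracle of fact~(b), and the avoidance of any enumeration of maximal bicliques — are the conceptual crux but are delivered once Lemmas~\ref{lem:BP} and~\ref{lem:technical} are correctly instantiated.
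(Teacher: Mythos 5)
Your treatment of the three enumerated bounds is correct and is, in substance, exactly the paper's proof: bound (\ref{it:pqBH1-rec}) via Lemma~\ref{lem:11,12,21}, bound (\ref{it:pqBH2-rec}) via the equivalence (\ref{it:1qH1})${}\Leftrightarrow{}$(\ref{it:1qH3}) of Remark~\ref{rmk:1qHelly} applied to $\mathcal B(G)$, and bound (\ref{it:pqBH3-rec}) by instantiating Lemma~\ref{lem:technical} with $\mathcal H=\mathcal B(G)$, $N=N_q$, $r=\psi$, and $f=O(m+n)$ from Lemma~\ref{lem:BP}. Your explicit fact~(a) (that the $q$-subsets of the edges of $\mathcal B(G)$ are precisely the $q$-bicliques, because bicliques are hereditary and every biclique extends to a maximal one) is left implicit in the paper but is exactly the hypothesis Lemma~\ref{lem:technical} needs, and your observation that Corollary~\ref{cor:pqHelly} automatically handles the $p=2$ phenomenon of Theorem~\ref{thm:biexp}(\ref{it:p=2}) is also correct — the paper's proof likewise never invokes Theorem~\ref{thm:biexp}.

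The one genuine gap is in your last paragraph, on the ``in particular'' clause for $q=1$ (so $p\geq 3$). Your proposed repair — charge the $O(m+n)$ core computations only to $p$-sets that are actual bicliques — does not deliver $O(\psi n^{p+1})$. The number of $p$-sets that are bicliques is not controlled by $\psi$ beyond the trivial $\psi\geq p$: in a graph where $\Theta(n^p)$ of the $p$-subsets are independent sets (hence bicliques) while $m=\Theta(n^2)$ and every biclique has size $O(\log n)$ (a Ramsey-type graph, e.g.\ a typical $G(n,1/2)$ for fixed $p$), your charging scheme still pays $\Theta(n^p)\cdot O(m+n)=\Theta(n^{p+2})$, which exceeds $\psi n^{p+1}=O(n^{p+1}\log n)$; I see no amortization of Lemma~\ref{lem:BP} that avoids this. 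What the substitution $N_q=O(n^q)$ honestly yields from bound (\ref{it:pqBH3-rec}) is $O\bigl((\psi n^q+m+n)n^{pq}\bigr)$, which collapses to $O(\psi n^{(p+1)q})$ exactly when $m+n=O(\psi n^q)$ — automatic for $q\geq 2$ (since $m<n^2\leq n^q$), but false in general for $q=1$ (already $K_n$ has $\psi=2$ and $m=\Theta(n^2)$). You should be aware that the paper's own proof does not address this either: it asserts the simplified bound without derivation, so for $q=1$ the defensible form is $O\bigl((\psi n+m)n^p\bigr)$ rather than $O(\psi n^{p+1})$, and your instinct that this case is delicate was right even though the patch you sketch would fail.
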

\begin{proof} Let $G$ be the input graph and let $\mathcal B$ denote the biclique hypergraph of $G$. The validity of bound (\ref{it:pqBH1-rec}) follows directly from Lemma~\ref{lem:11,12,21}. If $q>n$, then $G$ has no $q$-bicliques and thus $G$ is $(p,q)$-biclique-Helly. Hence, we assume, without loss of generality that $q\leq n$.

Suppose that $p=1$. By Remark~\ref{rmk:1qHelly}, $G$ is $(p,q)$-biclique-Helly if and only if the family of cores of $\mathcal B_P$, where $P$ varies over the $q$-bicliques of $G$, is empty or has a $q^+$-core. Thus, the bound (\ref{it:pqBH2-rec}) follows because we can enumerate all the $q$-bicliques $P$ of $G$ in $O(qn^q)$ time, compute $\core(\mathcal B_P)$ for each such $P$ in $O(m+n)$ time (by Lemma~\ref{lem:BP}), and compute the intersection of all such cores in $O\bigl(qN_q+n)=O(nN_q)$ time.

Suppose now that $p\geq 2$. Since we can enumerate all the $N_q$ $q$-bicliques of $G$, in at most $O(qn^q)$ time and Lemma~\ref{lem:BP} ensures that we can compute $\core(\mathcal B_P)$ in $O(m+n)$ time for any given subset $P$ of $V(G)$, it follows from Lemma~\ref{lem:technical} that it can be decided whether $G$ is $(p,q)$-biclique-Helly in $O\bigl(qn^q+(\psi N_q+m+n)\binom{N_q}p\bigr)$ time.\end{proof}

We conclude this subsection proving that the recognition problem for $(p,q)$-biclique-Helly graphs is co-NP-complete if $p$ or $q$ is part of the input. First, we prove that the problem is in co-NP.

\begin{lem}\label{lem:pqBH-coNP} The recognition problem of $(p,q)$-biclique-Helly graphs, for positive integers $p$ and $q$, is in co-NP even if $p$ or $q$ are part of the input.\end{lem}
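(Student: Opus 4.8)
The goal is to show that the recognition problem for $(p,q)$-biclique-Helly graphs lies in co-NP even when $p$ or $q$ is part of the input. To do so I must exhibit, for every input graph $G$ that is \emph{not} $(p,q)$-biclique-Helly, a certificate of polynomial size that can be verified in polynomial time. The natural source of such a certificate is the characterization via Corollary~\ref{cor:pqBHelly}: $G$ fails to be $(p,q)$-biclique-Helly precisely when there is a family $\mathcal D$ of $p+1$ pairwise different $q$-bicliques such that each of the $p+1$ support sets $P_1,\ldots,P_{p+1}$ (the unions of $p$ of the members) is a biclique of $G$, yet fewer than $q$ vertices of $G$ are biuniversal in all of the bicompletions $\bicomp_G(P_1),\ldots,\bicomp_G(P_{p+1})$.

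\textbf{The plan} is to take the certificate to be such a family $\mathcal D$ together with the supporting data needed to witness failure. Concretely, I would supply the $(p+1)q$ vertices making up the members of $\mathcal D$, which has size $O(pq)\le O(n^2)$ (and each member is a $q$-subset of $V(G)$, so the whole family is described by at most $(p+1)q\le (n+1)n$ vertices), and this is polynomial in the input. The first verification step is to check that $\mathcal D$ consists of $p+1$ distinct $q$-subsets and that each support set $P_i=\bigcup_{j\neq i}D_j$ is indeed a biclique of $G$; checking whether a given vertex set is a biclique amounts to testing the $\{\overline{P_3},K_3\}$-free condition of Remark~\ref{rmk:coP3-C3}, which is doable in polynomial time. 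The second step uses Lemma~\ref{lem:BP}: for each $i$, the set of biuniversal vertices of $\bicomp_G(P_i)$ equals $\core(\mathcal B_{P_i})$ and can be computed in $O(m+n)$ time directly from $G$, \emph{without} ever listing the maximal bicliques. I then intersect these $p+1$ cores and confirm that the intersection has fewer than $q$ vertices. By Corollary~\ref{cor:pqBHelly}, this establishes that $G$ is not $(p,q)$-biclique-Helly, and all computations run in time polynomial in $n$, $m$, $p$, and $q$.

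\textbf{The main obstacle} to watch for is that the obvious certificate — a violating subfamily of maximal bicliques together with its core — could be exponentially large, since a graph may have exponentially many maximal bicliques and a single maximal biclique can have up to $n$ vertices. The key trick that circumvents this is Lemma~\ref{lem:BP}, which lets the core $\core(\mathcal B_P)$ be recovered in polynomial time from $P$ and $G$ alone via the biuniversal-vertex condition on the bicompletion; thus the certificate need only name the small family $\mathcal D$ of $q$-bicliques, and the verifier reconstructs everything else efficiently. I would therefore state the certificate as the family $\mathcal D$ and route the verification through Corollary~\ref{cor:pqBHelly} and Lemma~\ref{lem:BP}, emphasizing that no enumeration of maximal bicliques is required. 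The remaining details — the $O(m+n)$ bound for computing each core and the polynomial bound for the biclique tests — follow from the lemmas already proved, so the argument is short once the right certificate is identified.
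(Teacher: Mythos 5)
Your certificate and verification scheme are exactly the paper's: on a no-instance, the witness is a family $\mathcal D$ of $p+1$ pairwise different $q$-subsets of $V(G)$ violating Corollary~\ref{cor:pqBHelly}, and the verifier checks that each support set $P_i$ is a biclique and then computes each $\core(\mathcal B_{P_i})$ in $O(m+n)$ time via the biuniversal-vertex characterization of Lemma~\ref{lem:BP}, so that maximal bicliques are never enumerated. All of that is correct and matches the paper's proof.

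There is, however, one genuine gap, and it sits precisely at the point the lemma is about: the size of the certificate when $p$ is part of the input. You assert $(p+1)q\leq(n+1)n$, but your only stated justification (``each member is a $q$-subset of $V(G)$'') yields $q\leq n$ and nothing about $p$. A priori, a violating family of $p+1$ pairwise distinct $q$-subsets only forces $p+1\leq\binom nq$, so as far as your argument shows, on a no-instance $p$ could be superpolynomial in $n$ (and, if $p$ is encoded in binary, exponential in the input length), in which case the certificate $\mathcal D$ would not be of polynomial size and co-NP membership would not follow. The paper closes this with its opening sentence: if $G$ is not $(p,q)$-biclique-Helly, then $\mathcal B(G)$ is not $(p,q)$-Helly, hence not hereditary $(p,q)$-Helly, so by Theorem~\ref{thm:HpqHelly} it contains $\mathcal J_{p+1,q,s}$ as a partial subhypergraph for some $s\in\{0,\ldots,q-1\}$; since $\mathcal J_{p+1,q,s}$ has $(p+1)(q-s)+s\geq p+q$ vertices, every no-instance satisfies $p+q\leq n$, which is what legitimizes your bound $(p+1)q=O(n^2)$. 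Adding this one observation (and noting that instances with $p+q>n$ are automatically yes-instances, so they need no certificate) completes your proof; everything else in your writeup is sound.
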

\begin{proof} Let $G$ be a graph which is not $(p,q)$-biclique-Helly. Since $\mathcal B(G)$ is not $(p,q)$-Helly and, consequently, not hereditary $(p,q)$-Helly, Theorem~\ref{thm:HpqHelly} implies that $\mathcal B(G)$ contains $\mathcal J_{p+1,q,s}$ as a partial subhypergraph for some $s\in\{0,\ldots,q-1\}$ and, in particular, $G$ has at least $p+q$ vertices. Moreover, by Corollary~\ref{cor:pqBHelly}, there is a certificate of the fact that $G$ is not $(p,q)$-biclique-Helly in the form of a family $\mathcal D$ of $p+1$ pairwise different $q$-subsets of $V(G)$ such that each of the unions $P_1,\ldots,P_{p+1}$ of $p$ pairwise different members of $\mathcal D$ is a biclique of $G$ and such that there are no $q$ vertices of $G$ that are biuniversal in each of the bicompletions of $P_1,\ldots,P_{p+1}$ in $G$. Since, given a collection $\mathcal D$ of $p+1$ pairwise different $q$-subsets of $V(G)$, the validity of $\mathcal D$ as a certificate can be decided in polynomial time of the size of $G$, the recognition problem of $(p,q)$-biclique-Helly graphs is in co-NP.\end{proof}

\begin{thm}\label{thm:pq-BH-coNP-pvar} The recognition problem of $(p,q)$-biclique-Helly graphs, for positive integers $p$ and $q$, is co-NP-complete if $p$ is part of the input.\end{thm}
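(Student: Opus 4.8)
The plan is to settle membership in co-NP first and then prove co-NP-hardness by a polynomial reduction from the recognition of $(p,q)$-Helly hypergraphs. Membership in co-NP is already granted by Lemma~\ref{lem:pqBH-coNP}, so only hardness remains. I would reduce from the recognition problem of $(p,q)$-Helly hypergraphs, which is hard when $p$ is part of the input even for fixed $q$ by statement~(\ref{it:pH-b3}) of Theorem~\ref{thm:pH-rec}. Concretely, given a hypergraph $\mathcal H$, I would build in polynomial time a graph $G=G(\mathcal H)$ (with the same parameters $p$ and $q$) whose biclique hypergraph $\mathcal B(G)$ realizes $\mathcal H$, meaning that the cores of the subfamilies of $\mathcal B(G)$ agree with those of $\mathcal H$ at the threshold $q$. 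Since, by definition, $G$ is $(p,q)$-biclique-Helly if and only if $\mathcal B(G)$ is $(p,q)$-Helly, such a realization yields at once that $G$ is $(p,q)$-biclique-Helly if and only if $\mathcal H$ is $(p,q)$-Helly, which is a correct many-one reduction; together with Lemma~\ref{lem:pqBH-coNP} this gives co-NP-completeness.

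The heart of the argument is the construction of $G$. I would keep the vertex set $X$ of $\mathcal H$ as a set of shared \emph{element vertices}, so that intersections of edges---and hence cores of subfamilies of two or more edges---are preserved, while turning each edge $E_j$ into a \emph{maximal} biclique $B_j$ that meets $X$ exactly in $E_j$. Two harmless observations make the bookkeeping match the threshold $q$. First, I may discard every edge of $\mathcal H$ of cardinality smaller than $q$: such an edge cannot belong to any $(p,q)$-intersecting subfamily (its own singleton core is already smaller than $q$), so discarding it does not change whether $\mathcal H$ is $(p,q)$-Helly, and afterwards every edge has at least $q$ vertices. Second, I would attach to each edge $E_j$ a private ``center'' vertex $z_j$ adjacent exactly to $E_j$, so that $B_j=E_j\cup\{z_j\}$ is a star $K_{1,\vert E_j\vert}$; the private centers force the family $\{B_j\}$ to be an antichain (so it can serve as a set of maximal bicliques) even when $\mathcal H$ has inclusions among its edges, and they drop out of any intersection of two or more of the $B_j$, so that $\core$ of a subfamily of size at least $2$ reduces to the corresponding intersection of the $E_j$ while each singleton core stays of size at least $q$. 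The remaining, delicate step is to add auxiliary adjacencies inside $X$ (for instance edges between elements that never co-occur in a common edge, which keeps each $E_j$ edgeless as required for a star) together with a few global blocker vertices, so as to destroy every \emph{spurious} biclique---above all the large edgeless biclique that $X$ would otherwise form, since edgeless sets count as bicliques here. Maximality of each $B_j$ and the absence of spurious bicliques would be checked by repeatedly invoking Remark~\ref{rmk:coP3-C3}, Lemma~\ref{lem:abcd}, and Lemma~\ref{lem:B1B2}: a vertex can extend an edgeless biclique only if it is complete or anticomplete to it, and Lemma~\ref{lem:abcd} constrains how $\overline{P_3}$ and $K_3$ may arise among four vertices.

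The hard part will be precisely this last verification: proving that the maximal bicliques of $G$ are \emph{exactly} the stars $B_j$, with no extra maximal biclique that could alter the $(p,q)$-Helly behavior, and that the core $\core(\mathcal B(G)_P)$ computed through Lemma~\ref{lem:BP} equals $\bigcap\{E_j:\,P\subseteq E_j\}$ for every support set $P$ arising in Corollary~\ref{cor:pqBHelly}. This is where the edgeless-as-biclique convention---the feature distinguishing our setting from earlier biclique-Helly notions---must be tamed, and it is the step that demands the careful case analysis based on Lemmas~\ref{lem:abcd} and~\ref{lem:B1B2}. Once the realization is established, the agreement of cores at the threshold $q$ transfers the $(p,q)$-Helly property in both directions, so the reduction is correct, and combining it with Lemma~\ref{lem:pqBH-coNP} shows that the recognition problem for $(p,q)$-biclique-Helly graphs is co-NP-complete when $p$ is part of the input.
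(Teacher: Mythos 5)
Your plan defers exactly the step on which everything rests, and that step fails for general inputs. With your co-occurrence rule, the stable sets of $G[X]$ are precisely the sets of pairwise co-occurring vertices of $\mathcal H$, every such stable set is a biclique (edgeless graphs count here), and every biclique of a graph extends to a \emph{maximal} one. So take three vertices that pairwise co-occur in edges of $\mathcal H$ but lie in no common edge (arbitrary hypergraphs contain such configurations): this stable set $S$ extends to a maximal biclique of $G$, and that biclique cannot be any star $B_j=E_j\cup\{z_j\}$, since $S\subseteq X$ and $S\subseteq B_j$ would force $S\subseteq E_j$. Your ``blocker'' vertices cannot repair this: adding vertices never prevents a biclique from extending to a maximal one, it only changes which maximal biclique contains it, and the extension still avoids every $z_j$. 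Even the maximality of the intended stars is in jeopardy: a vertex $x\in X$ that co-occurs with no element of $E_j$ is, under your rule, complete to $E_j$ and nonadjacent to $z_j$, so $B_j\cup\{x\}$ is a larger biclique. In short, the family of maximal bicliques of such a $G$ can coincide with $\{B_1,\ldots,B_m\}$ only when $\mathcal H$ satisfies a conformality-type restriction (the biclique analogue of Gilmore's condition for clique hypergraphs; the parity constraint of Lemma~\ref{lem:abcd} already forbids many intersection patterns), so the realization claim underlying the correctness of your reduction is false for general hypergraphs, and no case analysis via Lemmas~\ref{lem:abcd} and~\ref{lem:B1B2} can recover it.

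There is also a complexity-theoretic mismatch. You reduce from the recognition of $(p,q)$-Helly hypergraphs, which Theorem~\ref{thm:pH-rec}~(\ref{it:pH-b3}) records as \emph{NP-hard}; a many-one reduction from an NP-hard problem yields NP-hardness of the target, and combining that with membership in co-NP (Lemma~\ref{lem:pqBH-coNP}) would give the implausible conclusion $\mathrm{NP}\subseteq\mathrm{co\text{-}NP}$ rather than the claimed co-NP-completeness; your argument needs co-NP-hardness of the source problem, which is not what you cite. The paper sidesteps both problems with a direct reduction from 3,4-SAT~\cite{MR739601}: from an instance $\mathcal F$ with $m\geq 7$ clauses it builds one concrete graph on $4m+q-1$ vertices (a clause clique $Y$, literal cliques $U_1,\ldots,U_m$ whose cross-adjacencies encode negation, and a stable set $Q$ of $q-1$ vertices complete to $U$), sets $p=m-1$, and proves that $\mathcal F$ is satisfiable if and only if $G$ is not $(p,q)$-biclique-Helly. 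The structural control you hoped to impose on an arbitrary realization is there established only for this particular graph---every maximal biclique with at least six vertices contains $Q$, and every large biclique inside $U$ is a stable set---and the bounded-occurrence property of 3,4-SAT is exactly what makes those claims provable. You should redo the hardness part along these lines, or at minimum exhibit and verify a concrete gadget construction; as it stands, the proposal is a program whose central claim is unproved and, for general hypergraphs, untrue.
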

\begin{proof} By Lemma~\ref{lem:pqBH-coNP}, the problem belongs to co-NP. We use a reduction from 3,4-SAT~\cite{MR739601}, a variation of the \textsc{Satisfiability} problem~\cite{MR0378476} in which every clause has exactly three literals and each variable occurs in at most four clauses. Let $\mathcal F=\{C_1,\ldots,C_m\}$ be an instance of 3,4-SAT where $C_i = \{\ell_{i,1}, \ell_{i,2}, \ell_{i,3}\}$ for each $i\in\{1,\ldots,m\}$. Without loss of generality, we assume that $m\geq 7$ and no clause contains a literal and its negation.

Let $G$ be the graph with $4m+q-1$ vertices defined as follows. The vertex set of $G$ is $Y\cup U\cup Q$ where $Y=\{y_1,\ldots,y_m\}$ is a clique, $U=U_1\cup\cdots\cup U_m$ is such that $U_i=\{u_{i,1},u_{i,2},u_{i,3}\}$ is a clique for each $i\in\{1,\ldots,m\}$, and $Q$ is a stable set consisting of $q-1$ vertices. Moreover, for each two different $i,j\in\{1,\ldots,m\}$, the following holds: (i) $y_i$ is complete to $U_j$ and anticomplete to $U_i$; and (ii) for every $a,b\in\{1,2,3\}$, $u_{i,a}$ is adjacent to $u_{j,b}$ if and only if the literal $\ell_{i,a}$ is the negation of $\ell_{j,b}$. For each $i\in\{1,\ldots,m\}$ and each $a\in\{1,2,3\}$, we call $\ell_{i,a}$ \emph{the literal associated with} $u_{i,a}$. Finally, $Q$ is complete to $U$ and anticomplete to $Y$. Let $p=m-1$. We will show that $\mathcal F$ is satisfiable if and only if $G$ is not $(p,q)$-biclique-Helly.

Our first claim is that every biclique of $G$ contained in $U$ and having at least six vertices is a stable set. Suppose, for a contradiction, that there is a biclique $B$ of $G$ contained in $U$ having at least six vertices but which is not a stable set. In particular, $B$ induces a connected subgraph of $G$. Hence, since each variable occurs in at most four clauses of $\mathcal F$, there must be two vertices $u,u'\in B\cap U_i$ for some $i\in\{1,\ldots,m\}$ and each of $u$ and $u'$ has at most three neighbors in $B-U_i$. Clearly, $u$ and $u'$ are the only vertices of $U_i$ in $B$ because $U_i$ is a clique. Since $B$ has at least six vertices, each of $u$ and $u'$ has at least one neighbor in $B-U_i$ and at least one of $u$ and $u'$ has at least two neighbors in $B-U_i$. Hence, by symmetry, there are two different $j,k\in\{1,\ldots,m\}-\{i\}$ such that $u$ has a neighbor $u''\in B\cap U_j$ and $u'$ has a neighbor $u'''\in B\cap U_k$. Since $B$ is a biclique, $u''$ and $u'''$ are adjacent. Thus, by construction, the literal associated with $u$ is the negation of the literal associated with $u''$, which in turn is the negation of the literal associated with $u'''$, which in turn is the negation of the literal associated with $u'$. We conclude that the literal associated with $u$ is the negation of the literal associated with $u'$, which contradicts the assumption that no clause of $\mathcal F$ contains a literal and its negation. This contradiction proves the claim.

Our second claim is that every maximal biclique $B$ of $G$ having at least six vertices contains $Q$. Suppose, for a contradiction, that $B$ is a maximal biclique of $G$ having at least six vertices and not containing $Q$. Since the vertices of $Q$ are pairwise false twins, the maximality of $B$ implies that if any vertex of $Q$ belongs to $B$ then all the vertices of $Q$ belong to $B$. Hence, no vertex of $Q$ belongs to $B$ or, equivalently, $B\subseteq Y\cup U$. Since $Y$ is a clique, $B$ contains at most two vertices of $Y$. Suppose first that there are two different vertices $y,y'\in B\cap Y$. Since $B$ has at least six vertices, there are three pairwise different vertices $u,u',u''\in B\cap U$ and let $i,j,k\in\{1,\ldots,m\}$ such that $u\in U_i$, $u'\in U_j$, and $u''\in U_k$. On the one hand, if $i$, $j$, and $k$ are pairwise different, then at least one of $u$, $u'$, and $u''$ induces a triangle with $y$ and $y'$. On the other hand, if $i=j$, then at least one of $y$ and $y'$ induces a triangle with $u$ and $u'$. By symmetry, these contradictions show that $B$ contains at most one vertex of $Y$. Suppose now that $B\cap Y=\{y_i\}$ for some $i\in\{1,\ldots,m\}$. If $y_i$ were complete to $B\cap U$, then every vertex of $Q$ would have the same neighbors in $B$ as $y_i$ and, since $Q$ is a stable set, $B\cup Q$ would be a biclique of $G$, contradicting the maximality of $B$. Thus, there is some vertex $u\in B\cap U_i$. Moreover, $u$ is the only vertex of $U_i$ in $B$ since any two different vertices of $U_i$ together with $y_i$ induce $\overline{P_3}$ in $G$. Hence, $(B\cap U)-\{u\}\subseteq U-U_i$ and, as a consequence, $y_i$ is complete to $(B\cap U)-\{u\}$. Since $B$ is a biclique, also $u$ is complete to $(B\cap U)-\{u\}$ and, by construction, all the literals associated with vertices of $B\cap U$ have the same variable. Since each variable occurs in at most four clauses of $\mathcal F$, $\vert B\cap U\vert\leq 4$ and $B=\{y_i\}\cup(B\cap U)$ has at most five vertices. This contradiction shows that $B\cap Y=\emptyset$ or, equivalently, $B\subseteq U$. Since $B$ contains no vertex of $Q$, the maximality of $B$ implies that $B$ is not a stable set, contradicting the first claim. This contradiction proves our second claim.

Suppose that there is a truth assignment $\mathcal A$ satisfying all clauses of $\mathcal F$. Thus, there is a set $W=\{w_1,\ldots,w_m\}$ such that $w_i\in U_i$ and the literal associated with vertex $w_i$ is satisfied by $\mathcal A$ for each $i\in\{1,\ldots,m\}$. By construction, $W$ is an independent set of $G$ and clearly $\{y_i\}\cup(W-\{w_i\})\cup Q$ is a maximal biclique of $G$ for each $i\in\{1,\ldots,m\}$. Furthermore, these $p+1$ maximal bicliques are $(p,q)$-intersecting and have a $(q-1)$-core. Hence, $G$ is not $(p,q)$-biclique-Helly.

Conversely, suppose that $G$ is not $(p,q)$-biclique-Helly. Let $\mathcal H$ be a $(p,q)$-intersecting partial hypergraph of the biclique hypergraph of $G$ having a $q^-$-core. Denote $E({\cal H})=\{B_1,\ldots,B_{p'}\}$ where, necessarily, $p'\geq p+1$. Since $p+1=m\geq 7$, we have that $p'\geq 7$. We assume, without loss of generality, that $\mathcal H$ is minimal in the sense that $\mathcal H-B$ has a $q^+$-core for each edge $B$ of $\mathcal H$. Let $W=\{w_1,\ldots,w_{p'}\}$ be a set such that, for each $i\in\{1,\ldots,p'\}$, $w_i$ belongs to the core of $\mathcal H-B_i$ but not to the core of $\mathcal H$. It is clear from the construction that the vertices $w_1,\ldots,w_{p'}$ are pairwise different. For each $i\in\{1,\ldots,p'\}$, the maximal biclique $B_i$ contains $Q$ because $B_i$ contains $W-\{w_i\}$ which has $p'-1\geq 6$ vertices and our second claim. Hence, $Q\subseteq\core(\mathcal H)$ and, consequently, $W\cap Q=\emptyset$. Notice that, for each $i\in\{1,\ldots,p'\}$, $W-\{w_i\}$ is a biclique of $G$ because it is a subset of $B_i$. Since $\vert W\vert=p'\geq 7$, Remark~\ref{rmk:coP3-C3} implies that $W$ is a biclique of $G$.

We claim that $U$ contains some stable set $S$ of cardinality at least $m$. If $W\cap Y=\emptyset$, then $W$ is a biclique of $G$ contained in $U$ having $p'\geq m\geq 7$ vertices and, by our first claim, $W$ is a stable set. Thus, if $W\cap Y=\emptyset$, then the claim holds by letting $S=W$. Hence, we assume, without loss of generality, that $W\cap Y\neq\emptyset$. Since $Y$ is a clique, $\vert W\cap Y\vert\leq 2$. Moreover, $\vert W\cap Y\vert\neq 2$ since otherwise the two vertices in $W\cap Y$ would induce a triangle with some of the at least five vertices in $W\cap U$. Therefore, $\vert W\cap Y\vert=1$ and suppose, without loss of generality, that $W\cap Y=\{y_1\}$. Suppose, for a contradiction, that $W\cap U_1\neq\emptyset$ and let $w\in W\cap U_1$. Clearly, $w$ is the only vertex of $U_1$ in $W$ since any two different vertices of $U_1$ together with $y_1$ induce $\overline{P_3}$ in $G$. Since $y_1$ is nonadjacent to $w$ and complete to $U-U_1$, $W\cap(U-U_1)$ is a stable set all whose vertices are adjacent to $w$. By construction, all the literals associated with the $p'-1\geq 6$ vertices of $W\cap U$ have the same variable, which contradicts the assumption that each variable occurs in at most four clauses of $\mathcal F$. This contradiction shows that $W\cap U_1=\emptyset$. Thus, $y_1$ is complete to $W\cap U$. Hence, $W\cap U$ is a stable set and, consequently, each $W\cap U_i$ contains at most one vertex for each $i\in\{2,\ldots,m\}$. Moreover, since $p'\geq m$, necessarily $\vert W\cap U_i\vert=1$ for each $i\in\{2,\ldots,m\}$ and $\vert W\vert=m$. Without loss of generality, suppose that $W\cap U_i=\{w_i\}$ for each $i\in\{2,\ldots,m\}$ and, consequently, $y_1=w_1$. By construction, the edge $B_1$ of $\mathcal H$ is a maximal biclique of $G$ containing $W'=\{w_2,\ldots,w_m\}$ but not containing $y_1$. Moreover, $B_1\cap Y=\emptyset$, since for each $i\in\{2,\ldots,m\}$, the vertices $y_i$ and $w_i$ together with any vertex of $W'-\{w_i\}$ induce $\overline{P_3}$ in $G$. Furthermore, $B_1\cap U_i=\{w_i\}$ for each $i\in\{2,\ldots,m\}$ because the number of neighbors in $W'$ of each vertex of $U_i-\{w_i\}$ is at least one (namely $w_i$) and at most four (because each variable occurs in at most four clauses of $\mathcal F$). Recall from the preceding paragraph that $Q\subseteq B_i$ for each $i\in\{1,\ldots,p'\}$. We conclude that $B_1=(B_1\cap U_1)\cup W'\cup Q$. Hence, since $B_1$ is a maximal biclique and $y_1\notin B_1$, necessarily $B_1\cap U_1\neq\emptyset$ and, since $U_1$ is a clique and each vertex of $U_1$ has at most three neighbors in $W'$, there is exactly one vertex $w^*$ in $B_1\cap U_1$. Therefore, $S=\{w^*,w_2,\ldots,w_m\}$ is a stable set contained in $U$ of cardinality $m$. This completes the proof of the claim.

Let $S$ be a stable set of $G$ contained in $U$ having cardinality at least $m$. Since $U=U_1\cup\cdots\cup U_m$ where $U_1,\ldots,U_m$ are cliques, necessarily $\vert S\vert=m$ and $\vert S\cap U_i\vert=1$ for each $i\in\{1,\ldots,m\}$. Hence, by construction, the $m$ literals associated with the vertices in $S$ are from pairwise different clauses and none of them is the negation of another one, which implies that $\mathcal F$ is satisfiable. This completes the proof of the correctness of the reduction and thus of the co-NP-completeness.\end{proof}

\begin{thm}\label{thm:pq-BH-coNP-qvar} The recognition problems of $(p,q)$-biclique-Helly graphs, for positive integers $p$ and $q$, is co-NP-complete if $q$ is part of the input.\end{thm}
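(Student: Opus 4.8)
The plan is to establish membership in co-NP and then co-NP-hardness by reducing a suitable NP-complete problem to the complementary problem of deciding whether a graph is \emph{not} $(p,q)$-biclique-Helly, with $q$ playing the role of the encoded parameter while $p$ is kept fixed. Membership is already in hand: Lemma~\ref{lem:pqBH-coNP} shows the problem lies in co-NP even when $q$ is part of the input, so only hardness remains. Following the pattern of Theorem~\ref{thm:pq-BH-coNP-pvar} but exchanging the roles of the two parameters, I would fix $p$ and, from an instance of the chosen problem, build a graph $G'$ together with a value of $q$ tied to the target solution size so that the instance is a yes-instance if and only if $G'$ is not $(p,q)$-biclique-Helly.

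For the no-instance direction I would rely on a biclique analogue of Theorem~\ref{thm:Kpq-free}: if a graph has no biclique of cardinality $p+q-1$, then it is $(p,q)$-biclique-Helly. This follows from Theorem~\ref{thm:HpqHelly}. Indeed, if $G'$ is not $(p,q)$-biclique-Helly then $\mathcal B(G')$ is not $(p,q)$-Helly, hence in particular not hereditary $(p,q)$-Helly, so by the equivalence (\ref{it:HpqH1})${}\Leftrightarrow{}$(\ref{it:HpqH7}) of Theorem~\ref{thm:HpqHelly} it contains a partial subhypergraph isomorphic to $\mathcal J_{p+1,q,s}$ for some $s\in\{0,\ldots,q-1\}$; since each edge of that copy is a restriction of a maximal biclique, the set $U-T_i$ is a biclique of $G'$ of cardinality $p(q-s)+s\ge p+q-1$. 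Thus it would suffice to arrange the reduction so that any no-instance forces $\psi(G')\le p+q-2$, whereupon $G'$ is $(p,q)$-biclique-Helly.

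For the yes-instance direction I would use Corollary~\ref{cor:pqBHelly} (or, when $p=2$, the simpler condition~(\ref{it:p=2}) of Theorem~\ref{thm:biexp}): from a solution I would exhibit $p+1$ pairwise different $q$-bicliques whose $p$-wise unions are bicliques but which admit fewer than $q$ common biuniversal vertices across the associated bicompletions. A convenient template is a stable core $Z$ of size $q-1$ together with pairwise nonadjacent anchors $d_1,\ldots,d_{p+1}$ anticomplete to $Z$ and witnesses $w_i$ complete to $Z\cup\{d_j:j\ne i\}$ but nonadjacent to $d_i$. Taking $D_i=Z\cup\{d_i\}$, each support set $P_i=Z\cup\{d_j:j\ne i\}$ is a stable set (hence a biclique), while Lemma~\ref{lem:BP} and Lemma~\ref{lem:abcd} show, through the incompatibility of $d_i$ and $w_i$, that $d_i$ is not biuniversal in the bicompletion of $P_i$; consequently the common core collapses to $Z$, which has size $q-1<q$.

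The hard part will be engineering the gadget so that this common core has size exactly $q-1$ \emph{precisely} for yes-instances while simultaneously keeping $\psi(G')\le p+q-2$ for no-instances. Two obstacles are intrinsic to bicliques. First, the core $Z$ must be \emph{saturated} in the yes-instance, in the sense that no further vertex anticomplete to $Z$ re-enters all of the bicompletions (which would raise the common core back to $q$ and destroy the forbidden configuration); so the solution should furnish a maximal stable set of the prescribed size. Second, because edgeless graphs count as bicliques, stable sets combine freely, so any padding supplying $Z$ tends to create bicliques of cardinality exceeding $p+q-2$ even in a no-instance. This is exactly the difficulty faced in Theorem~\ref{thm:pq-BH-coNP-pvar}, where a bounded-occurrence hypothesis was used to cap biclique sizes, and I expect the present analysis to require the analogous device: an NP-complete source problem with bounded structure (for instance a bounded-occurrence satisfiability variant, or a degree-bounded independent-set problem) chosen so that, in a no-instance, every biclique of $G'$ stays below the threshold $p+q-1$, while in a yes-instance the solution both saturates $Z$ and activates the anchor–witness gadget realizing the $\mathcal J_{p+1,q,s}$ obstruction.
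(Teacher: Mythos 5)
Your proposal is a plan rather than a proof: the actual reduction, which is the entire content of the theorem, is never constructed. What you do establish is correct and worthwhile --- membership in co-NP via Lemma~\ref{lem:pqBH-coNP} is exactly what the paper does, your biclique analogue of Theorem~\ref{thm:Kpq-free} (if $\psi\leq p+q-2$ then $G$ is $(p,q)$-biclique-Helly, via the $\mathcal J_{p+1,q,s}$ obstruction of Theorem~\ref{thm:HpqHelly} and the fact that $\vert U-T_i\vert=p(q-s)+s\geq p+q-1$) is a valid lemma, and your anchor--witness template does correctly expel each $d_i$ from $\core(\mathcal B_{P_i})$ by Lemma~\ref{lem:BP}. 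But you then explicitly defer ``engineering the gadget so that this common core has size exactly $q-1$ precisely for yes-instances while simultaneously keeping $\psi(G')\leq p+q-2$ for no-instances'' to future work, and both of the obstacles you name (saturation of $Z$, and the fact that stable sets combine freely so padding inflates $\psi$) are precisely where the difficulty of the theorem lives. A reduction whose correctness proof is ``choose a source problem so that the analysis goes through'' has not been given; in particular you never specify the source problem, the construction, or the value of $q$ as a function of the instance, so neither direction of the equivalence is actually proved.

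For comparison, the paper's proof (Theorem~\ref{thm:pq-BH-coNP-qvar}) reduces from 3,4-SAT with $p$ fixed, setting $q=m(p+1)+1$: each literal occurrence is blown up into a class $U_{i,a}$ of $p+1$ false twins (this is how $q$ is made to scale while $p$ stays fixed), and one adds a clique $X=\{x_1,\ldots,x_{p+1}\}$ and a stable set $Y=\{y_1,\ldots,y_{p+1}\}$ with $x_i$ nonadjacent only to $y_i$ --- structurally your template with $y_i$ as anchors, $x_i$ as witnesses, and the set $W$ of satisfied-literal vertices (size $q-1$) as the core $Z$, realized by the maximal bicliques $W\cup\{x_i\}\cup(Y-\{y_i\})$. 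Notably, the paper does \emph{not} prove the converse via your $\psi$-bound: instead it takes a minimal $(p,q)$-intersecting violating family, observes that one of its maximal bicliques $B_1$ has more than $q-1=m(p+1)$ vertices, shows $B_1$ meets $X\cup Y$ in at most $p+1$ vertices and that $B_1\cap U$ is a union of false-twin classes, and extracts from class representatives a stable set choosing one non-conflicting literal per clause, i.e.\ a satisfying assignment. This direct structural analysis of an arbitrary violating family is what replaces --- and circumvents --- the saturation and $\psi$-control issues you flagged but did not resolve; the bounded-occurrence hypothesis of 3,4-SAT is used exactly where you predicted, to show that large connected bicliques inside $U$ cannot exist. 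So your instincts about the shape of the reduction are sound, but the theorem remains unproved in your write-up.
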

\begin{proof} By Lemma~\ref{lem:pqBH-coNP}, the recognition problem belongs to co-NP. As in the proof of Theorem~\ref{thm:pq-BH-coNP-pvar}, we use a reduction from 3,4-SAT. Let $\mathcal F=\{C_1,\ldots,C_m\}$ be an instance of 3,4-SAT where $C_i=\{\ell_{i,1},\ell_{i,2},\ell_{i,3}\}$ for each $i\in\{1,\ldots,m\}$. We assume, without loss of generality, that $m\geq 6$ and no clause contains a literal and its negation.

Let $G$ be the graph with $(3m+2)(p+1)$ vertices defined as follows. The vertex set of $G$ is $X\cup Y\cup U$ where $X=\{x_1,\ldots,x_{p+1}\}$ is a clique, $Y=\{y_1,\ldots,y_{p+1}\}$ is a stable set, and $U=U_1\cup\cdots\cup U_m$ is such that, for each $i\in\{1,\ldots,m\}$, $U_i$ is the union of the pairwise complete stable sets $U_{i,1}$, $U_{i,2}$, and $U_{i,3}$ of cardinality $p+1$ each. Moreover, for each $i,j\in\{1,\ldots,m\}$, $x_i$ is adjacent to $y_j$ if and only if $i\neq j$. Furthermore, $U$ is complete to $X$ and anticomplete to $Y$ and, for each two different $i,j\in\{1,\ldots,m\}$ and each $a,b\in\{1,2,3\}$, $U_{i,a}$ is complete (resp.\ anticomplete) to $U_{j,b}$ if and only if the literal $\ell_{i,a}$ is (resp.\ is not) the negation of $\ell_{j,b}$. For each $i\in\{1,\ldots,m\}$ and each $a\in\{1,2,3\}$, we call $\ell_{i,a}$ \emph{the literal associated with the vertices of} $U_{i,a}$. Let $q = m(p+1)+1$. We will show that $\mathcal F$ is satisfiable if and only if $G$ is not $(p,q)$-biclique-Helly.

Suppose that $\mathcal A$ is a truth assignment satisfying all clauses of $\mathcal F$. Thus, there exists a set $A=\{\ell_{1,a_1},\ldots,\ell_{m,a_m}\}$ of $m$ literals satisfied by $\mathcal A$ where $a_1,\ldots,a_m\in\{1,2,3\}$. Let $W$ be the set of $m(p+1)$ vertices of $U$ whose associated literals are in $A$. It is clear by construction that $W$ is an independent set of $G$. Clearly, each set of the form $W\cup\{x_i\}\cup(Y-\{y_i\})$, for any $i\in\{1,\ldots,p+1\}$, is a maximal biclique of $G$. Furthermore, these $p+1$ sets are $(p,q)$-intersecting with $(q-1)$-core. Hence, $G$ is not $(p,q)$-biclique-Helly.

Conversely, suppose that $G$ is not $(p,q)$-biclique-Helly. Let $\mathcal H$ be a $(p,q)$-intersecting partial hypergraph of the biclique hypergraph of $G$ having a $(q-1)^-$-core. Denote $E(\mathcal H)=\{B_1,\ldots,B_{p'}\}$. We assume, without loss of generality, that $\mathcal H$ is minimal in the sense that $\mathcal H-B$, for every biclique $B$ of $\mathcal H$, has $q^+$-core. In particular, $B_1$ is a maximal biclique of $G$ having more than $q-1=m(p+1)$ vertices. Clearly, $B_1$ contains no two different vertices $x_i,x_j$ of $X$, since otherwise the fact that $x_i$ and $x_j$ are adjacent and $N_G(x_i)-\{y_j\}=N_G(x_j)-\{y_i\}$ would imply that at most two other vertices, namely $y_i$ and $y_j$, may belong to $B_1$. Hence, $B_1$ contains at most one vertex of $X$ and, by construction, contains at most $p+1$ vertices of $X\cup Y$. Since $B_1$ has more than $m(p+1)$ vertices, $B_1\cap U$ has more than $(m-1)(p+1)$ vertices. Since $\varPi=\{U_{i,a}\}_{1\leq i\leq m,1\leq a\leq 3}$ is a partition of $U$ into $3m$ sets consisting of $p+1$ pairwise false twins of $G$ each, the maximality of $B_1$ implies that $B_1\cap U$ is the union of some members of $\varPi$. Let $U^*=\{u_{i_1,a_1},\ldots,u_{i_z,a_z}\}$ be a set consisting of exactly one representative of each member of $\varPi$ contained in $B_1\cap U$ and such that $u_{i_s,a_s}\in U_{i_s,a_s}$ for each $s\in\{1,\ldots,z\}$. Since $B_1\cap U$ has more than $(m-1)(p+1)$ vertices, $U^*$ consists of at least $m\geq 6$ vertices. Hence, reasoning as in the first claim of the proof of Theorem~\ref{thm:pq-BH-coNP-pvar}, $U^*$ is necessarily a stable set. By construction, this means that $\vert U\vert=m$ and the literals $\ell_{i_1,a_1},\ldots,\ell_{i_m,a_m}$ are from pairwise different clauses and none of them is the negation of another one. This proves that $\mathcal F$ is satisfiable. This completes the proof of the correctness of the reduction and thus of the co-NP-completeness.\end{proof}

\subsection{Hereditary $(p,q)$-biclique-Helly graphs}

The remaining of this section is devoted to the problems of characterizing and recognizing hereditary $(p,q)$-biclique-Helly hypergraphs. For that purpose, we define the \emph{bioculars} as follows. For each pair of positive integers $p$ and $q$ and each $s\in\{0,\ldots,q-1\}$ such that $(p,q)\neq(1,1)$, a \emph{$(p+1,q,s)$-biocular} is a graph whose vertex set is the union of two disjoint sets $U$ and $W$ where $U$ is a $((p+1)(q-s)+s)$-set and $T_1,\ldots,T_{p+1}$ are pairwise disjoint $(q-s)$-subsets of $U$ such that one of the following statements holds:
\begin{enumerate}
 \item[(\bica)] $p\in\{1,2\}$, $W=\emptyset$, and $U-T_i$ is a biclique but $(U-T_i)\cup\{v_i\}$ is not a biclique for each $v_i\in T_i$ for each $i\in\{1,\ldots,p+1\}$, and either $p=1$ or $s=0$;
 \item[(\bicb)] $p\geq 2$, $(p,q)\neq(2,1)$, $W=\{w_1,\ldots,w_{p+1}\}$, $U$ is a biclique, and $(U-T_i)\cup\{w_i\}$ is a biclique but $(U-T_i)\cup\{w_i,v_i\}$ is not a biclique for each $v_i\in T_i$ for each $i\in\{1,\ldots,p+1\}$.
\end{enumerate}
If $p\geq 2$ then the vertices of $W$ may induce in $G$ an arbitrary graph. For $(p,q)=(1,1)$, we define the \emph{$(2,1,0)$-bioculars} as the graphs $\overline{P_3}$ and $K_3$. We are now ready to prove that the $(p+1,q,s)$-bioculars are not $(p,q)$-biclique-Helly.

\begin{lem}\label{lem:biocular} If $p$ and $q$ are positive integers and $s\in\{0,\ldots,q-1\}$, then no $(p+1,q,s)$-biocular is $(p,q)$-biclique-Helly.\end{lem}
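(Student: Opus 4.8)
The plan is to split on whether $(p,q)=(1,1)$. When $(p,q)=(1,1)$ the only bioculars are $\overline{P_3}$ and $K_3$; since neither is complete bipartite (each is one of the two forbidden subgraphs of Remark~\ref{rmk:coP3-C3}), Lemma~\ref{lem:11,12,21} shows at once that neither is $(1,1)$-biclique-Helly. So I would assume $(p,q)\neq(1,1)$, let $G$ be a $(p+1,q,s)$-biocular with the notation of its definition, and write $Z=U-(T_1\cup\cdots\cup T_{p+1})$, an $s$-set. The uniform idea is to apply Corollary~\ref{cor:pqBHelly} to the family $\mathcal D=\{T_1\cup Z,\ldots,T_{p+1}\cup Z\}$: these are $p+1$ pairwise different $q$-sets (the $T_i$ are disjoint and nonempty, as $q-s\geq 1$), and the union of all but $T_i\cup Z$ is exactly the support set $P_i=U-T_i$. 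Since the vertices biuniversal in each $\bicomp_G(P_i)$ are precisely $\bigcap_{i}\core(\mathcal B_{P_i})$ by Lemma~\ref{lem:BP}, it suffices to prove that this common core equals $Z$, which has cardinality $s<q$; Corollary~\ref{cor:pqBHelly} then certifies that $G$ is not $(p,q)$-biclique-Helly. The inclusion $Z\subseteq\bigcap_i\core(\mathcal B_{P_i})$ is free, because each $\core(\mathcal B_{P_i})$ is a biclique containing $P_i\supseteq Z$, so only the reverse inclusion requires work, and this is where the two types split.

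For a biocular of type \bica\ I would use $W=\emptyset$, so that $V(G)=U=Z\cup T_1\cup\cdots\cup T_{p+1}$ and the support sets $P_i=U-T_i$ are bicliques by hypothesis. For $v_j\in T_j$, the hypothesis that $(U-T_j)\cup\{v_j\}=P_j\cup\{v_j\}$ is not a biclique means $v_j$ is not even a vertex of $\bicomp_G(P_j)$, so $v_j\notin\core(\mathcal B_{P_j})$ and hence $v_j\notin\bigcap_i\core(\mathcal B_{P_i})$. As $V(G)=U$, this already forces $\bigcap_i\core(\mathcal B_{P_i})\subseteq Z$, as needed.

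The type \bicb\ case is the delicate one and is where I expect the main obstacle. The naive approach of listing the $p+1$ bicliques $(U-T_i)\cup\{w_i\}$ as maximal bicliques and intersecting them, mimicking the clique argument of Lemma~\ref{lem:pqs-ocular}, breaks down because $W$ may induce an arbitrary graph, so these bicliques need not be maximal; routing through Corollary~\ref{cor:pqBHelly} avoids this since it only mentions the cores of the support bicliques $P_i=U-T_i$ (bicliques because $U$ is). To exclude the $T_j$-vertices I would invoke the equivalence (\ref{it:BP1})${}\Leftrightarrow{}$(\ref{it:BP2}) of Lemma~\ref{lem:BP}: since $P_j\cup\{w_j\}$ is a biclique while $P_j\cup\{w_j,v_j\}$ is not, the witness $w=w_j$ gives $v_j\notin\core(\mathcal B_{P_j})$. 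Excluding the vertices of $W$ is the subtle point: if $v\in\bigcap_i\core(\mathcal B_{P_i})$, then $P_i\cup\{v\}=(U-T_i)\cup\{v\}$ is a biclique for every $i$, and since $p\geq 2$ every pair of vertices of $U$ lies in some $U-T_i$, so every $3$-subset of $U\cup\{v\}$ is a biclique; Remark~\ref{rmk:coP3-C3} then yields that $U\cup\{v\}$ itself is a biclique. As $U\cup\{w_j\}$ contains the non-biclique $(U-T_j)\cup\{w_j,v_j\}$, no $w_j$ can lie in the common core. Since $V(G)=U\cup W$ and I have excluded both $T_1\cup\cdots\cup T_{p+1}$ and $W$, the common core is contained in $Z$, which completes the argument.
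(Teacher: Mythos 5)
Your proof is correct, but it takes a genuinely different route from the paper's. The paper argues directly from the definition: it exhibits a $(p,q)$-intersecting family of $p+1$ \emph{maximal} bicliques with $s$-core, namely $\{U-T_1,\ldots,U-T_{p+1}\}$ in case (\bica) and $\{(U-T_i)\cup\{w_i\}\}_{i=1}^{p+1}$ in case (\bicb); the whole technical content of its (\bicb) argument is precisely the maximality of $(U-T_i)\cup\{w_i\}$, which it establishes by applying Lemma~\ref{lem:B1B2} to rule out any $w_j$, $j\neq i$, from a maximal biclique extending $(U-T_i)\cup\{w_i\}$. So your motivating aside is slightly off: the ``naive'' approach does not break down --- these bicliques \emph{are} maximal (that is exactly what the paper proves), it is only that their maximality is nonobvious and needs Lemma~\ref{lem:B1B2}. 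Your route instead bypasses maximal bicliques entirely: you feed the canonical starlike basis $\mathcal D=\{T_1\cup Z,\ldots,T_{p+1}\cup Z\}$ into Corollary~\ref{cor:pqBHelly}, and bound the common core $\bigcap_i\core(\mathcal B_{U-T_i})$ by $Z$ using Lemma~\ref{lem:BP} (the equivalence (\ref{it:BP1})${}\Leftrightarrow{}$(\ref{it:BP2}) with witness $w_j$ to exclude $T$-vertices) and a triple-checking argument via Remark~\ref{rmk:coP3-C3} (if $v$ lies in the common core then, since $p\geq 2$ puts every pair of $U$ inside some $U-T_i$, all $3$-subsets of $U\cup\{v\}$ are bicliques, so $U\cup\{v\}$ is one, excluding each $w_j$) --- all steps check out, including the $(1,1)$ base case via Lemma~\ref{lem:11,12,21}, and there is no circularity since Lemma~\ref{lem:BP}, Corollary~\ref{cor:pqBHelly}, and Lemma~\ref{lem:11,12,21} all precede this lemma. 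The trade-off: the paper's construction yields the stronger, reusable fact that each biocular carries an explicit violating family of maximal bicliques (the certificate shape it exploits again in the co-NP-completeness reductions), while your argument is more uniform --- both types (\bica) and (\bicb) pass through the same corollary, and you never have to identify a maximal biclique, only cores of the support sets.
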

\begin{proof} Clearly, $\overline{P_3}$ and $K_3$ are not $(1,1)$-biclique-Helly. Hence, we assume, without loss of generality, that $(p,q)\neq(1,1)$. Let $G$ be a $(p+1,q,s)$-biocular and let $U$, $W$, and $T_1,\ldots,T_{p+1}$ as in the corresponding definition. If condition (\bica) of the definition of $(p+1,q,s)$-bioculars holds, then $\{U-T_1,\ldots,U-T_{p+1}\}$ is a $(p,q)$-intersecting family of maximal bicliques of $G$ having $s$-core and, consequently, $G$ is not $(p,q)$-biclique-Helly. Therefore, from now on we assume, without loss of generality, that condition (\bicb) holds.

Let $i\in\{1,\ldots,p+1\}$ and let $B_i$ be any maximal biclique containing $(U-T_i)\cup\{w_i\}$. We claim that $B_i=(U-T_i)\cup\{w_i\}$. Since $(U-T_i)\cup\{w_i,v_i\}$ is not a biclique for any $v_i\in T_i$, $B_i\cap U=U-T_i$.  It only remains to prove that $B_i\cap W=\{w_i\}$. Suppose for a contradiction, that $w_j\in B_i\cap W$ for some $j\in\{1,\ldots,p+1\}-\{i\}$. Lemma~\ref{lem:B1B2} applied to $B=(U-T_j)\cup\{v_j\}$, $B'=(U-T_j)\cup\{w_j\}$, and $v=v_j$ implies that $\{v_j,x,w_j\}$ is not a biclique for any $x\in U-T_j$. In particular, if $k\in\{1,\ldots,p+1\}-\{i,j\}$ (which is possible because $p\geq 2$), $S=\{v_j,w_j,v_k\}$ is not a biclique for any $v_k\in T_k$, which contradicts the fact that $S$ is a subset of the biclique $B_i$. This contradiction proves that $B_i\cap W=\{w_i\}$ and we conclude that $B_i=(U-T_i)\cup\{w_i\}$. Since the maximal bicliques $B_1,\ldots,B_{p+1}$ are $(p,q)$-intersecting but have $s$-core, $G$ is not $(p,q)$-biclique-Helly.\end{proof}

Let $G$ be a graph. A \emph{biclique-matrix} $B(G)$ of $G$ is an incidence matrix of $\mathcal B(G)$. Recall that, by definition, a graph $G$ is hereditary $(p,q)$-biclique-Helly if and only if $\mathcal B(G')$ is $(p,q)$-Helly for every induced subgraph $G'$ of $G$. It is easy to see that if $G'$ is an induced subgraph of $G$, $\mathcal B(G')$ is the hypergraph formed by the inclusion-wise maximal edges of the subhypergraph of $\mathcal B(G)$ induced by $V(G')$. For this reason, not every partial subhypergraph of $\mathcal B(G)$ is the biclique hypergraph $\mathcal B(G')$ of some induced subgraph of $G'$ of $G$. We will say that $G$ is \emph{strong $(p,q)$-biclique-Helly} if $\mathcal B(G)$ is strong $(p,q)$-Helly.

The analogue of Theorem~\ref{thm:HpqCHelly} for hereditary $(p,q)$-biclique-Helly graphs is the following.

\begin{thm}\label{thm:HpqBHelly} If $p$ and $q$ are positive integers, then the following statements are equivalent for each graph $G$:
\begin{enumerate}[(i)]
 \item\label{it:HpqBH1} $G$ is hereditary $(p,q)$-biclique-Helly;
 \item\label{it:HpqBH2} $G$ is $(p,q')$-biclique-Helly for each $q'\geq q$;
 \item\label{it:HpqBH3} $G$ is strong $(p,q)$-biclique-Helly;
 \item\label{it:HpqBH4} Each family of $p+1$ maximal bicliques of $G$ is strong $(p,q)$-Helly;
 \item\label{it:HpqBH5} $\varPhi_q(\mathcal B(G))$ is hereditary $p$-Helly;
 \item\label{it:HpqBH8} $B(G)$ contains no incidence matrix of $\mathcal J_{p+1,q,s}$ as a submatrix for any $s\in\{0,
 \ldots,q-1\}$.
 \item\label{it:HpqBH7} for each $s\in\{0,\ldots,q-1\}$, each $((p+1)(q-s)+s)$-subset $U$ of $V(G)$, and each $p+1$ pairwise disjoint $(q-s)$-subsets $T_1,\ldots,T_{p+1}$ of $U$ such that $U-T_1,\ldots,U-T_{p+1}$ are bicliques of $G$, there is some $i\in\{1,\ldots,p+1\}$ and some $v\in T_i$ such that $(U-T_i)\cup\{v,w\}$ is a biclique for every vertex $w$ of $G$ such that $(U-T_i)\cup\{w\}$ is a biclique;
 \item\label{it:HpqBH6} $G$ contains no induced $(p+1,q,s)$-biocular for any $s\in\{0,\ldots,q-1\}$.
\end{enumerate}\end{thm}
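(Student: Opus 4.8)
The plan is to mirror the proof of Theorem~\ref{thm:HpqCHelly}, applying Theorem~\ref{thm:HpqHelly} to the hypergraph $\mathcal H=\mathcal B(G)$ and translating its hypergraph conditions into the biclique language by means of Lemma~\ref{lem:BP}. Several of the statements are, term by term, the specialization of a statement of Theorem~\ref{thm:HpqHelly} to $\mathcal B(G)$: statement~(\ref{it:HpqBH2}) is~(\ref{it:HpqH2}), statement~(\ref{it:HpqBH3}) is~(\ref{it:HpqH3}), statement~(\ref{it:HpqBH4}) is~(\ref{it:HpqH4}) (since the partial $(p+1)$-hypergraphs of $\mathcal B(G)$ are exactly the families of $p+1$ maximal bicliques of $G$), statement~(\ref{it:HpqBH5}) is literally~(\ref{it:HpqH5}), and statement~(\ref{it:HpqBH8}) is~(\ref{it:HpqH6}) because $B(G)$ is an incidence matrix of $\mathcal B(G)$. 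Hence Theorem~\ref{thm:HpqHelly} applied to $\mathcal B(G)$ immediately yields the equivalence of (\ref{it:HpqBH2}), (\ref{it:HpqBH3}), (\ref{it:HpqBH4}), (\ref{it:HpqBH5}), and (\ref{it:HpqBH8}).

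The first piece of real work is to bring statement~(\ref{it:HpqBH7}) into this group by showing it is equivalent to statement~(\ref{it:HpqH8}) of Theorem~\ref{thm:HpqHelly} for $\mathcal H=\mathcal B(G)$; this is the biclique analogue of the second claim in the proof of Theorem~\ref{thm:HpqCHelly}. Writing $\mathcal B=\mathcal B(G)$ and $\mathcal S=\{T_1\cup Z,\ldots,T_{p+1}\cup Z\}$ with $Z=U-(T_1\cup\cdots\cup T_{p+1})$, the support sets of $\mathcal S$ are $U-T_1,\ldots,U-T_{p+1}$, and each being contained in an edge of $\mathcal B$ is the same as being a biclique of $G$. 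By Lemma~\ref{lem:BP}, $\core(\mathcal B_{U-T_j})$ consists of the biuniversal vertices of the bicompletion of $U-T_j$ and in particular contains $U-T_j$; since $T_i\subseteq U-T_j$ for $i\neq j$, a vertex $v\in T_i$ lies in $\core(\mathcal B^\cup_\mathcal S)=\core(\mathcal B_{U-T_1})\cap\cdots\cap\core(\mathcal B_{U-T_{p+1}})$ if and only if $v\in\core(\mathcal B_{U-T_i})$, which by Lemma~\ref{lem:BP} is exactly the condition in~(\ref{it:HpqBH7}). Thus $\core(\mathcal B^\cup_\mathcal S)\cap(T_1\cup\cdots\cup T_{p+1})\neq\emptyset$ translates precisely into~(\ref{it:HpqBH7}), so (\ref{it:HpqBH7}) joins the equivalent group.

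It then remains to insert (\ref{it:HpqBH1}) and (\ref{it:HpqBH6}) via the three implications (\ref{it:HpqBH3})${}\Rightarrow{}$(\ref{it:HpqBH1}), (\ref{it:HpqBH1})${}\Rightarrow{}$(\ref{it:HpqBH6}), and (\ref{it:HpqBH6})${}\Rightarrow{}$(\ref{it:HpqBH7}). For (\ref{it:HpqBH3})${}\Rightarrow{}$(\ref{it:HpqBH1}) I would argue by contrapositive exactly as for cliques: if some induced subgraph $G'$ is not $(p,q)$-biclique-Helly, then $\mathcal B(G')$ is not $(p,q)$-Helly, hence not hereditary $(p,q)$-Helly, so by the equivalence (\ref{it:HpqH1})${}\Leftrightarrow{}$(\ref{it:HpqH7}) of Theorem~\ref{thm:HpqHelly} it contains some $\mathcal J_{p+1,q,s}$ as a partial subhypergraph; since $\mathcal B(G')$ is a partial subhypergraph of $\mathcal B(G)$, so is this copy of $\mathcal J_{p+1,q,s}$, whence $\mathcal B(G)$ is not strong $(p,q)$-Helly. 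The implication (\ref{it:HpqBH1})${}\Rightarrow{}$(\ref{it:HpqBH6}) is immediate from Lemma~\ref{lem:biocular}, since a $(p+1,q,s)$-biocular is itself an induced subgraph that is not $(p,q)$-biclique-Helly.

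The main obstacle is the last implication (\ref{it:HpqBH6})${}\Rightarrow{}$(\ref{it:HpqBH7}), the construction of an induced biocular out of a failure of~(\ref{it:HpqBH7}). I would follow the skeleton of (\ref{it:HpqCH8})${}\Rightarrow{}$(\ref{it:HpqCH7}): assuming (\ref{it:HpqBH7}) fails, for each $i$ and each $v\in T_i$ the set $W(v)=\{w:(U-T_i)\cup\{w\}\text{ is a biclique but }(U-T_i)\cup\{v,w\}\text{ is not}\}$ is nonempty, and I would split according to whether some $T_j$ carries two inclusion-incomparable sets $W(x),W(y)$ (yielding a biocular with $s=q-1$) or every $\{W(v):v\in T_i\}$ is a chain, in which case minimal representatives $w_i=\trans{v_i}$ build a biocular on $U\cup\{w_1,\ldots,w_{p+1}\}$. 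The genuinely new difficulty, absent in the clique setting, is that pairwise bicompatibility does not force a set to be a biclique, so the clique step ``$U$ is a clique'' has no direct analogue; establishing that $U$ is a biclique requires checking every $3$-subset via Remark~\ref{rmk:coP3-C3} and Lemma~\ref{lem:abcd}, which succeeds for $p\geq 3$ (three vertices always miss some $T_i$) but fails for $p=2$. Consequently the case $p=2$ must be treated separately: when $U$ is a biclique and $(p,q)\neq(2,1)$ the chain argument produces a biocular of type~(\bicb), whereas the remaining possibilities (in particular $U$ not a biclique, which forces $s=0$, and the excluded pair $(p,q)=(2,1)$) are exactly those covered by type~(\bica) with $W=\emptyset$; this is precisely why the definition restricts (\bica) to $s=0$ when $p=2$ and forbids (\bicb) for $(2,1)$, paralleling the special clause~(\ref{it:p=2}) of Theorem~\ref{thm:biexp}. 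Throughout, Lemma~\ref{lem:B1B2} plays the role of the ``nonadjacent witness'' of the clique proof, localizing each non-biclique to a bad triple $\{v,x,w\}$.
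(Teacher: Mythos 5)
Your proposal matches the paper's proof essentially step for step: the same specialization of Theorem~\ref{thm:HpqHelly} to $\mathcal B(G)$, the same translation of statement~(\ref{it:HpqH8}) into~(\ref{it:HpqBH7}) via Lemma~\ref{lem:BP} (mirroring the second claim in the proof of Theorem~\ref{thm:HpqCHelly}), the same three bridging implications (\ref{it:HpqBH3})${}\Rightarrow{}$(\ref{it:HpqBH1})${}\Rightarrow{}$(\ref{it:HpqBH6})${}\Rightarrow{}$(\ref{it:HpqBH7}), and for the last of these the same $W(v)$ chain/incomparability dichotomy with Lemma~\ref{lem:B1B2} and Lemma~\ref{lem:abcd} resolving exactly the cases you single out ($U$ automatically a biclique when $p\geq 3$; $p=2$ with $U$ not a biclique forcing $s=0$ and a type-(\bica) obstruction). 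Your explicit observation that the chain argument yields a type-(\bicb) biocular only when $(p,q)\neq(2,1)$, with the excluded pair falling back to a type-(\bica) obstruction, is if anything slightly more careful than the paper's own final paragraph, which applies the chain construction without remarking on that exclusion.
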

\begin{proof} We can prove that (\ref{it:HpqBH7}) is equivalent to statement (\ref{it:HpqH8}) of Theorem~\ref{thm:HpqHelly} applied to $\mathcal H=\mathcal B(G)$, in a way entirely analogous to that used to prove the second claim within the proof of Theorem~\ref{thm:HpqCHelly}, by letting Lemma~\ref{lem:BP} play the role of Lemma~\ref{lem:CP}. Hence, Theorem~\ref{thm:HpqHelly} applied to $\mathcal H=\mathcal B(G)$ implies that statements (\ref{it:HpqBH2}) to (\ref{it:HpqBH8}) are all equivalent. Hence, since (\ref{it:HpqBH3})${}\Rightarrow{}$(\ref{it:HpqBH1}) follows by an entirely analogous argumentation as the one used in the proof of (\ref{it:HpqCH3})${}\Rightarrow{}$(\ref{it:HpqCH1}) of Theorem~\ref{thm:HpqCHelly} and (\ref{it:HpqBH1})${}\Rightarrow{}$(\ref{it:HpqBH6}) follows from Lemma~\ref{lem:biocular}, in order to complete the proof of the theorem it suffices to show that (\ref{it:HpqBH6})${}\Rightarrow{}$(\ref{it:HpqBH7}), which we do below.

\mbox{(\ref{it:HpqBH6})${}\Rightarrow{}$(\ref{it:HpqBH7})} Suppose that there is some $s\in\{0,\ldots,q-1\}$, some $((p+1)(q-s)+s)$-subset $U$ of $V(G)$ and $p+1$ pairwise disjoint $(q-s)$-subsets $T_1,\ldots,T_{p+1}$ of $U$ such that $U-T_1,\ldots,U-T_{p+1}$ are bicliques of $G$ and such that for each $i\in\{1,\ldots,p+1\}$ and each $v\in T_i$ there is some vertex $\trans{v}$ of $G$ such that $(U-T_i)\cup\{\trans{v}\}$ is a biclique but $(U-T_i)\cup\{v,\trans{v}\}$ is not a biclique of $G$.

Suppose first that $p=1$. If $(U-T_i)\cup\{v\}$ is not a biclique for each $i\in\{1,2\}$ and each $v\in T_i$, then $U$ induces a $(p+1,q,s)$-ocular in $G$. If, on the contrary, there is some $i\in\{1,2\}$ and some $v\in T_i$ such that $(U-T_i)\cup\{v\}$ is a biclique, then Lemma~\ref{lem:B1B2} applied to the bicliques $(U-T_i)\cup\{v\}$ and $(U-T_i)\cup\{\trans{v}\}$ implies that $\{v,b,\trans{v}\}$ is not a biclique for any $b\in U-T_i$ and, consequently, $U'\cup\{v,\trans{v}\}$ induces a $(p+1,q,q-1)$-ocular in $G$ for any $(q-1)$-subset $U'$ of $U-T_i$. In either case, (\ref{it:HpqBH6}) does not hold. Thus, from now on, we assume that $p\geq 2$.

Suppose now that $U$ is not a biclique of $G$ and let $S$ be $3$-subset of $V(G)$ which is not a biclique. Since $U-T_1,\ldots,U-T_{p+1}$ are bicliques of $G$, there must be at least one element of $S$ in each of $T_1,\ldots T_{p+1}$. Hence, $p=2$ and $S=\{s_1,s_2,s_3\}$ for some $s_i\in T_i$ for each $i\in\{1,2,3\}$. Let $Z=U-(T_1\cup T_2\cup T_3)$. If there were some vertex $z\in Z$, then the fact that $\{s_1,s_2,s_3\}$ is a not a biclique of $G$ but $\{z,s_i,s_j\}$ is a biclique of $G$ (because it is contained in $U-T_k$) for each permutation $i,j,k$ of $1,2,3$, would contradict Lemma~\ref{lem:abcd}. Hence, $Z=\emptyset$; i.e., $s=0$. Let $i\in\{1,2,3\}$ and let $v\in T_i$. If $j$ and $k$ are such that $i,j,k$ is a permutation of $1,2,3$, then the fact that $\{v,s_i,s_j\}$ and $\{v,s_i,s_k\}$ are bicliques of $G$ (because they are contained in $U-T_k$ and $U-T_j$, respectively) but $\{s_1,s_2,s_3\}$ is not a biclique of $G$ implies, by virtue of Lemma~\ref{lem:abcd}, that $\{v,s_j,s_k\}$ is not a biclique of $G$ and, in particular, $(U-T_i)\cup\{v\}$ is not a biclique of $G$. This proves that $U$ induces a $(p,q,0)$-ocular in $G$.

It only remains to consider the case where $p\geq 2$ and $U$ is a biclique of $G$. For each $i\in\{1,\ldots,p+1\}$ and each $v\in T_i$, let $W(v)=\{w\in V(G):\,(U-T_i)\cup\{w\}\text{ is a biclique of $G$ but }(U-T_i)\cup\{v,w\}$ is not a biclique of $G\}$. Suppose first that for some $i\in\{1,\ldots,p+1\}$, there are two $x,y\in T_i$ such that $W(x)$ and $W(y)$ are inclusion-wise incomparable; i.e., there are two vertices $x',y'$ of $G$ such that $(U-T_i)\cup\{x,x'\}$ and $(U-T_i)\cup\{y,y'\}$ are not bicliques of $G$ but $(U-T_i)\cup\{x,y'\}$ and $(U-T_i)\cup\{y,x'\}$ are bicliques of $G$. By symmetry, let $i=1$. Thus, for any choice of $v_i\in T_i$ for each $i\in\{2,\ldots,p+1\}$, the sets $U'=\{x,y,v_2,\ldots,v_p\}\cup(T_{p+1}-\{v_{p+1}\})\cup Z$ and $W'=\{x',y',\trans{v_2},\ldots,\trans{v_p}\}$ together induce a $(p+1,q,q-1)$-biocular in $G$; in fact: (1) $(U'-\{x\})\cup\{x'\}$ is a biclique of $G$ (because it is contained in $(U-T_i)\cup\{y,x'\}$) and $(U'-\{x\})\cup\{x,x'\}$ is not a biclique of $G$ (by Lemma~\ref{lem:B1B2} applied to the bicliques $(U-T_i)\cup\{x\}$ and $(U-T_i)\cup\{x'\}$); (2) analogously, $(U'-\{y\})\cup\{y'\}$ is a biclique of $G$ but $(U'-\{y\})\cup\{y,y'\}$ is not a biclique of $G$; and (3) $(U'-\{v_i\})\cup\{\trans{v_i}\}$ is a biclique of $G$ (because it is contained in $(U-T_i)\cup\{\trans{v_i}\}$) but $(U'-\{v_i\})\cup\{v_i,\trans{v_i}\}$ is not a biclique of $G$ (by Lemma~\ref{lem:B1B2} applied to the bicliques $(U-T_i)\cup\{v_i\}$ and $(U-T_i)\cup\{\trans{v_i}\}$) for each $i\in\{2,\ldots,p\}$. Thus, (\ref{it:HpqBH6}) does not hold.

Suppose that, on the contrary, for each $i\in\{1,\ldots,p+1\}$, the family $\{W(v):\,v\in T_i\}$ is a chain under inclusion, let $v_i\in T_i$ such that $W(v_i)$ is the minimum element of the chain, and let $w_i=\trans{v_i}$. By construction, $(U-T_i)\cup\{w_i\}$ is a biclique of $G$ but $(U-T_i)\cup\{v,w_i\}$ is not a biclique of $G$ for each $i\in\{1,\ldots,p+1\}$ and each $v\in T_i$. Hence, if $W=\{w_1,\ldots,w_{p+1}\}$, then $U\cup W$ induces a $(p+1,q,s)$-ocular subgraph in $G$ and (\ref{it:HpqBH6}) does not hold. This completes the proof of (\ref{it:HpqBH6})${}\Rightarrow{}$(\ref{it:HpqBH7}) and thus of the theorem.\end{proof}

By combining the equivalence (\ref{it:HpqBH1})${}\Rightarrow{}$(\ref{it:HpqBH3}) of the above theorem with the equivalence (\ref{it:HpqH1})${}\Leftrightarrow{}$(\ref{it:HpqH3}) of Theorem~\ref{thm:HpqHelly}, we obtain the following result analogous to Corollary~\ref{cor:HpqCHelly}, which is likewise crucial for our derivation of a polynomial-time recognition algorithm.

\begin{cor}\label{cor:HpqBHelly} If $p$ and $q$ are positive integers, then a graph $G$ is hereditary $(p,q)$-biclique-Helly if and only if $\mathcal B(G)$ is hereditary $(p,q)$-Helly.\end{cor}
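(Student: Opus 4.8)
The plan is to obtain the corollary as a short chain of equivalences, using the strong $(p,q)$-Helly property as the bridge between the graph-theoretic and hypergraph-theoretic sides and relying entirely on the two main characterization theorems already proved. Recall that, by definition, $G$ is \emph{strong $(p,q)$-biclique-Helly} precisely when $\mathcal B(G)$ is strong $(p,q)$-Helly; this is the only point at which the passage from the graph $G$ to its biclique hypergraph $\mathcal B(G)$ enters, and it is purely definitional.

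First I would invoke the equivalence (\ref{it:HpqBH1})${}\Leftrightarrow{}$(\ref{it:HpqBH3}) of Theorem~\ref{thm:HpqBHelly} to rewrite the left-hand side: $G$ is hereditary $(p,q)$-biclique-Helly if and only if $G$ is strong $(p,q)$-biclique-Helly. By the definition just recalled, the latter is equivalent to $\mathcal B(G)$ being strong $(p,q)$-Helly. Finally, I would apply the equivalence (\ref{it:HpqH1})${}\Leftrightarrow{}$(\ref{it:HpqH3}) of Theorem~\ref{thm:HpqHelly} to the hypergraph $\mathcal H=\mathcal B(G)$, which asserts that $\mathcal B(G)$ is strong $(p,q)$-Helly if and only if $\mathcal B(G)$ is hereditary $(p,q)$-Helly. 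Concatenating these three biconditionals yields exactly the claimed equivalence, in complete parallel with the derivation of Corollary~\ref{cor:HpqCHelly}.

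There is no genuine obstacle here, since all the difficulty has already been absorbed into Theorems~\ref{thm:HpqBHelly} and~\ref{thm:HpqHelly}. The only point that warrants care --- and the reason the argument is routed through the strong property rather than attempted directly --- is that hereditary $(p,q)$-biclique-Helly is defined via induced subgraphs, whose biclique hypergraphs arise only as the families of inclusion-wise maximal edges of subhypergraphs of $\mathcal B(G)$, so that not every partial subhypergraph of $\mathcal B(G)$ is realized as $\mathcal B(G')$ for an induced subgraph $G'$ of $G$. A naive attempt to match ``hereditary on the graph side'' with ``hereditary on the hypergraph side'' through a subhypergraph correspondence would therefore stumble on this mismatch; the strong $(p,q)$-Helly property sidesteps it cleanly, which is precisely the role it plays in both Theorem~\ref{thm:HpqBHelly} and Theorem~\ref{thm:HpqHelly}.
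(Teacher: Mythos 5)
Your proof is correct and follows exactly the paper's route: the paper derives the corollary by combining the equivalence (\ref{it:HpqBH1})${}\Leftrightarrow{}$(\ref{it:HpqBH3}) of Theorem~\ref{thm:HpqBHelly} with the equivalence (\ref{it:HpqH1})${}\Leftrightarrow{}$(\ref{it:HpqH3}) of Theorem~\ref{thm:HpqHelly} applied to $\mathcal H=\mathcal B(G)$, bridged by the definition of strong $(p,q)$-biclique-Helly, just as you do. Your closing remark about why the strong property (rather than a naive subhypergraph correspondence) is the right bridge is also accurate and matches the paper's own observation that $\mathcal B(G')$ consists only of the inclusion-wise maximal edges of the induced subhypergraph of $\mathcal B(G)$.
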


From the above result, the fact that $\mathcal B(G)$ is simple, and Lemma~\ref{lem:H1qHelly}, the following characterizations of hereditary $(1,q)$-biclique-Helly graphs follows.

\begin{cor}\label{cor:H1qBHelly} If $q$ is a positive integer, then the following statements are equivalent for each graph $G$:
\begin{enumerate}[(i)]
 \item\label{it:H1qBH1} $G$ is hereditary $(1,q)$-biclique-Helly
 \item\label{it:H1qBH2} $G$ has at most one maximal biclique of cardinality at least $q$.
 \item\label{it:H1qBH3} the union of all the $q$-bicliques of $G$ is empty or a maximal biclique of $G$.
 \item\label{it:H1qBH4} the union of all the $q$-bicliques of $G$ is a biclique of $G$.
\end{enumerate}\end{cor}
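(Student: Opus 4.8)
The plan is to obtain all four equivalences in one stroke by transferring Lemma~\ref{lem:H1qHelly} from the hypergraph $\mathcal H=\mathcal B(G)$ back to the graph $G$, using Corollary~\ref{cor:HpqBHelly} together with the fact that $\mathcal B(G)$ is a simple hypergraph. First I would record that, by definition, the edges of $\mathcal B(G)$ are exactly the maximal bicliques of $G$, and that distinct maximal bicliques are (inclusion-wise incomparable, hence) distinct sets, so $\mathcal B(G)$ is simple. This is what licenses applying Lemma~\ref{lem:H1qHelly} with $\mathcal H=\mathcal B(G)$, after which it remains only to translate each of its four statements into the language of $G$.

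Two of the translations are immediate. Statement~(i) of the lemma, ``$\mathcal B(G)$ is hereditary $(1,q)$-Helly'', is exactly statement~(\ref{it:H1qBH1}) by virtue of Corollary~\ref{cor:HpqBHelly}. Statement~(ii) of the lemma, ``$\mathcal B(G)$ has at most one $q^+$-edge'', is literally statement~(\ref{it:H1qBH2}), since the $q^+$-edges of $\mathcal B(G)$ are precisely the maximal bicliques of $G$ of cardinality at least $q$.

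The one translation that needs a short argument is the identification of the $q$-subsets of the edges of $\mathcal B(G)$ with the $q$-bicliques of $G$. I would observe that every subset of a biclique is again a biclique (an induced subgraph of a complete bipartite graph is complete bipartite), so each $q$-subset of a maximal biclique is a $q$-biclique; conversely, every $q$-biclique extends to some maximal biclique and is therefore a $q$-subset of an edge of $\mathcal B(G)$. Hence the set $U$ obtained as the union of all the $q$-subsets of the edges of $\mathcal B(G)$ coincides with the union of all the $q$-bicliques of $G$. Given this, statement~(iv) of the lemma (``$U$ is empty or an edge of $\mathcal B(G)$'') becomes statement~(\ref{it:H1qBH3}) verbatim, because an edge of $\mathcal B(G)$ is a maximal biclique of $G$.

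Finally, to match statement~(iii) of the lemma (``$U$ is empty or contained in some edge'') with statement~(\ref{it:H1qBH4}) (``$U$ is a biclique of $G$''), I would use that a set is contained in some maximal biclique if and only if it is itself a biclique, together with the fact that the empty set is a biclique (edgeless graphs count as complete bipartite). Thus ``$U$ empty or contained in some maximal biclique'' collapses to the single condition ``$U$ is a biclique'', which is~(\ref{it:H1qBH4}). I do not anticipate any real obstacle; the only points demanding care are the subset-closure of bicliques and the bookkeeping of the empty-set case when passing between the hypergraph and graph formulations, both of which are routine.
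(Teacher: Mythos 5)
Your proof is correct and takes essentially the same approach as the paper: the paper states Corollary~\ref{cor:H1qBHelly} as an immediate consequence of Corollary~\ref{cor:HpqBHelly}, the simplicity of $\mathcal B(G)$, and Lemma~\ref{lem:H1qHelly}, which is exactly your derivation. Your write-up merely makes explicit the routine translation steps the paper leaves implicit (that the $q$-subsets of edges of $\mathcal B(G)$ are precisely the $q$-bicliques of $G$, and that a vertex set is a biclique if and only if it is contained in some maximal biclique, the empty set included), and these are handled correctly, including the crossed correspondence between statements~(iii)/(iv) of the lemma and statements~(\ref{it:H1qBH4})/(\ref{it:H1qBH3}) of the corollary.
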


We now address the problem of recognizing hereditary $(p,q)$-biclique-Helly graphs, providing different time bounds which are polynomial for fixed $p$ and $q$.

\begin{thm}\label{thm:recog-pq-HBH} If $q$ is any fixed positive integer, then the recognition problem of hereditary $(p,q)$-biclique-Helly graphs, where $p$ is part of the input, can be solved in:
\begin{enumerate}[(i)]
\item\label{it:pqHBH1-rec} $O(m+n)$ time if $(p,q)=(1,1)$, $(1,2)$ or $(2,1)$;
\item\label{it:pqHBH2-rec} $O(qn^q+m+n)$ time if $p=1$ and $q\geq 3$;
\item\label{it:pqHBH3-rec} $O\bigl(qn^q+(\psi N_q+m+n)\binom{N_q}p\bigr)$ time if $p\geq 2$, where $N_q$ is the number of $q$-bicliques of the input graph.
\end{enumerate}
In particular, if $p$ and $q$ are both fixed, bounds (\ref{it:pqHBH2-rec}) and (\ref{it:pqHBH3-rec}) above become $O(n^q)$ and $O(\psi n^{(p+1)q})$, respectively.\end{thm}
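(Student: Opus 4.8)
The plan is to mirror the proof of Theorem~\ref{thm:recog-pq-BH}, replacing the characterizations of $(p,q)$-biclique-Helly graphs used there by the characterizations of the \emph{hereditary} property supplied by Corollaries~\ref{cor:H1qBHelly} and~\ref{cor:HpqBHelly}, and replacing Lemma~\ref{lem:technical} by Lemma~\ref{lem:technical2}. Let $G$ be the input graph and let $\mathcal B=\mathcal B(G)$ be its biclique hypergraph. As a preprocessing step I would note that if $q>n$ then $G$ has no $q$-biclique and is vacuously hereditary $(p,q)$-biclique-Helly; since this is detectable in $O(n)$ time, I assume $q\le n$ from now on.

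For bound~\eqref{it:pqHBH1-rec}, the point is that for each of the three pairs $(p,q)\in\{(1,1),(1,2),(2,1)\}$ the property coincides with being complete bipartite. For $(1,1)$ and $(1,2)$ every $q$-subset of $V(G)$ is a biclique, so the union of all $q$-bicliques of $G$ equals $V(G)$, and Corollary~\ref{cor:H1qBHelly}\eqref{it:H1qBH4} shows $G$ is hereditary $(p,q)$-biclique-Helly if and only if $V(G)$ is a biclique, i.e.\ $G$ is complete bipartite. For $(2,1)$, Lemma~\ref{lem:11,12,21} equates being $(2,1)$-biclique-Helly with being complete bipartite, a property inherited by induced subgraphs, so hereditary $(2,1)$-biclique-Helly is again equivalent to $G$ being complete bipartite. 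Since complete bipartiteness is testable in $O(m+n)$ time (e.g.\ by checking that $\{N_G(v),V(G)-N_G(v)\}$ witnesses complete bipartiteness for an arbitrary vertex $v$, or via Remark~\ref{rmk:coP3-C3}), the bound follows.

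For bound~\eqref{it:pqHBH2-rec}, with $p=1$ and $q\ge 3$, I would invoke Corollary~\ref{cor:H1qBHelly}\eqref{it:H1qBH4} once more: $G$ is hereditary $(1,q)$-biclique-Helly if and only if the union $U$ of all its $q$-bicliques is a biclique. The algorithm therefore enumerates all $q$-subsets of $V(G)$, tests each for being a biclique (via bicompatibility, cf.\ Remark~\ref{rmk:coP3-C3}), accumulates the union $U$ of those that are bicliques, and finally tests whether $U$ is a biclique. The enumeration and accumulation take $O(qn^q)$ time (there are $O(n^q)$ subsets, each handled in amortized $O(q)$ time after an $O(n^2)=O(n^q)$ preprocessing that permits constant-time adjacency tests), and the final test costs $O(m+n)$. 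This is genuinely faster than the corresponding non-hereditary bound of Theorem~\ref{thm:recog-pq-BH}, because here a single biclique test on $U$ replaces the computation and intersection of the cores $\core(\mathcal B_P)$.

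For bound~\eqref{it:pqHBH3-rec}, with $p\ge 2$, I would use Corollary~\ref{cor:HpqBHelly}: $G$ is hereditary $(p,q)$-biclique-Helly if and only if $\mathcal B$ is hereditary $(p,q)$-Helly, and then apply Lemma~\ref{lem:technical2} to $\mathcal H=\mathcal B$. The step that needs care---and which I expect to be the main (though mild) obstacle---is verifying the hypotheses of that lemma: the $q$-subsets of the edges of $\mathcal B$ are precisely the $q$-bicliques of $G$, so the list $S_1,\dots,S_{N_q}$ of all $q$-bicliques (enumerable in $O(qn^q)$ time as above) is an admissible input; the rank $r$ of $\mathcal B$ equals $\psi$; and, by Lemma~\ref{lem:BP}, deciding whether $\mathcal B_P$ is empty and otherwise computing $\core(\mathcal B_P)$ costs $f=O(m+n)$ time. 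With $N=N_q$, Lemma~\ref{lem:technical2} yields $O\bigl((\psi N_q+m+n)\binom{N_q}p\bigr)$, and adding the enumeration cost gives~\eqref{it:pqHBH3-rec}; the pair $(2,1)$, although formally covered here, is dispatched faster by~\eqref{it:pqHBH1-rec}. Finally, the specializations for fixed $p$ and $q$ follow by substituting $N_q=O(n^q)$ (so $\binom{N_q}p=O(n^{pq})$) and $\psi\le n$, with the $(m+n)$ term absorbed into $\psi n^{(p+1)q}$ whenever $q\ge 2$ because then $m+n=O(n^2)=O(n^q)$: bound~\eqref{it:pqHBH2-rec} collapses to $O(n^q)$ and bound~\eqref{it:pqHBH3-rec} to $O(\psi n^{(p+1)q})$.
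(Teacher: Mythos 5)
Your proposal is correct and follows essentially the same route as the paper's proof: case~(\ref{it:pqHBH1-rec}) via the complete-bipartite characterization of Lemma~\ref{lem:11,12,21} (your detour through Corollary~\ref{cor:H1qBHelly} for $(1,1)$ and $(1,2)$ is an equivalent shortcut), case~(\ref{it:pqHBH2-rec}) via Corollary~\ref{cor:H1qBHelly}, and case~(\ref{it:pqHBH3-rec}) via Corollary~\ref{cor:HpqBHelly} combined with Lemma~\ref{lem:technical2} and Lemma~\ref{lem:BP}, taking $r=\psi$ and $f=O(m+n)$. Your explicit checking of the hypotheses of Lemma~\ref{lem:technical2} and of the $(m+n)$-absorption for $q\geq 2$ in the fixed-parameter specialization is, if anything, slightly more careful than the paper's own write-up.
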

\begin{proof} Let $G$ be the input graph and let $\mathcal B$ denote its biclique hypergraph.

Lemma~\ref{lem:11,12,21} implies that if $(p,q)=(1,1)$, $(1,2)$, or $(2,1)$, then $G$ is hereditary $(p,q)$-biclique-Helly if and only if $G$ is a complete bipartite graph. This proves the validity of bound (\ref{it:pqHBH1-rec}).

Suppose that $p=1$ and $q\geq 3$. By Corollary~\ref{cor:H1qBHelly}, $G$ is hereditary $(p,q)$-biclique-Helly if and only if the union of all the $q$-bicliques of $G$ is a biclique of $G$. Thus, the bound (\ref{it:pqHBH2-rec}) follows from the facts that we can determine all the $q$-bicliques of $G$ and compute their union in $O(qn^q+n)$ time and determine whether $U$ is a biclique of $G$ in additional $O(m+n)$ time.

Suppose now that $p\geq 2$. By Corollary~\ref{cor:HpqBHelly}, $G$ is hereditary $(p,q)$-clique-Helly if and only if $\mathcal B(G)$ is hereditary $(p,q)$-Helly, and the latter can be verified in $O\bigl(qn^q+(\psi N_q+m+n)\binom{N_q}p\bigr)$ time by virtue of Lemma~\ref{lem:technical2} because the $q$-bicliques of $G$ can be enumerated in $O(qn^q)$ time and the core of $\mathcal B_P$ for any subset $P$ of $V(G)$ can be computed in $O(m+n)$ time (by Lemma~\ref{lem:BP}).\end{proof}

Finally, we prove that if $p$ or $q$ is part of the input then the recognition of hereditary $(p,q)$-biclique-Helly graphs is co-NP-complete.

\begin{thm} The recognition problem of hereditary $(p,q)$-biclique-Helly graphs, for positive integers $p$ and $q$, is co-NP-complete if $p$ or $q$ is part of the input.\end{thm}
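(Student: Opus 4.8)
The plan is to derive both membership in co-NP and hardness from the machinery already developed for the non-hereditary problem, the essential new input being the forbidden-induced-subgraph description in Theorem~\ref{thm:HpqBHelly} together with the observation that each reduction built for $(p,q)$-biclique-Helly graphs survives, mutatis mutandis, the passage to the hereditary property.

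For membership in co-NP I would invoke the equivalence (\ref{it:HpqBH1})${}\Leftrightarrow{}$(\ref{it:HpqBH6}) of Theorem~\ref{thm:HpqBHelly}: a graph fails to be hereditary $(p,q)$-biclique-Helly exactly when it contains an induced $(p+1,q,s)$-biocular for some $s\in\{0,\ldots,q-1\}$. Such a biocular is an induced subgraph on at most $(p+1)(q-s)+s+(p+1)\le n$ vertices, and checking that a given labelled vertex set $U\cup W$ with distinguished subsets $T_1,\ldots,T_{p+1}$ realises the defining conditions (\bica)/(\bicb) reduces to polynomially many tests of whether a set is a biclique, each of which is decidable in polynomial time via Remark~\ref{rmk:coP3-C3}. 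Hence an induced biocular, together with $s$ and the sets $U,W,T_1,\ldots,T_{p+1}$, is a polynomial-size certificate of non-membership that is verifiable in polynomial time, so the problem lies in co-NP whether $p$ or $q$ is part of the input.

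For hardness I would reuse verbatim the graph $G$ constructed from a 3,4-SAT instance $\mathcal F$ in the proof of Theorem~\ref{thm:pq-BH-coNP-pvar} when $p$ is part of the input, and the one from the proof of Theorem~\ref{thm:pq-BH-coNP-qvar} when $q$ is part of the input, and show in each case that $\mathcal F$ is satisfiable if and only if $G$ is \emph{not} hereditary $(p,q)$-biclique-Helly. The forward implication is immediate: if $\mathcal F$ is satisfiable then the cited theorem gives that $G$ is not $(p,q)$-biclique-Helly, so by (\ref{it:HpqBH1})${}\Rightarrow{}$(\ref{it:HpqBH2}) of Theorem~\ref{thm:HpqBHelly} (applied with $q'=q$) $G$ cannot be hereditary $(p,q)$-biclique-Helly. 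For the converse I would use the equivalence (\ref{it:HpqBH1})${}\Leftrightarrow{}$(\ref{it:HpqBH2}) in the other direction: if $G$ is not hereditary $(p,q)$-biclique-Helly, then $G$ is not $(p,q')$-biclique-Helly for some $q'\ge q$, so by the definition of the $(p,q')$-Helly property applied to $\mathcal B(G)$ there is a $(p,q')$-intersecting family of maximal bicliques of $G$ whose total intersection has fewer than $q'$ vertices. This is exactly the type of configuration that drives the converse direction of the proofs of Theorems~\ref{thm:pq-BH-coNP-pvar} and~\ref{thm:pq-BH-coNP-qvar}, the sole difference being that the threshold $q$ is replaced by $q'\ge q$.

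The main obstacle is therefore to verify that the combinatorial analysis of those converse directions is insensitive to the replacement of $q$ by $q'$. Concretely, one must re-run, with a minimal such family and with $q'$ in place of $q$, the two structural claims of each earlier proof---that every maximal biclique with at least six vertices contains the set $Q$ (respectively, that a sufficiently large maximal biclique meets $U$ in a union of whole blocks of the false-twin partition $\varPi$)---and then conclude that the associated core-difference vertices $w_1,\ldots,w_{p'}$, with $p'\ge p+1$, form a stable set of $U$ of size $m$ encoding a truth assignment of $\mathcal F$. I expect the re-derivation to go through because every inequality used there, as well as each appeal to Lemma~\ref{lem:abcd} and Lemma~\ref{lem:B1B2}, relies only on lower bounds on biclique cardinalities---which $q'\ge q$ only strengthens---and on the two syntactic hypotheses on $\mathcal F$ (each variable occurs in at most four clauses and no clause contains a literal together with its negation), neither of which refers to the value of $q$; confirming this line by line is the one genuinely laborious step.
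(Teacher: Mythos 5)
Your proposal is correct, and its co-NP membership argument (an induced $(p+1,q,s)$-biocular as a polynomial-size, polynomially verifiable certificate, via the equivalence (\ref{it:HpqBH1})${}\Leftrightarrow{}$(\ref{it:HpqBH6}) of Theorem~\ref{thm:HpqBHelly}) coincides with the paper's; where you genuinely diverge is in the sufficiency direction of the two hardness reductions. The paper keeps the threshold $q$ fixed and passes to an induced subgraph: if $G$ is not hereditary $(p,q)$-biclique-Helly, then some induced subgraph $G'$ fails to be $(p,q)$-biclique-Helly, and the paper asserts that the converse analyses of Theorems~\ref{thm:pq-BH-coNP-pvar} and~\ref{thm:pq-BH-coNP-qvar} can be re-run inside $G'$. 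You instead keep the graph $G$ fixed and raise the threshold: by (\ref{it:HpqBH1})${}\Leftrightarrow{}$(\ref{it:HpqBH2}) of Theorem~\ref{thm:HpqBHelly}, non-hereditariness yields some $q'\geq q$ for which $G$ itself is not $(p,q')$-biclique-Helly, and you re-run the converse analyses with $q'$ in place of $q$. Your monotonicity claim does check out line by line: in the proof of Theorem~\ref{thm:pq-BH-coNP-pvar} the threshold enters only through the existence of the vertices $w_i\in\core(\mathcal H-B_i)-\core(\mathcal H)$ (guaranteed for any $q'$ by minimality, since $\core(\mathcal H-B_i)$ is a $q'^+$-set while $\core(\mathcal H)$ is not) and through the containment $Q\subseteq\core(\mathcal H)$, which is used only to get $W\cap Q=\emptyset$ and never through the cardinality $\vert Q\vert=q-1$; in the proof of Theorem~\ref{thm:pq-BH-coNP-qvar} its only quantitative use is $\vert B_1\vert\geq q'\geq q>m(p+1)$, which $q'\geq q$ only strengthens; and the remaining ingredients (the two structural claims, the false-twin partition $\varPi$, and the appeals to Lemmas~\ref{lem:abcd} and~\ref{lem:B1B2}) are statements about the fixed graph $G$ that do not mention the threshold at all. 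This buys you something concrete: the paper's route tacitly requires re-verifying those structural claims inside an arbitrary induced subgraph $G'$, where maximal bicliques of $G'$ need not be maximal in $G$ and $Q\cap V(G')$ may be smaller than $Q$, whereas your route reuses them verbatim for $G$ and confines the checking to a few monotone cardinality inequalities; the only price is the reliance on the equivalence (\ref{it:HpqBH1})${}\Leftrightarrow{}$(\ref{it:HpqBH2}), which is already established at this point of the paper and is therefore harmless.
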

\begin{proof} Let $G$ be the input graph. The recognition problem is in co-NP because, by Theorem~\ref{thm:HpqBHelly}, if $G$ is a graph which is not hereditary $(p,q)$-biclique-Helly, then there is a certificate in the form of an induced subgraph of $G$ that is a $(p,q,s)$-biocular graph for some $s\in\{0,\ldots,q-1\}$.

In order to prove the NP-completeness if $p$ or $q$ is part of the input, we use the same reductions from 3,4-SAT used in Theorem~\ref{thm:pq-BH-coNP-pvar} or \ref{thm:pq-BH-coNP-qvar}, respectively. Let $\mathcal F=\{C_1,\ldots,C_m\}$ be an instance of 3,4-SAT and let $G$ be the graph constructed from $\mathcal F$ as in Theorem~\ref{thm:pq-BH-coNP-pvar} or \ref{thm:pq-BH-coNP-qvar}, respectively. It remains to show that $\mathcal F$ is satisfiable if and only if $G$ is not hereditary $(p,q)$-biclique-Helly. The necessity is direct, since $G$ is an induced subgraph of itself. For the sufficiency, it is enough to observe that the constructions used in the proof of the sufficiency in Theorems~\ref{thm:pq-BH-coNP-pvar} and \ref{thm:pq-BH-coNP-qvar} can be applied to any non-$(p,q)$-biclique-Helly induced subgraph $G'$ of $G$ for obtaining a truth assignment satisfying all clauses of $\mathcal F$.\end{proof}

\section*{Acknowledgments}

M.C.~Dourado was partially supported by Conselho Nacional de Desenvolvimento Cient\'ifico e Tecnol\'ogico, Brazil, Grant number 305404/2020-2. L.N.~Grippo and M.D.~Safe were partially supported by ANPCyT PICT 2017-1315. M.D.~Safe was partially supported by Universidad Nacional del Sur Grant PGI L24/115.

\end{document}